\documentclass[english, usenames,dvipsnames,svgnames,table]{article}
\usepackage{preamble}

\title{Higher Semiadditive Character Theory}

\begin{document}

\begin{titlepage}
    \maketitle
    \thispagestyle{empty}
    \begin{abstract}
    We introduce and develop the theory of semiadditive characters in the higher semiadditive setting, generalizing both the $T(n)$-local monoidal character and the $K(t)$-local transchromatic character. Such characters are natural transformations
    \begin{equation*}
        X^{(-)} \to Y^{\L_p^{\,n-t}(-)}
    \end{equation*}
    compatible with restriction and transfer along $\pi$-finite spaces. We show that every $\infty$-commutative monoid admits a universal $(n-t)$-fold character. This universal character has several strong structural properties: it exhibits blue shift, satisfies higher cyclotomic descent, and is compatible with the semiadditive Fourier transform. We compute it for an arbitrary $K(n)$-local object and show that, for Morava $E$-theory, it recovers the $K(t)$-local transchromatic character. By functoriality, the universal character carries a natural action of the profinite group $\GL_{n-t}(\ZZ_p)$. When $t=0$, the fixed points of this action recover rationalization. As a consequence, we derive an explicit description of $L_\QQ(\SS_{K(n)}^A)$ for every $\pi$-finite space $A$ and compute the ring of rational $K(n)$-local power operations.  
     
     \medskip
    
    \begin{figure}[h]
    \centering
    \includegraphics[width=.90\linewidth]{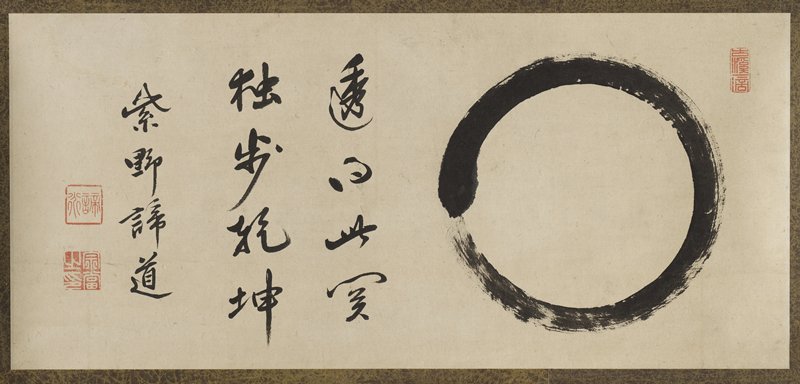}
    \caption*{Enso, Taido Soto (Japanese, 1776--1836), 19th century. 
Minneapolis Institute of Art, Public Domain.}
    \end{figure}
    
\end{abstract}

\end{titlepage}

\tableofcontents
\setcounter{page}{1}
\newpage

\section{Introduction}

\subsection{Background and Motivation}

In their foundational work on generalized characters, Hopkins--Kuhn--Ravenel
\cite{Hopkins-Kuhn-Ravanel-2000-HKR} constructed, for Morava $E$-theory at height $n$,
a character map comparing the $E_n$-cohomology of classifying spaces with a height $0$ theory of generalized class functions.

The height $1$ case provides a useful model for the general picture. Taking
\[
E_1\simeq \KU_p^\wedge,
\]
the Atiyah-Segal completion theorem identifies, for a finite $p$-group $G$, the
spectrum $(\KU_p^\wedge)^{\B G}$ with the completed representation ring. The resulting character map is the usual
$p$-adic character map
\[
\chi\colon (\KU_p^\wedge)^{\B G}\to
\QQ_p(\omega_{p^\infty})[\beta^{\pm1}]^{\L\B G},
\qquad |\beta|=2.
\]
Equivalently, one may describe the same map in terms of $p$-divisible groups. The formal group associated to $\KU_p^\wedge$ is $\widehat{\Gm}$, whose $p^r$-torsion is $\mu_{p^r}$. After base change to $\QQ_p$, this becomes an etale $p$-divisible
group, and adjoining all of its torsion points produces the extension $\QQ_p(\omega_{p^\infty})$.

Hopkins--Kuhn--Ravenel observed that this picture persists in higher heights. For Morava $E$-theory, one starts with the canonical $p$-divisible group on $E_n$, after rationalization it becomes etale, and trivializing it produces the generalized character map.

Stapleton \cite{Stapleton-2013-HKR} later showed that this story admits intermediate stages. Rather than passing directly from height $n$ to height $0$, one may descend to any intermediate height $0\le t\le n$. The target is obtained by taking the canonical $p$-divisible group on $E_n$, base changing to $L_{K(t)}E_n$, splitting its
connected-etale sequence, and trivializing the etale part.

\medskip

From the point of view of this paper, the striking feature of transchromatic character theory is not only the existence of these maps, but also the additional structure carried by both their source and target. By the ambidexterity theorem of
Hopkins--Lurie for $K(n)$-local spectra, $K(n)$-local objects admit
canonical restriction and transfer maps along $\pi$-finite spaces
\cite{Hopkins-Lurie-2013-ambi}. These transfer operations have become an important organizing principle in chromatic homotopy theory, and it is natural to ask
how the transchromatic character interacts with them.

A convenient way to package such transfer data is as follows. Let
\[
\PCMoninf(\cD)\coloneqq \Fun(\Span(\spcpi),\cD),
\]
and let
\[
\CMoninf(\cD)\subseteq \PCMoninf(\cD)
\]
denote the full subcategory of those functors satisfying the Segal condition, namely
those functors $M^{(-)}$ for which the natural map
\[
M^A \to (M^{\pt})^A
\]
is an equivalence for every $A\in \spcpi$. Thus an object of $\CMoninf(\cD)$ is an
object of $\cD$ equipped with restriction and transfer maps along all maps of
$\pi$-finite spaces. In particular, the higher semiadditivity results above allow us to regard $\Sp_{K(n)}$ as a full subcategory of $\CMoninf(\Sp)$. The same is also true for $\Sp_{T(n)}$.

\begin{theorem}[\cite{CSY-teleambi}]
Every $T(n)$-local object admits canonical restriction and transfer maps along $\pi$-finite spaces. Equivalently, the
forgetful functor
\[
\CMoninf(\Sp_{T(n)})\xto{\simeq}\Sp_{T(n)}
\]
is an equivalence.
\end{theorem}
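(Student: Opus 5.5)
The proof reduces to showing that $\Sp_{T(n)}$ is $\infty$-semiadditive. Once that is known, the equivalence $\CMoninf(\Sp_{T(n)})\simeq \Sp_{T(n)}$ follows from the general theory of higher semiadditivity: for an $\infty$-semiadditive presentable $\infty$-category $\cD$, the forgetful functor from $\CMoninf(\cD)$ is an equivalence. The inverse sends $X$ to the functor $A\mapsto X^A$ on $\Span(\spcpi)$. Restrictions come from precomposition, and transfers come from the integration maps built from the inverses of the norm maps $\mathrm{Nm}_q\colon q_! \to q_*$.

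\textbf{Step 1: reduction to $\mathrm{Nm}_{\B C_p}$.} I would run the argument inductively on the truncation level $m$ of $\pi$-finite spaces, following Hopkins--Lurie and Carmeli--Schlank--Yanovski. The key inductive input is that, given $m$-semiadditivity, $(m+1)$-semiadditivity for $p$-local stable categories reduces to invertibility of the "cardinality" $|\B^{m} C_p|$, together with the $p$-space reduction: only $p$-spaces matter, since $\Sp_{T(n)}$ is $p$-local and prime-to-$p$ groups are handled by averaging. A further simplification, special to height-bounded telescopic localizations, is that it suffices to show the single norm map for $\B C_p$ is an equivalence on $\Sp_{T(n)}$. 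A general "bootstrap" then shows that $1$-semiadditivity implies $\infty$-semiadditivity for $\Sp_{T(n)}$. The bootstrap works because $|\B^{k}C_p|$ for $k\ge n+1$ acts invertibly; this is detected by its effect on the height, namely the semiadditive height of $T(n)$-local objects is exactly $n$.

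\textbf{Step 2: invertibility of $\mathrm{Nm}_{\B C_p}$.} This is the main obstacle, because there is no Morava $E$-theory or Lubin--Tate computation available to check it the way Hopkins--Lurie do in the $K(n)$-local case. My first attempt would be Kuhn's theorem that the Tate construction $(-)^{tC_p}$ vanishes $T(n)$-locally. That gives invertibility of the norm for $\B C_p$ acting on $T(n)$-local spectra with $C_p$-action, hence $1$-semiadditivity for $p$-groups.

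\textbf{Step 3: higher $\B^kC_p$.} At higher levels, where Kuhn's result does not directly apply, I would use the CSY approach. They show that in a $1$-semiadditive presentably symmetric monoidal stable category, semiadditive height is controlled by the invertibility of $|\B^k C_p|$. The remaining task is to show that $|\B^{n}C_p|$ is invertible on $T(n)$-local objects, so that height is at most $n$. For this I would use categorification: realize $\Sp_{T(n)}$ inside a higher semiadditive $\infty$-category of modules, and compare cardinalities via the categorified $\B C_p$ action on $\Sp_{T(n)}$ itself, where invertibility reduces to a nilpotence/chromatic statement ($v_n$-periodicity). The genuinely delicate point is this height-bounding step, since the $T(n)$-local sphere is not understood computationally. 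I expect it to require the full categorification machinery rather than a direct computation.
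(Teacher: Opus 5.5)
The paper does not prove this statement; it quotes it from Carmeli--Schlank--Yanovski, so the relevant comparison is with their argument. Several parts of your proposal are fine. The equivalence $\CMoninf(\Sp_{T(n)})\simeq\Sp_{T(n)}$ is formal once $\Sp_{T(n)}$ is $\infty$-semiadditive. Kuhn's Tate vanishing gives $1$-semiadditivity. And once $n$-semiadditivity and invertibility of $|\B^nC_p|$ are known, the lemma ``$|\Omega A|$ invertible $\Rightarrow$ $A$ ambidextrous'' propagates ambidexterity above height $n$.

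The gap is the range in between: passing from $m$- to $(m+1)$-semiadditivity for $1\le m<n$. This is the actual content of the theorem. The criterion you give in Step~1, invertibility of $|\B^mC_p|$, is only sufficient, and it fails in exactly this range. Already $|C_p|=p$ is not a unit on $\SS_{T(n)}$. More generally, $|\B^mC_p|$ becomes $p^{\binom{n-1}{m}}$ over $E_n$, and nonzero $T(n)$-local objects are $|\B^mC_p|$-complete for $m<n$, so $|\B^mC_p|$ acts invertibly on none of them. Your induction therefore stalls at $\B^2C_p$ as soon as $n\ge 2$. The sentence ``a general bootstrap then shows that $1$-semiadditivity implies $\infty$-semiadditivity'' restates the theorem rather than proving it.

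The justifications you offer for that bootstrap are also circular. The cardinality $|\B^kC_p|$ is only defined once $\B^kC_p$ is known to be ambidextrous, i.e.\ after $k$-semiadditivity. Likewise, the semiadditive height of $\Sp_{T(n)}$ is a consequence of telescopic ambidexterity, not an input to it. In a merely $1$-semiadditive category, ``height is controlled by invertibility of $|\B^kC_p|$'' has no meaning for $k\ge 2$, so the ``remaining task'' of Step~3 presupposes $n$-semiadditivity. (Also, it is $|\B^nC_p|$, not $|\B^kC_p|$ for $k\ge n+1$, whose invertibility starts the propagation above height $n$.) The categorification sketch is too vague to close this hole.

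What you need is a mechanism that produces ambidexterity of $\B^{m+1}C_p$ while $|\B^mC_p|$ is \emph{not} invertible. In CSY this is handled by a dedicated bootstrap argument, which needs two further ingredients:
\begin{itemize}
\item chromatic input beyond Kuhn's theorem, namely the nilpotence theorem, which lets one detect vanishing of $T(n)$-local commutative rings after passing to $K(n)$ or $E_n$, where the relevant cardinalities can be computed;
\item algebraic structure available only in the semiadditive setting, namely an additive $p$-derivation on $\pi_0$ of commutative algebras.
\end{itemize}
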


At first sight, however, there is an evident tension. The transchromatic character lowers chromatic height, and so it cannot preserve semiadditive structure naively, see
\cite[Proposition 5.3.8]{CSY-ambiheight}. Recent work of Ben-Moshe shows that the correct compatibility is restored only after inserting a $p$-typical free loop space
for each unit of height drop \cite{ben-mosh-Transchromatic}.

\begin{theorem}[\cite{Hopkins-Kuhn-Ravanel-2000-HKR,Stapleton-2013-HKR,Lurie-2019-Elliptic3,ben-mosh-Transchromatic}]
Let $E_n$ be Morava $E$-theory at height $n$ and choose  $0\le t\le n$. Then there is a canonical transchromatic character map
\[
\chi^{\mathrm{tch}}_{n,t}\colon E_n^{\,(-)}
\to
(C_t^n)^{\,\L_p^{\,n-t}(-)}
\qquad \in \qquad \Fun(\Span(\spcpi),\Sp),
\]
where $C_t^n$ is the target
appearing in transchromatic character theory  \footnote{In this work we will only use the $T(t)$-localization, equivalently the $K(t)$-localization, of the relevant splitting algebra.}, and
\[
\L_p^{\,n-t}A\coloneqq \Map(\B\ZZ_p^{\,n-t},A).
\]
\end{theorem}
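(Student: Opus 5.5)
We construct the map one level at a time and then check that it is compatible with all spans. The statement is attributed to Hopkins--Kuhn--Ravenel, Stapleton, Lurie and Ben-Moshe, so the plan follows their architecture.

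\textbf{Step 1: the map at each $\pi$-finite space.} Fix a $\pi$-finite space $A$. Following Lurie's treatment in \cite{Lurie-2019-Elliptic3}, write $\mathbb{G}$ for the canonical $p$-divisible group over $E_n$. Base change to $L_{K(t)}E_n$ splits $\mathbb{G}$ into its connected-\'etale sequence with \'etale part of height $n-t$. The algebra $C_t^n$ is the $K(t)$-local $E_\infty$-algebra corepresenting trivializations $(\QQ_p/\ZZ_p)^{n-t}\simeq \mathbb{G}^{\text{\'et}}$ of that \'etale part. Over $C_t^n$ the \'etale part is constant, and a tempered/orientation argument identifies $C_t^n$-cochains on $A$, computed via $\mathbb{G}$, with cochains on the space of maps $\B\ZZ_p^{n-t}\to A$. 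The character map is then the composite
\[
E_n^A\to (C_t^n)^A\to (C_t^n)^{\L_p^{n-t}A},
\]
where the second map is pullback along evaluation $\L_p^{n-t}A\times \B\ZZ_p^{n-t}\to A$, followed by integrating out the $\B\ZZ_p^{n-t}$ factor. That last step uses the trivialization: it identifies $(C_t^n)^{\B\ZZ_p^{n-t}}$ with functions on $\ZZ_p^{n-t}$-torsion data.

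\textbf{Step 2: naturality in restrictions.} Precomposition makes $A\mapsto (C_t^n)^{\L_p^{n-t}A}$ a functor on $\spcpi^{\op}$. The evaluation-and-pullback construction is natural in $A$, so the character is natural for restrictions.

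\textbf{Step 3: compatibility with transfers.} This is the main obstacle. By \cite{Hopkins-Lurie-2013-ambi} the source $E_n^{(-)}$ is a functor on spans. The target also extends to spans: $\L_p^{n-t}$ preserves $\pi$-finite spaces and pullbacks, since it is a mapping space, and $C_t^n$ is $K(t)$-local, hence higher semiadditive, so one composes with the span functor of $C_t^n$. Promoting a natural transformation on $\spcpi^{\op}$ to one on $\Span(\spcpi)$ then reduces to a Beck--Chevalley condition for each map $f\colon A\to B$: the character must commute with the integrals $\int_f$ and $\int_{\L_p^{n-t}f}$. Following \cite{ben-mosh-Transchromatic}, I would prove this by induction on $n-t$, factoring the height drop into single steps $t+1\to t$, each of which adds one free loop factor.

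For a single step, both transfers are determined by their norm maps, and the character is an $E_\infty$ map. By the universal property of higher semiadditivity, it suffices to check compatibility on the universal cases $f\colon \B C_p\to\pt$ and, more generally, on Eilenberg--MacLane spaces $K(C_p,k)\to\pt$. In those cases the comparison amounts to matching the cardinalities $|A|$ computed at heights $t+1$ and $t$. This matching is the height-shift identity $|\,\Omega A|_{t+1}=|\L_p A|_t$-type formula from \cite{CSY-ambiheight}, which follows from the \'etale splitting: the \'etale part has exactly one extra $\QQ_p/\ZZ_p$ factor.

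Once the Beck--Chevalley squares hold, coherence is automatic: the target is a full subcategory condition (Segal objects), so a natural transformation that commutes with all transfers assembles to a map in $\Fun(\Span(\spcpi),\Sp)$, which is $\chi^{\mathrm{tch}}_{n,t}$.
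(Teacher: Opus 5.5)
The decisive gap is Step~3. Compatibility with the transfer along $f\colon A\to B$ means $\chi_B(\int_f x)=\int_{\L_p^{n-t}f}\chi_A(x)$ for \emph{every} class $x\in E_n^A$. This does not reduce to Eilenberg--MacLane spaces or to cardinalities. The identity $|A|_{E_n}\mapsto|\L_p^{n-t}A|_{C_t^n}$ only tests $x=1$ and $B=\pt$, and it is a \emph{consequence} of span-naturality (this is exactly \cref{prop: img of cardinalty}), not a criterion for it. Multiplicativity and Frobenius reciprocity only reduce $\int_f$ to its values on a set of $E_n^B$-module generators of $E_n^A$, and $1$ is not such a generator: already $E_n^{\B C_p}$ is free of rank $p^n$ over $E_n$. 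Classically, for $f\colon \B H\to \B G$, the required identity is the induced character formula of \cite{Hopkins-Kuhn-Ravanel-2000-HKR}, a genuine theorem about all classes.

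Your closing coherence argument also fails. $(C_t^n)^{\L_p^{n-t}(-)}$ is precisely \emph{not} a Segal object; that is why $F^*$ lands in $\PCMoninf$ rather than $\CMoninf$. So its transfers are extra structure rather than a property, and Beck--Chevalley squares commuting up to homotopy do not assemble into a morphism of $\infty$-functors on $\Span(\spcpi)$. Finally, the height-by-height induction leaves Morava $E$-theory after one step, since the base becomes a $K(n-1)$-local splitting algebra. The inductive hypothesis would therefore have to be stated for general $K(s)$-local rings with a $p$-divisible group, which your outline does not do.

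Step~1 also describes the wrong map. Since $\B\ZZ_p^{n-t}$ is not $\pi$-finite, you must work at finite levels $\Lambda_k=(\ZZ/p^k)^{n-t}$ and use $\L_p^{n-t}A\simeq\colim_k\Map(\B\Lambda_k,A)$. More seriously, if you first pass to $(C_t^n)^A$, the splitting data is already lost. Because $C_t^n$ is $K(t)$-local, $\pi_0(C_t^n)^{\B\Lambda_k}$ is the ring of functions on maps $\Lambda_k^\vee\to\GG^0[p^k]$ into the height $t$ connected part only. Maps into the full $p$-divisible group, including the \'etale part, are seen only by $L_{K(t)}(E_n^{\B\Lambda_k})$; cf.\ \cref{lem: Ltr functor of points} and \cref{lem: coassembly as map of extrior}. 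The correct composite pulls back $E_n$-cochains along $\Map(\B\Lambda_k,A)\times\B\Lambda_k\to A$ and uses K\"unneth. It then applies the \emph{ring map} $L_{K(t)}(E_n^{\B\Lambda_k})\to C^n_{t,k}$ classifying the universal splitting, which inverts an idempotent as in \cref{cor: classifying the splitting algebra}. Integrating out $\B\Lambda_k$ is merely linear, not multiplicative, and does not give the HKR--Stapleton map.

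For comparison, the paper does not prove this theorem. It imports it from \cite[Theorem~A]{ben-mosh-Transchromatic}, which rests on Lurie's tempered refinement, in the form of an equivalence of span functors $L_{T(t)}(E_n^{(-)})\widehat{\otimes}_{L_{T(t)}E_n}C_t^n\simeq(C_t^n)^{\L_p^{n-t}(-)}$ (see \cref{lem : maps from CtL determined by a point}). Under this equivalence, $\chi^{\mathrm{tch}}_{n,t}$ is simply the base-change unit $E_n^{(-)}\to E_n^{(-)}\widehat{\otimes}_{E_n}C_t^n$, which is span-natural for free. All of the transfer compatibility is packaged in that character isomorphism holding as functors on $\Span(\spcpi)$, and that is the ingredient your outline is missing.
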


This suggests reversing the story. Rather than starting with a specific character map
and then discovering that a free loop shift is forced upon us, one may instead begin by
asking for a natural transformation of the form
\[
X^{(-)}\to Y^{\L_p^{\,n-t}(-)}
\]
which is compatible with restriction and transfer along $\pi$-finite spaces, and ask
what such a requirement forces. Equivalently, once the loop-space correction is built
into the problem from the outset, it is natural to ask whether there is a theory
parallel to transchromatic character theory for a general object
\[
X\in \CMoninf(\Sp),
\]
rather than only for Morava $E$-theory.

The main result of this paper is that such a theory exists in much greater generality. We show that every $\infty$-commutative monoid admits a universal $(n-t)$-fold character, and that for Morava $E$-theory this recovers the transchromatic character.
This universal character enjoys strong structural properties, including blue shift, finite cyclotomic descent, and compatibility with the semiadditive Fourier transform. It also allows many results previously known for Morava $E$-theory to be extended to arbitrary $K(n)$-local spectra. As applications, we compute $L_\QQ(\SS_{K(n)}^A)$ for every $\pi$-finite space $A$, and determine the ring of rational power operations in the $K(n)$-local category.

\subsection{Higher Semiadditive Character Theory}

Fix a prime $p$ and integers $0\le c\le n$, and call $c$ the coheight. For a $\pi$-finite space $A$, the
$c$-fold $p$-typical loop space is
\[
\L_p^cA\coloneqq \Map(\B\ZZ_p^c,A).
\]
This construction is functorial not only in maps but also in spans of $\pi$-finite
spaces. Hence it determines an endofunctor
\[
\L_p^c\colon \Span(\spcpi)\to \Span(\spcpi),
\]
and therefore, by precomposition, a functor
\[
(\L_p^c)^*\colon \CMoninf(\Sp)\to \PCMoninf(\Sp),
\qquad
X^{(-)}\mapsto X^{\L_p^c(-)}.
\]

Recall that an object $X\in \CMoninf(\Sp)$ is a functor
\[
X^{(-)}\in \Fun\!\bigl(\Span(\spcpi),\Sp\bigr)=:\PCMoninf(\Sp)
\]
satisfying the Segal condition
\[
X^A\simeq (X^{\pt})^A.
\]
In particular, such an object encodes restriction and transfer along all maps of
$\pi$-finite spaces. 

\begin{definition}
Let $X,Y\in \CMoninf(\Sp)$. A \emph{$c$-fold character of $X$ with values in $Y$}
is a natural transformation
\[
\chi\colon X^{(-)} \to Y^{\L_p^c(-)}
\qquad \in \qquad \PCMoninf(\Sp).
\]
Equivalently, it is a family of maps
\[
\chi_A\colon X^A\to Y^{\L_p^cA},
\]
natural in $A$ and compatible with both restriction and transfer.
\end{definition}

This notion simultaneously encompasses the classical monoidal character, for rational
commutative ring spectra,\footnote{In work in progress by
Carmeli--Cnossen--Ramzi--Yanovski, following up on
\cite{Carmeli-Cnossen-Ramzi-Yanovski-2022-characters}, it is shown that the monoidal
character of any $T(n)$-local commutative algebra defines a character in the sense of the
above definition.} and the transchromatic characters $\chi^{\mathrm{tch}}_{n,n-c}$.

A key feature of the theory is that $(\L_p^c)^*$ admits a symmetric monoidal left
adjoint
\[
(\L_p^c)_!\colon \PCMoninf(\Sp)\to \CMoninf(\Sp).
\]
For $X\in \CMoninf(\Sp)$, the unit of this adjunction defines the
\emph{universal $c$-fold character}
\[
\chi^{\mrm{uni}}_{n,c}\colon
X^{(-)}\to
\bigl((\L_p^c)_!X\bigr)^{\L_p^c(-)}.
\]
By construction, every $c$-fold character out of $X$ factors uniquely through
$\chi^{\mrm{uni}}_{n,c}$. We show that this universal character enjoys several strong structural properties.

\subsubsection*{Blue-shift}

The red-shift philosophy predicts that categorified constructions, such as algebraic $K$-theory, raise chromatic height. Character theory goes in the opposite direction. It extracts functions from objects with symmetries, and therefore behaves as a form of
decategorification. From this perspective, one expects character theory to lower chromatic height. The first structural property of the universal character makes this precise.

\begin{alphtheorem}[\cref{cor: F chr single height T local}]
Let $R\in \Sp_{T(n)}$. Then
\[
(\L_p^c)_!R\in \Sp_{T(n-c)}.
\]
Consequently, if
\[
R^{(-)}\to S^{\L_p^c(-)}
\]
is a $c$-fold character of commutative rings, then $S\in \Sp_{T(n-c)}$.
\end{alphtheorem}

In particular, the universal $c$-fold character carries $T(n)$-local information to $T(n-c)$-local information.

\subsubsection*{Cyclotomic blue-shift}

The second structural property concerns higher cyclotomic extensions. In \cite{CSY-cyclotomic}, Carmeli, Schlank, and Yanovski construct, for every $R\in \Sp_{T(n)}$, higher cyclotomic extensions
\[
R[\omega_{p^r}^{(n)}]\in \CAlg(\Sp_{T(n)})^{\B(\ZZ/p^r)\units},
\]
which play the role of adjoining $p^r$-th roots of unity in the $T(n)$-local setting. 

We show that the universal character is compatible with these extensions, up to the height shift predicted by blue shift. 

\begin{alphtheorem}[\cref{thm:cyclo blue shift}] \label{alphthm: cyclotomic blushift}
Let $R\in \CAlg(\Sp_{T(n)})$. Then there is a canonical equivalence
\[
(\L_p^c)_!\bigl(R[\omega_{p^r}^{(n)}]\bigr)
\simeq
(\L_p^c)_!(R)[\omega_{p^r}^{(n-c)}]
\qquad
\in \qquad
\CAlg(\Sp_{T(n-c)})^{\B(\ZZ/p^r)\units}.
\]
\end{alphtheorem}

Thus the universal character carries height $n$ cyclotomic extensions to the corresponding height $n-c$ cyclotomic extensions, compatibly with the cyclotomic Galois actions.

\subsubsection*{Fourier transform}

The third structural property is compatibility with the semiadditive Fourier transform.
One of the most striking features of the higher cyclotomic extensions is that a choice
of ring map
\[
R[\omega_{p^r}^{(n)}]\to S
\]
is precisely the datum needed to construct a height $n$ semiadditive Fourier transform, see \cite[Definition 4.1, Proposition 6.6]{BCSY-Fourier}. Concretely, if $M\in \Mod_{\ZZ/p^r}(\Ab)^{\fin}$, then such a map gives canonical equivalences of $R$-algebras
\[
S^{\B^kM^\vee}\simeq S[\B^{n-k}M],
\qquad 0\le k\le n.
\]

Using the cyclotomic compatibility above, we show that the universal character carries
the semiadditive Fourier transform at height $n$ to the corresponding Fourier transform
at height $n-c$. More precisely, if $R$ admits a primitive $p^r$-th root of unity of height
$n$, then $(\L_p^c)_!R$ admits a primitive $p^r$-th root of unity of height $n-c$. The induced height $n-c$ Fourier transform can be computed explicitly from the height $n$ Fourier transform on $R$.

\begin{alphtheorem}[\cref{prop: Fourier in chr}]
Let $R\in \CAlg(\Sp_{T(n)})$ and assume that $R$ admits a primitive $p^r$-th root of
unity of height $n$. Let $0\le k\le n-c$, and let $M$ be a finite $p^r$-torsion abelian
group. If
\[
\mathcal F\colon R[\B^kM]\xto{\simeq} R^{\B^{n-k}M^\vee}
\]
denotes the height $n$ semiadditive Fourier transform, then the height $n-c$ Fourier transform of $(\L_p^c)_!R$ is induced from $\mathcal F$ by the following diagram:
\[
\begin{tikzcd}[column sep=large,row sep=large]
(\L_p^c)_!(R)[\B^kM]
    \arrow[r]
    \arrow[d,dashed,"\simeq"']
&
(\L_p^c)_!(R[\B^kM])
    \arrow[d,"(\L_p^c)_!(\mathcal F)"]
\\
(\L_p^c)_!(R)^{\B^{n-c-k}M^\vee}
&
(\L_p^c)_!(R^{\B^{n-k}M^\vee})
    \arrow[l]
\end{tikzcd}
\]
Here the top horizontal map is the assembly map, the right vertical map is obtained by
applying $(\L_p^c)_!$ to $\mathcal F$, and the bottom horizontal map is induced by
the universal character together with the projection
\[
(\B^{n-c-k}M^\vee)\to \L_p^c\B^{n-k}M^\vee
\]
coming from a choice of a degree $1$ map $\TT^c \to S^c$ \footnote{Note that there is a unique such map up to non-canonical homotopy, induced by the canonical orientation of $S^1$.}.
\end{alphtheorem}

\subsubsection*{Base change}

Finally, the universal character is compatible with exponentiation by $\pi$-finite
spaces.

\begin{proposition}[\cref{thm: F! of R A}]
Let $R\in \CAlg(\Sp_{T(n)})$ and let $A\in \spcpi$. Then there is a canonical
equivalence, functorial in $A$,
\[
(\L_p^c)_!(R^A)
\simeq
\bigl((\L_p^c)_!R\bigr)^{\L_p^cA}.
\]
\end{proposition}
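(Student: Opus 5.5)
We will deduce the equivalence from a formal statement about the left adjoint $(\L_p^c)_!$ and its interaction with "shifting" functors by $\pi$-finite spaces. For $A\in\spcpi$ consider the endofunctor $\tau_A\coloneqq A\times(-)$ of $\Span(\spcpi)$; precomposition with it gives functors $(\tau_A)^*$ on $\PCMoninf(\Sp)$ and on $\CMoninf(\Sp)$, sending $X^{(-)}$ to $X^{A\times(-)}$. On Segal objects, $(\tau_A)^*X$ is the $\infty$-commutative monoid with underlying object $X^A$. The key observation is the natural equivalence of endofunctors of $\Span(\spcpi)$
\[
\L_p^c\circ\tau_A\simeq\tau_{\L_p^cA}\circ\L_p^c,
\]
which holds because $\Map(\B\ZZ_p^c,-)$ preserves products: $\L_p^c(A\times B)\simeq \L_p^cA\times\L_p^cB$. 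This equivalence is natural in spans, since $\L_p^c$ is a product-preserving functor on spans.

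The plan has three steps.

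\emph{Step 1.} Show that $(\tau_A)^*$ on $\CMoninf(\Sp)$ and on $\PCMoninf(\Sp)$ admits both adjoints, with left and right adjoints agreeing. Ambidexterity in spans, namely that $A$ is self-dual in $\Span(\spcpi)$, gives $(\tau_A)^*\simeq(\tau_A)_!\simeq(\tau_A)_*$, computed as left or right Kan extension along $\tau_A$. Concretely, $\tau_A$ is self-adjoint in the $2$-categorical sense, with unit and counit given by the diagonal spans.

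\emph{Step 2.} Take left adjoints in the commuting square
\[
(\L_p^c)^*(\tau_{\L_p^cA})^*\simeq(\tau_A)^*(\L_p^c)^*.
\]
Since $(\tau_B)^*$ is also a left adjoint (of $(\tau_B)^*$ itself), this yields the Beck--Chevalley-type equivalence
\[
(\L_p^c)_!\,(\tau_A)^*\simeq(\tau_{\L_p^cA})^*\,(\L_p^c)_!.
\]
Evaluating at the point and using the Segal condition, we get for any $X\in\CMoninf(\Sp)$
\[
(\L_p^c)_!(X^A)\simeq\bigl((\L_p^c)_!X\bigr)^{\L_p^cA}.
\]
Functoriality in $A$ should come from naturality of the square in $A$, for spans of $\pi$-finite spaces $A\to A'$. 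For $R\in\CAlg(\Sp_{T(n)})$ we apply this to $R$ viewed in $\CMoninf(\Sp)$ via the higher semiadditivity theorem. Since $(\L_p^c)_!$ is symmetric monoidal and the equivalence is compatible with the lax structures, it upgrades to an equivalence of commutative algebras. Blue shift (\cref{cor: F chr single height T local}) then places both sides in $\Sp_{T(n-c)}$.

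\emph{Main obstacle.} The delicate point is Step 1 and its compatibility in Step 2. The mate of the commuting square along these adjunctions must be an equivalence, not merely a natural map. I would check this by using that passing to mates commutes with composing squares, and reducing to the fact that the self-duality of $A$ in spans is carried by $\L_p^c$ to the self-duality of $\L_p^cA$. This holds since $\L_p^c$ is a product-preserving functor of spans, hence preserves duality data.

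A further subtlety is whether $(\L_p^c)_!$ should be read as landing in $\CMoninf$ (localized) or in $\PCMoninf$. Here $(\tau_B)^*$ preserves Segal objects and commutes with the Segal localization, because it is both a left and a right adjoint preserving the generating local equivalences, so the argument survives the localization.
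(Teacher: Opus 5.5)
Your argument is correct, but it is organized differently from the paper's.

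The paper writes $R^{A\times(-)}\simeq R^{(-)}\otimes A_!\ounit$ as a Day convolution product with the representable at $A$, using \cref{lem: pointwise left kan otimes} and \cref{lem: representable_tensor_dual}. It then pushes this through the symmetric monoidal left adjoint $F_!$ of \cref{lem: exists of L! and mult st}, using $F_!(A_!\ounit)\simeq F(A)_!\ounit$, and reads the result back as $(F_!R)^{F(A)\times(-)}$. You instead take left adjoints of the equivalence of right adjoints $F^*\tau_{FA}^*\simeq\tau_A^*F^*$ and use $(\tau_B)_!\simeq\tau_B^*$.

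Your Step 1 is exactly the input behind those lemmas: tensoring with the representable is $(\tau_A)_!$, and self-duality of $A$ in $\Span(\spcpi)$ identifies this with $\tau_A^*$. Beyond the existence of $F_!$, you never need Day convolution or the monoidality of Segalification and of $F_!$; you only use that $F$ commutes with $A\times(-)$ on spans. So your proof is more elementary, uses no monoidal structure on the coefficients, and applies verbatim to any $X\in\CMoninf$.

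Two of your worries are unnecessary. No Beck--Chevalley condition has to be verified: an equivalence between composites of right adjoints yields an equivalence between the composite left adjoints by uniqueness of adjoints. And $\tau_B^*$ is self-adjoint on $\CMoninf$ simply because it is the restriction of the self-adjoint functor $\tau_B^*$ on $\PCMoninf$ to a full subcategory it preserves; no discussion of the Segal localization is needed.

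What the paper's route buys is the enhanced statement: an equivalence of symmetric monoidal functors $\Span(\spcpi)\op\to\Mod_{F_!(R)}(\CMoninf(\cC))$, $F_!R$-linear and multiplicative in $A$. This is what is used later, e.g.\ when taking duals in \cref{cor: group_algebra_equivariant_equiv}.

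In your version, functoriality in spans is only sketched. To make it precise you must track that $(\tau_A)_!\simeq\tau_A^*$ is natural in $A$ only after twisting by the duality $\Span(\spcpi)\op\simeq\Span(\spcpi)$, which reverses spans. This is the bookkeeping the paper does with $(-)^\vee$ in \cref{lem: representable_tensor_dual}. Your claimed upgrade to an equivalence of commutative algebras is also asserted rather than derived. Neither point is needed for the proposition as stated.
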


All of the results above hold in greater generality: one may replace $\L_p^c$ by an arbitrary exact functor satisfying mild finiteness and orientability conditions: see \cref{def: dim F} and \cref{def: F orientable}. One may also replace $\Sp$ by any
additive category.

\subsection{The $K(n)$-Local Universal Character}

We now turn to applying the general theory to the $K(n)$-local category. Our general approach is first to identify the transchromatic character as the
universal character of Morava $E$-theory and then use the descent theory developed by Mathew
\cite{Mathew-2016-Galois}, together with the structural results of the previous
section, to extend many results about the transchromatic character from Morava
$E$-theory to arbitrary $K(n)$-local objects. The key input for the first step is the tempered refinement of the transchromatic character, \cite{Lurie-2019-Elliptic3,ben-mosh-Transchromatic}, which gives the transchromatic character the right kind of semiadditive naturality.

\subsubsection*{Formula for the $K(n)$-local universal character}

We begin by revisiting the $K(n-c)=K(t)$-local splitting algebra appearing in
transchromatic character theory. Classically, this algebra is obtained by inverting a
determinant which detects the relevant splitting condition on the $p$-divisible group
associated to Morava $E$-theory.

The following proposition gives a semiadditive description of this determinant.

\begin{proposition}[\cref{cor: indtifying the spliting algebra}]
There is an idempotent
\[
e_r\in
\pi_0 L_{T(n-c)}\!\bigl(\En^{\B(\ZZ/p^r)^c}\bigr)
\]
constructed from semiadditive data and a choice of a height $n$ primitive $p^r$-th root of unity. After inverting this
idempotent, one obtains the finite-level splitting algebra
\[
C_{n-c,r}^n\simeq
L_{T(n-c)}(\En^{\B(\ZZ/p^r)^c})[e_r^{-1}].
\]
Consequently,
\[
C_{n-c}^n \simeq
\colim_r\,L_{T(n-c)}(\En^{\B(\ZZ/p^r)^c})[e_r^{-1}].
\]
\end{proposition}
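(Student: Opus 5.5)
Start from Stapleton's description of the finite-level splitting algebra. The $p^r$-torsion of the $p$-divisible group $\mathbb G_{\En}$ is the finite flat scheme $\mathrm{Spf}\,\En^{\B\ZZ/p^r}$. Hence level structures $(\ZZ/p^r)^c\to\mathbb G[p^r]$ are classified by
\[
\En^{\B(\ZZ/p^r)^c}\simeq \En^{\B\ZZ/p^r}\otimes_{\En}\cdots\otimes_{\En}\En^{\B\ZZ/p^r}.
\]
By definition, $C^n_{n-c,r}$ is obtained from $L_{T(n-c)}\En^{\B(\ZZ/p^r)^c}$ by inverting the locus where the universal map $(\ZZ/p^r)^c\to\mathbb G[p^r]$ is injective and splits off the étale part. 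Classically this locus is cut out by a determinant, or equivalently by products of the elements $[p^{r-1}]$-series evaluated on nonzero vectors. Because we are $K(n-c)$-local, the connected part has height $n-c$ and the étale quotient has rank $c$. So the splitting locus is open and closed in $\mathrm{Spec}\,\pi_0 L_{T(n-c)}\En^{\B(\ZZ/p^r)^c}$, and is therefore given by an idempotent. My plan is to construct this idempotent semiadditively and then match it with Stapleton's.

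\textbf{Step 1: the semiadditive idempotent.} Use the chosen height $n$ primitive $p^r$-th root of unity $\omega$ to get, via the Fourier transform of \cite{BCSY-Fourier}, an identification of $\En^{\B(\ZZ/p^r)^c}$ with a group algebra over the Pontryagin dual. For each subgroup $H\le(\ZZ/p^r)^c$, form the idempotent $\varepsilon_H$ as the normalized transfer $|H|^{-1}\int_{\B H}$. The needed cardinalities become invertible after $T(n-c)$-localization, by \cite[Prop.~5.3.8]{CSY-ambiheight} type height arguments. Then set
\[
e_r \;=\; \prod_{0\ne H}(1-\varepsilon_H),
\]
or equivalently use Möbius inversion over the subgroup lattice. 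This is the idempotent that kills every point where some nonzero subgroup lands in the connected (height $n-c$) part. I expect to simplify this to a product over the cyclic subgroups of order $p$.

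\textbf{Step 2: identify the locus.} A $K(n-c)$-local geometric point of $\pi_0\En^{\B(\ZZ/p^r)^c}$ is a homomorphism $\phi\colon(\ZZ/p^r)^c\to\mathbb G[p^r]$. I will show that $\varepsilon_H$ evaluates to $1$ exactly when $\phi|_H$ factors through the connected component $\mathbb G^0$, and to $0$ otherwise. I would prove this by reduction to the cases $c=1$ and $H$ of order $p$. There, $\int_{\B\ZZ/p}$ of the unit paired against the character $\omega$ computes $p$-torsion of the formal group: the element $[p](x)/x$, whose vanishing or invertibility distinguishes connected points from étale points after $K(n-c)$-localization. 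Since $\mathbb G[p^r]\to\mathbb G^{\text{ét}}[p^r]\cong(\QQ_p/\ZZ_p)^c[p^r]$ over the localized base, the map $\phi$ is an isomorphism onto the étale part iff no nonzero $H$ lands in $\mathbb G^0$. So the idempotent $e_r$ cuts out exactly Stapleton's locus. This gives
\[
C^n_{n-c,r}\simeq L_{T(n-c)}(\En^{\B(\ZZ/p^r)^c})[e_r^{-1}].
\]
Both sides are $T(n-c)$-local, étale over the same ring, and have the same $\pi_0$ by Stapleton's computation, so the equivalence follows.

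\textbf{Step 3: pass to the colimit.} Check that $e_r$ maps to $e_{r+1}$ up to the expected factor under the restriction along $\B(\ZZ/p^{r+1})^c\to\B(\ZZ/p^r)^c$. This uses compatible choices of roots of unity, $\omega_{p^r}=\omega_{p^{r+1}}^p$. The statement $C^n_{n-c}\simeq\colim_r$ then follows from Stapleton's definition of $C^n_{n-c}$ as the colimit of the $C^n_{n-c,r}$.

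\textbf{Main obstacle.} The hardest part is Step 2: matching the semiadditively defined $\varepsilon_H$ with the algebro-geometric condition ``lands in $\mathbb G^0$''. This requires an explicit computation of transfers in $\En^{\B\ZZ/p}$ after $T(n-c)$-localization. I would attack it using the formula $\int_{\B\ZZ/p}1=$ the image of $|\B\ZZ/p|$, which corresponds to $[p](x)/x$ restricted appropriately. I would also use the known height of $|\B C_p|$ from \cite{CSY-ambiheight}.
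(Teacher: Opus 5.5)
There is a genuine gap in Steps 1--2, which is where the content of the statement lies. Your geometric reformulation is correct. Over $L_{T(t)}\En$ with $t=n-c$, the map $\phi$ splits off the \'etale part iff no order-$p$ subgroup lands in $\GG^0$. This locus is clopen, hence cut out by an idempotent.

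But an idempotent defined that way is just Stapleton's classical one. The point of the statement is to produce it from semiadditive data and $\omega^{(n)}_{p^r}$, and your $\varepsilon_H=|H|^{-1}\int_{\B H}$ does not do this once $t\geq 1$. The integer $|H|$ is a power of $p$, so it is not invertible in $\pi_0L_{T(t)}\En$. The semiadditive cardinality $|\B H|$ is also not invertible in $\Sp_{T(t)}$ for $t\geq 2$.

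More seriously, the dichotomy you rely on in Step 2 is false. The element $\mathrm{tr}(1)=[p](x)/x\in\pi_0L_{T(t)}(\En^{\B\ZZ/p})$ does vanish on the non-connected points. Its restriction to the clopen piece $\GG^0[p]$ is its image under the coassembly map $L_{T(t)}(\En^{\B\ZZ/p})\to(L_{T(t)}\En)^{\B\ZZ/p}$, which commutes with transfers. That image is the height $t$ transfer element $[p]_{\GG^0}(y)/y$. It is killed by $y$ and satisfies $\mathrm{tr}(1)^2=p\,\mathrm{tr}(1)$, so it is neither a unit nor a unit multiple of an idempotent. Normalized $1$-truncated transfers isolate the connected part only in the HKR case $t=0$.

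Your construction also never really uses the root of unity. Applying the height $n$ Fourier transform directly to $\En^{\B(\ZZ/p^r)^c}$ lands in a group algebra on $\B^{n-1}$ of the dual, which is the wrong truncation level.

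The missing idea is to compress the rank-$c$ condition into a rank-one condition at level $\B^t$, where $|\B^t\ZZ/p|$ is invertible. The paper does this with the top exterior power, in the following steps.
\begin{enumerate}
\item A map of \'etale groups of equal height $c$ is an isomorphism iff its $\Lambda^c$ is.
\item One has $\mathcal{O}(\Lambda^c\GG[p^r])=\pi_0L_{T(t)}(\En^{\B^c\ZZ/p^r})$, and the map $\GG[p^r]^{c}\to\Lambda^c\GG[p^r]$ is given by restriction along the cup product $\B(\ZZ/p^r)^c\to\B^c\ZZ/p^r$.
\item A Dieudonn\'e module computation shows that the \'etale quotient of $\Lambda^c\GG$ has height one.
\item The assembly map $L_{T(t)}(\En)[\B^t\ZZ/p^r]\to L_{T(t)}(\En[\B^t\ZZ/p^r])$, followed by the height $n$ Fourier transform $\En[\B^t\ZZ/p^r]\simeq\En^{\B^c\ZZ/p^r}$, identifies that \'etale quotient with the height $t$ roots of unity. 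This is where $\omega^{(n)}_{p^r}$ enters.
\end{enumerate}
The splitting condition thus becomes primitivity of the universal height $t$ root of unity. That condition is cut out by the cyclotomic idempotent $1-|\B^t\ZZ/p|^{-1}\int_{\B^t\ZZ/p}\iota_t$ in $L_{T(t)}(\En)[\B^t\ZZ/p^r]$. The idempotent $e_r$ is its image under assembly, Fourier transform and cup-product restriction.

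With $e_r$ constructed this way, your remaining steps go through: lifting from $\pi_0$ to spectra (the paper uses $2$-periodicity and $\pi_1=0$) and passing to the colimit over $r$.
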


As this construction uses only higher semiadditivity and higher roots of unity, it applies to any $K(n)$-local algebra equipped with all height $n$ primitive $p^r$-th roots of unity.

\begin{construction}\label{cons:CtR}
Assume that $R$ is a $K(n)$-local algebra which admits all height $n$ primitive
$p^r$-th roots of unity for all $r$, i.e.
\[
R\in \CAlg_{\SS_{K(n)}[\omega^{(n)}_{p^\infty}]}(\Sp_{T(n)}).
\]
For each $r\ge 1$, let $e_r$ denote the above idempotent in $L_{T(n-c)}\!\bigl(R^{\B(\ZZ/p^r)^c}\bigr)$. Set
\[
C_{n-c,r}(R)\coloneqq
L_{T(n-c)}\!\bigl(R^{\B(\ZZ/p^r)^c}\bigr)[e_r^{-1}]
\]
and define
\[
C_{n-c}(R)\coloneqq \colim_r C_{n-c,r}(R),
\]
where the colimit is taken in $\CAlg(\Sp_{T(n-c)})$.
\end{construction}

We prove that this construction computes the universal character and in particular identifies the universal character of $E_n$ with the transchromatic character.

\begin{alphtheorem}[\cref{thm: k(n) local character with roots of unity}]\label{thm:Ct-is-L!}
Assume that $R$ is $K(n)$-local and admits all height $n$ primitive $p^r$-th roots of
unity for all $r$. Then there is a canonical equivalence
\[
(\L_p^c)_!R \simeq C_{n-c}(R)
\]
in  $\Sp_{T(n-c)}$. Moreover, under this equivalence, the universal character of $E_n$ agrees with the transchromatic character.
\end{alphtheorem}

The same formula holds for the universal character of any $R$-module, for $R$ as above. For a general $K(n)$-local object $M$, one first adjoins all height $n$ roots of unity, applies the theorem, and then takes fixed points for the induced $\ZZ_p\units$-action. This is carried out in \cref{cor: L!-as-hZ}.

Since the universal character is compatible with exponentiation, the above theorem also gives a character isomorphism generalizing the character isomorphism of \cite{Hopkins-Kuhn-Ravanel-2000-HKR}.

\begin{proposition}[\cref{cor: char-iso}]
Assume that $R$ is $K(n)$-local and admits all height $n$ primitive $p^r$-th roots of unity for all $r$. Then, for any $\pi$-finite space $A$, there is a canonical
equivalence
\[
C_{n-c}(R^A)\simeq C_{n-c}(R)^{\L_p^cA}
\]
in  $\Sp_{T(n-c)}$.
\end{proposition}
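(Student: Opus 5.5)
The plan is to combine the base change result (\cref{thm: F! of R A}) with \cref{thm: k(n) local character with roots of unity}. First I check that $R^A$ satisfies the hypotheses of \cref{thm: k(n) local character with roots of unity}. Since $\Sp_{K(n)}$ is closed under limits, $R^A$ is $K(n)$-local. The constant map $A\to\pt$ gives a commutative algebra map $R\to R^A$. Composing it with the structure map $\SS_{K(n)}[\omega^{(n)}_{p^\infty}]\to R$ makes $R^A$ an algebra over $\SS_{K(n)}[\omega^{(n)}_{p^\infty}]$. So $R^A$ admits all height $n$ primitive $p^r$-th roots of unity, namely the images of those of $R$. Primitivity is a property that is detected after base change along any map of algebras of the relevant kind, so it is preserved; I would double-check this against the definition in \cite{CSY-cyclotomic}. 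Hence \cref{thm: k(n) local character with roots of unity} applies to both $R$ and $R^A$.

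We then obtain the chain of equivalences
\[
C_{n-c}(R^A)\simeq (\L_p^c)_!(R^A)\simeq \bigl((\L_p^c)_!R\bigr)^{\L_p^cA}\simeq C_{n-c}(R)^{\L_p^cA}.
\]
The first and last equivalences come from \cref{thm: k(n) local character with roots of unity}, and the middle one is \cref{thm: F! of R A}. The target lies in $\Sp_{T(n-c)}$ because $\L_p^cA$ is $\pi$-finite and $\Sp_{T(n-c)}$ is closed under limits. The source lies there by construction.

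The main point needing care is canonicity and functoriality. The equivalence $(\L_p^c)_!R\simeq C_{n-c}(R)$ should be natural in $R$ as an $\SS_{K(n)}[\omega^{(n)}_{p^\infty}]$-algebra. This requires the idempotents $e_r$ to be built functorially from the semiadditive data and the chosen roots of unity. Since those roots are pulled back along $R\to R^A$, the construction is compatible with this map. The equivalence of \cref{thm: F! of R A} is natural in $A$. Combining these gives an equivalence that is functorial in $A$, at least for maps of $\pi$-finite spaces, where both sides vary through restriction.

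If one wants a more hands-on description, the composite map $C_{n-c}(R^A)\to C_{n-c}(R)^{\L_p^cA}$ should be induced, levelwise in $r$, by
\[
L_{T(n-c)}(R^{A\times\B(\ZZ/p^r)^c})[e_r^{-1}]\to \prod_{\alpha}\cdots
\]
where $\alpha$ runs over maps $\B(\ZZ/p^r)^c\to A$, assembled into $\L_p^cA$ in the colimit. I would only mention this as a sanity check, since identifying it with the abstract equivalence is not needed for the statement.
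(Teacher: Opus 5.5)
Your proposal is correct and is essentially the paper's own argument: the paper's proof of \cref{cor: char-iso} is exactly the combination of \cref{thm: k(n) local character with roots of unity}, applied to $R$ and to $R^A$, with the base-change equivalence \cref{thm: F! of R A}. Your worry about primitivity is unnecessary. The theorem only requires $R^A$ to be a module over $\SS_{K(n)}[\omega^{(n)}_{p^\infty}]$, which it is via $R\to R^A$. Moreover, under the paper's definition primitivity is tested against all algebra maps out of the source, so it is automatically preserved along $R\to R^A$.
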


\subsubsection*{The $\GL_n(\ZZ_p)$-action}

The universal character also has many symmetries. The group $\ZZ_p^c$ has a natural
action of the profinite group $\GL_c(\ZZ_p)$, and by functoriality the universal
character
\[
\chi^{\mrm{uni}}_{n,c}\colon
R^{(-)}\to
\bigl((\L_p^c)_!R\bigr)^{\L_p^c(-)}
\]
is $\GL_c(\ZZ_p)$-equivariant. In the case $c=n$, this gives a
$\GL_n(\ZZ_p)$-action. We show that, after taking fixed points for this action,
one recovers rationalization.

\begin{alphtheorem}[\cref{thm: unit-rational-hGL}] \label{thm: fixed point intro}
Let $M\in \Sp_{K(n)}$ and let $A$ be a $\pi$-finite space.
Then there is a canonical equivalence
\[
\bigl((\L_p^n)_!M^{\L_p^nA}\bigr)^{h\GL_n(\ZZ_p)}
\simeq
L_\QQ(M^A).
\]
\end{alphtheorem}

\subsubsection*{The universal character of the $K(n)$-local sphere}

We next apply the general theory to the $K(n)$-local sphere. In this case, the
universal $n$-fold character admits a particularly simple description.

\begin{proposition}[\cref{cor: universal character SK(n)}]
The universal $n$-fold character of the $K(n)$-local sphere is its rationalization.
More precisely,
\[
(\L_p^n)_!(\SS_{K(n)})\simeq L_\QQ(\SS_{K(n)}),
\]
and the induced $\GL_n(\ZZ_p)$-action is trivial.
\end{proposition}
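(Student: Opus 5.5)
The strategy is to specialize \cref{thm: fixed point intro} to $M=\SS_{K(n)}$ and $A=\pt$, and separately to identify the $\GL_n(\ZZ_p)$-action on $(\L_p^n)_!\SS_{K(n)}$ as trivial. Since $\L_p^n\pt\simeq\pt$, \cref{thm: fixed point intro} gives
\[
\bigl((\L_p^n)_!\SS_{K(n)}\bigr)^{h\GL_n(\ZZ_p)}\simeq L_\QQ(\SS_{K(n)}).
\]
It therefore suffices to show two things: that $(\L_p^n)_!\SS_{K(n)}$ is rational, and that the profinite group $\GL_n(\ZZ_p)$ acts trivially on it (continuously, in the appropriate sense used to define the fixed points). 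Given these, the rational continuous cohomology of the profinite group $\GL_n(\ZZ_p)$ with trivial rational coefficients is concentrated in degree $0$. Indeed, rationally, continuous cohomology of a profinite group is a colimit over open normal subgroups of finite-group cohomology, which vanishes rationally in positive degrees. Hence the homotopy fixed points equal the underlying object, which yields the equivalence.

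Rationality is immediate from blue shift. By \cref{cor: F chr single height T local}, $(\L_p^n)_!\SS_{K(n)}\in\Sp_{T(0)}=\Sp_\QQ$.

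For triviality of the action, I would use the unit. The functor $(\L_p^n)_!$ is symmetric monoidal, so $(\L_p^n)_!\SS_{K(n)}$ is the unit of the target (after the appropriate $K(n)$-local/$T(0)$-local localization). The $\GL_n(\ZZ_p)$-action comes from automorphisms $g$ of $\ZZ_p^n$, which induce automorphisms $g^*$ of the endofunctor $\L_p^n$ of $\Span(\spcpi)$. By functoriality these give symmetric monoidal automorphisms of $(\L_p^n)_!$. A symmetric monoidal natural automorphism evaluated on the unit is canonically the identity. More precisely, the action on $(\L_p^n)_!(\mathbb 1)$ factors through the space of symmetric monoidal self-maps of the unit, which is contractible. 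To make this rigorous as an action of the group (not just of individual elements), I would note that the whole $\GL_n(\ZZ_p)$-action is through symmetric monoidal functors. The action on the image of the unit is thus the composite $\B\GL_n(\ZZ_p)\to\Fun^{\otimes}(\dots)\to\{\text{unit}\}$, and the latter space is contractible.

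The main obstacle I expect is this last point, together with the continuity. The group is profinite, and the fixed points in \cref{thm: fixed point intro} are presumably defined as continuous fixed points, i.e.\ a colimit over finite quotients of homotopy fixed points. I would handle this by checking that the triviality argument is compatible with the finite-level presentation: at each finite stage the action on the unit is trivial by the same symmetric monoidal argument. Then rational vanishing of higher group cohomology at each finite stage passes to the colimit.

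A fallback, if the symmetric monoidal argument is awkward, is to note that $L_\QQ\SS_{K(n)}$ is, as a ring, a rational $E_\infty$ algebra receiving a unit map from $\SS$. The universal property of $(\L_p^n)_!$ as a left adjoint then shows that any automorphism commuting with the unit map from $\SS_{K(n)}$ is the identity on the free object. Either way, the resulting equivalence $(\L_p^n)_!\SS_{K(n)}\simeq L_\QQ(\SS_{K(n)})$ is the one induced by the fixed-point comparison.
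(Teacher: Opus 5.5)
Your reduction is correct as far as it goes. Blue shift (\cref{cor: F chr single height T local} with $t=0$) makes $(\L_p^n)_!\SS_{K(n)}$ rational, and \cref{thm: unit-rational-hGL} at $A=\pt$ identifies its fixed points with $L_\QQ\SS_{K(n)}$. Since the paper lets $\GL_n(\ZZ_p)$ act through the discrete topology (remark following \cref{lem:GL-action-Ln-t}), a trivial action on a rational object then has no higher cohomology. This step genuinely uses discreteness: with $p$-adic coefficients, Lazard gives $H^*_{\cts}(\GL_n(\ZZ_p),\QQ_p)\cong\Lambda_{\QQ_p}(x_1,x_3,\dots,x_{2n-1})$, which is precisely $\pi_{-*}L_\QQ\SS_{K(n)}$.

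But given \cref{thm: unit-rational-hGL}, triviality of the action is equivalent to the entire statement, and your argument for it fails. $\SS_{K(n)}$ is not the monoidal unit of the source of $(\L_p^n)_!$. The unit of $\PCMoninf(\Sp)$ is $\Sigma^\infty_+\Map_{\Span(\spcpi)}(\pt,-)$, which is sent to the unit of $\tsadi$. If you view $\SS_{K(n)}$ as a commutative algebra, a monoidal automorphism of $(\L_p^n)_!$ acts on $(\L_p^n)_!\SS_{K(n)}$ by an $\EE_\infty$-ring automorphism. The unit constraint only forces it to fix $\ounit\to(\L_p^n)_!\SS_{K(n)}$, which every ring map does. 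Working in $\Mod_{\SS_{K(n)}}$ does not help either, since $g$ also moves the target ring $(\L_p^n)_!\SS_{K(n)}$, so it acts semilinearly rather than as a monoidal self-transformation of a fixed functor.

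Decisively, your reasoning applies verbatim to $\En$, the unit of $\Mod_{\En}$. It would make $\GL_n(\ZZ_p)$ act trivially on $(\L_p^n)_!\En\simeq C^n_0$, contradicting that $C^n_0$ is a $\GL_n(\ZZ_p)$-Galois extension of $L_\QQ\En$ (\cref{lem: hGL-over-En}, \cref{prop:L!-R-Galois}). Your fallback has the same flaw. Equivariance of the unit (\cref{lem:GL-action-Ln-t}) only says the universal character is invariant under the \emph{diagonal} action, on $\L_p^n(-)$ and on $(\L_p^n)_!\SS_{K(n)}$ together. Invariance under reparametrizing $\B\ZZ_p^n$ alone is a non-formal property, and it is false for $\En$.

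What is missing is an actual computation, which is how the paper proceeds. \Cref{lem:L!-as-finite-quotient-hfps} writes $(\L_p^n)_!\SS_{K(n)}$ as $\colim_r\bigl(L_\QQ(\SS_{K(n)}[\omega^{(n)}_{p^r}]^{\B(\ZZ/p^r)^n})[e_r^{-1}]\bigr)^{h(\ZZ/p^r)^\times}$. The two-towers isomorphism (\cref{lem:C0r-hGGnr-spectrum}, \cref{prop:hfp-comparison-finite-det-level}) identifies the $r$-th term with $\bigl(\QQ_p(\omega_{p^r})^{h_c\GL_{n,r}}\bigr)^{h(\ZZ/p^r)^\times}$, whose homotopy is $H^*_{\cts}(\GL_{n,r},\QQ_p)$. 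Lazard's theorem then shows that the transition maps, and the map from $L_\QQ\SS_{K(n)}$, all induce the identity on $H^*(\mathfrak{gl}_n(\QQ_p),\QQ_p)$. The residual $\GL_n(\ZZ_p)$-action is the adjoint action there, hence trivial. So triviality is an output of the two-towers and Lazard input, not a formal consequence of monoidality.
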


A key step in the computation is to give a geometric interpretation of the finite-level covers appearing in the universal character formula as covers of the rigid-analytic generic fiber of the Lubin-Tate formal scheme. Using the two-towers isomorphism, we then compute these covers by moving them to the Drinfeld side.

As a consequence, by passing to $\GL_n(\ZZ_p)$-fixed points, we compute the rationalization of $\SS_{K(n)}^A$ for every $\pi$-finite space $A$.

\begin{alphtheorem}[\cref{thm: rationalization SK(n)A}]
Let $A$ be a $\pi$-finite space. The profinite group $\GL_n(\ZZ_p)$ acts by precomposition on
\[
\Map(\B\ZZ_p^n,A)
\simeq
\colim_{r}\Map(\B(\ZZ/p^r)^n,A),
\]
where the equivalence follows from \cite[Proposition~3.4.7]{Lurie-2019-Elliptic3}. Write
\[
S =
\pi_0\Map(\B\ZZ_p^n,A)/\GL_n(\ZZ_p).
\]
Then there is an equivalence
\[
L_\QQ(\SS_{K(n)}^A)
\simeq
L_\QQ(\SS_{K(n)})^S.
\]
\end{alphtheorem}

For classifying spaces of finite groups this gives a concrete counting formula.

\begin{corollary}[\cref{cor: rationalization SK(n)BG}]
Let $G$ be a finite group, and let
\[
S:=
\left\{
\text{abelian }p\text{-subgroups of }G
\text{ generated by at most }n\text{ elements}
\right\}\big/\text{conjugacy}.
\]
Then
\[
L_\QQ(\SS_{K(n)}^{\B G})
\simeq
L_\QQ(\SS_{K(n)})^S.
\]
\end{corollary}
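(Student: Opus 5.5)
This follows from the preceding theorem (\cref{thm: rationalization SK(n)A}) with $A=\B G$; the only work is to identify the set $S=\pi_0\Map(\B\ZZ_p^n,\B G)/\GL_n(\ZZ_p)$ with conjugacy classes of abelian $p$-subgroups of $G$ generated by at most $n$ elements. Since $\B G$ is a $1$-type, $\pi_0\Map(\B H,\B G)\simeq \Hom(H,G)/G$ for any group $H$ (conjugation action). Using the equivalence $\Map(\B\ZZ_p^n,\B G)\simeq\colim_r\Map(\B(\ZZ/p^r)^n,\B G)$ quoted in the theorem, and the fact that $\pi_0$ commutes with filtered colimits, we get
\[
\pi_0\Map(\B\ZZ_p^n,\B G)\simeq \colim_r \Hom((\ZZ/p^r)^n,G)/G \simeq \Hom_{\mathrm{cts}}(\ZZ_p^n,G)/G .
\]
Because $G$ is finite, every continuous homomorphism $\ZZ_p^n\to G$ factors through some $(\ZZ/p^r)^n$. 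Concretely, these are $n$-tuples of pairwise commuting elements of $p$-power order, up to simultaneous conjugation.

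Next I take the quotient by the precomposition action of $\GL_n(\ZZ_p)$, which commutes with the $G$-conjugation action. Define a map sending $\varphi\colon\ZZ_p^n\to G$ to its image $\varphi(\ZZ_p^n)\subseteq G$. The image is an abelian $p$-group generated by the $n$ elements $\varphi(e_i)$. The map is invariant under $\GL_n(\ZZ_p)$ (an automorphism of the source does not change the image) and equivariant for conjugation, so it descends to
\[
\Hom_{\mathrm{cts}}(\ZZ_p^n,G)/(G\times\GL_n(\ZZ_p)) \to S .
\]
Surjectivity is clear: any abelian $p$-subgroup generated by at most $n$ elements is the image of some $\varphi$, padding with identity elements if needed.

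Injectivity is the one nontrivial point. Given two surjections $\varphi,\psi\colon\ZZ_p^n\twoheadrightarrow P$ onto the same finite abelian $p$-group $P$, I must show they differ by an element of $\GL_n(\ZZ_p)$. Both factor through $(\ZZ/p^r)^n$ for $r$ large. The standard fact is that $\Aut(\ZZ_p^n)$ acts transitively on surjections $\ZZ_p^n\to P$. To see this, lift $\varphi$ to a map $\alpha\colon\ZZ_p^n\to\ZZ_p^n$ with $\psi\alpha=\varphi$, using projectivity of $\ZZ_p^n$. In general $\alpha$ need not be invertible, so I argue instead by reduction mod $p$: the kernels $\ker\varphi$ and $\ker\psi$ are both free of rank $n$ with isomorphic quotients $\ZZ_p^n/\ker\simeq P$. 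By the Smith normal form over the PID $\ZZ_p$, there are adapted bases in which each kernel is $\bigoplus p^{a_i}\ZZ_p$, with the same exponents $a_i$ since they are determined by $P$. Choose $g\in\GL_n(\ZZ_p)$ carrying $\ker\varphi$ to $\ker\psi$. Then $\psi\circ g$ and $\varphi$ are two surjections with the same kernel, so they differ by an automorphism $\beta$ of $P$. It remains to lift $\beta$ to $\GL_n(\ZZ_p)$ preserving the kernel. Such lifts exist because $\Aut(\ZZ_p^n,K)\to\Aut(P)$ is surjective; concretely, $\Aut(P)$ is generated by elementary operations on a diagonal basis, and each of these lifts to the corresponding elementary matrix over $\ZZ_p$.

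This lifting step is the main, though mild, obstacle. Once it is done, $S$ is identified with the stated set, and applying \cref{thm: rationalization SK(n)A} to $A=\B G$ gives $L_\QQ(\SS_{K(n)}^{\B G})\simeq L_\QQ(\SS_{K(n)})^S$.
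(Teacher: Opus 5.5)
Your proposal is correct and has the same skeleton as the paper. You apply \cref{thm: rationalization SK(n)A} with $A=\B G$, identify $\pi_0\Map(\B\ZZ_p^n,\B G)$ with $\Hom_{\mathrm{cts}}(\ZZ_p^n,G)/G$, send an orbit to its image subgroup, and reduce injectivity to transitivity of $\GL_n(\ZZ_p)$ on surjections $\ZZ_p^n\to P$. You are a bit more explicit than the paper about the identification of $\pi_0$, which \cref{lem: count orbits as subgroups} takes for granted.

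The genuine difference is how you prove transitivity. You first use elementary divisors to move $\ker\varphi$ onto $\ker\psi$. You then need $\Aut(\ZZ_p^n,K)\to\Aut(P)$ to be surjective. But that statement is exactly transitivity for the two surjections $\varphi$ and $\beta^{-1}\circ\varphi$, which share a kernel. So the Smith normal form step only handles the easy part, and all the weight rests on your claim that $\Aut(P)$ is generated by diagonal scalings and admissible transvections $e_i\mapsto e_i+ce_j$ (with $p^{a_j}\mid p^{a_i}c$). That claim is true, and each generator does lift to an elementary matrix preserving $K=\bigoplus p^{a_i}\ZZ_p f_i$. However, you assert it without proof. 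It needs a short induction on the largest exponent: the block of $\beta$ on the summands of maximal order is invertible mod $p$, so row operations clear the other entries in those columns, column operations clear the remaining entries in those rows, and the block splits off.

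The paper avoids all of this with the mod-$p$ idea you mentioned and then dropped. Pick $u_0\in\GL_n(\ZZ_p)$ lifting an element $\bar u\in\GL_n(\FF_p)$ with $\bar\varphi\circ\bar u=\bar\psi$ as maps to $P/pP$. Write $\psi(e_i)-\varphi(u_0e_i)=pa_i$, choose $w_i$ with $\varphi(w_i)=a_i$, and set $u(e_i)=u_0(e_i)+pw_i$. Then $\varphi\circ u=\psi$ exactly, and $u\equiv u_0\bmod p$ is still invertible. In effect, this rescues your projectivity lift $\alpha$ by choosing it invertible mod $p$. The paper's route is shorter and self-contained. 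Yours gives the extra, unneeded structural fact that the stabilizer of $K$ surjects onto $\Aut(P)$, but only once the generation lemma is actually proved.
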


\subsubsection*{Rational $K(n)$-local power operations}

We use the above computation to identify the ring of rational
$K(n)$-local power operations. Consider the functor
\[
\pi_0^\QQ\colon\CAlg(\SpKn)\to\Set,
\qquad
A\mapsto\pi_0(L_\QQ A),
\]
where we regard $\pi_0(L_\QQ A)$ as its underlying set. Pointwise
addition and multiplication endow
$\pi_0\Nat(\pi_0^\QQ,\pi_0^\QQ)$ with a ring structure. We refer to this ring as the ring of rational $K(n)$-local power operations, since it acts
naturally on $\pi_0(L_\QQ A)$ for every
$A\in\CAlg(\SpKn)$.

\begin{proposition}[\cref{lem:yoneda-identifies-power-operations,prop: K(n) local rational power operations}]
The ring of rational $K(n)$-local power operations admits a canonical
grading, with respect to which there are isomorphisms of graded
rings
\[
\pi_0\Nat(\pi_0^\QQ,\pi_0^\QQ)
\simeq
\pi_0\left(
\bigoplus_{k\geq0}L_\QQ\SSKn^{\B\Sn[k]}
\right)
\simeq
\QQ_p[x_H\mid H\leq_o\ZZ_p^n]^{\GL_n(\ZZ_p)}.
\]
Where the grading on $\QQ_p[x_H\mid H\leq_o\ZZ_p^n]^{\GL_n(\ZZ_p)}$ is
induced by declaring
\[
\deg(x_H)=[\ZZ_p^n:H]
\]
before taking fixed points.
\end{proposition}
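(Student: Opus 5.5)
The plan has two parts. First, a Yoneda argument identifies the operations with a graded ring of rational cochains. Then \cref{thm: rationalization SK(n)A} computes that ring explicitly.

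\textbf{Step 1: representability.} Fix $k\ge0$. The functor $A\mapsto\Omega^\infty A$ on $\CAlg(\SpKn)$ is corepresented by the free $K(n)$-local commutative algebra $\mathrm{Free}(\SSKn)=L_{K(n)}\bigoplus_k \SSKn[\B\Sn[k]]$, where we use the homotopy-orbit description $\SSKn[\B\Sn[k]]=(\SSKn^{\otimes k})_{h\Sigma_k}$. We want to pass from $\pi_0$ to $\pi_0^\QQ$. Since $L_\QQ A=A\otimes\QQ$ is a filtered colimit of $A$ along multiplication by integers, every class in $\pi_0 L_\QQ A$ has the form $a/m$. A natural transformation $\pi_0^\QQ\to\pi_0^\QQ$ is therefore determined by a compatible family of elements, and restricting along $\pi_0\to\pi_0^\QQ$ reduces us to natural maps $\pi_0\to\pi_0^\QQ$. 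Yoneda identifies these with
\[
\pi_0 L_\QQ\bigl(L_{K(n)}\textstyle\bigoplus_k \SSKn^{\B\Sn[k]}\bigr),
\]
after trading homotopy orbits for cochains via ambidexterity ($\SSKn[\B\Sn[k]]\simeq\SSKn^{\B\Sn[k]}$ by the norm equivalence). The division by integers is the step where care is needed. I would handle it by showing that every natural operation commutes with scalar multiplication up to the grading, using that the $k$-th summand scales as $\lambda^k$ under the endomorphism $x\mapsto\lambda x$ of the free algebra. This also produces the grading: the grading by $k$ is the weight grading for this scaling action, and composition of operations matches the plethystic structure. We then need to commute $L_\QQ$ with the $K(n)$-localized sum. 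On $\pi_0$ of operations only finitely many $k$ contribute to any given element, so the $\pi_0$ of the rationalized localized sum agrees with the direct sum of the $\pi_0 L_\QQ \SSKn^{\B\Sn[k]}$.

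\textbf{Step 2: counting orbits.} By \cref{cor: rationalization SK(n)BG}, or directly by \cref{thm: rationalization SK(n)A} with $A=\B\Sn[k]$, we get $L_\QQ\SSKn^{\B\Sn[k]}\simeq L_\QQ(\SSKn)^{S_k}$. Here
\[
S_k=\pi_0\Map(\B\ZZ_p^n,\B\Sn[k])/\GL_n(\ZZ_p)=\Hom(\ZZ_p^n,\Sn[k])/(\text{conj},\GL_n(\ZZ_p)).
\]
By \cref{cor: universal character SK(n)}, $\pi_0L_\QQ\SSKn=\QQ_p$. So the degree $k$ piece is the $\QQ_p$-vector space with basis $S_k$. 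A continuous homomorphism $\ZZ_p^n\to\Sn[k]$ up to conjugacy is the same thing as a $\ZZ_p^n$-set of size $k$. Decomposing into orbits, it is a multiset of open subgroups $H\le_o\ZZ_p^n$ whose indices $[\ZZ_p^n:H]$ sum to $k$, and this multiset is exactly a monomial $\prod x_{H_i}$ of degree $k$. Modding out by $\GL_n(\ZZ_p)$ gives orbits of monomials, which form a basis of the invariants $\QQ_p[x_H]^{\GL_n(\ZZ_p)}$ in degree $k$. The invariants of a permutation module have as basis the orbit sums, and in each degree there are finitely many monomials, since only finitely many $H$ have index at most $k$.

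\textbf{Step 3: multiplicativity.} It remains to match products. Multiplication of operations comes from the coproduct on the free algebra, which is induced by restriction along $\Sn[a]\times\Sn[b]\to\Sn[a+b]$, transferred through ambidexterity. Under the character, restriction along $\B\Sn[a]\times\B\Sn[b]\to\B\Sn[a+b]$ and the relevant transfer correspond to disjoint union of $\ZZ_p^n$-sets, i.e.\ multiplication of monomials. Before passing to $\GL_n$-fixed points this is a statement about $(\L_p^n)_!$. We can check it there using the functoriality of \cref{thm: F! of R A} in spans, so that the monomial basis multiplies correctly, and then take $\GL_n(\ZZ_p)$-invariants as in \cref{thm: unit-rational-hGL}.

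I expect the main obstacle to be Step 1. The difficulties are that the functor is a rationalization rather than corepresentable on the nose, and that $L_\QQ$ must be interchanged with the infinite $K(n)$-local sum. Step 2 is bookkeeping once \cref{thm: rationalization SK(n)A} is available.
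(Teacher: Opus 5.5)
Your route is the paper's: Yoneda on the free algebra $R=\bigoplus_k\SSKn^{\B\Sn[k]}$, with the scaling endomorphism absorbing rationalization. Then \cref{cor: universal character SK(n)} and \cref{thm: F! of R A} pass to $\bigoplus_k\QQ_p^{\L_p^n\B\Sn[k]}$, followed by $\GL_n(\ZZ_p)$-invariants. Two steps need repair.

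\textbf{Step 1.} The claim that only finitely many $k$ contribute to an element of $\pi_0L_\QQ$ of the $K(n)$-localized sum is false. That $\pi_0$ is a completed sum: it contains a class whose $k$-th component is $p^k$ times the unit for every $k$, and this class survives rationalization. Correspondingly there are non-polynomial natural operations, such as $x\mapsto\sum_k p^{k^2}x^k$; these make sense because $\pi_0$ of a $K(n)$-local ring is derived $p$-complete. For the same reason the scaling endomorphism is not rationally invertible on the completed sum, so not every natural map $\pi_0\to\pi_0^\QQ$ extends to $\pi_0^\QQ$. What is true is the weight-by-weight statement, and that is what the graded assertion needs. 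Write $\pi_0^\QQ\simeq\colim(h_R\xto{[p]^*}h_R\xto{[p]^*}\cdots)$, which is valid because integers prime to $p$ are already invertible, and apply Yoneda, as the paper does. The only input is then that $[p]$ acts by $p^k$ in weight $k$. This identifies the homogeneous weight-$k$ operations with $\pi_0L_\QQ\SSKn^{\B\Sn[k]}$, and no interchange of $L_\QQ$ with the infinite $K(n)$-local sum is needed.

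\textbf{Step 3.} Pointwise multiplication of operations corresponds under Yoneda to the product in $\pi_0L_\QQ R$. That is the multiplication of the free algebra (pushforward along block sum, hence transfer on cochains), not its coproduct, which governs $\phi(x+y)$. More importantly, this transfer does not multiply your monomial basis correctly. Let $\delta_X$ be the indicator of the component of $\L_p^n\B\Sn[m]$ indexed by $X\cong\coprod_H a_H\,\ZZ_p^n/H$. That component is $\B\Aut_{\ZZ_p^n}(X)$ with $\Aut_{\ZZ_p^n}(X)\cong\prod_H(\ZZ_p^n/H)^{a_H}\rtimes\Sn[a_H]$, so
\[
\delta_X\delta_Y=[\Aut(X\sqcup Y):\Aut(X)\times\Aut(Y)]\,\delta_{X\sqcup Y}=\prod_H\binom{a_H+b_H}{a_H}\,\delta_{X\sqcup Y}.
\]
Thus the $\delta_X$ behave like divided powers, and your identification $\delta_X\leftrightarrow\prod_H x_H^{a_H}$ is only a linear isomorphism. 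The ring isomorphism requires the rescaled basis $\prod_H a_H!\,\delta_X$, which spans the monoid algebra of finite $\ZZ_p^n$-sets under $\sqcup$; this is the content of \cref{lem: QLB as a poly ring}. Since $\prod_H a_H!$ is constant on $\GL_n(\ZZ_p)$-orbits, your orbit-sum description of the invariants in Step 2 is unaffected.
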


\subsection{Organization}

In \cref{sec: Higher Semi-Additive Character} we develop the general formalism of $F$-characters, for an exact endofunctor $F\colon \spcpi\to \spcpi$ commuting with finite coproducts. We define $F$-characters as span-natural transformations, show that the functor $F^*$ admits a symmetric monoidal left adjoint $F_!$, and obtain the universal $F$-character $X\to F^*F_!X$. We then prove the basic structural consequences of universality: we establish a blue-shift theorem, prove a base-change/exponentiation formula, and a cyclotomic blue-shift equivalence.

In \cref{sec: Fourier Transform and the Character of Higher Roots of Unity} we study the interaction between $F$-characters and the semiadditive Fourier transform. We first analyze multiplicative structure and explain how maps into the underlying multiplicative commutative monoid transport along an $F$-character. We then show that characters send primitive higher roots of unity to primitive ones, and use this to  express the Fourier transform on the target of an $F$-character explicitly in terms of the Fourier transform on the source.

In \cref{sec: splitting algebra} we revisit the construction of the
$T(t)$-local splitting algebra appearing in transchromatic character theory. Using the chromatic Fourier transform together with the theory of exterior powers of $p$-divisible groups, we give a semiadditive description of the determinant inversion that trivializes the etale part, recovering the splitting algebra from this perspective.

In \cref{sec: The $K(n)$-Local Universal Character} we turn to the $K(n)$-local universal character. We show that the transchromatic character is the universal character of $\En$. We then bootstrap this to compute the universal $(n-t)$-fold character of any $K(n)$-local object. We then investigate the natural profinite group action of $\GL_{n-t}(\ZZ_p)$ on the universal character, and show that when $t=0$ the resulting fixed points recover rationalization.

Finally, in \cref{sec: Character Theory for the K(n) Local Sphere} we specialize the general theory to the $K(n)$-local sphere. We study the Lubin-Tate and Drinfeld sides of the relevant finite-level covers and compute their rational condensed global sections. We then compare the two constructions using the two-towers isomorphism, which allows us to identify the finite-level pieces appearing in the universal character formula. This lets us determine the universal $n$-fold character of $\SS_{K(n)}$, and we deduce from this an explicit formula for the rationalization of $\SS_{K(n)}^A$ for every $\pi$-finite space $A$. We then use this to determine the ring of rational $K(n)$-local power operations.

\subsection{Conventions}
We use the following terminology and notation:

\begin{enumerate}
    \item We use the term \emph{category} to mean an $(\infty,1)$-category, and the term \emph{space} to mean an $\infty$-groupoid. We write $\spc$ for the $\infty$-category of spaces and $\Cat$
    for the $\infty$-category of (small) categories.

    \item We write $\Sp$ for the $\infty$-category of spectra and $\Sp\cn\subseteq \Sp$ for the full subcategory of connective spectra.

    \item For a category $\cC$ we write $\cC^{\simeq}\subseteq \cC$ for its maximal subgroupoid.
    For $X,Y\in \cC$ we write $\Map_{\cC}(X,Y)$ for the mapping space (and omit the subscript when $\cC$ is clear).

    \item We write $\PrL$ for the $\infty$-category of presentable categories and colimit-preserving functors, and $\CAlg(\PrL)$ for presentably symmetric monoidal categories. If
    $\cC\in \CAlg(\PrL)$, we write $\Mod_{\cC}(\PrL)$ for the $\infty$-category of $\cC$-module categories.

    \item We write $\spcpi\subseteq \spc$ for the full subcategory of $\pi$-finite spaces (spaces with finitely many nontrivial homotopy groups, all finite, and finitely many components). When we
    need to emphasize $p$-locality we will say so explicitly.

    \item For $A\in \spc$ and $X\in \cC$, we denote by $X^{A}$ (resp.\ $X[A]$) the constant $A$-indexed limit (resp.\ colimit) of $X$, whenever it exists.

    \item We write $\Span(\spcpi)$ for the category of spans in $\spcpi$: objects are $\pi$-finite spaces and morphisms are spans $A\xleftarrow{}B\xrightarrow{}C$, with composition by pullback. We regard $\Span(\spcpi)$ as symmetric monoidal under cartesian product.

    \item We write
    \[
    \PCMoninf(\cC) := \Fun(\Span(\spcpi),\cC)
    \]
    and endow it with the Day convolution symmetric monoidal structure induced from the cartesian product on $\Span(\spcpi)$ and the tensor product on $\cC$.

    \item We write $\CMoninf(\cC)\subseteq \PCMoninf(\cC)$ for the full subcategory of \emph{$\infty$-commutative monoids} (i.e.\ those functors satisfying the Segal condition).

    \item For chromatic localization we write $L_{K(n)}$ and $L_{T(n)}$ for Bousfield localization  at $K(n)$ and $T(n)$, and $\Sp_{K(n)}$, $\Sp_{T(n)}$ for the corresponding local categories.

    \item We write $\tsadi:=\CMon_\infty(\Sp)$ for the universal $\infty$-semiadditive stable category. For \(n\ge 0\), we write
    $\tsadi_n\subseteq \tsadi $ for the full subcategory of objects of semiadditive height exactly $n$, and similarly $\tsadi_{>n}\subseteq \tsadi$ for the full subcategory of objects of semiadditive height $>n$.

    \item We denote by $C_t^n$ the $T(t)$-local, equivalently $K(t)$-local, splitting algebra from transchromatic character theory. Note that this is usually denoted by $\widehat{C^n_t}$.

\end{enumerate}

\subsection{Acknowledgments}

I would like to thank my advisor, Tomer Schlank, for his patience, valuable insights, and many helpful conversations. I am especially grateful to Lior Yanovski, whose ideas both inspired this project and shaped its development, and whose suggestions helped guide it throughout. I am also grateful to my academic brother, Shai Keidar, for numerous insightful discussions and for reading earlier drafts of this paper. I thank Beckham Myers, Shay Ben-Moshe, and Lior Yanovski for their comments on previous drafts. \\
LLMs were used to improve the writing. All mathematical content, and any errors, are my own.

\section{Higher Semiadditive Characters}\label{sec: Higher Semi-Additive Character}

This section defines and records the basic properties of $F$-characters that we will use throughout the paper. In \cref{subsec: F Characters} we describe the general setup, introduce $F$-characters, and prove the existence of a universal $F$-character. In \cref{subsec: blue shift} we show that an $F$-character lowers semiadditive height (a blue-shift phenomenon) by the ``dimension'' of $F$ (see \cref{def: dim F}). In \cref{subsec: Base Change} we prove a base-change formula relating the universal $F$-character of $R^A$ to that of $R$. Finally, in \cref{sec: Cyclotomic Blue-Shift} we apply these results to obtain a cyclotomic blue-shift equivalence, see \cref{thm:cyclo blue shift}.

\subsection{$F$-Characters}\label{subsec: F Characters}

Fix a prime $p$, a presentably symmetric monoidal $p$-local category $\cC \in \CAlg(\PrL_{(p)})$, and a left exact endofunctor
\[
F\colon \spcpi \to \spcpi
\]
which commutes with finite coproducts. The functor $F$ induces a symmetric monoidal functor (with respect to the product on $\pi$-finite spaces)
\[
F\colon \Span(\spcpi)\to \Span(\spcpi),
\]
which, by abuse of notation, we again denote by $F$. Precomposition with $F$ then defines a lax monoidal functor
\[
F^*\colon \CMoninf(\cC)\to \PCMoninf(\cC),
\qquad
X^{(-)} \mapsto X^{F(-)}.
\]

The prototypical examples of such functors $F$ arise from manifolds.

\begin{example}
    Let $M$ be a compact connected space. Then the functor
    \[
    \Map(M,-)\colon \Span(\spcpi)\to \Span(\spcpi)
    \]
    is exact and commutes with finite coproducts. In the special case $M=(S^1)^{n-t}$ with $n-t\in \NN$, we obtain a functor
    \[
    (\L^{n-t})^*\colon \CMoninf(\cC)\to \PCMoninf(\cC),
    \qquad
    X^{(-)}\to X^{\L^{n-t}(-)}.
    \]
\end{example}

We now define \emph{$F$-characters}, the central objects of study in this work.

\begin{definition}
    Let $X,Y \in \CMoninf(\cC)$.
    \begin{itemize}
        \item An \emph{$F$-character of $X$ with values in $Y$} is a morphism
        \[
        X \to F^*Y
        \]
        in $\PCMoninf(\cC)$. When the source and target are clear from context, we may simply speak of an \emph{$F$-character of $X$} or an \emph{$F$-character}.
        
        \item If, moreover, $X,Y \in \CAlg(\CMoninf(\cC))$ and the above morphism is a morphism of commutative algebras, then we call it an \emph{$F$-character of commutative algebras}.
        
        \item When $F=\L^{c}_p$ or $F=\L^{c}$, we refer to such a morphism as a \emph{$p$-typical $c$-fold character} or a \emph{$c$-fold character}, respectively, of $X$ with values in $Y$. When the prime $p$ is clear from context, we omit it from the terminology.
        
        \item In the special case $c=1$, we simply call it a \emph{character} of $X$ with values in $Y$.
    \end{itemize}
\end{definition}
The following constructions fit into this general framework.

\begin{example}
    Let $R$ be a rational ring spectrum. The induced character formula shows that the monoidal character map
    \[
    (\Mod_R\dbl)^{A} \to R^{\L A}
    \]
    is functorial in $\Span(\spcpi)$. In particular, in our terminology this monoidal character is a character of $\Mod_R\dbl$ with values in $R$.

    Moreover, work in progress of Carmeli--Cnossen--Ramzi--Yanovski, building on
    \cite{Carmeli-Cnossen-Ramzi-Yanovski-2022-characters}, shows that for any
    $R\in \CAlg(\Sp_{T(n)})$ the monoidal character map
    \[
    (\Mod_R(\Sp_{T(n)})\dbl)^{A} \to R^{\L A}
    \]
    is functorial in $\Span(\spcpi)$, and hence defines a character in our sense, extending the case $n=0$.
\end{example}

\begin{example}
    Let $\En$ denote the Morava $E$-theory associated to a height $n$ formal group $\Gamma$ over a perfect field $\kappa$. By \cite[Theorem~A]{ben-mosh-Transchromatic}, the $(n-t)$-fold transchromatic character defines an $(n-t)$-fold character map. For the definition of the transchromatic character, see \cite{Stapleton-2013-HKR,Hopkins-Kuhn-Ravanel-2000-HKR,Lurie-2019-Elliptic3}\footnote{We revisit its construction in \cref{sec: splitting algebra}.}.
\end{example}

Another example, of particular importance for us, is the universal one.

\begin{lemma}\label{lem: exists of L! and mult st}
    The functor $F^*$ admits a symmetric monoidal left adjoint.
\end{lemma}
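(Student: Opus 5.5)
The plan is to factor $F^*$ as the composite
\[
\CMoninf(\cC)\hookrightarrow \PCMoninf(\cC)\xrightarrow{F^*}\PCMoninf(\cC),
\]
where the second functor is precomposition with $F\colon \Span(\spcpi)\to\Span(\spcpi)$. I will produce a symmetric monoidal left adjoint for each factor and then compose the two adjoints.

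For the precomposition functor on all of $\PCMoninf(\cC)=\Fun(\Span(\spcpi),\cC)$, the left adjoint is left Kan extension along $F$. This exists because $\cC$ is presentable and $\Span(\spcpi)$ is essentially small. Since $F$ is symmetric monoidal for the cartesian product, a standard fact about Day convolution applies: left Kan extension along a symmetric monoidal functor is symmetric monoidal for Day convolution, provided the tensor product of $\cC$ preserves colimits in each variable, which holds because $\cC\in\CAlg(\PrL)$. Concretely, $F_!(X\otimes_{\mathrm{Day}} Y)$ and $F_!X\otimes_{\mathrm{Day}}F_!Y$ are both left Kan extensions of $X\boxtimes Y$ along the composite $\Span\times\Span\to\Span$, so the canonical comparison map is an equivalence. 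Equivalently, precomposition $F^*$ is lax monoidal, and its left adjoint is oplax monoidal with invertible structure maps. I would invoke this via the universal property of Day convolution.

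For the inclusion $\CMoninf(\cC)\subseteq\PCMoninf(\cC)$, I would exhibit it as the inclusion of local objects for a small set of maps. The Segal condition asks that $M^A\to (M^{\pt})^A\simeq \lim_A M^{\pt}$ be an equivalence, and this is corepresented by the maps
\[
\colim_{a\in A}\,\mathbbm 1[\Map(\pt,-)]\otimes c\;\to\;\mathbbm 1[\Map(A,-)]\otimes c
\]
for $c$ ranging over a set of generators of $\cC$. Hence the inclusion has a left adjoint $L$, namely the Segal-ification, by presentability. To see that $L$ is symmetric monoidal, it suffices to check that the local equivalences are stable under tensoring with an arbitrary object. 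By Day convolution it is enough to tensor with representables $\mathbbm 1[\Map(B,-)]\otimes d$. Tensoring a generating map with such a representable yields the analogous map for $A\times B$ versus $B$ (using $\Map(A,-)\otimes\Map(B,-)\simeq\Map(A\times B,-)$), and this is again a Segal equivalence, since $\colim_{a\in A}$ commutes with $\times B$. This compatibility of localization with Day convolution is the step I expect to need the most care. An alternative I would try first is to cite the known fact that $\CMoninf(\cC)\simeq \CMoninf(\spc)\otimes\cC$ is a symmetric monoidal localization of presheaf-type categories, as in Harpaz's universal property of spans.

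Composing the two pieces, $L\circ F_!$ is left adjoint to $F^*$ restricted to $\CMoninf(\cC)$, and it is symmetric monoidal as a composite of symmetric monoidal functors. It then follows formally that the right adjoint $F^*$ is lax monoidal, compatibly with the lax structure described before the lemma.
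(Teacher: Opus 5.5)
Your proposal is correct and follows the paper's proof: it uses the same factorization $\CMoninf(\cC)\hookrightarrow\PCMoninf(\cC)\xrightarrow{F^*}\PCMoninf(\cC)$, with left Kan extension along the symmetric monoidal functor $F$ being symmetric monoidal for Day convolution and Segalification being a symmetric monoidal localization. The only difference is that you sketch the compatibility of Segalification with Day convolution yourself, whereas the paper cites Ben-Moshe--Schlank for it. Your sketch works: the tensored generating map is a local equivalence by two-out-of-three against the generating map for $A\times B$, which is what your remark that $\colim_{a\in A}$ commutes with $\times B$ amounts to.
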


\begin{proof}
    We may write $F^*$ as the composite
    \[
    \CMoninf(\cC)\xto{i} \PCMoninf(\cC)\xto{F^*}\PCMoninf(\cC).
    \]
    Since limits in a functor category are computed pointwise, the functor $F^*$ admits a left adjoint. The inclusion $i$ also admits a left adjoint, given by Segalification, see, for example, \cite[Definition~4.7]{Ben-Moshe-Schlank-2024-K-theory}.

    Moreover, both Segalification \cite[Proposition~4.24]{Ben-Moshe-Schlank-2024-K-theory} and the functor $\Fun((-)\op,\cC)\colon \Cat\to \PrL$ are symmetric monoidal. Since $\Fun(F,\cC)=F_!$, it follows that the resulting left adjoint is symmetric monoidal as well.
\end{proof}

We denote the left adjoint from \cref{lem: exists of L! and mult st} by $F_{!}$, and we omit the subscript $\cC$ when it is clear from context.

\begin{corollary}
    Let $X,Y\in \CMoninf(\cC)$. Any $F$-character $X\to F^*Y$ factors uniquely through the unit map
    \[
    X \to F^*F_!X.
    \]
    We call this unit map the \emph{$F$-universal character}.
\end{corollary}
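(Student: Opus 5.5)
This is formal adjunction theory applied to \cref{lem: exists of L! and mult st}, which gives an adjunction $F_!\dashv F^*$ between $\PCMoninf(\cC)$ and $\CMoninf(\cC)$. Write $\eta\colon \mathrm{id}\to F^*F_!$ for its unit. The universal property of the unit says that for every $P\in\PCMoninf(\cC)$ and $Y\in\CMoninf(\cC)$ the composite
\[
\Map_{\CMoninf(\cC)}(F_!P,Y)\xto{F^*}\Map_{\PCMoninf(\cC)}(F^*F_!P,F^*Y)\xto{\eta_P^*}\Map_{\PCMoninf(\cC)}(P,F^*Y)
\]
is an equivalence.

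I would take $P=X$, regarded in $\PCMoninf(\cC)$ via the inclusion $i$. Then every $F$-character $\chi\colon X\to F^*Y$ corresponds to an essentially unique morphism $\tilde\chi\colon F_!X\to Y$ in $\CMoninf(\cC)$ satisfying $F^*(\tilde\chi)\circ\eta_X\simeq\chi$.

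\emph{Uniqueness} is meant in the $\infty$-categorical sense. The space of pairs $(\tilde\chi,\text{homotopy }F^*\tilde\chi\circ\eta_X\simeq\chi)$ is the fiber of the displayed equivalence over $\chi$, so it is contractible. Equivalently, the space of factorizations of $\chi$ through $\eta_X$ via maps of the form $F^*(-)$ is contractible.

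The only point needing care is that the factorization is through maps in the image of $F^*$. Since $F^*$ need not be fully faithful, this is the precise meaning of ``factors uniquely''. I would state it this way, and note in addition that the target $F^*F_!X$ is indeed of the form $F^*$ applied to an object of $\CMoninf(\cC)$, so that $\eta_X$ is itself an $F$-character.

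For $F$-characters of commutative algebras, the same argument runs in $\CAlg$. Because $F_!$ is symmetric monoidal, $F^*$ is lax monoidal, and the adjunction lifts to an adjunction on commutative algebra objects with the same unit. The factorization is therefore also through a map of commutative algebras. I expect no real obstacle in any of these steps.
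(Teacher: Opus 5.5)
Your proposal is correct and matches the paper's route: the paper gives no separate argument and treats the corollary as the universal property of the unit of the adjunction $F_!\dashv F^*$ from \cref{lem: exists of L! and mult st}, applied to $X$ viewed in $\PCMoninf(\cC)$. Your added remarks are accurate and not needed for the statement as given: that uniqueness means a contractible space of factorizations through maps of the form $F^*(-)$, and that the adjunction lifts to commutative algebras because $F_!$ is symmetric monoidal.
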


\subsection{Blue-Shift}\label{subsec: blue shift}

We turn to show that $\L^{n-t}_!$ blue-shifts (lowers height) by $n-t$, and more generally that $F_!$ blue-shifts by its \emph{dimension} which we define now.

\begin{definition}\label{def: dim F}
    We say that $F$ is of $\ZZ/p^r$ dimension $n-t$ if, for every $k \ge n-t$, the space $F(\B^{k}\ZZ/p^r)$ is $(k-n+t-1)$-connected but not $(k-n+t)$-connected, equivalently, the first nonzero homotopy group of $F(\B^{k}\ZZ/p^r)$ occurs in degree $k-n+t$. 
\end{definition}

\begin{remark}
    If $F=\Map(M,-)$ for $M$ a compact connected space, then 
    \[
    \Map(M,\B^k\ZZ/p^r)\simeq \prod_{i=0}^{k} \B^i H^{k-i}(M,\ZZ/p^r).
    \]
    In particular, if $M$ is a closed connected $\ZZ/p^r$-oriented manifold we have  $\dim_{\ZZ/p^r}(F)=\dim(M)$.
\end{remark}

\begin{lemma}\label{lem: F(BkCp) Cp module}
Let $k$ be a natural number. Then $F(\B^{k}\ZZ/p)$ is equivalent to a finite product of Eilenberg-MacLane spaces of the form $\B^{i}\ZZ/p$.
\end{lemma}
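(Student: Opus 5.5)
The plan is to show that $F(\B^k\ZZ/p)$ carries the structure of a (connective) $\ZZ/p$-module in spaces, that is, it is $\Omega^\infty$ of a connective $H\FF_p$-module spectrum. Then the classification of $H\FF_p$-modules, together with $\pi$-finiteness, gives the product decomposition. The main input is that $F$ is left exact, so it preserves finite products and the terminal object. Hence $F$ preserves any algebraic structure defined purely by finite products.

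\emph{Step 1: a product-preserving model of $\B^k\ZZ/p$.} Let $\mathcal{L}$ be the Lawvere theory of $\FF_p$-vector spaces. This is the (ordinary) category whose objects are the $\FF_p^m$, with $m\ge 0$, and whose morphisms are $\FF_p$-linear maps; it has finite coproducts $\FF_p^m\oplus\FF_p^{m'}$. The Eilenberg--MacLane space $\B^k\ZZ/p$ is the value at $\FF_p^1$ of a finite-product-preserving functor
\[
\Phi_k\colon \mathcal{L}\op\to \spcpi,\qquad \FF_p^m\mapsto (\B^k\ZZ/p)^m.
\]
For instance, take the geometric realization of the simplicial $\FF_p$-vector space $K(\FF_p,k)$ and apply it to $\Hom(\FF_p^m,-)$. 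By the standard theorem on animated (simplicial) algebras, finite-product-preserving functors $\mathcal{L}\op\to\spc$ are exactly connective $H\FF_p$-modules, via $\Omega^\infty$.

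\emph{Step 2: transport along $F$.} Since $F$ preserves finite products (left exactness), $F\circ\Phi_k\colon\mathcal{L}\op\to\spcpi\subseteq\spc$ again preserves finite products. Therefore $F(\B^k\ZZ/p)=F\Phi_k(\FF_p^1)$ is $\Omega^\infty M$ for a connective $H\FF_p$-module $M$. Note in particular that $F(\pt)=\pt$, which handles $m=0$. The hypothesis that $F$ commutes with finite coproducts is not needed here.

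\emph{Step 3: classification.} Every $H\FF_p$-module splits as $M\simeq\bigoplus_i\Sigma^i H(\pi_iM)$, because $\FF_p$ is a field. Hence $\Omega^\infty M\simeq\prod_i \B^i(\pi_iM)$, and this product agrees with the direct sum whenever only finitely many terms are nonzero. Since $F(\B^k\ZZ/p)$ is $\pi$-finite, only finitely many $\pi_iM$ are nonzero. Moreover, each $\pi_iM$ is a finite $\FF_p$-vector space, hence isomorphic to some $(\ZZ/p)^{d_i}$. This gives $F(\B^k\ZZ/p)\simeq\prod_i(\B^i\ZZ/p)^{d_i}$, which is the claimed form.

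The main obstacle is Step 1, specifically justifying that finite-product-preserving functors from $\mathcal{L}\op$ into spaces model connective $H\FF_p$-modules; I would cite the standard result (Lurie, Higher Algebra / Spectral Algebraic Geometry, on animated modules) rather than reprove it. Working with product-preserving functors is essential. The naive alternative of only using that $F$ preserves loop spaces, so that $F(\B^k\ZZ/p)$ is an infinite loop space on which $p$ acts by zero, does not suffice: such a spectrum need not be an $H\FF_p$-module, as the mod $p$ Moore spectrum shows.
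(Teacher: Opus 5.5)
Your proposal is correct and follows the paper's strategy: transport the $\FF_p$-module structure along the product-preserving functor $F$, identify the result with a connective $H\FF_p$-module, and split it using $\pi$-finiteness. The difference is in how the scalars are encoded. The paper treats them as the internal ring object $\ZZ/p\in\spcpi$ and so needs coproduct preservation to get $F(\ZZ/p)\simeq\ZZ/p$, whereas your Lawvere-theory formulation makes the homotopy coherence explicit and, as you note, uses only finite products.
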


\begin{proof}
Since $F \colon \spcpi \to \spcpi$ is symmetric monoidal, it preserves commutative group objects and their module objects. In particular, $F(\B^{k}\ZZ/p)$ carries
a canonical structure of module object over $F(\ZZ/p)$. Moreover, as $F$ preserves finite coproducts, for every $m$ we have $F([m]) \simeq \coprod_{m} F(\pt)$. This identifies $F(\ZZ/p)\simeq \ZZ/p$. Hence $F(\B^{k}\ZZ/p)$ is naturally a $\ZZ/p$-module object, equivalently an $\FF_p$-module object in connective spectra. Such objects decompose into products of Eilenberg-MacLane spaces $\B^{i}\ZZ/p$, since $F(\B^{k}\ZZ/p)$ is $\pi$-finite, only finitely many degrees occur.
\end{proof}

Our main tool for proving height reduction will be \cref{prop: img of cardinalty}, which describes how semiadditive cardinality behaves under an $F$-character. We start by recalling the definition of semiadditive cardinality and semiadditive height developed in \cite{CSY-ambiheight}.

\begin{definition}
    Let $X\in \CMoninf(\cC)$ and $A\in\spcpi$. Define $|A|_X$ as the composition
    \[
    X\to X^A=\lim_A X\simeq \colim_A X \simeq X[A]\to X
    \]
    where the first map is the diagonal, the second is the inverse of the norm map and the third is the fold map. 
\end{definition}

This definition allows one to define the height of an object in any $\infty$-semiadditive category.

\begin{definition}
    Let $X\in \CMoninf(\cC)$. We say that 
    \begin{enumerate}
        \item $X$ is of height $\leq n$ if $|\B^n C_p|_X$ is invertible. 
        \item $X$ is of height $>n$ if $X$ is $|\B^k C_p|_X$ complete for $k\leq n$. 
    \end{enumerate}
    We denote the full subcategory on objects of height exactly $n$ by $\CMoninf(\cC)_n$. 
\end{definition}

We investigate how $F$-characters interact with semiadditive cardinality.

\begin{proposition}\label{prop: img of cardinalty}
     Let $\cC\in\CAlg(\PrL)$. For $R,S\in \CAlg(\CMoninf(\cC))$,
     \[
     \chi \colon R \to F^*S
     \]
     an $F$-character of commutative algebras and $A\in \spcpi$. Then, $\chi$ sends $|A|_R$ to $|F(A)|_{S}$.
\end{proposition}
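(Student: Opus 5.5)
Since $R$ and $S$ are commutative algebras, $|A|_R$ is determined by the element $|A|_R\cdot 1 \in \pi_0\End(R)$, and multiplication by it is an $R$-module endomorphism. I will therefore show that $\chi_{\pt}\colon R^{\pt}\to S^{F(\pt)}=S$ sends the element $|A|_R\in\pi_0 R$ to $|F(A)|_S\in\pi_0 S$. Since $\chi$ is a ring map, this gives the statement: $\chi\circ|A|_R = |F(A)|_S\circ\chi$, with $F(\pt)\simeq\pt$ because $F$ is left exact and preserves the terminal object.

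The key observation is that the element $|A|_X\in \pi_0X$ of a commutative algebra $X\in\CAlg(\CMoninf(\cC))$ can be read off from the functor $X^{(-)}\colon\Span(\spcpi)\to\cC$ itself. Consider the span $\pt\leftarrow A\to\pt$, i.e.\ the composite of restriction along $q\colon A\to\pt$ with transfer along $q$. Applying $X^{(-)}$ gives the endomorphism $q_!q^*\colon X\to X^A\to X$. By definition of the Segal structure, the transfer $X^A\to X$ is the composite of the inverse norm $X^A\simeq X[A]$ with the fold map. Hence $X^{(\pt\leftarrow A\to \pt)}=|A|_X$. I would verify this by unwinding that, under the Segal equivalence $X^A\simeq (X^{\pt})^A$, the covariant functoriality along $q$ is the integration map $\int_q$ of \cite{CSY-ambiheight}. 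This is essentially the standard identification of semiadditive integration in $\CMoninf$ via spans, and I expect it to be the only non-formal point.

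Given this, the argument is naturality. $\chi$ is a natural transformation of functors on $\Span(\spcpi)$, so applying naturality to the morphism $\omega_A := (\pt\leftarrow A\to\pt)$ gives
\[
\chi_{\pt}\circ R^{(\omega_A)} \simeq (F^*S)^{(\omega_A)}\circ \chi_{\pt} = S^{(F(\omega_A))}\circ\chi_{\pt}.
\]
Since $F$ on spans acts by applying $F$ to both legs, and $F(\pt)\simeq\pt$, we get $F(\omega_A)=(\pt\leftarrow F(A)\to\pt)=\omega_{F(A)}$. By the previous paragraph, the left side is $\chi_{\pt}\circ|A|_R$ and the right side is $|F(A)|_S\circ\chi_{\pt}$.

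The main obstacle is the first step: checking carefully that the span action of $\omega_A$ on a Segal object agrees with the cardinality defined via the inverse norm. This needs the Segal condition, which holds for $S$ since it lies in $\CMoninf(\cC)$. Note that $F^*S$ need not be Segal, but we never use that: we only apply the identification to $R$ and to $S$, evaluated at $F(A)$. The commutative algebra hypothesis is only used to pass between the endomorphism and the element, and it could likely be dropped.
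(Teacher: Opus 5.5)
Your proposal is correct and is essentially the paper's proof. The paper applies naturality of $\chi$ to restriction followed by transfer along $A\to\pt$, which gives a commutative ladder $R\to R^A\to R$ over $S\to S^{F(A)}\to S$. It reads off the two composites as $|A|_R$ and $|F(A)|_S$ and concludes because $\chi$ preserves units.

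The one point you flag as non-formal, that the span $\pt\leftarrow A\to\pt$ acts on a Segal object by the cardinality, is simply taken for granted in the paper. Also, where you write $|A|_R\cdot 1 \in \pi_0\End(R)$, you mean an element of $\pi_0 R$.
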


\begin{proof}
   By naturality, we have a commutative diagram:
   \[
        \begin{tikzcd}
	       R & {R^A} & R \\
	       {S} & {S^{F(A)}} & {S}
	       \arrow[from=1-1, to=1-2]
	       \arrow["{|A|_R}", shift left, curve={height=-12pt}, from=1-1, to=1-3]
	       \arrow[from=1-1, to=2-1]
	       \arrow[from=1-2, to=1-3]
	       \arrow[from=1-2, to=2-2]
	       \arrow[from=1-3, to=2-3]
	       \arrow[from=2-1, to=2-2]
	       \arrow["{|F(A)|_{S}}"', shift right, curve={height=12pt}, from=2-1, to=2-3]
	       \arrow[from=2-2, to=2-3]
        \end{tikzcd}
    \]
    Since the map $R\to S$ is a ring homomorphism and thus preserves units, the claim follows.
\end{proof}

With the above we are ready to prove our blue shift results.

\begin{lemma}\label{lem: height leq}
    Let $\cC\in\CAlg(\PrL)$, $R,S\in \CAlg(\CMoninf(\cC))$ and 
    \[
    \chi:R \to F^*S
    \]
     an $F$-character of commutative rings. Assume that $R$ is of height $\leq n$ at $p$ and $\dim_{\ZZ/p}(F)=n-t$. Then $S$ is of height $\leq t$ at $p$.
\end{lemma}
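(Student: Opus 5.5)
The plan is to read off the invertibility of $|\B^n C_p|_R$ through the character, using \cref{prop: img of cardinalty}, and then decompose the cardinality on the $S$-side using \cref{lem: F(BkCp) Cp module}.

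\textbf{Transport along $\chi$.} By hypothesis $|\B^n C_p|_R$ is invertible in $\pi_0 R$. The component of $\chi$ at the point is a map of commutative algebras $R \to S^{F(\pt)} \simeq S$; here $F(\pt)\simeq\pt$ because $F$ is left exact. Such a map preserves units. By \cref{prop: img of cardinalty} it sends $|\B^n C_p|_R$ to $|F(\B^n C_p)|_S$, so the latter is invertible in $\pi_0 S$.

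\textbf{Decomposition.} By \cref{lem: F(BkCp) Cp module},
\[
F(\B^n C_p)\simeq \prod_{i} (\B^{i}\ZZ/p)^{m_i}
\]
is a finite product. Since $\dim_{\ZZ/p}(F)=n-t$, applying \cref{def: dim F} with $k=n$ shows that the first nonzero homotopy group sits in degree $n-n+t=t$. Hence $m_i=0$ for $i<t$ and $m_t\geq 1$. For $t=0$ this means there is at least one discrete factor $\ZZ/p$.

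\textbf{Multiplicativity.} I will use the standard fact from \cite{CSY-ambiheight} that semiadditive cardinality is multiplicative in products of $\pi$-finite spaces, $|A\times B|_S=|A|_S\,|B|_S$ in $\pi_0 S$. This follows from the compatibility of norms with products, equivalently from $\Span(\spcpi)$ acting monoidally. It gives
\[
|F(\B^n C_p)|_S=\prod_{i\geq t}|\B^{i}C_p|_S^{\,m_i}.
\]
This product lies in the commutative ring $\pi_0 S$ and is a unit. Each factor divides a unit, so each factor is a unit. Since $m_t\ge 1$, the element $|\B^t C_p|_S$ is invertible, so $S$ has height $\leq t$. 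In the case $t=0$ this reads $|C_p|_S=p$ invertible, which is consistent with the definition of height $\le 0$.

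The only step needing care is the multiplicativity of cardinalities. I would either cite it from \cite{CSY-ambiheight} or verify it directly, by writing $|A\times B|$ as the composite of the spans $\pt\leftarrow A\times B\to\pt$ and factoring this through $A$ and $B$ separately. Everything else is formal.
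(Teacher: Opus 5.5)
Your proof is correct and is essentially the paper's argument: you transport $|\B^n C_p|_R$ along $\chi$ via \cref{prop: img of cardinalty}, split $F(\B^n C_p)$ using \cref{lem: F(BkCp) Cp module}, and use multiplicativity of cardinalities to exhibit $|\B^t C_p|_S$ as a factor of a unit. You are slightly more explicit than the paper, which leaves two points implicit: the use of the dimension hypothesis (at $k=n$) to guarantee a $\B^t\ZZ/p$ factor, and the fact that a factor of a unit in the commutative ring $\pi_0 S$ is itself a unit.
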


\begin{proof}
 We aim to show that $|\B^{t}\ZZ/p|_{S} \in \pi_0(S)$ is invertible. By \cref{prop: img of cardinalty} when evaluated at the point $\chi$ sends
    $|\B^n\ZZ/p|_R$ to $|F(\B^n\ZZ/p)|_{S}$. By \cref{lem: F(BkCp) Cp module} 
    \[
    |F(\B^n\ZZ/p)|_{S}=|A|_{S}|\B^{t}\ZZ/p|_{S}.
    \]
    Since $R$ has height $\leq n$, we know that $|\B^n\ZZ/p|_R$ is invertible and therefore $|\B^t\ZZ/p|_{S}$ is invertible.\\
\end{proof}

\begin{lemma}\label{lem: height geq}
    Let $\cC\in\CAlg(\Mod_{\Sp_{(p)}}(\PrL))$, $R,S\in \CAlg(\CMoninf(\cC))$ and 
    \[
    \chi:R \to F^*S
    \]
     a $F$-character of commutative rings. Assume that $R$ is of height $> n$ at $p$ and $\dim_{\ZZ/p}(F)=n-t$. Then $S$ is of height $>t$ at $p$.
\end{lemma}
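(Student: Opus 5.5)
The idea is to dualize the argument of \cref{lem: height leq}. Instead of showing that some cardinality in $S$ is invertible, I will show that inverting the relevant cardinality in $S$ kills it. The stable $p$-local hypothesis on $\cC$ is what lets me use the height splitting of \cite{CSY-ambiheight}: a stable $p$-local higher semiadditive category decomposes as a product of its height $\le m$ and height $>m$ parts. Consequently, for a ring object $T$, being of height $>m$ is equivalent to the vanishing of its height $\le m$ part $T[|\B^m\ZZ/p|_T^{-1}]$. Moreover, for $k\le m$, an element of height $\le k$ is also of height $\le m$. So it suffices to fix $k\le t$ and prove that
\[
S' \coloneqq S\bigl[|\B^{k}\ZZ/p|_S^{-1}\bigr] \simeq 0 .
\]
Here $S'$ is a localization of a commutative algebra in $\CMoninf(\cC)$ at an element of $\pi_0$, so $S'$ is again a commutative algebra in $\CMoninf(\cC)$, and $S\to S'$ is a ring map.

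First, I compose $\chi$ with $F^*(S\to S')$ to get an $F$-character of commutative algebras $\chi'\colon R\to F^*S'$. Put $k'=k+n-t\le n$. Since $k'\ge n-t$, \cref{def: dim F} and \cref{lem: F(BkCp) Cp module} give a decomposition
\[
F(\B^{k'}\ZZ/p)\simeq \B^{k}\ZZ/p\times \prod_j \B^{i_j}\ZZ/p
\]
with all $i_j\ge k$. The first nonzero homotopy group sits in degree $k$, and I use one factor $\B^k\ZZ/p$ from it. Cardinality is multiplicative on products. In $S'$ the cardinality $|\B^{k}\ZZ/p|$ is invertible, so $S'$ has height $\le k$. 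Hence $|\B^{i}\ZZ/p|_{S'}$ is invertible for every $i\ge k$, because height $\le k$ implies height $\le i$. By \cref{prop: img of cardinalty}, $\chi'$ at the point sends $|\B^{k'}\ZZ/p|_R$ to the invertible element $|F(\B^{k'}\ZZ/p)|_{S'}$.

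Second, since $F(\pt)=\pt$, evaluating $\chi'$ at the point gives a ring map $R\to S'$. This makes $S'$ an $R$-module, and the $R$-module $S'$ has $|\B^{k'}\ZZ/p|$ acting invertibly: the cardinality of $S'$ as an object is computed through this ring map, by naturality of the norm diagrams in the proof of \cref{prop: img of cardinalty}. Thus $S'$ lies in the height $\le k'$ part, hence in the height $\le n$ part of the splitting. Meanwhile $R$ lies in the height $>n$ part, since it is $|\B^{j}\ZZ/p|$-complete for all $j\le n$. Under the product decomposition there are no nonzero maps between the two factors. So the unit map $R\to S'$ is zero, and therefore $1=0$ in $\pi_0 S'$, i.e.\ $S'=0$. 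Running this for every $k\le t$ shows that $S$ has height $>t$.

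The main obstacle is the orthogonality step. I must make sure that the splitting theorem of \cite{CSY-ambiheight} applies in $\CMoninf(\cC)$ for $\cC$ stable and $p$-local. I also need that height $>n$, defined via completeness, is exactly membership in the complementary factor, and that $S'$, equipped with its own semiadditive structure, is of height $\le k'$. If only a one-sided orthogonality is available, I would instead argue via modules. The ring map $R\to S'$ makes $S'$ an $R$-algebra, so $S'\simeq S'\otimes_R R$. Then $R$-completeness forces $S'$ to be $|\B^{k'}\ZZ/p|$-complete while that element acts invertibly on it, and an object that is complete with respect to an element acting invertibly on it is zero.
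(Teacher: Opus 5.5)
There is a genuine gap in your second step. The map $\chi'_{\pt}\colon R\to S'$ is a map in $\CAlg(\cC)$, but it is not a morphism in $\CMoninf(\cC)$. By \cref{prop: img of cardinalty} it carries $|A|_R$ to $|F(A)|_{S'}$, not to $|A|_{S'}$, so it does not respect transfers. Your claim that ``the cardinality of $S'$ is computed through this ring map'' is therefore false. The product decomposition of \cite[Theorem 4.2.7]{CSY-ambiheight} gives orthogonality only for morphisms in $\CMoninf(\cC)$, so it says nothing about $\chi'_{\pt}$. Indeed, the transchromatic character $\En\to C^n_t$ is a nonzero ring map from a height $n$ object to a height $t$ object.

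Your fallback fails for a related reason: base change does not preserve completeness, since $\QQ_p\simeq\QQ_p\otimes_{\ZZ_p}\ZZ_p$ is not $p$-complete. More fundamentally, what you extract is that $R$ is $a$-complete and $\chi'_{\pt}(a)$ is a unit. This yields no contradiction at all, as the ring map $\ZZ_p\to\QQ_p$ with $a=p$ shows. Completeness is not transported along ring maps, so the mere invertibility of $|F(\B^{k'}\ZZ/p)|_{S'}$ can never suffice.

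What ring maps do transport is invertibility. Since $R$ is $|\B^n\ZZ/p|$-complete, the element $1-|\B^n\ZZ/p|_R\cdot b$ acts invertibly on $R$; the paper uses \cite[Proposition 4.1.14]{CSY-ambiheight} for this with $b=|\B^{n+1}\ZZ/p|_R$. The missing idea is therefore to show that the inverse of the image of $a$ is itself the image of an element of $R$.

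The paper does this as follows. By exactness of $F$ one has $F(\B^n\ZZ/p)\simeq\Omega F(\B^{n+1}\ZZ/p)$. Together with \cite[Proposition 4.2.2]{CSY-ambiheight}, this gives that
\[
1-|F(\B^n\ZZ/p)|\,|F(\B^{n+1}\ZZ/p)|
\]
is null on each height $k\le t$ factor $S_k$. Hence the unit $1-|\B^n\ZZ/p|_R|\B^{n+1}\ZZ/p|_R$ is sent by a ring map to an element acting as $0$ on $S_k$, which forces $S_k=0$.

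Your localization $S'$ and the index $k'$ could be kept, but the conclusion has to come from an identity of this form, namely that the image of $a$ times the image of some $b\in\pi_0R$ equals $1$.
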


\begin{proof}
    By \cite[Theorem 4.2.7]{CSY-ambiheight}, the category $\CMoninf(\cC)$ decomposes as a product
    \[
        \CMoninf(\cC)=\CMoninf(\cC)_{>t}\times \prod_{k=0}^{t}\CMoninf(\cC)_k.
    \]
    We denote by $S_k$ the projection of $S$ onto the factor $\CMoninf(\cC)_k$.
    The statement is thus equivalent to showing that
    \[
        S_k=0, \quad \textrm{for} \quad 0\leq k\leq t.
    \]
    By \cref{prop: img of cardinalty} we have a commutative diagram 
    \[
    \begin{tikzcd}[column sep=4em]
    R &&& R \\
    S_k &&& S_k \\
    & {}
    \arrow["{1-|\B^{n}\ZZ/p|_R|\B^{n+1}\ZZ/p|_R}", from=1-1, to=1-4]
    \arrow[from=1-1, to=2-1]
    \arrow[from=1-4, to=2-4]
    \arrow["{1-|F(\B^{n}\ZZ/p)|_{S_k}|F(\B^{n+1}\ZZ/p)|_{S_k}}"', from=2-1, to=2-4]
    \end{tikzcd}
    \]
    Since $R$ is of height $>n$, using \cite[Proposition 4.1.14]{CSY-ambiheight}, the upper horizontal map is an equivalence. As $F$ is exact, and finite limits in $\spcpi$ are computed in $\spc$, we have that $F(B^{n-1}\ZZ/p)=\Omega F(B^{n}\ZZ/p)$. Since $S_k$ is of height $k \leq t$, it follows from \cite[Proposition 4.2.2]{CSY-ambiheight} that the lower horizontal map is null. As the map $R \to S$ is a map of rings we get that 
    \[
    1-|F(\B^{n}\ZZ/p)|_{S_k}|F(\B^{n+1}\ZZ/p)|_{S_k}
    \] 
    is an invertible element that acts like zero on $S_k$ which implies $S_k=0$.
\end{proof}

We now specialize to $\cC=\Sp$ and adopt the usual notation $\CMoninf(\Sp)=\tsadi$ and $\CMoninf(\Sp)_n = \tsadi_n$. 

\begin{corollary}\label{cor: chr single height}
    Let $R\in \CAlg(\tsadi_n)$, $S\in\CAlg(\tsadi)$, $\dim_{\ZZ/p}(F)=n-t$ and 
    \[
    \chi:R \to F^*S
    \]
    a character of commutative rings. Then, $S\in\tsadi_{t}$.  In particular for any $X\in \tsadi_n$, $F_!X\in \tsadi_{t}$.   
\end{corollary}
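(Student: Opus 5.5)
The plan is to combine \cref{lem: height leq} and \cref{lem: height geq}. Since $R\in \CAlg(\tsadi_n)$, $R$ is both of height $\leq n$ and of height $>n-1$. We work with $\cC=\Sp$, which is presentably symmetric monoidal. Throughout we implicitly $p$-localize, as the paper does in \cref{subsec: F Characters}, so that \cref{lem: height geq} applies with $\cC$ a module over $\Sp_{(p)}$.

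The first step is to note that a dimension hypothesis $\dim_{\ZZ/p}(F)=n-t$ is a statement about $F(\B^k\ZZ/p)$ for all $k\geq n-t$. Applying \cref{lem: height leq} with the given $n$ and $t$, and using that $|\B^n\ZZ/p|_R$ is invertible, gives that $S$ has height $\leq t$.

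The second step is to apply \cref{lem: height geq}, reindexed with $n-1$ in place of $n$ and $t-1$ in place of $t$. The difference $(n-1)-(t-1)=n-t$ equals the dimension of $F$, so the hypothesis is the same. Since $R$ has height $>n-1$, we conclude that $S$ has height $>t-1$. The case $t=0$ needs no argument, because height $>-1$ is vacuous. Combining the two bounds gives $S\in\tsadi_t$.

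The main obstacle is checking that the proof of \cref{lem: height geq} really only uses $F(\B^{n}\ZZ/p)$ and $F(\B^{n+1}\ZZ/p)$ together with exactness of $F$, so that shifting both indices by one is legitimate. That proof extracts a $|\B^t\ZZ/p|$-type factor, which needs $n\geq n-t$; after reindexing this becomes $n-1\geq n-t$, i.e.\ $t\geq 1$. This holds exactly when we are not in the vacuous case $t=0$, so the reindexing is safe.

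For the ``in particular'' statement, take $X\in\tsadi_n$. For the algebra version, $F_!$ is symmetric monoidal by \cref{lem: exists of L! and mult st}, so $F_!$ of a ring is a ring and the unit $X\to F^*F_!X$ is a character of commutative rings. We then apply the first part with $S=F_!X$.

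For a general object $X$, we use the ring $\mathbb{1}_n$, the unit of $\tsadi_n$, which lies in $\CAlg(\tsadi_n)$. Then $X$ is a module over $\mathbb{1}_n$. Since $F_!$ is symmetric monoidal, $F_!X$ is a module over the ring $F_!\mathbb{1}_n$. By the algebra case, $F_!\mathbb{1}_n\in\tsadi_t$. Membership in $\tsadi_t$ is detected by cardinalities $|\B^k\ZZ/p|$ acting invertibly or nilpotently-completely, and this property is inherited by modules over a ring in $\tsadi_t$ via the idempotent decomposition of \cite[Theorem 4.2.7]{CSY-ambiheight}. Hence $F_!X\in\tsadi_t$.
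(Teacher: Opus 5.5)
Your proposal is correct and follows the paper's proof: the upper bound comes from \cref{lem: height leq}, the lower bound from \cref{lem: height geq} applied with $(n-1,t-1)$ (vacuous when $t=0$), and the ``in particular'' is obtained exactly as in the paper by applying the first part to the universal character $\ounit_{\tsadi_n}\to F^*F_!\ounit_{\tsadi_n}$ and using that $F_!X$ is a module over $F_!\ounit_{\tsadi_n}$. Your ``main obstacle'' is not really one: the condition $\dim_{\ZZ/p}(F)=n-t$ depends only on the difference $n-t$, so for $t\geq 1$ the reindexed statement is a literal instance of \cref{lem: height geq}, and there is no need to re-examine its proof.
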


\begin{proof}
    The first part follows immediately from \cref{lem: exists of L! and mult st}, \cref{lem: height geq}, and \cref{lem: height leq}. For the second part, apply the first part to the universal character
    \[
    \ounit_{\tsadi_n} \to F^*F_!\ounit_{\tsadi_n}.
    \]
    This shows that $F_!\ounit_{\tsadi_n}\in \tsadi_{t}$. Since $X$ is a module over $F_!\ounit_{\tsadi_n}$, the claim follows.
\end{proof}

\begin{corollary}\label{cor: F chr single height T local}
    Let $R\in \CAlg(\Sp_{T(n)})\subseteq \CAlg(\tsadi_n)$ where we think of $\Sp_{T(n)}$ as a symmetric monoidal full subcategory of $\tsadi_n$ by \cite[Corollary 5.5.14]{CSY-ambiheight}. Let $S\in \CAlg(\tsadi)$,  $\dim_{\ZZ/p}(F)=n-t$ and
    \[
    \chi \colon R \to F^*S
    \]
    be an $F$-character of commutative rings. Then $S\in \Sp_{T(t)}$. In particular, for any $X\in \Sp_{T(n)}$
    $F_!X\in \Sp_{T(t)}$.
\end{corollary}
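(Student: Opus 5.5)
By \cref{cor: chr single height} applied to $R\in\CAlg(\tsadi_n)$, we already know that $S\in\tsadi_t$. It remains to upgrade ``semiadditive height exactly $t$'' to ``$T(t)$-local''. This step is not automatic: $\tsadi_t$ is strictly larger than $\Sp_{T(t)}$.

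The plan is to exploit that $S$ receives a ring map from something $T(n)$-local. Evaluating $\chi$ at the point gives a map of commutative algebras $R=R^{\pt}\to S^{F(\pt)}=S^{\pt}=S$, using $F(\pt)=\pt$. Hence $S$ is an algebra over $R$, and in particular over $\SS_{T(n)}$, viewed inside $\tsadi$ through the inclusion of \cite[Corollary 5.5.14]{CSY-ambiheight}.

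I would then use the structure theory of \cite{CSY-ambiheight}. The underlying spectrum functor $\tsadi\to\Sp$, $X\mapsto X^{\pt}$, is symmetric monoidal, so the underlying spectrum of $S$ is a commutative algebra over $\SS_{T(n)}$. Such a spectrum is $T(n)$-local module-wise? No: this only makes it a module over $\SS_{T(n)}$, which is local for $L_n^f$-type phenomena but not $T(t)$-local. So I would argue instead in two steps.
\begin{enumerate}
\item An $\SS_{T(n)}$-module spectrum is $L_n^f$-local, hence $p$-local and of bounded chromatic support $\le n$.
\item The semiadditive height of $S$ being exactly $t$ means $|\B^t C_p|_S$ is invertible and $S$ is $|\B^k C_p|$-complete for $k<t$. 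Using \cite[Theorem 5.3.?]{CSY-ambiheight} (the identification that, for objects of $\Sp$ which are $L_n^f$-local, i.e.\ lie in the image of the higher semiadditive part, height exactly $t$ is equivalent to being $T(t)$-local), the underlying spectrum is $T(t)$-local.
\end{enumerate}
Concretely, $|\B^k C_p|$ acts on $L_n^f$-local spectra so that inverting it cuts out heights $\le k$, while completing at it kills heights $\le k$ in the telescopic sense. This is \cite[Theorem 4.4.5/5.3]{CSY-ambiheight}: $\Sp_{T(t)}=\Sp_{(p)}\cap\tsadi_t$ among $L_n^f$-local spectra, or at least for objects that are $1$-semiadditive and $p$-local of height $t$.

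The final assertion follows as in \cref{cor: chr single height}. Apply the first part to the universal character $\ounit_{\Sp_{T(n)}}\to F^*F_!\ounit$ to get $F_!\SS_{T(n)}\in\Sp_{T(t)}$. Then note that $F_!X$ is a module over $F_!\SS_{T(n)}$, since $F_!$ is symmetric monoidal by \cref{lem: exists of L! and mult st}. Finally, modules over a $T(t)$-local ring in $\tsadi$ are $T(t)$-local, because the unit's $T(t)$-locality is inherited by modules via the module structure retracting $X$ off $F_!\ounit\otimes X$.

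The main obstacle is Step 2: identifying semiadditive height exactly $t$ with $T(t)$-locality. This needs the underlying spectrum to already be suitably bounded, which is exactly why the $\SS_{T(n)}$-algebra structure coming from $\chi_{\pt}$ is used.
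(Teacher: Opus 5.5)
Your outline is sound and ends the same way as the paper. The difference is how you get the chromatic bound on $S$.

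\textbf{Comparison with the paper.} The paper uses Burklund's $\EE_1$-ring structure on a type $n+1$ complex $F(n+1)$ \cite{burklund-multiplicative-moore}. It shows $R\otimes F(n+1)\simeq 0$ in $\tsadi$, then tensors $\chi$ with $F(n+1)$ to get a ring map $0\to F^*S\otimes F(n+1)$, so $F^*S\otimes F(n+1)\simeq 0$. It then invokes \cite[Theorem 5.4.10]{CSY-ambiheight} (cf.\ \cite[Lemma 8.2]{Ben-Moshe-Schlank-2024-K-theory}) to put $S$ in $\prod_{k\le n}\Sp_{T(k)}$, and intersects with $\tsadi_t$ via \cref{lem: height geq}.

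You instead evaluate $\chi$ at $\pt$, using $F(\pt)=\pt$, to get a ring map $R\to S^{\pt}$. Since $L_n^f$ is smashing, $S^{\pt}$ is $L_n^f$-local. This needs no multiplicative structure on $F(n+1)$, so it is more elementary. It also works for modules. For the ``in particular'' you can rerun it for $F_!X$: it has height $t$, and its underlying spectrum is a module over the $T(t)$-local ring $(F_!\SS_{T(n)})^{\pt}$. That replaces your retract argument, which as written also needs $\Sp_{T(t)}\subseteq\tsadi$ to be closed under tensoring with arbitrary objects of $\tsadi$.

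\textbf{Two points need fixing.}

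First, $\chi_{\pt}$ is a map of $\EE_\infty$-ring spectra $R\to S^{\pt}$ because evaluation at the unit $\pt\in\Span(\spcpi)$ is lax symmetric monoidal for Day convolution. The forgetful functor $\tsadi\to\Sp$ is only lax, not strong, symmetric monoidal. This does \emph{not} make $S$ an $R$- or $\SS_{T(n)}$-algebra in $\tsadi$. For $t<n$ we have $R\in\tsadi_{>t}$ and $S\in\tsadi_t$, so $\Map_{\tsadi}(R,S)$ is contractible by the height decomposition. Drop that intermediate claim and use the spectrum-level ring map directly.

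Second, Step 2 as stated does not parse. Semiadditive height is a property of $S$ as an object of $\tsadi$, since it uses the transfers; it is not a property of a spectrum. Likewise ``$\Sp_{T(t)}=\Sp_{(p)}\cap\tsadi_t$'' is not meaningful. The input you need is exactly what the paper cites: an object $X\in\tsadi$ with $X\otimes F(n+1)\simeq 0$ lies in $\prod_{k\le n}\Sp_{T(k)}$.

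To connect this to your Step 1, note that evaluation at $\pt$ is exact and conservative on $\tsadi$, by the Segal condition $X^A\simeq (X^{\pt})^A$. So $S^{\pt}\otimes F(n+1)\simeq 0$ gives $S\otimes F(n+1)\simeq 0$ in $\tsadi$. Together with $S\in\tsadi_t$, this yields $S\in\Sp_{T(t)}$.
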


\begin{proof}
    By \cite[Theorem 1.3]{burklund-multiplicative-moore} there exists a type $n+1$ spectrum, $F(n+1)$ equipped with an $\EE_1$-ring structure. Let $G\colon \Sp \to \tsadi$ denote the free $\infty$-commutative monoid functor, and write
    $\underline{R}$ for the underlying spectrum of $R$. Since $\underline{R}\in \Sp_{T(n)}$, we have $\underline{R}\otimes F(n+1)\simeq 0$, and hence
    \[
    0 \simeq G(\underline{R}\otimes F(n+1))
    \simeq G(\underline{R})\otimes F(n+1)
    \to R\otimes F(n+1),
    \]
    where we use that the functors in sight are spectrum-linear. As this is a map of algebras we conclude that $R\otimes F(n+1)\simeq 0$.
    Tensoring $\chi$ with $F(n+1)$ yields a map of $\EE_1$-rings
    \[
    0 \simeq R\otimes F(n+1) \to F^*S\otimes F(n+1),
    \]
    and thus $F^*S\otimes F(n+1)\simeq 0$. Consequently by \cite[Theorem 5.4.10]{CSY-ambiheight} (see also \cite[Lemma 8.2]{Ben-Moshe-Schlank-2024-K-theory} and the discussion preceding it),
    \[
    S \in \prod_{k=0}^n \Sp_{T(k)}.
    \]
    By \cref{lem: height geq} we moreover have $S\in \tsadi_{t}$, so $S\in \Sp_{T(t)}$. For the second part, apply the first part to the universal character
    \[
    \SS_{T(n)} \to F^*F_!\SS_{T(n)}.
    \]
    We get that $F_!\SS_{T(n)}\in \Sp_{T(t)}$. Since $F_!X$ is a module over $F_!\SS_{T(n)}$, the claim follows.
\end{proof}

\subsection{Base Change}\label{subsec: Base Change}

Our next goal is to establish a base change formula. Namely, for $R\in \CMoninf(\cC)$ and
$A\in \spcpi$, we will construct a natural equivalence
\[
R^{A\times -}\otimes_{R^{(-)}} F^*F_!R \simeq (F_!R)^{F(A)\times F(-)},
\]
functorial in $A$.

The proof will follow from some purely categorical statement about tensor products with respect to Day convolution.

\begin{lemma}\label{lem: tensor left kan}
    Let $\cC\in \CAlg(\PrL)$ and let $I$ be a small category. There is a natural equivalence in
    \[
    \Fun(I{\op},\Fun(\cC,\Fun(I,\cC)))
    \]
    between the functors
    \[
    i \mapsto (X \mapsto \Map(i,-)\otimes X),
    \qquad
    i \mapsto (X \mapsto i_!X),
    \]
    where $i_!X$ denotes the left Kan extension of $X\colon \pt \to \cC$ along $i\colon \pt \to I$.
\end{lemma}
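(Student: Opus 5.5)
Both functors send $(i,X)$ to an object of $\Fun(I,\cC)$, and the natural guess is that they are the same functor: evaluated at $j\in I$, the first gives $\Map(i,j)\otimes X$ (the copower $\colim_{\Map(i,j)}X$), and the pointwise formula for left Kan extension along $i\colon\pt\to I$ gives
\[
(i_!X)(j)\simeq \colim_{\pt\times_I I_{/j}} X \simeq \colim_{\Map(i,j)} X,
\]
since the comma category $\pt\times_I I_{/j}$ is the space $\Map_I(i,j)$. The remaining work is to make this identification natural in $i$, $X$ and $j$ simultaneously, as an equivalence in $\Fun(I\op,\Fun(\cC,\Fun(I,\cC)))$. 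I will do this with a universal property rather than by comparing colimit formulas by hand.

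The cleanest approach is through corepresenting objects. For fixed $i$ and $X$, both functors are left adjoint to evaluation at $i$ in the variable $X$. For $i_!$ this is the definition. For $\Map(i,-)\otimes X$ it follows from the Yoneda lemma: there are equivalences, natural in $G\in\Fun(I,\cC)$,
\[
\Map_{\Fun(I,\cC)}(\Map(i,-)\otimes X,G)\simeq \Map_{\Fun(I,\spc)}\bigl(\Map(i,-),\Map_\cC(X,G(-))\bigr)\simeq \Map_\cC(X,G(i)).
\]
The first equivalence is the tensoring adjunction of the presentable category $\cC$ over $\spc$, applied pointwise; this uses the formula for mapping spaces in functor categories as ends. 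The second equivalence is co-Yoneda.

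To get naturality in $i$, I package everything into one adjunction. Evaluation defines a functor
\[
\mathrm{ev}\colon I\times\Fun(I,\cC)\to\cC,
\]
which after currying becomes $\Fun(I,\cC)\to\Fun(I,\cC)$, the identity. So I consider instead the functor
\[
\Fun(I,\cC)\to\Fun(I\op\times\cC\op,\spc),\qquad G\mapsto \Map_\cC(-,G(-)).
\]
The first family of functors $(i,X)\mapsto \Map(i,-)\otimes X$ corepresents this functor by the computation above, and the second family $(i,X)\mapsto i_!X$ corepresents it by the adjunction defining $i_!$. Both identifications are natural in $(i,X)$, because each comes from a natural equivalence of bifunctors of $\bigl((I\op\times\cC\op)\op\bigr)\times\Fun(I,\cC)$. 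Uniqueness of corepresenting objects, in its parametrized form, then gives an equivalence of functors
\[
I\op\times\cC\to\Fun(I,\cC).
\]
Currying this gives the required equivalence in $\Fun(I\op,\Fun(\cC,\Fun(I,\cC)))$.

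The main obstacle is the $\infty$-categorical coherence of the Yoneda step: it must be natural in $i$ and not only pointwise. I would handle it by citing the fact that the Yoneda embedding $I\op\to\Fun(I,\spc)$, tensored with $\cC$, is the restriction of the left adjoint $\spc^{I\op}\otimes\cC\simeq\Fun(I,\cC)$ induced by $\Fun(I,\spc)\otimes\cC\simeq\Fun(I,\cC)$ in $\PrL$. Under this identification, $\Map(i,-)\otimes X$ is the image of $y(i)\boxtimes X$, and $i_!X$ is the same object, because left Kan extension is computed by the same tensor decomposition.
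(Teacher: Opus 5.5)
Your proposal is correct and follows essentially the same route as the paper: both use the Yoneda lemma to show that $X\mapsto \Map(i,-)\otimes X$ is left adjoint to $\ev_i$, just as $i_!$ is, and then conclude by uniqueness of adjoints. Your parametrized-corepresentability packaging spells out the naturality in $i$ that the paper only asserts; the pointwise colimit check and the closing $\PrL$-tensor remark are not needed (and $\spc^{I\op}$ there should read $\Fun(I,\spc)$).
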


\begin{proof}
Fix $i\in I$ and let $\ev_i\colon \Fun(I,\cC)\to \cC$ be evaluation at $i$.
Since $\cC$ is presentable, $\Fun(I,\cC)$ is presentable and $\ev_i$ admits a left adjoint given by left Kan extension along $i$, namely $i_!\colon \cC\to \Fun(I,\cC)$.

For $X\in \cC$ and $G\in \Fun(I,\cC)$ we have a natural equivalence
\[
\Map_{\Fun(I,\cC)}(\Map(i,-)\otimes X,\,G)\simeq \Map_{\cC}(X,\,G(i)).
\]
Thus the functor $X \mapsto \Map(i,-)\otimes X$ is also left adjoint to $\ev_i$. By uniqueness of left adjoints, there is a canonical equivalence of functors
\[
\Map(i,-)\otimes (-)\simeq i_!(-)\colon \cC\to \Fun(I,\cC).
\]
These equivalences are natural in $i$, and therefore assemble to the claimed equivalence in
$\Fun(I{\op},\Fun(\cC,\Fun(I,\cC)))$.
    
\end{proof}

\begin{definition}
Let $\cC\in \CAlg(\PrL)$. We denote by:
\begin{enumerate}
    \item $\Fun_\cC^L(-,-)$ the category of $\cC$-linear left adjoints,
    \item $\Fun_{\cC}^{L,R}(-,-)$ the category of $\cC$-linear left adjoints whose right adjoint, equipped with its induced lax $\cC$-linear structure, is strong,
    \item $\Fun_{\cC}^{R,L}(-,-)$ the category of $\cC$-linear right adjoints whose left adjoint, equipped with its induced op-lax $\cC$-linear structure, is strong.
\end{enumerate}
Note that passing to right adjoints induces an equivalence
\[
\Fun_{\cC}^{L,R}(\cD,\cE){\op}\simeq \Fun_{\cC}^{R,L}(\cE,\cD).
\]
\end{definition}

\begin{lemma}\label{lem: pointwise left kan otimes}
Let $\cC\in \CAlg(\PrL)$ and let $I\in \CAlg(\Cat)$. Endow $\Fun(I,\cC)$ with the Day convolution symmetric monoidal structure. Then for any $G\in \Fun(I,\cC)$ and any $i\in I$ there is an equivalence
\[
(\Map_I(i,-)\otimes \ounit_{\cC})\otimes G \simeq (i\otimes(-))_!G.
\]
\end{lemma}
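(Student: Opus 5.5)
Both sides are colimit-preserving in $G$, so the plan is to reduce to representables and then compute Day convolution of corepresentables. The left side $(\Map_I(i,-)\otimes\ounit_{\cC})\otimes(-)$ preserves colimits in $G$ because Day convolution on $\Fun(I,\cC)$ is presentably symmetric monoidal. The right side $(i\otimes(-))_!$ is a left adjoint, to precomposition with $i\otimes(-)\colon I\to I$. Moreover, $\Fun(I,\cC)\simeq \Fun(I,\spc)\otimes\cC$ is generated under colimits (and $\cC$-tensoring) by objects of the form $\Map_I(j,-)\otimes X$ with $j\in I$ and $X\in\cC$. Both functors are also $\cC$-linear: for Day convolution this holds because the $\cC$-action is through constant functors, and for left Kan extension it holds since $(-)\otimes X$ is computed pointwise and commutes with colimits.

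We therefore first construct a natural transformation between the two functors and then check it on generators. The cleanest construction goes through adjoints. By the universal property of Day convolution, a map out of $(\Map(i,-)\otimes\ounit)\otimes G$ into $H$ is a natural transformation $\Map(i,a)\otimes G(b)\to H(a\otimes b)$ over $I\times I$. By Yoneda in the variable $a$, this is the same as a natural map $G(b)\to H(i\otimes b)$, that is, a map $G\to (i\otimes-)^*H$. Spelling this out:
\[
\Map\big((\Map(i,-)\otimes\ounit)\otimes G,H\big)\simeq \Map_{\Fun(I\times I,\cC)}\big(\Map(i,-)\otimes G(-),\,H(-\otimes-)\big)\simeq \Map(G,(i\otimes-)^*H).
\]
These equivalences are natural in $G$ and $H$, so the two functors are left adjoint to the same functor, and the equivalence follows by uniqueness of adjoints. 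This parallels \cref{lem: tensor left kan}, which is exactly the Yoneda step applied in the first variable.

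The main obstacle is justifying the middle step, namely that $\Map(i,-)\otimes G(-)$ is the external product and that the Yoneda lemma applies in $\cC$-enriched form. Concretely, we need
\[
\Map_{\Fun(I,\cC)}(\Map(i,-)\otimes Y,K)\simeq \Map_{\cC}(Y,K(i)),
\]
naturally and parametrized over the $b$-variable. This is \cref{lem: tensor left kan} applied to $\Fun(I,\Fun(I,\cC))$ with coefficients in $\Fun(I,\cC)$. As a sanity check, for $G=\Map(j,-)\otimes X$ both sides give $\Map(i\otimes j,-)\otimes X$: on the left by the standard formula for Day convolution of corepresentables, and on the right by \cref{lem: tensor left kan} together with composition of left Kan extensions, since $(i\otimes-)_!\, j_! = (i\otimes j)_!$.
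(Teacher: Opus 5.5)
Your proof is correct and is essentially the paper's argument transposed to right adjoints. The paper writes the Day convolution as $\mu_!\bigl((i_!\ounit_{\cC})\boxtimes G\bigr)$, identifies $(i_!\ounit_{\cC})\boxtimes G\simeq s_!G$ for $s=i\times\id_I$, and uses $\mu\circ s=i\otimes(-)$; you instead check that the corresponding right adjoints ($\mu^*$, then $s^*$ via \cref{lem: tensor left kan} with coefficients in $\Fun(I,\cC)$) compose to $(i\otimes(-))^*$, and conclude by uniqueness of adjoints. The generator-reduction plan in your first paragraph is never used and can be dropped; its claim that the $\cC$-action on $\Fun(I,\cC)$ is Day convolution with constant functors is also inaccurate, since pointwise tensoring with $X$ is Day convolution with $\Map_I(\ounit_I,-)\otimes X$.
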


\begin{proof}
Let $\mu=\otimes_I\colon I\times I\to I$ denote the monoidal product. Recall that Day convolution on $\Fun(I,\cC)$ is given by
\[
A\otimes B \simeq \mu_!(A\boxtimes B),
\qquad
(A\boxtimes B)(a,b)=A(a)\otimes_{\cC}B(b),
\]
where $\boxtimes$ denotes the external tensor product.

Fix $i\in I$ and write 
\[
s=i\times \id_I\colon I\simeq \pt\times I\to I\times I.
\]
Then $\mu\circ s=i\otimes(-)$. Since $\boxtimes$ preserves colimits separately in each variable, there is a natural equivalence
\[
(i_!\ounit_{\cC})\boxtimes G \simeq s_!(\ounit_{\cC}\boxtimes G).
\]
Using \cref{lem: tensor left kan}, we identify $\Map_I(i,-)\otimes \ounit_{\cC}\simeq i_!\ounit_{\cC}$, and therefore
\begin{align*}
(\Map_I(i,-)\otimes \ounit_{\cC})\otimes G
&\simeq \mu_!((\Map_I(i,-)\otimes \ounit_{\cC})\boxtimes G) \\
&\simeq \mu_!((i_!\ounit_{\cC})\boxtimes G) \\
&\simeq \mu_!(s_!(\ounit_{\cC}\boxtimes G)) \\
&\simeq (i\otimes(-))_!(\ounit_{\cC}\boxtimes G) \\
&\simeq (i\otimes(-))_!G,
\end{align*}
where the last step uses the canonical identification $\ounit_{\cC}\boxtimes G\simeq G$.
\end{proof}

\begin{lemma}\label{lem:Left-Kan-otimes-Day}
Let $\cC\in \CAlg(\PrL)$ and let $I\in \CAlg(\Cat)$. Endow $\Fun(I,\cC)$ with the Day convolution symmetric monoidal structure. Then there is a natural equivalence of symmetric monoidal functors in
\[
\Fun(I{\op},\Fun(I,\cC))\simeq
\Fun(I{\op},\Fun_{\Fun(I,\cC)}^{L}(\Fun(I,\cC),\Fun(I,\cC))),\footnote{Note that the symmetric monoidal structure on $\Fun_{\Fun(I,\cC)}^{L}(\Fun(I,\cC),\Fun(I,\cC))$ is given by composition.}
\]
between the functors
\[
i \mapsto (\Map_I(i,-)\otimes \ounit_{\cC})\otimes(-)
\qquad \text{and} \qquad
i \mapsto (i\otimes(-))_!(-).
\]
\end{lemma}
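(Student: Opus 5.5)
The strategy is to promote the pointwise equivalence of \cref{lem: pointwise left kan otimes} to a natural equivalence of functors of $i$. I will do this by exhibiting both sides as composites of canonical symmetric monoidal functors out of $I\op$, and then checking that the symmetric monoidal structures agree.

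\textbf{The left-hand side.} It is the composite
\[
I\op \xrightarrow{y} \Fun(I,\spc)\xrightarrow{-\otimes \ounit_\cC}\Fun(I,\cC)\xrightarrow{L}\Fun^L_{\Fun(I,\cC)}(\Fun(I,\cC),\Fun(I,\cC)),
\]
where $y$ is the (co)Yoneda embedding and $L$ sends an object to tensoring with it. Each piece is symmetric monoidal:
\begin{itemize}
\item $y$ is symmetric monoidal for Day convolution, since $\Map(i,-)\otimes\Map(j,-)\simeq \Map(i\otimes j,-)$, the standard property of Day convolution on representables.
\item $-\otimes\ounit_\cC$ is symmetric monoidal, being a left adjoint obtained by tensoring up $\Fun(I,\spc)\to\Fun(I,\cC)$.
\item $L\colon \cD\to \Fun^L_\cD(\cD,\cD)$ is an equivalence of symmetric monoidal categories, with composition on the target. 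This is the standard fact that $\cD$-linear endofunctors of $\cD$ are given by tensoring with $F(\ounit)$.
\end{itemize}

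\textbf{The right-hand side.} It is the composite
\[
I\op\to \Fun(I,I)\xrightarrow{(-)_!}\Fun^L(\Fun(I,\cC),\Fun(I,\cC)),
\]
where the first functor is $i\mapsto i\otimes(-)$ and left Kan extension is functorial (covariantly in the functor; the variance is matched by the same contravariance that appears on the left). The first functor is monoidal because $(i\otimes j)\otimes(-)\simeq (i\otimes -)\circ(j\otimes -)$. Left Kan extension is monoidal for composition because $(fg)_!\simeq f_!g_!$. To see that these functors are $\Fun(I,\cC)$-linear, use the projection formula $(i\otimes -)_!(A\otimes G)\simeq A\otimes (i\otimes-)_!G$. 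This follows by writing Day convolution as $\mu_!(-\boxtimes-)$ and using associativity $\mu\circ(\id\times\mu)\simeq\mu\circ(\mu\times\id)$ together with $\mu\circ(i\otimes -\times \id)\simeq (i\otimes-)\circ\mu$.

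\textbf{Identifying the two functors.} Since $L$ is an equivalence, it suffices to compare $\Fun(I,\cC)$-linear functors after evaluating at the unit $\ounit_{\mathrm{Day}}=\Map_I(\ounit_I,-)\otimes\ounit_\cC$. On the right-hand side this gives $(i\otimes-)_!\,\ounit_!\ounit_\cC\simeq i_!\ounit_\cC$, naturally in $i$. By \cref{lem: tensor left kan} this is naturally equivalent to $\Map(i,-)\otimes\ounit_\cC$, which is exactly the left-hand side evaluated at the unit. So the pointwise computation of \cref{lem: pointwise left kan otimes} becomes a natural equivalence, with the naturality in $i$ supplied by \cref{lem: tensor left kan}.

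\textbf{The main obstacle.} The remaining work, which I expect to be the real obstacle, is upgrading this natural equivalence to a symmetric monoidal one. The approach I would try first is to recognize both functors as the unique symmetric monoidal colimit-preserving extensions determined by their values on representables. Concretely, both are symmetric monoidal functors $I\op\to \Fun(I,\cC)$, via the equivalence $L^{-1}$, that agree with the composite of $y$ and $-\otimes\ounit_\cC$. I would verify this by checking that the monoidal structure maps on the right-hand side match the Day convolution structure maps $i_!\ounit\otimes j_!\ounit\simeq (i\otimes j)_!\ounit$, which are themselves given by the same pushforward along $\mu$ as in the proof of \cref{lem: pointwise left kan otimes}. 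If this bookkeeping becomes unwieldy, the fallback is to work at the level of $\infty$-operads, constructing both functors from a single symmetric monoidal functor $I\op\times\Fun(I,\cC)\to \Fun(I,\cC)$ given by $(i,G)\mapsto (i\otimes -)_!G$.
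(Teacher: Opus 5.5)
Your proposal has a genuine gap, and it sits exactly where the content of this lemma lies. The pointwise equivalence is already \cref{lem: pointwise left kan otimes}. What is new here is coherent naturality in $i$ together with compatibility of the symmetric monoidal structures, and your argument establishes neither.

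You justify the symmetric monoidal structure on $i\mapsto (i\otimes(-))_!$, and its $\Fun(I,\cC)$-linearity, only by object-level equivalences: $(fg)_!\simeq f_!g_!$ and a pointwise projection formula. In the $\infty$-categorical setting these do not assemble into a symmetric monoidal functor $I\op\to\Fun^L_{\Fun(I,\cC)}(\Fun(I,\cC),\Fun(I,\cC))$. Your step of comparing after evaluation at the unit already presupposes such a functor. There is also a variance slip: $i\mapsto i\otimes(-)$ is a functor $I\to\Fun(I,I)$, and the contravariance comes from $(-)_!$, which sends $\alpha\colon f\Rightarrow g$ to $g_!\Rightarrow f_!$.

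You leave the final upgrade open, and the remedies you suggest do not close it. Matching the binary structure maps $i_!\ounit_\cC\otimes j_!\ounit_\cC\simeq (i\otimes j)_!\ounit_\cC$ does not give an equivalence of symmetric monoidal functors, because all higher coherences are needed. The ``unique colimit-preserving extension'' idea does not apply as stated, since the domain is $I\op$. The universal property of Day convolution identifies symmetric monoidal functors out of $I\op$ with colimit-preserving symmetric monoidal functors out of $\Fun(I,\spc)$, but it does not single out a symmetric monoidal structure on a given underlying functor. Your ``concrete'' version of this idea assumes the agreement you are trying to prove.

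The missing idea is to build the right-hand side coherently from functoriality instead of checking it on objects. The paper applies the symmetric monoidal functor $\Mod_{I\op}(\Cat)\to\Mod_{\Fun(I,\cC)}(\PrL)$, $M\mapsto\Fun(M\op,\cC)$, from \cite[Corollary 4.8.1.12]{Lurie-HA}, to $I\op$ regarded as a module over itself. On endomorphism objects this gives a symmetric monoidal functor $I\op\simeq\Fun_{I\op}(I\op,I\op)\to\Fun^L_{\Fun(I,\cC)}(\Fun(I,\cC),\Fun(I,\cC))$ sending $i$ to $(i\otimes(-))_!$. The linear structure, the compatibility with composition, the higher coherences and the correct variance are all built in.

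Only after this does the paper use \cref{lem: pointwise left kan otimes} to identify the underlying functor with $i\mapsto(\Map_I(i,-)\otimes\ounit_\cC)\otimes(-)$. It then compares the two symmetric monoidal functors inside the full subcategory spanned by these endofunctors, which contains the unit and is closed under tensor products. Your fallback, a single action functor $I\op\times\Fun(I,\cC)\to\Fun(I,\cC)$, points in this direction. To make it work you need precisely such a coherent construction of the action. You also need an argument relating it to tensoring with $\Map_I(i,-)\otimes\ounit_\cC$, for instance naturality of the Yoneda embedding in the module $M$.
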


\begin{proof}
We check that the equivalence is compatible with the symmetric monoidal structures. By \cite[Corollary 4.8.1.12]{Lurie-HA}, there is a symmetric monoidal functor
\[
\Mod_{I{\op}}(\Cat) \to \Mod_{\Fun(I,\cC)}(\PrL),
\qquad
M\mapsto \Fun(M{\op},\cC).
\]
Applying this to the endomorphism object of $I{\op}$ yields a symmetric monoidal map
\[
I{\op}\simeq \Fun_{I{\op}}(I{\op},I{\op})
\to
\Fun_{\Fun(I,\cC)}^{L}(\Fun(I,\cC),\Fun(I,\cC))
\simeq \Fun(I,\cC).
\]
By construction, an object $i\in I$ is sent to the left Kan extension functor $(i\otimes(-))_!$. Moreover, the underlying functor of this symmetric monoidal embedding agrees with the functor
\[
i \mapsto (\Map_I(i,-)\otimes \ounit_{\cC})\otimes(-)
\]
by \cref{lem: pointwise left kan otimes}.

Let $\cD\subset \Fun_{\Fun(I,\cC)}^{L}(\Fun(I,\cC),\Fun(I,\cC))$ be the full subcategory spanned by the endofunctors
\[
(\Map_I(i,-)\otimes \ounit_{\cC})\otimes(-)
\]
for $i\in I$. It contains the unit and is closed under tensor product. Hence there is a unique symmetric monoidal structure on $\cD$ for which the inclusion $\cD\hookrightarrow \Fun_{\Fun(I,\cC)}^{L}(\Fun(I,\cC),\Fun(I,\cC))$ lifts to a symmetric monoidal functor. It follows that the two symmetric monoidal functors in the statement are naturally equivalent.
\end{proof}

\begin{remark}
    Assume that every object of $I$ is dualizable. Then \cref{lem:Left-Kan-otimes-Day} upgrades to an equivalence of symmetric monoidal functors
    \[
    I\op \to \Fun(I,\cC)\dbl\simeq \Fun_{\Fun(I,\cC)}^{R,L}(\Fun(I,\cC),\Fun(I,\cC)).
    \]
    Here the symmetric monoidal structure on $\Fun_{\Fun(I,\cC)}^{R,L}(\Fun(I,\cC),\Fun(I,\cC))$ is given by composition.
\end{remark}

\begin{lemma}\label{lem: representable_tensor_dual}
Let $\cC\in \CAlg(\PrL)$ and let $I\in \CAlg(\Cat)$ and assume every object in $I$ is dualizable. Equip
$\Fun(I,\cC)$ with its Day convolution symmetric monoidal structure. Then, for every
$i\in I$, tensoring with the representable object $\Map_I(i,-)\otimes \ounit_{\cC}\in \Fun(I,\cC)$
is canonically equivalent to left Kan extension along the functor $i\otimes(-)\colon I\to I$.
That is, there is a natural equivalence of endofunctors
\[
\bigl(\Map_I(i,-)\otimes \ounit_{\cC}\bigr)\otimes(-)
\simeq
(i\otimes(-))_!(-).
\]

These equivalences assemble naturally in $i\in I$ and extend to an equivalence of symmetric monoidal functors
\[
I{\op}\to
\Fun_{\Fun(I,\cC)}^L\bigl(\Fun(I,\cC),\Fun(I,\cC)\bigr).
\]
Here the symmetric monoidal structure on
\[
\Fun_{\Fun(I,\cC)}^L\bigl(\Fun(I,\cC),\Fun(I,\cC)\bigr)
\]
is given by composition.
\end{lemma}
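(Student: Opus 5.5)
The plan is to deduce the statement directly from \cref{lem: pointwise left kan otimes} and \cref{lem:Left-Kan-otimes-Day}. The dualizability hypothesis is used only at the end, to control the variance of the resulting functor.

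\textbf{Pointwise equivalence.} For a fixed $i\in I$, \cref{lem: pointwise left kan otimes} already supplies a natural equivalence
\[
\bigl(\Map_I(i,-)\otimes \ounit_{\cC}\bigr)\otimes G\simeq (i\otimes(-))_!G
\]
for every $G\in\Fun(I,\cC)$. I would first check that this equivalence is natural in $G$. This holds because every step in its construction is built from colimit-preserving functors of $G$: the external product $\boxtimes$ and the left Kan extensions $s_!$ and $\mu_!$. The equivalence therefore lives in $\Fun^L_{\Fun(I,\cC)}(\Fun(I,\cC),\Fun(I,\cC))$. The $\Fun(I,\cC)$-linearity comes from the projection formula for $(i\otimes(-))_!$, which holds because $i\otimes(-)$ is a map of $I$-modules. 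Equivalently, one can observe that both functors are tensoring with an object, up to identification.

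\textbf{Naturality and the symmetric monoidal upgrade.} Next, I would invoke \cref{lem:Left-Kan-otimes-Day}. It identifies the two assignments
\[
i\mapsto (\Map_I(i,-)\otimes\ounit_\cC)\otimes(-),\qquad i\mapsto (i\otimes(-))_!(-)
\]
as equivalent symmetric monoidal functors out of $I^{\op}$ into the endofunctor category with the composition monoidal structure. This gives the assembly in $i$. The point needing care is variance. A morphism $i\to j$ contravariantly induces $\Map_I(j,-)\to\Map_I(i,-)$. On the Kan extension side, it covariantly induces a transformation $(i\otimes-)_!\to(j\otimes-)_!$ of left Kan extensions. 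To match the two, I would use duality. Since every object of $I$ is dualizable, the functor $i\otimes(-)$ has both a left and a right adjoint, namely $i^\vee\otimes(-)$. Hence $(i\otimes-)_!\simeq (i^\vee\otimes -)^*$. This turns the covariant dependence on $i$ into a contravariant one via $i\mapsto i^\vee$, and makes the source $I^{\op}$ of the functor coherent, using $I^{\op}\simeq I$ via duals as in the preceding remark.

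\textbf{Main obstacle.} The hard step is this coherence: verifying that the symmetric monoidal map $I^{\op}\to\Fun(I,\cC)\simeq\Fun^L_{\Fun(I,\cC)}(\Fun(I,\cC),\Fun(I,\cC))$ coming from \cite[Corollary 4.8.1.12]{Lurie-HA} really sends $i$ to $(i\otimes-)_!$ functorially. I would handle it as in the proof of \cref{lem:Left-Kan-otimes-Day}. One restricts to the full subcategory $\cD$ spanned by the relevant endofunctors. This subcategory contains the unit and is closed under composition, since $(i\otimes-)_!\circ(j\otimes-)_!\simeq((i\otimes j)\otimes-)_!$. The symmetric monoidal structure on $\cD$ is then forced, so any two symmetric monoidal functors into $\cD$ that agree on underlying functors agree as symmetric monoidal functors. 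Combining this with the pointwise identification from the first step gives the claimed equivalence.
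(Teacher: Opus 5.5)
Your argument is essentially the paper's. Both reduce the statement to \cref{lem:Left-Kan-otimes-Day} and use dualizability to rewrite $(i\otimes(-))_!$ as precomposition with $i^\vee\otimes(-)$. The paper gets there by passing to right adjoints: tensoring with $\Map_I(i,-)\otimes\ounit_\cC$ has right adjoint tensoring with $\Map_I(i^\vee,-)\otimes\ounit_\cC$, and $(i\otimes(-))_!$ has right adjoint $(i\otimes(-))^*$. This gives symmetric monoidal functors $I\to\Fun^{R,L}(\Fun(I,\cC),\Fun(I,\cC))$, which it then precomposes with $(-)^\vee\colon I{\op}\to I$. Your identity $(i\otimes(-))_!\simeq(i^\vee\otimes(-))^*$, coming from the adjunction $i\otimes(-)\dashv i^\vee\otimes(-)$ in $I$, is exactly what matches that output, $(\Map_I(i,-)\otimes\ounit_\cC)\otimes(-)\simeq(-)\circ(i^\vee\otimes(-))$, with the displayed statement. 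The precomposition form is also what \cref{cor: base change} actually uses.

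Two points in your write-up are wrong, though neither breaks the argument.

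\emph{Variance.} There is no variance mismatch. A transformation $f\Rightarrow g$ of functors $I\to I$ induces $f^*\Rightarrow g^*$ on $\Fun(I,\cC)$, hence $g_!\Rightarrow f_!$ by taking mates. So a map $i\to j$ induces $(j\otimes(-))_!\to(i\otimes(-))_!$; on representables this is $\Map_I(j\otimes k,-)\to\Map_I(i\otimes k,-)$. Both assignments are therefore already functors out of $I{\op}$, which is why \cref{lem:Left-Kan-otimes-Day} can compare them at all. In any case, a natural identification of two assignments cannot change the direction of their functoriality. Dualizability is needed to get the precomposition description, not to fix variance.

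\emph{Coherence.} The symmetric monoidal structure on $\cD$ making the inclusion symmetric monoidal is indeed unique. But this does not imply that symmetric monoidal functors into $\cD$ are determined by their underlying functors: the structure equivalences $F(a)\otimes F(b)\simeq F(a\otimes b)$ and their coherences are additional data. You do not need this step here, because the symmetric monoidal comparison is exactly what you are citing from \cref{lem:Left-Kan-otimes-Day}.
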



\begin{proof}
    Taking right adjoints to \cref{lem:Left-Kan-otimes-Day} we get an equivalence of symmetric monoidal functors between
    \begin{gather*}
    I \to \Fun^{R,L}(\Fun(I,\cC),\Fun(I,\cC)), \\ 
    i \mapsto (\Map_I(i^{\vee},-)\otimes \ounit_{\cC})\otimes(-) \quad \textrm{and} \quad  
    i\mapsto (-)\circ(i\otimes (-)).
        \end{gather*}

    Precomposing with $I\op\xto{(-)^\vee} I$ gives the desired claim.
\end{proof}

\begin{remark}\label{rem: R to is symmatric monoidal}
Note that it follows from \cref{lem: representable_tensor_dual} that for $R\in \CAlg(\CMoninf(\cC))$ the functor 
\[
\spcpi \to \Mod_R(\PCMoninf), \quad A \mapsto R^{A \times (-)}
\]
is symmetric monoidal.
\end{remark}

We are now finally ready to prove our base change result.

\begin{corollary}\label{cor: base change}
Let $R\in \CAlg(\CMoninf(\cC))$ and $A\in \spcpi$. Then there is a natural equivalence of symmetric monoidal functors
\begin{gather*}
\Span(\spcpi)\op\to \Mod_{F^*F_!(R)}(\PCMoninf(\cC))\simeq \End_{\Mod_{F^*F_!(R)}(\PCMoninf(\cC))}^L(\Mod_{F^*F_!(R)}(\PCMoninf(\cC)),\\
\quad\quad A \mapsto (G\mapsto R^{A\times -}\otimes_{R^{(-)}} G) \quad \textrm{and} \quad A\mapsto (G\mapsto G({A\times (-)})).
\end{gather*}
\end{corollary}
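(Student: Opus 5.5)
The plan is to deduce the statement from \cref{lem: representable_tensor_dual}, applied with $I=\Span(\spcpi)$. Every object of $\Span(\spcpi)$ is self-dual for the cartesian product: the evaluation and coevaluation are the spans $A\times A\leftarrow A\to \pt$ and $\pt\leftarrow A\to A\times A$. So the lemma yields a symmetric monoidal equivalence, natural in $A$, between tensoring with the representable $\Map_{\Span}(A,-)\otimes\ounit_\cC$ and precomposition with $A\times(-)$.

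The first step is to identify $R^{A\times -}$ with $R\otimes(\Map_{\Span}(A,-)\otimes\ounit_\cC)$ as $R$-modules in $\PCMoninf(\cC)$. By the lemma (in the right-adjoint form used in its proof, with $A^\vee\simeq A$), the right-hand side is $R(A\times(-))=R^{A\times -}$. This is exactly the symmetric monoidal functor $A\mapsto R^{A\times(-)}$ of \cref{rem: R to is symmatric monoidal}, now extended over $\Span(\spcpi)\op$ and not merely $\spcpi$. Next I base change along the algebra map $R\to F^*F_!R$, which is the universal character; it is a map of commutative algebras because $F_!$ is symmetric monoidal and $F^*$ is lax monoidal. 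For $G\in\Mod_{F^*F_!R}(\PCMoninf(\cC))$ this gives
\[
R^{A\times -}\otimes_{R}G\simeq (\Map_{\Span}(A,-)\otimes\ounit_\cC)\otimes R\otimes_R G\simeq(\Map_{\Span}(A,-)\otimes\ounit_\cC)\otimes G\simeq G(A\times(-)),
\]
where the last equivalence is \cref{lem: representable_tensor_dual} once more. The remaining point is that the module structure on $G(A\times -)$ agrees with the restricted one. This follows because the lemma's equivalence is one of $\Fun(I,\cC)$-linear functors, so it intertwines the actions of $F^*F_!R$.

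For coherence in $A$ and the symmetric monoidal structure, I will compose the symmetric monoidal functor of \cref{lem: representable_tensor_dual}, namely $I\op\to\Fun^L_{\Fun(I,\cC)}(\Fun(I,\cC),\Fun(I,\cC))$, with base change $\Mod_\ounit\to\Mod_{F^*F_!R}$. Base change is symmetric monoidal and sends composition to composition. Its target is identified with $\Mod_{F^*F_!R}(\PCMoninf(\cC))$ via $M\mapsto M\otimes_{F^*F_!R}(-)$, with $M=F^*F_!R\otimes\Map(A,-)$. On the other side, precomposition with $A\times(-)$ preserves $F^*F_!R$-modules: precomposition $(A\times-)^*$ is lax $\Fun(I,\cC)$-linear, and strong by the lemma, so the identification descends to module categories.

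I expect the main obstacle to be bookkeeping rather than mathematics. One has to check that the two descriptions, tensoring with a representable and precomposing with $A\times(-)$, remain equivalent as $\Mod_{F^*F_!R}$-linear endofunctors, and not just as functors on $\PCMoninf(\cC)$. My first attempt would be to argue that strong $\Fun(I,\cC)$-linear functors automatically lift to $\Mod_B$ for any commutative algebra $B$, functorially and symmetric monoidally. This is a formal consequence of $\Mod_B(-)$ being a functor on $\Mod_{\Fun(I,\cC)}(\PrL)$ given by $-\otimes_{\Fun(I,\cC)}\Mod_B$.
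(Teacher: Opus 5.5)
Your proposal is correct and follows the same route as the paper. You use self-duality of objects of $\Span(\spcpi)$ and \cref{lem: representable_tensor_dual} to write $R^{A\times -}\simeq(\Map(A,-)\otimes\ounit_\cC)\otimes R^{(-)}$, cancel $R^{(-)}$ in the relative tensor product, apply the lemma once more to get $G(A\times -)$, and inherit symmetric monoidality from the lemma. Your extra remarks on why the identification respects the $F^*F_!R$-module structures (via $\Fun(I,\cC)$-linearity and the base change $-\otimes_{\Fun(I,\cC)}\Mod_{F^*F_!R}$) spell out bookkeeping that the paper leaves implicit.
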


\begin{proof}
Every object of $\Span(\spcpi)$ is canonically self-dual. Hence, applying \cref{lem: representable_tensor_dual} to the object $A$, we obtain an identification
\[
R^{A\times -}\simeq (\Map_{\Span(\spcpi)}(A,-)\otimes \ounit_\cC)\otimes R^{(-)},
\]
where $\otimes$ denotes Day convolution.

Now compute for $G\in \Mod_{F^*F_!(R)}(\PCMoninf(\cC))$:
\begin{align*}
R^{A\times -}\otimes_{R^{(-)}} G
&\simeq ((\Map(A,-)\otimes \ounit_\cC)\otimes R^{(-)})\otimes_{R^{(-)}} G \\
&\simeq (\Map(A,-)\otimes \ounit_\cC)\otimes G \\
&\simeq G(A\times -).
\end{align*}
Since the isomorphism in \cref{lem: representable_tensor_dual} is symmetric monoidal this  equivalence is symmetric monoidal as well. 
\end{proof}

\begin{proposition}\label{thm: F! of R A}
Let $R\in \CAlg(\CMoninf(\cC))$. Then there exists an equivalence of functors
\[
F_!(R^{A\times (-)})\simeq (F_!R)^{F(A)\times (-)}.
\] 
functorial in $A$. That is an equivalence of symmetric monoidal functors
\[
\Span(\spcpi)\op \to  \Mod_{F_!(R)}(\CMoninf(\cC)).
\]
\end{proposition}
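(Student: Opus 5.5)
We will prove the statement by comparing corepresented functors, i.e.\ by the Yoneda lemma in $\Mod_{F_!R}(\CMoninf(\cC))$. Since $F_!$ is symmetric monoidal (\cref{lem: exists of L! and mult st}), it sends $R^{(-)}$-modules to $F_!R$-modules. Moreover, for an $R^{(-)}$-module $M$ and an $F_!R$-module $N$ we have the base-changed adjunction
\[
\Map_{\Mod_{F_!R}}(F_!M,N)\simeq \Map_{\Mod_{R}}(M,F^*N),
\]
where $F^*N$ is regarded as an $R$-module via the unit $R\to F^*F_!R$. We apply this with $M=R^{A\times(-)}$, which is an $R$-module by \cref{rem: R to is symmatric monoidal}.

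\textbf{Step 1.} Compute maps out of $R^{A\times(-)}$. By \cref{lem: representable_tensor_dual} and self-duality of $A$ in $\Span(\spcpi)$,
\[
R^{A\times -}\simeq (\Map(A,-)\otimes\ounit_\cC)\otimes R^{(-)}.
\]
Hence this module is free on the representable object, and for any $R$-module $G$ we get
\[
\Map_{\Mod_R}(R^{A\times -},G)\simeq \Map_{\PCMoninf}(\Map(A,-)\otimes\ounit,G).
\]
Tensoring with the representable is right adjoint-compatible with restriction along $A\times(-)$, i.e.\ it is left adjoint to $G\mapsto G(A\times -)$. This is the content of \cref{cor: base change}, read through adjunction. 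So maps out of $R^{A\times -}$ into $G$ are maps out of $R$ into $G(A\times -)$.

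\textbf{Step 2.} Set $G=F^*N$. Then $G(A\times -)=N^{F(A)\times F(-)}$, using that $F$ is monoidal on spans. Thus
\[
\Map_{\Mod_R}(R,F^*(N^{F(A)\times -}))\simeq \Map_{\Mod_{F_!R}}(F_!R,N(F(A)\times -)).
\]
Running Step 1 in reverse for $F_!R$ and the object $F(A)$ identifies this with
\[
\Map_{\Mod_{F_!R}}\bigl((F_!R)^{F(A)\times -},N\bigr).
\]
Here we use that $(F_!R)^{F(A)\times -}$ is Segal, so it lies in $\Mod_{F_!R}(\CMoninf(\cC))$. By Yoneda this yields $F_!(R^{A\times -})\simeq (F_!R)^{F(A)\times -}$.

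\textbf{Step 3: naturality in $A$ and monoidality.} We run the argument at the level of functors $\Span(\spcpi)^{\op}\to \Mod$, using the symmetric monoidal equivalences of \cref{lem: representable_tensor_dual} and \cref{cor: base change}. Both sides are then composites of symmetric monoidal functors: $A\mapsto R^{A\times -}$ followed by $F_!$, and $A\mapsto F(A)\mapsto (F_!R)^{F(A)\times -}$. The equivalence above comes from the adjunction $F_!\dashv F^*$ together with the symmetric monoidal identification of tensoring by representables with precomposition by $A\times(-)$, so it is symmetric monoidal.

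\textbf{Main obstacle.} The delicate point is the interplay of Segalification with $F_!$. We need the identity $F^*(N^{F(A)\times -})=(F^*N)(A\times -)$ to be compatible with the module structures. We also need $F_!$ on $\PCMoninf$, followed by Segalification, to commute with tensoring by the representable $\Map(A,-)$ in the form $F_!(\Map(A,-)\otimes M)\simeq \Map(F(A),-)\otimes F_!M$. The latter follows from $F_!$ being symmetric monoidal and sending the representable at $A$ to the representable at $F(A)$, which holds because left Kan extension sends corepresentables to corepresentables. I would make this identity the core lemma and verify its coherence through the symmetric monoidal functor $I^{\op}\to \Fun(I,\cC)$ of \cref{lem:Left-Kan-otimes-Day}, applied functorially in $F$.
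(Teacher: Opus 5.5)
Your proposal is correct, but Steps 1--2 take a route dual to the paper's.

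The paper argues purely with left adjoints. It writes $R^{A\times(-)}\simeq R^{(-)}\otimes A_!\ounit_\cC$ via \cref{lem: representable_tensor_dual} and \cref{lem: tensor left kan}, and applies the symmetric monoidal functor $F_!$. It then uses that left Kan extension along $F$ sends $A_!\ounit_\cC$ to $F(A)_!\ounit_\cC$, and reads the result back through \cref{lem: representable_tensor_dual} for $F(A)$. This is exactly the ``core lemma'' of your last paragraph.

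Your Yoneda argument instead uses the right-adjoint counterpart of that identity, namely $F\circ(A\times -)\simeq(F(A)\times -)\circ F$ on spans, together with $F_!\dashv F^*$ on modules. It is self-contained and does not need the core lemma at all. It can even be shortened. By freeness and \cref{lem: tensor left kan}, both $\Map_{\Mod_{F_!R}}(F_!(R^{A\times(-)}),N)$ and $\Map_{\Mod_{F_!R}}((F_!R)^{F(A)\times(-)},N)$ are naturally $\Map_\cC(\ounit_\cC,N(F(A)))$. So your worry about the module structure on $(F^*N)(A\times -)$ never arises. Segalification enters only through two facts: $(F_!R)^{F(A)\times(-)}$ is Segal, and $\Mod_{F_!R}(\CMoninf(\cC))\subseteq\Mod_{F_!R}(\PCMoninf(\cC))$ is full. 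In the paper's route the matching point is that the Day convolution $F_!R\otimes F(A)_!\ounit_\cC$ is already Segal, so Segalifying it changes nothing.

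What the paper's route buys is the last clause of the statement. Both sides appear as composites of symmetric monoidal functors, so functoriality in $\Span(\spcpi)$ and monoidality are inherited directly. Yoneda, by contrast, gives the equivalence only objectwise and naturally in $N$. Your Step 3 asserts rather than proves the upgrade to a symmetric monoidal equivalence in $A$. To actually carry it out you would fall back on the left-adjoint identity, i.e.\ on the paper's argument.
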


\begin{proof}
    By \cref{lem: representable_tensor_dual} and \cref{lem: tensor left kan} we have
    \[
    R^{A\times (-)} \simeq R^{(-)} \otimes (\Map(A,-)\otimes \ounit_\cC)\simeq R^{(-)} \otimes A_!\ounit_\cC.
    \]
    As $F_!$ is symmetric monoidal, we get 
    \[
    F_!(R^{A\times (-)})=F_!R^{(-)} \otimes F(A)_!\ounit_\cC\simeq F_!(R^{(-)}) \otimes (\Map(F(A),-)\otimes \ounit_\cC)\simeq F_!(R)^{F(A)\times (-)}.
    \]
\end{proof}




\begin{remark}
    None of the chromatic applications in this paper depend essentially on \cref{thm: F! of R A}, in fact, an earlier draft was written without it. Instead, one may argue directly, as in \cref{rem: char root of unity}, to identify the character of higher roots of unity. This is the main ingredient needed to establish compatibility of the character with the semiadditive Fourier transform, which already suffices to identify the transchromatic character with the universal character of $\En$. In the chromatic case, taking $F=\L_p^{n-t}$, one may then recover the base-change result of \cref{thm: F! of R A} a posteriori.
\end{remark}


\subsection{Cyclotomic Blue-Shift}\label{sec: Cyclotomic Blue-Shift}

The goal of this subsection is to show that, under mild hypotheses on $F$, the universal $F$-character is compatible with the formation of higher cyclotomic extensions. We begin with a brief review of higher cyclotomic extensions, then record the additional assumptions we will impose on $F$, and finally prove that $F_!$ sends higher cyclotomic extensions to higher cyclotomic extensions up to a height-dependent shift.

To that end, we recall the construction of higher cyclotomic extensions from \cite[Section 4.1]{BMCSY-cycloshift}. Assume that $\cD$ is a presentably symmetric monoidal, stable, $\infty$-semiadditive category of semiadditive height $n$ (for example, $\Sp_{T(n)}$). The map $\Sigma^n \ZZ/p \to 0$ induces a canonical splitting in commutative algebras 
\[
\ounit_{\cD}[\Sigma^n \ZZ/p]\simeq \ounit_{\cD}\times \ounit_{\cD}[\omega_p^{(n)}]
\]
which is $(\ZZ/p^r)\units$ equivariant. The factor $\ounit_{\cD}[\omega_p^{(n)}]$ is called the height $n$ $p$-th cyclotomic extension.

If $M\in \Sp_{\ge 0}$ is equipped with a map $\Sigma^n \ZZ/p \to M$, then $\ounit_{\cD}[M]$ is naturally an algebra over $\ounit_{\cD}[\Sigma^n \ZZ/p]$.

\begin{definition}
    Let $X\in \cD$ and let $M\in \Sp_{\ge 0}$ be under $\Sigma^n \ZZ/p$. Define the two base changes
    \[
    X[M]_0 := X\otimes_{\ounit_{\cD}[\Sigma^n \ZZ/p]} \ounit_{\cD},
    \qquad
    X[M]_\omega := X\otimes_{\ounit_{\cD}[\Sigma^n \ZZ/p]} \ounit_{\cD}[\omega_p^{(n)}].
    \]
    Then $X[M]:=X\otimes \ounit_{\cD}[M]$ decomposes as a product
    \[
    X[M]\simeq X[M]_0 \times X[M]_\omega.
    \]
    This splitting is functorial in $M$ (under $\Sigma^n \ZZ/p$) and is compatible with the monoidal structure in $X$.
\end{definition}

\begin{definition} \label{def: F orientable}
    Assume that $\dim_{\ZZ/p^r}(F)$ is finite. We say that $F$ is $\ZZ/p^r$-orientable if there exists an isomorphism  $\pi_{n-\dim_{\ZZ/p^r}(F)}F(\B^n\ZZ/p^r) \simeq \ZZ/p^r$ for any $n\geq \dim_{\ZZ/p^r}(F)$. We call a choice of such isomorphisms a $\ZZ/p^r$ orientation of $F$. 
\end{definition}

\begin{example}
    Let $F=\Map(M,-)$, where $M$ is an $(n-t)$-dimensional closed connected manifold. Then $\dim_{\ZZ/p^r}(F)=n-t$, and
    \[
    \pi_{t}(F(\B^{n}\ZZ/p))\simeq H^{n-t}(M,\ZZ/p^r).
    \]
    In particular, $F$ is $\ZZ/p^r$-orientable if and only if $M$ is $\ZZ/p^r$-orientable.  
\end{example}

\begin{lemma}\label{lem: F(B^nZ/pr) splits}
    Assume that $\dim_{\ZZ/p^r}(F)=n-t$ and that it is $\ZZ/p^r$-orientable. Then a choice of an orientation for $F$ is equivalent to a choice of a splitting
    \[
    F(\B^{n}\ZZ/p^r)\simeq \B^{t}\ZZ/p^r \times \tau_{\geq t+1}F(\B^{n}\ZZ/p^r).
    \]
\end{lemma}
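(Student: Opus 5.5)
We will use the module structure from the proof of \cref{lem: F(BkCp) Cp module}, now over $\ZZ/p^r$. Since $F$ is symmetric monoidal and preserves finite coproducts, it preserves commutative group objects and their modules. We have $F(\ZZ/p^r)\simeq \coprod_{p^r}F(\pt)\simeq \ZZ/p^r$ as a ring object. Hence $F(\B^n\ZZ/p^r)$ is a $\ZZ/p^r$-module object in $\pi$-finite spaces, that is, the infinite loop space of a connective $\ZZ/p^r$-module spectrum $N$. By the dimension hypothesis, the bottom homotopy group of $N$ is $\pi_t$, so there is a Postnikov fiber sequence
\[
\tau_{\geq t+1}N \to N \to \Sigma^t\pi_tN.
\]
The plan is to show that splittings of this sequence (as spaces, or as modules) correspond exactly to identifications $\pi_t N\simeq \ZZ/p^r$, that is, to orientations.

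Given an orientation $\pi_tN\simeq\ZZ/p^r$, we will split the sequence as follows. The bottom group is now $\ZZ/p^r$, which is a free, and hence projective, module over $\pi_0$ of the ring $\ZZ/p^r$ (discrete). So the class $1\in\pi_tN$ is represented by a map of $\ZZ/p^r$-modules $\Sigma^t\ZZ/p^r\to N$. Composing with the truncation $N\to\Sigma^t\pi_tN\simeq\Sigma^t\ZZ/p^r$ gives a map inducing the identity on $\pi_t$, hence an equivalence. This section splits the fiber sequence, giving
\[
N\simeq \Sigma^t\ZZ/p^r\oplus\tau_{\geq t+1}N,
\]
and applying $\Omega^\infty$ yields the claimed product decomposition. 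Conversely, a splitting $F(\B^n\ZZ/p^r)\simeq\B^t\ZZ/p^r\times\tau_{\geq t+1}F(\B^n\ZZ/p^r)$ recovers an orientation: take $\pi_t$ of both sides, noting that the second factor has no $\pi_t$. We will then check that these two assignments are mutually inverse, up to the evident notion of equivalence of splittings, namely splittings compatible with the truncation map.

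The main obstacle is making precise which space of splittings is meant, since a section of $N\to\Sigma^t\ZZ/p^r$ is not unique: sections form a torsor under $\Map(\Sigma^t\ZZ/p^r,\tau_{\geq t+1}N)$. We will interpret the statement as saying that splittings compatible with the Postnikov truncation, taken up to this ambiguity (on $\pi_0$), are in bijection with orientations. The map from splittings to orientations ignores this choice, and the map from orientations to splittings lands in a nonempty set; this is what the argument above establishes. If one needs the statement for every $n\geq t$ simultaneously, we will note that the construction is natural, because it is obtained by applying $\pi_t$.
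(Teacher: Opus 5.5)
Your proposal is correct and follows essentially the same argument as the paper. Both use the $F(\ZZ/p^r)\simeq\ZZ/p^r$-module structure, represent the chosen generator of $\pi_t$ by a module map $\Sigma^t\ZZ/p^r\to F(\B^n\ZZ/p^r)$, and note that this gives a section of the truncation, which splits the Postnikov fiber sequence.

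Your treatment of the converse and of the non-uniqueness of sections goes beyond what the paper writes. For sections taken in $\ZZ/p^r$-modules the ambiguity you worry about is actually trivial on $\pi_0$, since $\pi_0\Map_{\ZZ/p^r}(\Sigma^t\ZZ/p^r,\tau_{\geq t+1}N)\cong\pi_t\tau_{\geq t+1}N=0$. So the correspondence with orientations is an honest bijection on $\pi_0$, with no further quotienting needed.
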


\begin{proof}
    Since $F$ commutes with finite coproducts and is symmetric monoidal, $F(\B^{n}\ZZ/p^r)$ is canonically a module over $F(\ZZ/p^r)\simeq \ZZ/p^r$. By assumption, $\pi_t(F(\B^{n}\ZZ/p^r))\simeq \ZZ/p^r$. Giving such an isomorphism is equivalent to choosing a generator, i.e. a map of $\ZZ/p^r$ modules
    \[
    \Sigma^{t}\ZZ/p^r \to F(\B^{n}\ZZ/p^r)
    \]
    which is an isomorphism on $\pi_t$. Such a choice determines a section of the canonical map
    \[
    F(\B^{n}\ZZ/p^r) \to \Sigma^{t}\ZZ/p^r,
    \]
    and any such section splits the canonical fiber sequence
    \[
    \tau_{\ge t+1}F(\B^{n}\ZZ/p^r) \to F(\B^{n}\ZZ/p^r) \to \Sigma^{t}\ZZ/p^r,
    \]
    yielding the desired splitting.
\end{proof}
 
We are now ready to prove that $F_!$ sends higher cyclotomic extensions to higher cyclotomic extensions up to a height-dependent shift. We will first show it behaves well with taking group algebra.

\begin{remark}
Let $R\in \CAlg(\CMoninf(\cC))$ and let $A\in \spcpi$ ($A\in \Sp^{\pi\text{-}\fin,\mrm{cn}}$).
Throughout, the notation $R[A]$ denotes the constant colimit (the associated group algebra) of $A$ in $\CMoninf(\cC)$.
\end{remark}

\begin{corollary}\label{cor: group_algebra_equivariant_equiv}
Let $R\in \CAlg(\CMoninf(\cC))$. Then there is an equivalence in
\[
\Fun(\Sp_{(p)}^{\pi-\fin,\mrm{cn}},\CAlg_{F_!(R)}(\CMoninf(\cC)))
\]
between the functors
\[
M \mapsto F_!(R[M]) \quad \textrm{and} \quad M \mapsto F_!(R)[F(M)].
\]
In particular, if $R$ is of height $n$ and $F$ is of dimension $\dim_{\ZZ/p^r}(F)=n-t$ and $\ZZ/p^r$-oriented, then
\[
F_!(R[\B^{n}(\ZZ/p^r)^\times]^{(-)})\simeq
F_!(R^{(-)})[\B^{t}\ZZ/p^r]
\quad \in\;
\CAlg(\Mod_{F_!(R)}(\CMoninf(\cC)))^{\B(\ZZ/p^r)^\times}.
\]
\end{corollary}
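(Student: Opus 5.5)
The plan is to realize group algebras as the Day-convolution dual of the exponentials handled in \cref{thm: F! of R A}, and then use symmetric monoidality of $F_!$.

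\textbf{Step 1: identify $R[M]$ as a Day convolution product.} For a $\pi$-finite connective $p$-local spectrum $M$, regard its underlying space $\Omega^\infty M$ as an object of $\Span(\spcpi)$. Since every object of $\Span(\spcpi)$ is self-dual, the representable $\Map_{\Span(\spcpi)}(M,-)\otimes\ounit_\cC$ computes, at $\pt$, the constant colimit $\ounit[M]$. After Segalification it corresponds to the group algebra $\ounit[M]$ in $\CMoninf(\cC)$. By \cref{lem: tensor left kan} this representable is $M_!\ounit_\cC$, so
\[
R[M]\simeq R\otimes M_!\ounit_\cC
\]
in $\Mod_R(\PCMoninf(\cC))$, after Segalifying.

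\textbf{Step 2: apply $F_!$ and assemble the equivalences.} Since $F_!$ is symmetric monoidal (\cref{lem: exists of L! and mult st}) and $F_!\circ M_!\simeq F(M)_!$ as left Kan extensions, we get
\[
F_!(R[M])\simeq F_!R\otimes F(M)_!\ounit\simeq F_!(R)[F(M)].
\]
This is the same manipulation as in the proof of \cref{thm: F! of R A}. To promote the pointwise equivalence to an equivalence of functors out of $\Sp^{\pi\text{-}\fin,\mrm{cn}}_{(p)}$ valued in $F_!R$-algebras, I would use that $M\mapsto M_!\ounit$ is symmetric monoidal. The source of this functor is the symmetric monoidal embedding of \cref{lem:Left-Kan-otimes-Day}, restricted to the wide subcategory of forward maps of spans. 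Commutative group objects, the spectra $M$, then go to commutative algebras, compatibly with $F$, since $F$ preserves products and hence group objects. Naturality in $M$ comes from the naturality statements of \cref{lem: representable_tensor_dual}.

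\textbf{Step 3: the particular case.} Take $M=\Sigma^n\ZZ/p^r$ with its $(\ZZ/p^r)^\times$-action, and use functoriality from Step 2 to keep track of the action. By \cref{lem: F(B^nZ/pr) splits}, an orientation gives an equivariant splitting
\[
F(\B^n\ZZ/p^r)\simeq \B^t\ZZ/p^r\times\tau_{\ge t+1}F(\B^n\ZZ/p^r).
\]
The splitting is equivariant because the orientation is an isomorphism of $\ZZ/p^r$-modules and units act by module automorphisms. Therefore
\[
F_!(R)[F(\B^n\ZZ/p^r)]\simeq F_!(R)[\B^t\ZZ/p^r]\otimes F_!(R)[\tau_{\ge t+1}F(\B^n\ZZ/p^r)].
\]
By \cref{cor: chr single height}, $F_!R$ has height $t$. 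So for each factor $\B^i\ZZ/p$ with $i>t$ (these are the factors provided by \cref{lem: F(BkCp) Cp module} and its $p^r$ analogue), the group algebra $F_!(R)[\B^i\ZZ/p]\simeq F_!(R)$: at height $t$, $\pi$-finite $(t)$-connected spaces are amenable, by the CSY results on height. This kills the second factor and gives the stated equivalence.

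\textbf{Main obstacle.} I expect the hard part to be in Step 2: making sure the Segalification of the Day-convolution representable really yields the group algebra, naturally and multiplicatively in $M$. I would handle this by checking the statement on the generating representables and using that Segalification is symmetric monoidal (\cite{Ben-Moshe-Schlank-2024-K-theory}).
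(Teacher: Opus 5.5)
Your outline matches the paper's: express $R[M]$ through a representable (the paper does this by viewing $R[M]$ as the dual of $R^{M}$ and dualizing \cref{thm: F! of R A}), use that $F_!$ is symmetric monoidal and that left Kan extensions compose, restrict along $\spcpi\to\Span(\spcpi)$ to get algebras, and kill $\tau_{\ge t+1}F(\B^n\ZZ/p^r)$ by height.

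However, Step 1 as you justify it does not support Step 2. First, $\Map_{\Span(\spcpi)}(M,-)\otimes\ounit_\cC$ evaluated at $\pt$ is not $\ounit[M]$: the space $\Map_{\Span(\spcpi)}(M,\pt)$ is the groupoid of $\pi$-finite spaces over $M$. More importantly, an identification $R[M]\simeq R\otimes M_!\ounit_\cC$ that holds only after Segalification does not give $F_!(R[M])\simeq F_!(R\otimes M_!\ounit_\cC)$. In the statement, $F_!$ is applied to $R[M]$ as an object of $\PCMoninf(\cC)$. It is left Kan extension along $F$ followed by Segalification, and it does not invert Segalification maps, since $F^*$ does not preserve Segal objects. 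Concretely, take $\cC=\Sp_{T(n)}$, $F=\L_p^{n-t}$ and $t<n$. Then $F_!$ sends the unit of $\PCMoninf(\cC)$ to $\SS_{T(n)}$. But it sends the Segalification $\SS_{T(n)}^{(-)}$ of that unit to an object of $\Sp_{T(n)}$ of height $\le t$ (by \cref{lem: height leq}), i.e. to $0$. So the obstacle you single out, computing the Segalification of the representable using that Segalification is symmetric monoidal, is aimed at the wrong statement and would not close the argument.

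What you actually need, and what is true, is that no Segalification is involved. By \cref{lem: representable_tensor_dual} together with self-duality of objects of $\Span(\spcpi)$ (exactly as in the proof of \cref{thm: F! of R A}), $R\otimes M_!\ounit_\cC\simeq R^{M\times(-)}$ already in $\PCMoninf(\cC)$, and this object is Segal. Then $R^{M}\simeq R[M]$ by ambidexterity in $\CMoninf(\cC)$. On this route you must still match the covariant functoriality of $R[M]$ in $M$ with the transfers on $R^{M\times(-)}$. The paper avoids that check by dualizing: the dual of restriction along $f$ is the induced map of group algebras, the duality data lives in $\Mod_R(\PCMoninf(\cC))$ by \cref{rem: R to is symmatric monoidal}, and the symmetric monoidal functor $F_!$ preserves duals.

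In Step 3, cite \cref{lem: height leq} rather than \cref{cor: chr single height}, which requires $\cC=\Sp$ and $R$ of height exactly $n$. Height $\le t$ already gives $F_!(R)[A]\simeq F_!(R)$ for every $t$-connected $\pi$-finite $p$-space $A$. So the decomposition into factors $\B^i\ZZ/p$ is unnecessary, and for $r>1$ a $\ZZ/p^r$-module spectrum need not split into Eilenberg--MacLane pieces anyway. Your argument that the orientation splitting is $(\ZZ/p^r)^\times$-equivariant is a useful point the paper leaves implicit.
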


\begin{proof}
By \cref{rem: R to is symmatric monoidal}, for any $A\in \spcpi$ the duality data of $R[A]$ and $R^{A}$ in $\Mod_R(\CMoninf(\cC))$ determines duality data in $\Mod_R(\PCMoninf(\cC))$. Since the functor
\[
F_!\colon \Mod_R(\PCMoninf(\cC))\to \Mod_{F_!(R)}(\CMoninf(\cC))
\]
is symmetric monoidal, the first statement follows from \cref{thm: F! of R A} by taking duals and restricting along the symmetric monoidal functor $\spcpi\to \Span(\spcpi)$.

For the second statement, by \cref{lem: F(B^nZ/pr) splits} we have a splitting
\[
F(\B^{n}\ZZ/p^r)\simeq \B^{t}\ZZ/p^r \times \tau_{\ge t+1}F(\B^{n}\ZZ/p^r).
\]
By \cref{lem: height leq}, the object $F_!(R)$ has height $t$, hence
\[
F_!(R)[\tau_{\ge t+1}F(\B^{n}\ZZ/p^r)]\simeq F_!(R).
\]
Therefore
\[
F_!(R)[F(\B^{n}\ZZ/p^r)]\simeq F_!(R)[\B^{t}\ZZ/p^r],
\]
and this gives the claimed equivalence.
\end{proof}

We now turn to show that the equivalence of \cref{cor: group_algebra_equivariant_equiv} is compatible with the splittings defining the height $n$ and height $t$ $p^r$-th cyclotomic extensions. This is essentially a consequence of the following proposition from \cite{BMCSY-cycloshift}.

\begin{proposition} \cite[Lemma 4.1]{BMCSY-cycloshift}\label{prop: compering decompositions}
    Let $\cC$ be a symmetric monoidal additive category, and let $R \in \CAlg(\cC)$. Assume that we are given two decompositions
    \[
    R \simeq R_1 \times R_2, \quad\quad R \simeq \overline{R_1} \times \overline{R_2}.
    \]
    in $\CAlg(\cC)$ and an isomorphism of $R_1$ and $\overline{R_1}$ under $R$. Then, there is an isomorphism of $R_2$ and $\overline{R_2}$ under R as well, namely an isomorphism of decompositions.
\end{proposition}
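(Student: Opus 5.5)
The plan is to reduce the statement to a comparison of idempotents in the commutative ring $\pi_0 R$ of endomorphisms of the unit of $\Mod_R(\cC)$ (more precisely, in $\pi_0\Map_{\cC}(\ounit,R)$). In an additive symmetric monoidal category, a decomposition $R\simeq R_1\times R_2$ in $\CAlg(\cC)$ is determined by an idempotent $e\in\pi_0\End_{\Mod_R}(R)$: the projection $R\to R_1$ exhibits $R_1$ as the localization $R[e^{-1}]$ and $R_2$ as $R[(1-e)^{-1}]$. The key point is that $\pi_0\End_{\Mod_R}(R)$ is commutative, being $\pi_0$ of a commutative algebra's endomorphisms of the unit, and that the space of such idempotents is discrete. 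This is because idempotents in an $\EE_\infty$-ring form a discrete set, and the space of $\EE_\infty$-algebra structures on a split factor is contractible.

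The first step is to prove the following: for an idempotent $e$, the space of commutative algebra maps $R[e^{-1}]\to S$ under $R$ is contractible if $e\mapsto 1$ in $S$, and empty otherwise. With this in hand, the given isomorphism $R_1\simeq\overline{R_1}$ under $R$ forces the idempotents $e$ and $\bar e$ to have the same image in $\pi_0R_1\simeq\pi_0\overline{R_1}$.

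The second step is to use the two decompositions to transport this to an equality $e=\bar e$ in $\pi_0 R$. Since $e$ maps to $1$ in $R_1$ and to $0$ in $R_2$, and $R_1$ is a retract as an $R$-module, the image of $\bar e$ in $R_1$ equals $1$. Hence $\bar e e=e$. Applying the symmetric argument with the inverse isomorphism gives $e\bar e=\bar e$, so $e=\bar e$.

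It then follows that $1-e=1-\bar e$, and by the universal property of localization at an idempotent we get $R_2\simeq R[(1-e)^{-1}]\simeq\overline{R_2}$ canonically under $R$. This assembles into an isomorphism of the two decompositions.

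The main obstacle is the first step: justifying the universal property of the factor $R_1$ as an idempotent localization purely in terms of a commutative algebra in an additive (not necessarily stable or presentable) symmetric monoidal category. I would handle it by observing that $R_1\otimes_R R_1\simeq R_1$ via the multiplication, since the decomposition is multiplicative and the cross terms $R_1\otimes_R R_2$ vanish because $e(1-e)=0$. This makes $R\to R_1$ an idempotent algebra, for which maps out are either empty or contractible.
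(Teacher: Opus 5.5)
Your argument is correct wherever relative tensor products over $R$ exist, which covers every use in the paper. It is also a genuinely different, self-contained route: the paper gives no proof of its own, only citing \cite[Lemma 4.1]{BMCSY-cycloshift} and asserting that the argument there needs only additivity. You attach to each decomposition the idempotent $e=(1,0)\in\pi_0\Map_{\cC}(\ounit,R)$, show $e=\bar e$, and conclude from the universal property of $R\to R_2$ as the idempotent localization at $1-e$. This has the advantage of making explicit that decompositions are governed by idempotents in $\pi_0R$.

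Two small corrections:
\begin{itemize}
\item The identity $\bar e e=e$ does not come from $R_1$ being an $R$-module retract. It comes from the isomorphism $R_1\simeq\overline{R_1}$ being under $R$, so $\bar e\mapsto 1$ in $\pi_0R_1$, together with the ring isomorphism $\pi_0R\cong\pi_0R_1\times\pi_0R_2$, in which $e=(1,0)$ and $\bar e=(1,\ast)$.
\item The discreteness claims in your first paragraph are not needed for the statement and can be dropped.
\end{itemize}

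The real caveat is Step 1. Your justification uses three things: the symmetric monoidal structure $\otimes_R$ on $\Mod_R(\cC)$, the identification $\CAlg(\Mod_R(\cC))\simeq\CAlg(\cC)_{R/}$, and the theory of idempotent algebras. All of these require $\cC$ to admit the geometric realizations computing relative tensor products, compatibly with $\otimes$. So, contrary to your last paragraph, this does not handle an arbitrary additive symmetric monoidal category. It does cover the paper's applications, where the ambient category is $\CMoninf(\cD)$, or algebras and modules therein, for presentable additive $\cD$.

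Once $\otimes_R$ is available, there is a shorter argument that skips idempotents entirely. Since $R_1\simeq\overline{R_1}$ under $R$, both $R_2\otimes_R\overline{R_1}\simeq R_2\otimes_R R_1$ and $R_1\otimes_R\overline{R_2}\simeq\overline{R_1}\otimes_R\overline{R_2}$ are rings in which $1=0$, hence they vanish. Since $\otimes_R$ distributes over finite products in the additive setting, the two maps $R_2\to R_2\otimes_R\overline{R_2}\leftarrow\overline{R_2}$ in $\CAlg(\cC)_{R/}$ are equivalences.
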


\begin{proof}
    The claim in \cite[Lemma 4.1]{BMCSY-cycloshift}, assumes stability but the proof works verbatim only assuming additivity.
\end{proof}

\begin{proposition}\cite[Lemma 4.3]{BMCSY-cycloshift} \label{prop: decomposition ZZ/p suffices}
    Let $I$ be a category with an initial object $i$ and let $\cC \in \CAlg(\PrL)$ be $0$-semiadditive.
    Then, a decomposition of a functor $F: I \to \CAlg(\cC)$ into a product is the same data as a map $\ounit^2_{\cC} \to F(i)$.
\end{proposition}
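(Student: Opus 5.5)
The plan is to use that in a $0$-semiadditive (hence additive, since we are in $\CAlg(\PrL)$ with biproducts) category $\cC$, decompositions of a commutative algebra $A$ as a product $A\simeq A_1\times A_2$ correspond to maps of commutative algebras $\ounit_\cC^2=\ounit_\cC\times\ounit_\cC\to A$. Indeed, the product $\ounit\times\ounit$ is the free "idempotent-splitting" object. A map $\ounit^2\to A$ produces $A_1=A\otimes_{\ounit^2}\ounit$ and $A_2=A\otimes_{\ounit^2}\ounit$, the two base changes along the projections. Conversely, a decomposition $A\simeq A_1\times A_2$ gives $\ounit^2\to A_1\times A_2$ as the product of the unit maps.

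First I will make this correspondence precise for a single algebra. The functor $\CAlg(\cC)_{\ounit^2/}\to\CAlg(\cC)\times\CAlg(\cC)$, sending $A$ to $(A\otimes_{\ounit^2}\ounit,\ A\otimes_{\ounit^2}\ounit)$, is an equivalence, with inverse $(A_1,A_2)\mapsto A_1\times A_2$. The key input is that $\Mod_{\ounit^2}(\cC)\simeq\cC\times\cC$. Additivity gives that $\ounit^2$-modules split via the two orthogonal idempotents, and this splitting is symmetric monoidal, so it passes to $\CAlg$. Thus a decomposition of $A$ (a pair $(A_1,A_2)$ together with an equivalence $A\simeq A_1\times A_2$) is the same as a lift of $A$ to $\CAlg(\cC)_{\ounit^2/}$.

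Next I will globalize over $I$. A decomposition of $F\colon I\to\CAlg(\cC)$ is a lift of $F$ to $\Fun(I,\CAlg(\cC)_{\ounit^2/})\simeq\Fun(I,\CAlg(\cC))_{\underline{\ounit^2}/}$, where $\underline{\ounit^2}$ is the constant functor. Since $I$ has an initial object $i$, the constant functor is the left Kan extension of $\ounit^2$ along $\{i\}\hookrightarrow I$. Equivalently, $\lim_I$ is evaluation at $i$ applied to the diagonal adjunction, so maps $\underline{\ounit^2}\to F$ in $\Fun(I,\CAlg(\cC))$ correspond to maps $\ounit^2\to F(i)$. Precisely, $\Map(\mathrm{const}\,X,F)\simeq\Map(X,\lim_I F)$, but the relevant identification here is that the constant functor is left adjoint to evaluation at an initial object. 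More exactly, $\mathrm{colim}_{I^{\op}}$ over spaces is contractible, or one can note that for $I$ with an initial object the functor $i_!$ of \cref{lem: tensor left kan} applied to $X$ is the constant functor. Composing these equivalences gives the claim.

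The main obstacle is the first step: making the equivalence $\Mod_{\ounit^2}(\cC)\simeq\cC\times\cC$ symmetric monoidal in the $\infty$-categorical setting, and checking that "decomposition" data exactly matches (rather than merely maps to) the under-category data. I would handle this by viewing $\ounit^2$ as $\ounit[\{0,1\}]$ with the discrete algebra structure and using that $\cC\simeq\cC\otimes\Mod_{\SS}$-linearity reduces the statement to $\cC$ being $\CMon$-enriched with split idempotents, which additivity provides.
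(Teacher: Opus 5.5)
The paper gives no proof of its own here (it only cites \cite[Lemma 4.3]{BMCSY-cycloshift}). Your argument is the standard one and is correct. The symmetric monoidal splitting $\Mod_{\ounit^2}(\cC)\simeq\cC\times\cC$, by base change along the two projections $p_1,p_2\colon\ounit^2\to\ounit$, identifies $\CAlg(\cC)_{\ounit^2/}\simeq\CAlg(\cC)\times\CAlg(\cC)$ compatibly with the functors to $\CAlg(\cC)$. So decompositions of $A$ form the fiber $\Map_{\CAlg(\cC)}(\ounit^2,A)$, and $\Fun(I,\CAlg(\cC)_{\ounit^2/})\simeq\Fun(I,\CAlg(\cC))_{\mathrm{const}(\ounit^2)/}$ together with $\mathrm{const}\dashv\lim_I\simeq\ev_i$ handles the globalization.

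One correction: $0$-semiadditivity does not imply additivity (e.g.\ $\CMon(\spc)$), and $\cC\simeq\cC\otimes\Sp$ holds only for stable $\cC$. Neither is needed. The unit $M\to(p_1)_!M\times(p_2)_!M$ is a transformation of colimit-preserving functors, and it is an equivalence on free modules $\ounit^2\otimes X$ because $\otimes$ preserves finite biproducts. This already gives the splitting under semiadditivity alone, and it also sidesteps any coherence issue in splitting the idempotents.
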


\begin{theorem}\label{thm:cyclo blue shift}
    Assume $\cC$ is additive, $R\in \CAlg(\CMoninf(\cC))$ of height $n$ and $\dim_{\ZZ/p^r}(F)=n-t$ and that $F$ is $\ZZ/p^r$ orientable. Then there is an  equivalence
    \[
    F_!(R[\omega^{(n)}_{p^r}]) \simeq F_!(R)[\omega^{(t)}_{p^r}]\in \CAlg(\CMoninf(\cC))^{(\ZZ/p^r)\units}.
    \]
\end{theorem}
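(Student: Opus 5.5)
The plan is to apply $F_!$ to the defining splitting of the height $n$ cyclotomic extension and compare the result with the defining splitting at height $t$. Concretely, $R[\omega^{(n)}_{p^r}]$ is a factor of the splitting
\[
R[\Sigma^n\ZZ/p^r]\simeq R[\Sigma^n\ZZ/p^r]_0\times R[\omega^{(n)}_{p^r}],
\]
which is $(\ZZ/p^r)\units$-equivariant and natural in $M$ under $\Sigma^n\ZZ/p^r$. Since $F_!$ is symmetric monoidal (\cref{lem: exists of L! and mult st}) and a left adjoint between additive categories, it preserves finite products. It therefore carries this splitting to a $(\ZZ/p^r)\units$-equivariant splitting of $F_!(R[\Sigma^n\ZZ/p^r])$ in $\CAlg(\CMoninf(\cC))$.

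By \cref{cor: group_algebra_equivariant_equiv}, using the orientation, we have
\[
F_!(R[\Sigma^n\ZZ/p^r])\simeq F_!(R)[\Sigma^t\ZZ/p^r]
\]
equivariantly. By \cref{cor: chr single height}, $F_!R$ has height $t$, so the target carries its own canonical splitting $F_!(R)\times F_!(R)[\omega^{(t)}_{p^r}]$. Both decompositions therefore live on the same equivariant commutative algebra. By \cref{prop: compering decompositions}, applied in the additive category $\CAlg(\CMoninf(\cC))^{\B(\ZZ/p^r)\units}$ or applied naturally, it suffices to identify the first factors compatibly under $F_!(R)[\Sigma^t\ZZ/p^r]$. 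The second factors are then identified, which is the claim.

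To identify the first factors, note that $R[\Sigma^n\ZZ/p^r]_0=R$ is the factor cut out by the augmentation induced by $\Sigma^n\ZZ/p^r\to 0$. Functoriality of \cref{cor: group_algebra_equivariant_equiv} in $M$ shows that $F_!$ of this augmentation is the augmentation
\[
F_!(R)[F(\Sigma^n\ZZ/p^r)]\to F_!(R)[F(0)]=F_!(R).
\]
Under the splitting $F(\B^n\ZZ/p^r)\simeq \B^t\ZZ/p^r\times\tau_{\ge t+1}$, and after collapsing the $\tau_{\ge t+1}$ part by height $t$, this becomes the augmentation of $F_!(R)[\Sigma^t\ZZ/p^r]$. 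So the "$0$" factors agree as algebras under the common ring. Equivalently, by \cref{prop: decomposition ZZ/p suffices}, a decomposition natural in $M$ under $\Sigma^n\ZZ/p^r$ is determined by a single idempotent map $\ounit^2\to(\text{value at the initial object})$. It thus suffices to check that the two idempotents in $F_!(R)[\Sigma^t\ZZ/p^r]$ coincide, and this is the augmentation idempotent in both cases.

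The main obstacle is equivariance. The $(\ZZ/p^r)\units$-action on $\B^n\ZZ/p^r$ must be carried by $F$ to the action on the oriented summand $\B^t\ZZ/p^r$, and the collapse of $\tau_{\ge t+1}$ must be equivariant. I would handle this by using that the orientation splitting of \cref{lem: F(B^nZ/pr) splits} is $\ZZ/p^r$-linear. Since $F(\B^n\ZZ/p^r)$ is a $\ZZ/p^r$-module object and the units act through the module structure, this makes the splitting $(\ZZ/p^r)\units$-equivariant. The comparison of idempotents is then a property rather than extra data, so equivariance of the final equivalence follows from equivariance of the underlying identification of algebras.
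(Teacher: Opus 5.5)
Your strategy matches the paper's: transport the height $n$ splitting through $F_!$, identify $F_!(R[\Sigma^n\ZZ/p^r])\simeq F_!(R)[\Sigma^t\ZZ/p^r]$ by \cref{cor: group_algebra_equivariant_equiv}, reduce to an initial object via \cref{prop: decomposition ZZ/p suffices}, and finish with \cref{prop: compering decompositions}. For $r=1$ it is literally the paper's proof. For $r\ge 2$, however, your identification of the first factor is wrong.

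The cyclotomic splitting is indexed by spectra under $\Sigma^n\ZZ/p$, not under $\Sigma^n\ZZ/p^r$. So the complement of $R[\omega^{(n)}_{p^r}]$ is
\[
R[\Sigma^n\ZZ/p^r]_0=R[\Sigma^n\ZZ/p^r]\otimes_{\ounit[\Sigma^n\ZZ/p]}\ounit\simeq R[\Sigma^n\ZZ/p^{r-1}].
\]
It is cut out by the image of the augmentation idempotent under $R[\Sigma^n\ZZ/p]\to R[\Sigma^n\ZZ/p^r]$, not by the augmentation $\Sigma^n\ZZ/p^r\to 0$. At height $0$, for instance, $\QQ_p[\ZZ/p^r]\simeq\QQ_p\times\QQ_p(\omega_p)\times\cdots\times\QQ_p(\omega_{p^r})$, and the complement of the augmentation factor is not $\QQ_p(\omega_{p^r})$. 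Consequently, checking that ``the augmentation idempotent'' of $F_!(R)[\Sigma^t\ZZ/p^r]$ agrees on both sides only identifies the complements of the augmentation factors, which is not the claim. Likewise, indexing over spectra under $\Sigma^n\ZZ/p^r$ makes $\Sigma^n\ZZ/p^r$ the initial object, and there too the relevant idempotent is not the augmentation one.

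The repair is what the paper does:
\begin{enumerate}
\item Apply \cref{prop: decomposition ZZ/p suffices} to the naturally equivalent functors $M\mapsto F_!(R[M])$ and $M\mapsto F_!(R)[F(M)]$ on connective $\pi$-finite spectra under $\Sigma^n\ZZ/p$.
\item At the initial object $\Sigma^n\ZZ/p$ the first factor really is $R$ under the augmentation. Naturality of \cref{cor: group_algebra_equivariant_equiv} along $\Sigma^n\ZZ/p\to 0$ gives the square identifying $F_!R$ with $F_!R$ under $F_!(R)[\Sigma^t\ZZ/p]$.
\item \cref{prop: compering decompositions} then matches the decompositions at the initial object, and naturality in $M$ transports this to $M=\Sigma^n\ZZ/p^r$.
\end{enumerate}
For the transported decomposition to be the height $t$ cyclotomic one on $F_!(R)[\Sigma^t\ZZ/p^r]$, you also need the oriented summands of $F(\Sigma^n\ZZ/p)$ and $F(\Sigma^n\ZZ/p^r)$ to be compatible under $\Sigma^t\ZZ/p\to\Sigma^t\ZZ/p^r$. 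This is clear for $F=\Map(X,-)$ with $X$ an oriented closed manifold, and the paper uses it implicitly. Your equivariance argument via $\ZZ/p^r$-linearity of the orientation splitting is fine.
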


\begin{proof}
    We need to check that the equivalence from \cref{cor: group_algebra_equivariant_equiv} 
    \[
    (M \mapsto F_!(R[M]^{(-)})\simeq (M \mapsto  F_!(R)[F(M)]^{(-)})
    \]
    of functors 
    \[
    \Sp^{\pi-\fin, \mrm{cn}}_{(\Sigma^n \ZZ/p)/} \to \CAlg_{F_!(R)}(\CMoninf(\cC))
    \]
    respects the decomposition on the source and the target. By $\cref{prop: decomposition ZZ/p suffices}$ it suffices to check this on $\Sigma^n \ZZ/p$.  Applying the equivalence of \cref{cor: group_algebra_equivariant_equiv} to $\Sigma^n \ZZ/p \to 0$ we get the commutative diagram 
    \[
    \begin{tikzcd}
	{F_!(R)[\Sigma^t \ZZ/p]} & {F_!(R[\Sigma^n \ZZ/p])} \\
	{F_!R} & {F_!R}
	\arrow["\sim", from=1-1, to=1-2]
	\arrow[from=1-1, to=2-1]
	\arrow[from=1-2, to=2-2]
	\arrow["\sim"', from=2-1, to=2-2]
    \end{tikzcd}
    \]
    Thus, the two decompositions coincide by \cref{prop: compering decompositions}. 
\end{proof}

\begin{remark}
    Since the composite functor
    \[
    \CMoninf(\cC)\xrightarrow{i}\PCMoninf(\cC)\xrightarrow{F_!}\CMoninf(\cC)
    \]
    does not commute with filtered colimits, there is no a priori reason to expect an equivalence
    \[
    F_!(R[\omega^{(n)}_{p^\infty}]) \simeq F_!(R)[\omega^{(t)}_{p^\infty}].
    \]
    In fact, this equivalence fails in general: we will see in \cref{sec: SK(n) universal character} that it is false for $R=\SS_{K(1)}$ and $F=\L$ the free loop space functor.
\end{remark}

\section{Fourier Transform and the Character of Higher Roots of Unity}\label{sec: Fourier Transform and the Character of Higher Roots of Unity}

In this section we study how $F$-characters interact with the semiadditive Fourier transform. 
We start in \cref{subsec: mul structure} by studying the multiplicative structure of $F$-characters, explaining how multiplicative structure on the ``representation'' induces additional structure on the associated character. In \cref{sec: Rognes-Galois Extensions} we show that the
character of a height $n$ primitive root of unity is again primitive, now of height $t$ (see \cref{cor: character roots of unity}), and we use this to show that a Galois cyclotomic extension of $R$ induces a Galois cyclotomic extension of $F_!(R)$. Finally, in
\cref{sec: Fourier Transform} we explain how to express the semiadditive Fourier transform of an $F$-character of $R$ in terms of the semiadditive Fourier transform of $R$ itself (see
\cref{prop: Fourier in chr}).

We continue under the assumptions of the previous section. Namely, we fix a prime $p$, a presentably symmetric monoidal $p$-local additive category $\cC\in \CAlg(\Pr^{\mrm{L},\mrm{ad}}_{(p)})$, and an exact endofunctor
\[
F\colon \spcpi \to \spcpi
\]
which commutes with finite coproducts.

\subsection{Multiplicative Structure}\label{subsec: mul structure}

We address the following question: given an $F$-character of commutative algebras $R \to F^{*}S$, what structure does a multiplicative structure on a map
\[
  A \to R, \qquad A \in \CMoninf(\spcpi)
\]
induce on the $F$-character of $A$:
\[
  F(A) \to S.
\]

We fix the following notation.

\begin{notation}
    Let $\cC$ be a semiadditive category.
    
    \begin{enumerate}
        
        \item For $R\in \CAlg(\CMoninf(\cC))$ we denote by $\underline{R}_\times\in \CMon(\spc)$ the underlying commutative monoid with respect to multiplication.

        \item For $X\in \CMoninf(\cC)$ we denote by $\underline{X}_+\in \Sp\cn$ the underlying connective spectrum with respect to addition.
        
    \end{enumerate}
\end{notation}

\begin{proposition}\label{prop: L! maps of Sp to maps of Sp}
    Let $R\in \CAlg(\CMoninf(\cC))$ and $f:A \to \underline{R}_\times$ be a map of commutative monoids. Let 
    \[
    \chi \colon R \to F^*S
    \]
    be an $F$-character of commutative algebras. Then the image of $f$ under $\chi$ has a natural structure of a map of commutative monoids 
    \[
    F(A) \to \underline{S}_\times 
    \]
\end{proposition}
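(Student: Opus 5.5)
We would first make precise what "the image of $f$ under $\chi$" means, and then show that this image inherits a commutative monoid structure from that of $f$.

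\textbf{Translating $f$ into a point.} A map of spaces $f\colon A\to \underline{R}_\times$, with $A$ $\pi$-finite, is the same as a point of $\Map(A,\Omega^\infty R)\simeq \Omega^\infty(R^A)$, i.e.\ a map $\ounit\to R^A$ in $\cC$ (for $\cC=\Sp$; in general a point of the underlying space of $R^A$). Applying $\chi_A\colon R^A\to S^{F(A)}$ gives a point of $\Omega^\infty S^{F(A)}\simeq\Map(F(A),\underline{S}_\times)$, which is the map $F(A)\to\underline S_\times$.

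\textbf{Encoding the monoid structure.} A commutative monoid structure on $f$ is encoded by the following compatibilities, as a functor out of finite pointed sets (or $\mathrm{Fin}_*$-shaped data, i.e.\ a Segal object). For each $k$, let $m_k\colon A^k\to A$ be the multiplication, and consider the composite $A^k\to A\xrightarrow{f}\underline{R}_\times$. This composite equals the product of the pullbacks $\pi_i^*f$ along the projections $\pi_i\colon A^k\to A$, where the product is the ring multiplication in $R^{A^k}$. Equivalently, $m_k^*(f)=\prod_i\pi_i^*(f)$ in $\Omega^\infty R^{A^k}$, coherently in $\mathrm{Fin}_*$.

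Since $R^{(-)}$ is a commutative algebra in $\PCMoninf(\cC)$ (Day convolution), the multiplication on $R^{A^k}$ is the composite $R^{A}\otimes\cdots\otimes R^{A}\to R^{A^k}$ coming from the lax structure maps $R^{B}\otimes R^{C}\to R^{B\times C}$. These maps are natural in restriction along $\spcpi$. The functor $A\mapsto\Omega^\infty R^{A}$, restricted to $\spcpi^{\op}$ (restrictions only), together with these lax maps, is a lax symmetric monoidal functor $\spcpi^{\op}\to\spc_\times$. Applying it to the commutative monoid $A$, viewed as a commutative comonoid in $\spcpi^{\op}$ via the cartesian structure, produces the structure above: a commutative monoid structure on $f$ is exactly a lift of the point $f$ to the data "$f$ is a grouplike-ish multiplicative map".

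\textbf{Transport along $\chi$.} Since $\chi$ is a map of commutative algebras in $\PCMoninf(\cC)$, restricting along $\spcpi^{\op}\to\Span(\spcpi)$ yields a symmetric monoidal natural transformation of lax symmetric monoidal functors $\Omega^\infty R^{(-)}\Rightarrow \Omega^\infty S^{F(-)}$. Because $F$ preserves finite products, $F(A)$ is a commutative monoid and $F(m_k)$ is its multiplication. It follows that $\chi$ sends the $\mathrm{Fin}_*$-indexed coherent data for $f$ to the analogous data for $\chi(f)$ with respect to $F(A)$, which is the desired monoid map $F(A)\to\underline S_\times$.

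\textbf{Main obstacle.} The main obstacle is to set up the identification between "commutative monoid maps $A\to\underline R_\times$" and "coherent lifts via the lax monoidal functor $\Omega^\infty R^{(-)}$ on $\spcpi^{\op}$" cleanly, with all higher coherences included. I would do this by observing that the functor $B\mapsto \Map(B,\underline R_\times)$ is a right Kan extension / corepresented object: maps of commutative monoids $A\to \underline R_\times$ are maps of commutative monoids in $\spc$, which can be computed as maps of commutative comonoids in $\spcpi^{\op}$ into the lax monoidal functor $\Omega^\infty R^{(-)}$. This step is naturality of the Yoneda-type equivalence $\Omega^\infty R^B\simeq\Map(B,\underline R_\times)$ as lax monoidal functors.
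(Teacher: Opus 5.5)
Your proposal is correct and follows essentially the same route as the paper: restrict $\chi$ along the one-way inclusion of $\spcpi$ into $\Span(\spcpi)$, apply the underlying-space functor to obtain a monoidal natural transformation of lax symmetric monoidal functors $\Map(-,\underline{R}_\times)\Rightarrow\Map(F(-),\underline{S}_\times)$, and transport the multiplicativity data of $f$ along it, using that $F$ preserves finite products.

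The coherence problem you flag as the main obstacle is what the paper settles with symmetric monoidal unstraightening. It identifies the Grothendieck construction of $\Map(-,\underline{R}_\times)$ with the slice $\overCat{\spcpi}{\underline{R}_\times}$, whose commutative monoids are exactly monoid maps into $\underline{R}_\times$. It then obtains a symmetric monoidal functor $\overCat{\spcpi}{\underline{R}_\times}\to\overCat{\spcpi}{\underline{S}_\times}$ by base change along $F$. Phrasing it this way also replaces your ``commutative comonoid in the opposite category'' step, which does not work as stated, since lax monoidal functors do not act on comonoids.
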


\begin{proof}
    Precompose with the symmetric monoidal functor
    \[
        \spcpi \to \Span(\spcpi)
    \]
    which is the identity on objects and sends a morphism $A\to B$ to the span
    \[
        \begin{tikzcd}
            & A \\
            A && B
            \arrow["\id"', from=1-2, to=2-1]
            \arrow[from=1-2, to=2-3].
        \end{tikzcd}
    \]
    After postcomposing with $\Omega^\infty$, the $F$-character
    $\chi\colon R\to F^*S$ induces a morphism
    \[
        \underline{R}_\times \to \underline{F^*S}_\times
        \qquad
        \in \CAlg(\FunDay(\spcpi,\spc)).
    \]

    Applying symmetric monoidal unstraightening
    \cite[Theorem A]{monoidal-unstr}, we obtain a symmetric monoidal functor
    \[
        \mrm{Un}^\otimes(\underline{R}_\times)
        \to
        \mrm{Un}^\otimes(\underline{F^*S}_\times).
    \]
    We identify the source with
    \[
        \mrm{Un}^\otimes(\underline{R}_\times)
        \simeq
        \overCat{\spcpi}{\underline{R}_\times}.
    \]
    The target is identified by base change. Namely, since
    $\underline{F^*S}_\times$ is the restriction of $\underline{S}_\times$
    along
    \[
        F\colon \spcpi \to \spcpi ,
    \]
    its unstraightening fits into a pullback square
    \[
        \begin{tikzcd}
            \mrm{Un}^\otimes(\underline{F^*S}_\times)
            \arrow[r]
            \arrow[d]
            &
            \overCat{\spcpi}{\underline{S}_\times}
            \arrow[d]
            \\
            \spcpi
            \arrow[r,"F"']
            &
            \spcpi .
        \end{tikzcd}
    \]
    Therefore the preceding symmetric monoidal functor may be written as
    \[
        \overCat{\spcpi}{\underline{R}_\times}
        \longrightarrow
        \spcpi
        \times_{\spcpi,F}
        \overCat{\spcpi}{\underline{S}_\times}.
    \]
    Composing with the projection to
    $\overCat{\spcpi}{\underline{S}_\times}$ gives a symmetric monoidal functor
    \[
        \overCat{\spcpi}{\underline{R}_\times}
        \xto{F(-)}
        \overCat{\spcpi}{\underline{S}_\times}.
    \]

    The symmetric monoidal structures on these slice categories are given by
    taking cartesian products of maps and then composing with the multiplication
    on the corresponding commutative algebra. Hence this symmetric monoidal
    functor induces a functor on commutative monoid objects
    \[
        \CMon\!\left(\overCat{\spcpi}{\underline{R}_\times}\right)
        \xto{F(-)}
        \CMon\!\left(\overCat{\spcpi}{\underline{S}_\times}\right),
    \]
    which gives the claim.
\end{proof}

\begin{proposition}
    Let $X\in \CMoninf(\cC)$ and $f:A \to \underline{X}_+$ be a map of commutative monoids in spaces. Let 
    \[
    \chi \colon X \to F^*Y
    \]
    be an $F$-character. Then the image of $f$ under $\chi$ has a natural structure of a map of commutative monoids 
    \[
    F(A) \to \underline{Y}_+ 
    \]
\end{proposition}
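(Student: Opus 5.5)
The plan is to mirror the proof of \cref{prop: L! maps of Sp to maps of Sp}, replacing the multiplicative structure by the additive one. The new point is that we no longer need $\chi$ to be a map of commutative algebras. The additive monoid structure on $\underline{X}_+$ comes from the Segal/semiadditive structure of $X$ itself, namely from covariant functoriality along spans, rather than from a multiplication.

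First, I precompose $X$ with the functor $\spcpi\op \to \Span(\spcpi)$, which sends $A\to B$ to the backwards span $B\leftarrow A \xrightarrow{\id} A$. I then apply $\Omega^\infty$, obtaining a functor $\spcpi\op\to\spc$, $A\mapsto \Omega^\infty X^A$. By the Segal condition this is $\Map(A,\Omega^\infty X)$, so it is representable. Since $\spcpi$ has finite coproducts, this functor sends coproducts to products; via the Segal condition its lax structure exhibits $\underline{X}_+$ with the pointwise addition coming from transfers along fold maps $A\sqcup A\to A$.

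The cleanest route is to view $A\mapsto\Omega^\infty X^A$ as a \emph{cartesian} symmetric monoidal functor out of $(\spcpi\op)^{\sqcup}$. Equivalently, it is a product-preserving presheaf, and the transfers along fold maps equip each value with its monoid structure. The same construction for $F^*Y$ gives $A\mapsto \Omega^\infty Y^{F(A)}$. Because $F$ preserves finite coproducts, the fold maps are carried to fold maps, so this is again product-preserving. The character $\chi$, being a transformation of functors on all of $\Span(\spcpi)$, commutes with these transfers.

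Next I unstraighten. A map $f\colon A\to\underline{X}_+$ corresponds to a point of $\Omega^\infty X^A$, i.e.\ an object of the unstraightening $\int X$ over $\spcpi$. The transformation $\chi$ gives a functor $\int X\to \spcpi\times_{F,\spcpi}\int Y \to \int Y$ that covers $F$. This functor sends $(A,f)$ to $(F(A),\chi_A(f))$.

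The commutative monoid structure on $A$, with multiplication $m\colon A\times A\to A$, enters through the span $A\times A\xleftarrow{} A\times A \xrightarrow{m} A$ and restriction along the projections. The condition that $f$ be a monoid map says that $f\circ m=f\circ\pr_1+f\circ\pr_2$ coherently. Here the sum is the transfer along the fold map of the coproduct of two copies of $A\times A$, followed by restriction. All of these operations are span operations. Hence $\chi$ transports the coherent data, using naturality of $\chi$ for spans and the facts that $F$ preserves products (exactness) and coproducts. The result is the same data for $\chi_A(f)$ on $F(A)$.

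To make the coherence rigorous, I will package everything as follows. Commutative monoid maps $A\to\underline{X}_+$ are equivalent to commutative monoid objects in the slice $\spcpi$ over $\underline{X}_+$. Here the symmetric monoidal structure is the cartesian product of maps followed by the addition on $\underline{X}_+$, and that addition is realized by span functoriality. I then check that the functor $F(-)$ on slices is symmetric monoidal, and apply $\CMon$, exactly as before.

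I expect the main obstacle to be this symmetric monoidality of the functor between slices. The additive structure is encoded by transfers, not by a Day-convolution algebra structure, so symmetric monoidal unstraightening does not apply verbatim. I would try to handle it by using the Segal equivalence $X^{A\times A}\simeq (X^A)^A$ together with the Day convolution structure relative to $\sqcup$ on $\Span(\spcpi)$. In that structure every object is a commutative monoid via the fold span, and $F$ is symmetric monoidal for $\sqcup$ because it preserves coproducts.
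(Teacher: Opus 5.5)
Your proposal follows the paper's proof. The paper also equips $\Span(\spcpi)$ with $\sqcup$ and $\cC$ with its cartesian product, and observes that symmetric monoidal functors for these structures are exactly the product-preserving ones, so $X$ and (because $F$ preserves coproducts) $F^*Y$ are commutative algebras for the resulting Day convolution and $\chi$ is automatically a map of such; it then reruns the unstraightening argument of \cref{prop: L! maps of Sp to maps of Sp}, so the obstacle you flag is resolved by exactly the device you propose, since the additive structure given by transfers along fold maps \emph{is} a Day-convolution algebra structure for $(\sqcup,\times)$.
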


\begin{proof}
    Endow $\Span(\spcpi)$ with the symmetric monoidal structure induced by coproducts of
    $\pi$-finite spaces, and endow $\cC$ with the symmetric monoidal structure given by the categorical product. With these choices, the category of symmetric monoidal functors $\Span(\spcpi)\to \cC$ identifies with the full subcategory spanned by the product-preserving functors. It follows that the morphism $X \to F^{*}Y$ defines a map in $\CAlg(\PCMoninf(\cC))$, where $\PCMoninf(\cC)$ is equipped with the Day convolution monoidal structure determined by the above symmetric monoidal structures on the source and target. The remainder of the proof is analogous to the proof of \cref{prop: L! maps of Sp to maps of Sp}.
\end{proof}

\subsection{The Character of Higher Roots of Unity and Galois Extensions}\label{sec: Rognes-Galois Extensions}

In this section we study the character of higher roots of unity, and use it to show that if $R[\omega^{(n)}_{p^r}]$ is a $(\ZZ/p^r)^\times$ Galois extension of $R$, then
$F_!(R)[\omega^{(t)}_{p^r}]$ is a $(\ZZ/p^r)^\times$ Galois extension of $F_!(R)$.

When $R$ is $T(n)$-local, the hypothesis is vacuous and the conclusion is automatic: by \cref{cor: F chr single height T local}, $F_!(R)$ is a $T(t)$-local commutative algebra, and the
desired conclusion holds for any $T(t)$-local commutative algebra by \cite[Proposition 6.11]{BCSY-Fourier}. Accordingly, the only results from this section that will
be used in the remainder of the paper are the constructions from the first part, namely \cref{cons: root of unity candidate} and \cref{cor: character roots of unity}.

Recall that a height $n$ $p^r$-th root of unity in $R$ is a map of spectra
\[
\zeta \colon \Sigma^n \ZZ/p^r \to R\units .
\]
We call $\zeta$ primitive if for every map of commutative algebras $g\colon R \to T$,
the existence of a commutative diagram
\[
\begin{tikzcd}
\Sigma^n \ZZ/p^r \ar[r, "\zeta"] \ar[d] & R\units \ar[d, "g"] \\
\Sigma^n \ZZ/p^{r-1} \ar[r] & T\units
\end{tikzcd}
\]
forces $T \simeq 0$.

Fix $R$ of height $n$. The functor sending an $R$-algebra
$S \in \CAlg_R(\CMoninf(\cC))$ to the space of primitive height $n$ $p^r$-th roots of unity
is represented by $R[\omega^{(n)}_{p^r}]$.

\begin{construction}\label{cons: root of unity candidate}
    Let $R,S \in \CAlg(\CMoninf(\cC))$. Assume that $R$ is of height $n$ and that $F$ is $\ZZ/p^r$-oriented of dimension $\dim_{\ZZ/p^r}(F)=n-t$. Let
    \[
    \chi \colon R \to F^*S
    \]
    be a character map of commutative algebras. Evaluating $\chi$ on $\B^n\ZZ/p^r$ yields a map
    \[
    R^{\B^n\ZZ/p^r} \to S^{F(\B^n\ZZ/p^r)} .
    \]
    Since $S$ is of height $t$ (see \cref{cor: chr single height}) and using
    \cref{lem: F(B^nZ/pr) splits}, we have an equivalence
    \[
    S^{F(\B^n\ZZ/p^r)} \simeq S^{\B^t\ZZ/p^r} .
    \]
    Therefore, by \cref{prop: L! maps of Sp to maps of Sp}, a height $n$ $p^r$-th root of unity
    \[
    \omega^{(n)}_{p^r}\colon \Sigma^n\ZZ/p^r \to R\units
    \]
    determines a height $t$ $p^r$-th root of unity in $S$,
    \[
    \chi(\omega^{(n)}_{p^r}) \colon \Sigma^t \ZZ/p^r \to S\units .
    \]
\end{construction}

It follows directly from the construction of the comparison map in \cref{thm:cyclo blue shift}
that \cref{cons: root of unity candidate} sends primitive roots of unity to primitive roots of unity.

\begin{corollary}\label{cor: character roots of unity}
    Let $0\neq R,S \in \CAlg(\CMoninf(\cC))$, and assume that $\cC$ is stable. Suppose that $R$ has height $n$ and let
    \[
    \chi \colon R \to F^*S
    \]
    be a character map of commutative algebras. Assume that $F$ is $\ZZ/p^r$-oriented of dimension
    $\dim_{\ZZ/p^r}(F)=n-t$. If
    \[
    \omega^{(n)}_{p^r}\colon \Sigma^n\ZZ/p^r \to R\units
    \]
    is a primitive height $n$ $p^r$-th root of unity, then the induced height $t$ $p^r$-th root of unity
    \[
    \chi(\omega^{(n)}_{p^r})\colon \Sigma^t\ZZ/p^r \to S\units
    \]
    constructed in \cref{cons: root of unity candidate} is primitive.
\end{corollary}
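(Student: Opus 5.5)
The plan is to reduce primitivity of $\chi(\omega^{(n)}_{p^r})$ to the cyclotomic blue-shift equivalence of \cref{thm:cyclo blue shift}, applied to a suitable factorization of the character through the universal one.

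First, use the universal property. The root of unity $\omega^{(n)}_{p^r}$ corresponds to a map of commutative algebras $R[\omega^{(n)}_{p^r}]\to R$ under $R$. Compose the character $\chi$ with this map, or equivalently use the universal character, and apply $F_!$. This yields a map of commutative algebras
\[
F_!(R)[\omega^{(t)}_{p^r}]\simeq F_!(R[\omega^{(n)}_{p^r}])\to F_!R\to S,
\]
where $F_!R\to S$ is the adjoint of $\chi$.

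Next, check that this composite classifies exactly the height $t$ root $\chi(\omega^{(n)}_{p^r})$ from \cref{cons: root of unity candidate}. The comparison in \cref{thm:cyclo blue shift} comes from \cref{cor: group_algebra_equivariant_equiv}, which identifies $F_!(R[\Sigma^n\ZZ/p^r])$ with $F_!(R)[F(\Sigma^n\ZZ/p^r)]$. Via the orientation splitting of \cref{lem: F(B^nZ/pr) splits} and the height $t$ collapse of $\tau_{\ge t+1}$, this becomes $F_!(R)[\Sigma^t\ZZ/p^r]$. Dualizing, this is the same map $R^{\B^n\ZZ/p^r}\to S^{F(\B^n\ZZ/p^r)}\simeq S^{\B^t\ZZ/p^r}$ used in the construction. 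Through \cref{prop: L! maps of Sp to maps of Sp}, it transports $\Sigma^n\ZZ/p^r\to R\units$ to $\Sigma^t\ZZ/p^r\to S\units$. So the map $F_!(R)[\Sigma^t\ZZ/p^r]\to S$ classifying $\chi(\omega)$ factors through the summand $F_!(R)[\omega^{(t)}_{p^r}]$. Note that \cref{thm:cyclo blue shift} matches the summand decompositions, not merely the algebras.

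Finally, use representability. A height $t$ root of unity in $S$ is primitive exactly when its classifying map $F_!(R)[\Sigma^t\ZZ/p^r]\to S$ factors through the factor $F_!(R)[\omega^{(t)}_{p^r}]$. To see this, let $g\colon S\to T$ be a map such that the root restricted to $\Sigma^t\ZZ/p^{r-1}$ becomes trivial. Then the induced map $F_!(R)[\Sigma^t\ZZ/p^r]\to T$ factors both through the cyclotomic factor and, via the quotient to $\Sigma^t\ZZ/p^{r-1}$, through the complementary factor, which is the base change along $\Sigma^t\ZZ/p\to0$ by the definition of the splitting. These two idempotents are orthogonal, so $1=0$ in $T$, hence $T\simeq 0$. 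Stability of $\cC$ is used to make this idempotent and product splitting argument valid in commutative algebras.

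The main obstacle I expect is the second step: verifying carefully that the comparison equivalence of \cref{thm:cyclo blue shift}, after adjunction, induces on $\Sigma^t\ZZ/p^r$ the same root as \cref{cons: root of unity candidate}. This requires matching the duality and assembly identifications of \cref{cor: group_algebra_equivariant_equiv} with the unstraightening description of \cref{prop: L! maps of Sp to maps of Sp}. I would first try to reduce it to naturality in $M\in\Sp^{\pi\text{-}\fin,\mrm{cn}}$ and the universal example $M=\Sigma^n\ZZ/p^r$.
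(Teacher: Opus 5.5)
Your proposal is correct and follows essentially the paper's route: the paper also reduces to the universal case, using that the equivalence of \cref{thm:cyclo blue shift} is induced by \cref{cor: group_algebra_equivariant_equiv} and respects the decompositions, so that the induced height $t$ root is (the image of) the universal one and hence primitive. You work relative to $R$ via the classifying map $R[\omega^{(n)}_{p^r}]\to R$ and spell out the idempotent argument the paper calls ``by definition''; note the small slip there, since the primitivity hypothesis is that $g\circ\zeta$ factors through the quotient $\Sigma^t\ZZ/p^r\to\Sigma^t\ZZ/p^{r-1}$ (equivalently, is trivial on $\Sigma^t\ZZ/p$), not that it is trivial on $\Sigma^t\ZZ/p^{r-1}$.
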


\begin{proof}
    We reduce to the universal case. Set $R=\ounit[\omega^{(n)}_{p^r}]$, where $\ounit$ denotes the unit
    object in the full subcategory of height $n$ objects in $\CMoninf(\cC)$, and let
    \[
    S = F_!(\ounit[\omega^{(n)}_{p^r}]).
    \]
    By \cref{thm:cyclo blue shift} there is an equivalence
    \[
    S \simeq F_!(\ounit)[\omega^{(t)}_{p^r}].
    \]
    Moreover, the equivalence of \cref{thm:cyclo blue shift}, is induced by the identification in
    \cref{cor: group_algebra_equivariant_equiv}. Tracing through this identification shows that the image
    under $\chi$ of the canonical map
    \[
    \Sigma^n\ZZ/p^r \to \ounit[\Sigma^n\ZZ/p^r]
    \]
    is identified with the canonical map
    \[
    F(\Sigma^n\ZZ/p^r) \to F_!(\ounit)[F(\Sigma^n\ZZ/p^r)].
    \]
    In particular, the induced root of unity in $S$ is the universal one, hence primitive by definition.
\end{proof}

\begin{remark}\label{rem: char root of unity}
In the special case $\cC=\Sp$ and $R\in \Sp_{T(n)}$, one can prove \cref{cor: character roots of unity} by a more direct argument.

First, one may reduce to the case $r=1$ using the map $\ZZ/p \to \ZZ/p^r$.
Next, using the Chromatic Nullstellensatz, one can show that primitivity of a root of unity in $S$
can be detected after composing with every geometric point $S \to E_t(k)$.
Finally, one shows that the integral of a primitive height $n$ $p^r$-th root of unity vanishes.
It follows that the semiadditive integral of the candidate root of unity constructed in
\cref{cons: root of unity candidate} also vanishes after composing with every geometric point
$S \to E_t(k)$. Since maps $\Sigma^t\ZZ/p \to E_t(k)\units$ are classified by
\[
\ZZ/p \simeq \Map(\Sigma^t\ZZ/p, \I^{(t)}_p),
\]
this vanishing criterion forces the candidate root of unity to be primitive.
\end{remark}

We now turn to show that $F_!(R)[\omega^{(t)}_{p^r}]$ is a Galois extension. We first recall the definition of Galois extension.

\begin{definition}
Let $\B G$ be a pointed connected space, let $\Delta\colon \B G \to \B G\times \B G$ denote the diagonal map, and let $q\colon \B G \to \pt$ denote the canonical map.
A $G$-equivariant commutative algebra
\[
R\colon \B G \to \mathrm{CAlg}(\mathcal{C})
\]
is \emph{$G$-Galois} if it satisfies the following two conditions:
\begin{enumerate}
\item The mate of the unit map $\mathbbm{1}\to q_*R=:R^{hG}$ is an equivalence.
\item The mate of the multiplication map $R\otimes R \to \Delta_*R =: \prod_{G} R$
      is an equivalence.
\end{enumerate}
\end{definition}

We now prove a lemma that will imply the second Galois condition.

\begin{lemma}\label{lem: tensor product of cyc extn}
Let $\cC$ be stable and $R\in \CAlg(\CMoninf(\cC))$. Suppose that $R[\omega^{(n)}_{p^r}]$ is a
$(\ZZ/p^r)\units$ Galois extension of $R$ in $\CAlg(\CMoninf(\cC))$.
Then $R[\omega^{(n)}_{p^r}]$ is also a $(\ZZ/p^r)\units$ Galois extension of $R$ when regarded as an object of $\CAlg(\PCMoninf(\cC))$.
\end{lemma}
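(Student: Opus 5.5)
We regard $\CMoninf(\cC)$ as a full subcategory of $\PCMoninf(\cC)$ via the inclusion $i$. The goal is to transport the two Galois conditions along $i$.

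The first condition concerns the unit map $R\to R[\omega^{(n)}_{p^r}]^{h(\ZZ/p^r)\units}$. Here $i$ is a right adjoint (to Segalification), so it preserves limits, in particular homotopy fixed points. Since the fixed points are computed pointwise on $\Span(\spcpi)$ and $\CMoninf(\cC)$ is closed under limits, this condition transfers immediately.

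The second condition is the real content. The issue is that the tensor product in $\PCMoninf(\cC)$ is Day convolution, and $i$ is only lax monoidal. So we must compare the Day convolution $R[\omega]\otimes^{\mrm{Day}}_{R}R[\omega]$ with $\prod_{(\ZZ/p^r)\units}R[\omega]$, where $R[\omega]:=R[\omega^{(n)}_{p^r}]$. The plan is to avoid computing the Day convolution relative tensor product directly and instead exhibit $R[\omega]$ in a form where the comparison is forced. Concretely, $R[\omega]$ is the split factor
\[
R[\Sigma^n\ZZ/p^r]\simeq R[\Sigma^n \ZZ/p^{r-1}]\times R[\omega]
\]
(or, for $r=1$, the factor complementary to $R$). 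Both pieces are base changes of $R$ along group algebras, i.e.\ $R[M]=R\otimes \ounit[M]$ for $M$ a $\pi$-finite connective spectrum, and these are dualizable with dual $R^{M}$. By \cref{rem: R to is symmatric monoidal} and \cref{lem: representable_tensor_dual}, the functor $M\mapsto R^{M\times(-)}$, and dually $M\mapsto R[M]$ viewed as $R\otimes (M_!\ounit)$-type objects, is symmetric monoidal into $\Mod_R(\PCMoninf(\cC))$. Hence
\[
R[M]\otimes^{\mrm{Day}}_R R[N]\simeq R[M\times N]
\]
also holds in $\PCMoninf(\cC)$; that is, on these objects $i$ is strong monoidal. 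Since the Day convolution tensor product preserves finite products, which are direct summands in the additive setting, the idempotent splitting of $R[\Sigma^n\ZZ/p^r]$ is compatible with $\otimes^{\mrm{Day}}_R$. It follows that $R[\omega]\otimes^{\mrm{Day}}_R R[\omega]$ is a retract of $R[\Sigma^n\ZZ/p^r\times\Sigma^n\ZZ/p^r]$ cut out by the same idempotents as in $\CMoninf(\cC)$. In particular it lies in $\CMoninf(\cC)$ and agrees with $R[\omega]\otimes_R R[\omega]$ there.

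Finally, the mate map $R[\omega]\otimes^{\mrm{Day}}_R R[\omega]\to\prod R[\omega]$ is the composite of this identification with the mate map in $\CMoninf(\cC)$, which is an equivalence by hypothesis. This gives the second Galois condition.

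The main obstacle is the middle step: justifying that $i$ is strong monoidal on group algebras $R[M]$. That is exactly where the Segal/duality input from \cref{lem: representable_tensor_dual} enters, through self-duality of objects of $\Span(\spcpi)$. One also has to check that the equivalence there is compatible with the algebra structures, so that the idempotents match; if this is subtle, I would instead invoke \cref{prop: compering decompositions} to match the two decompositions.
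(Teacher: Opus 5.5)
Your proposal is correct and follows essentially the same route as the paper. The first Galois condition transfers because $i$ preserves limits. The second holds because $i$ is strong monoidal on $R[\B^n\ZZ/p^r]$ by \cref{rem: R to is symmatric monoidal}, hence also on its retract $R[\omega^{(n)}_{p^r}]$; your explicit check that the idempotents and algebra structures match is a point the paper leaves implicit.
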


\begin{proof}
Write $A = R[\omega^{(n)}_{p^r}]$. By assumption, the Galois condition in $\CAlg(\CMoninf(\cC))$ gives an equivalence
\[
A \widehat{\otimes}_R A \simeq A^{(\ZZ/p^r)^\times},
\]
where the completed tensor product is formed in $\CMoninf(\cC)$.

By \cref{rem: R to is symmatric monoidal}, the inclusion
\[
i\colon \Mod_R(\CMoninf(\cC))\to \Mod_R(\PCMoninf(\cC))
\]
is strong symmetric monoidal on objects of the form $R[\B^n\ZZ/p^r]^{(-)}$.
Since $A$ is a retract of $R[\B^n\ZZ/p^r]^{(-)}$, it follows that $i$ is strong symmetric monoidal
on $A$ as well. Consequently, the canonical comparison map between $A\otimes_R A$ computed in $\CMoninf(\cC)$ and in $\PCMoninf(\cC)$ is an equivalence, so the second Galois condition holds in $\PCMoninf(\cC)$.

The first Galois condition follows because $i$ commutes with limits.
\end{proof}

We now prove two lemmas that will imply the first Galois condition.

\begin{lemma}\label{lem: pullback cyclo extension}
Let $\cC$ be stable and $R\in \CAlg(\CMoninf(\cC))$. Assume that $F$ is $\ZZ/p^r$-orientable of dimension
$\dim_{\ZZ/p^r}(F)=n-t$. Then there is an identification
\[
F^*\bigl(F_!(R)[\omega^{(t)}_{p^r}]\bigr)\simeq F^*F_!(R)\otimes_R R[\omega^{(n)}_{p^r}] \quad \in \PCMoninf(\cC)^{(\ZZ/p^r)\units}.
\]
\end{lemma}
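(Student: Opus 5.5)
We prove the identification by computing both sides as base changes of group algebras, and then matching the idempotent splittings that cut out the cyclotomic factors.

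\emph{Untruncated identification.} By \cref{thm: F! of R A} (dualized as in the proof of \cref{cor: group_algebra_equivariant_equiv}) we have $F_!(R[M]) \simeq F_!(R)[F(M)]$, functorially in connective $\pi$-finite $M$. Setting $M=\Sigma^n\ZZ/p^r$, this becomes
\[
F_!(R)[\Sigma^t\ZZ/p^r]\simeq F_!(R)[F(\Sigma^n\ZZ/p^r)]\simeq F_!(R[\Sigma^n\ZZ/p^r]),
\]
where the first equivalence uses the orientation (\cref{lem: F(B^nZ/pr) splits}) together with the fact that $F_!(R)$ has height $t$. Next, \cref{cor: base change} with $G=F^*F_!R$ gives, at the level of presheaves,
\[
R[\Sigma^n\ZZ/p^r]^{(-)}\otimes_{R^{(-)}} F^*F_!R \simeq (F^*F_!R)[F(\Sigma^n\ZZ/p^r)] \simeq F^*\bigl(F_!(R)[F(\Sigma^n\ZZ/p^r)]\bigr).
\]
The point is that $R[B]^{(-)}$ is dual to $R^{B\times(-)}$ by \cref{rem: R to is symmatric monoidal}. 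Tensoring with it therefore corresponds to evaluating on $F(B)\times F(-)$, and restriction along $F$ commutes with this, since $F$ preserves products. Combining the two displays, we obtain
\[
F^*\bigl(F_!(R)[\Sigma^t\ZZ/p^r]\bigr)\simeq F^*F_!(R)\otimes_R R[\Sigma^n\ZZ/p^r]
\]
in $\CAlg(\PCMoninf(\cC))^{(\ZZ/p^r)\units}$. The $(\ZZ/p^r)\units$-equivariance holds because every step is functorial in $M$.

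\emph{Matching the splittings.} The height-$n$ splitting $R[\Sigma^n\ZZ/p^r]\simeq R[\Sigma^n\ZZ/p^r]_0\times R[\Sigma^n\ZZ/p^r]_\omega$ base changes to a product decomposition of the right-hand side. Applying $F^*$ (which preserves products) to the height-$t$ splitting of $F_!(R)[\Sigma^t\ZZ/p^r]$ gives a product decomposition of the left-hand side. To identify the $\omega$-factors, we argue as in \cref{thm:cyclo blue shift}. By \cref{prop: decomposition ZZ/p suffices} it is enough to compare the decompositions at $r=1$. There, by \cref{prop: compering decompositions}, it suffices to match the augmentation factor. The augmentation factors are cut out by the maps induced by $\Sigma^n\ZZ/p\to 0$ and $\Sigma^t\ZZ/p\to 0$, respectively, and these correspond under the identification above because it is natural in $M$ and carries $0$ to $0$. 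Hence the $\omega$-factors agree, giving
\[
F^*\bigl(F_!(R)[\omega^{(t)}_{p^r}]\bigr)\simeq F^*F_!(R)\otimes_R R[\omega^{(n)}_{p^r}].
\]

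\emph{Main obstacle.} The delicate step is the first display in the untruncated identification: it needs the base change equivalence to be compatible with $F^*$ and with the truncation $F(\Sigma^n\ZZ/p^r)\to\Sigma^t\ZZ/p^r$ at the presheaf level rather than after Segalification. I would handle this by noting that the truncation equivalence already holds objectwise. Since $F^*F_!R$ takes height-$t$ values, for each $A$ the object $(F_!R)^{F(A)}$ is of height $t$, so $(F_!R)^{F(A)}[\tau_{\ge t+1}F(\Sigma^n\ZZ/p^r)]\simeq (F_!R)^{F(A)}$. This is enough to remove the higher-connective factor pointwise in $\PCMoninf(\cC)$, and a pointwise equivalence in the functor category is an equivalence there.
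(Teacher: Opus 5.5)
Your proposal is correct, and its first half matches the paper's: the paper also uses \cref{cor: base change} to identify $F^*F_!(R)\otimes_R R[\B^n\ZZ/p^r]$ with $F^*\bigl(F_!(R)[\B^t\ZZ/p^r]\bigr)$. It then only has to check that the cyclotomic idempotents correspond.

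The difference is in that second step. The paper computes explicitly. It writes the height $n$ idempotent as $e=1-\frac{1}{|\B^n\ZZ/p|}\int_{\B^n\ZZ/p}\iota_n$. It then uses that the character carries cardinalities and integrals over $B$ to those over $F(B)$ (as in \cref{prop: img of cardinalty}) and carries $\iota_n$ to $\iota_t$ (\cref{cor: character roots of unity}). From this it reads off that $e$ maps to the height $t$ idempotent $1-\frac{1}{|\B^t\ZZ/p|}\int_{\B^t\ZZ/p}\iota_t$.

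You instead rerun the formal argument of \cref{thm:cyclo blue shift} inside $\PCMoninf(\cC)$. Using \cref{prop: decomposition ZZ/p suffices} and \cref{prop: compering decompositions}, you only need to match the augmentation factors.

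\begin{itemize}
\item Your route is more self-contained. It needs neither the integral formula for the cyclotomic idempotent nor \cref{cor: character roots of unity}, whose proof itself goes back through \cref{thm:cyclo blue shift}. It also shows that the lemma is just the presheaf-level shadow of cyclotomic blue shift.
\item The paper's route gives an explicit formula for where the idempotent goes under the universal character, rather than only an abstract matching of decompositions.
\end{itemize}

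In both versions equivariance comes for free. Carrying one invariant idempotent to another is a property of the already equivariant untruncated identification.

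One small correction to your first step concerns the middle term $(F^*F_!R)[F(\Sigma^n\ZZ/p^r)]$. It should not be read as a pointwise colimit in $\PCMoninf(\cC)$; that would be wrong in general, since $\cC$ itself has no ambidexterity. What \cref{cor: base change} actually produces is $A\mapsto (F_!R)^{F(\B^n\ZZ/p^r)\times F(A)}$. This is $F^*$ applied to $(F_!R)^{F(\B^n\ZZ/p^r)}\simeq F_!(R)[F(\B^n\ZZ/p^r)]$, formed in $\CMoninf(\cC)$.

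Read this way, the truncation to $\B^t\ZZ/p^r$ is an equivalence in $\CMoninf(\cC)$, and you simply apply $F^*$ to it. So the ``main obstacle'' you describe does not arise. You can also drop the pointwise height argument: height is defined for objects of $\CMoninf(\cC)$, not for their values in $\cC$.
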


\begin{proof}
Let $e\in R[\B^n\ZZ/p^r]$ be the idempotent whose inversion defines the cyclotomic extension, so
\[
R[\omega^{(n)}_{p^r}] \simeq R[\B^n\ZZ/p^r][e^{-1}].
\]
Using \cref{cor: base change} we compute
\[
F^*F_!(R)\otimes_R R[\omega^{(n)}_{p^r}]
\simeq
F^*F_!(R)\otimes_R R[\B^n\ZZ/p^r][e^{-1}]
\simeq
F^*(F_!(R)[\B^t\ZZ/p^r][e^{-1}]).
\]
Thus it suffices to identify the image of $e$ in $F_!(R)[\B^t\ZZ/p^r]$.

Recall that
\[
e \;=\; 1-|\frac{1}{|\B^n\ZZ/p|}|\int^{R}_{\B^n\ZZ/p}\iota_n,
\]
where $\iota_n\colon \B^n\ZZ/p \to \B^n\ZZ/p^r \to R[\B^n\ZZ/p^r]$.
By \cref{cor: character roots of unity}, the element $e$ is carried to
\[
1-\frac{1}{|F(\B^n\ZZ/p)|}\int^{F_!(R)}_{F(\B^n\ZZ/p)}\chi^{\un}(\iota_n)
\;=\;
1-\frac{1}{|\B^t\ZZ/p|}\int^{F_!(R)}_{\B^t\ZZ/p}\iota_t,
\]
which is precisely the idempotent defining $F_!(R)[\omega^{(t)}_{p^r}]$.
Therefore,
\[
F^*F_!(R)\otimes_R R[\omega^{(n)}_{p^r}]
\simeq
F^*\bigl(F_!(R)[\omega^{(t)}_{p^r}]\bigr),
\]
as claimed.
\end{proof}

\begin{lemma}\label{lem: descent for character of cyclotomic extension}
Let $\cC$ be stable and $R\in \CAlg(\CMoninf(\cC))$. Assume that $R[\omega^{(n)}_{p^r}]$ is a $(\ZZ/p^r)\units$ Galois extension of $R$, and that $F$ is $\ZZ/p^r$-orientable of dimension $\dim_{\ZZ/p^r}(F)=n-t$. Then
\[
(F_!(R)[\omega^{(t)}_{p^r}])^{h(\ZZ/p^r)^\times}\simeq F_!(R).
\]
\end{lemma}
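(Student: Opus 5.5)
We want $(F_!(R)[\omega^{(t)}_{p^r}])^{h(\ZZ/p^r)^\times}\simeq F_!(R)$. The approach is to compute the left side by passing through $F^*$ and using \cref{lem: pullback cyclo extension}. The point is that fixed points along a finite group are a finite limit, and both $F^*$ and the relative tensor product $F^*F_!(R)\otimes_R(-)$ commute with such limits because $\cC$ is stable.

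First, the group $(\ZZ/p^r)^\times$ is finite of order prime to $p$ and $\cC$ is $p$-local. So $(-)^{h(\ZZ/p^r)^\times}$ is a retract of the identity, cut out by the idempotent $\frac{1}{|G|}\sum_{g}g$. In particular it is both a finite limit and a finite colimit, and it commutes with every exact functor. $F^*$ is computed pointwise, so it commutes with homotopy fixed points and reflects equivalences on the image of the Segal subcategory.

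Second, apply $F^*$ and compute using \cref{lem: pullback cyclo extension}:
\[
F^*\bigl((F_!(R)[\omega^{(t)}_{p^r}])^{h(\ZZ/p^r)^\times}\bigr)\simeq \bigl(F^*F_!(R)\otimes_R R[\omega^{(n)}_{p^r}]\bigr)^{h(\ZZ/p^r)^\times}\simeq F^*F_!(R)\otimes_R R[\omega^{(n)}_{p^r}]^{h(\ZZ/p^r)^\times}.
\]
The second equivalence uses that fixed points are a retract by the averaging idempotent, which commutes with the exact functor $F^*F_!(R)\otimes_R(-)$ on $\Mod_R(\PCMoninf(\cC))$. The first Galois condition, transported to $\PCMoninf(\cC)$ via \cref{lem: tensor product of cyc extn}, identifies $R[\omega^{(n)}_{p^r}]^{h(\ZZ/p^r)^\times}\simeq R$. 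Hence the whole expression is $F^*F_!(R)$.

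Finally, check that this equivalence is $F^*$ applied to the canonical unit map $F_!(R)\to (F_!(R)[\omega^{(t)}_{p^r}])^{h(\ZZ/p^r)^\times}$. This is the main bookkeeping obstacle: one must trace that the identification of \cref{lem: pullback cyclo extension} is an equivalence of $F^*F_!(R)$-algebras under $F^*F_!(R)$, and is equivariant. That holds by construction, since it comes from \cref{cor: base change} and the idempotent inversion.

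Then conclude that the unit map itself is an equivalence. For this I would use that $F^*$ is conservative on $\CMoninf(\cC)$, by evaluating at $\pt$, using $F(\pt)=\pt$ since $F$ preserves the terminal object. Evaluating the displayed equivalence at the point then gives exactly the underlying map $F_!(R)\to(F_!(R)[\omega^{(t)}_{p^r}])^{h(\ZZ/p^r)^\times}$, which is therefore an equivalence.

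An alternative route, avoiding the $p$-locality shortcut, is to note that relative tensor with the module $F^*F_!(R)$ commutes with the finite limit over $\B G$ since $\cC$ is stable. This requires $\B(\ZZ/p^r)^\times$-limits to be finite-ish, which the order-prime-to-$p$ argument guarantees anyway.
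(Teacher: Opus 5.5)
Your outline is the same as the paper's. You use that $F^*$ preserves limits and is conservative (your argument via $F(\pt)\simeq\pt$ is fine). You rewrite $F^*(F_!(R)[\omega^{(t)}_{p^r}])$ using \cref{lem: pullback cyclo extension}, move the fixed points past $F^*F_!(R)\otimes_R(-)$, and finish with the first Galois condition.

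The gap is your justification for that interchange. You assert that $(\ZZ/p^r)\units$ has order prime to $p$, so that $(-)^{h(\ZZ/p^r)\units}$ is cut out by the averaging idempotent $\frac{1}{|G|}\sum_g g$. But $|(\ZZ/p^r)\units|=p^{r-1}(p-1)$, which is divisible by $p$ for every $r\ge 2$ (e.g.\ $(\ZZ/4)\units\cong\ZZ/2$ at $p=2$). Since $\cC$ is only $p$-local, not rational, this order is not invertible and no averaging idempotent exists. The homotopy fixed points are then a genuinely infinite limit over $\B(\ZZ/p^r)\units$, neither a retract of the identity nor a finite limit. Your fallback fails for the same reason: stability only lets $F^*F_!(R)\otimes_R(-)$ commute with finite limits. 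As written, your argument proves the lemma only for $r=1$.

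This interchange is not bookkeeping. Once \cref{lem: pullback cyclo extension} is available, the equivalence
\[
\bigl(F^*F_!(R)\otimes_R R[\omega^{(n)}_{p^r}]\bigr)^{h(\ZZ/p^r)\units}\simeq F^*F_!(R)\otimes_R R[\omega^{(n)}_{p^r}]^{h(\ZZ/p^r)\units}
\]
is essentially the whole lemma, so it must use something specific to $R[\omega^{(n)}_{p^r}]$ rather than a property of the group. The paper's input is that $R[\omega^{(n)}_{p^r}]$ is a retract of $R[\B^n\ZZ/p^r]^{(-)}$. By \cref{rem: R to is symmatric monoidal} it is therefore dualizable as an $R$-module in $\Mod_R(\PCMoninf(\cC))$, and the paper uses this to commute the fixed points with tensoring by $F^*F_!(R)$.

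If you adopt this route, make the role of the Galois hypothesis explicit at this step. Dualizability alone only says that $R[\omega^{(n)}_{p^r}]\otimes_R(-)$ preserves limits; it does not formally let $M\otimes_R(-)$ pass through a limit over $\B G$. For example, $R$ with the trivial $G$-action is dualizable. Yet for $M=F^*F_!(R)$, by \cref{cor: base change}, $M^{hG}$ and $M\otimes_R R^{hG}$ evaluate to $F_!(R)^{\B G\times F(-)}$ and $F_!(R)^{F(\B G)\times F(-)}$ respectively. These do not agree in general, since $\L_p^c\B G\not\simeq \B G$.
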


\begin{proof}
Since $F^*\colon \CMoninf(\cC)\to \PCMoninf(\cC)$ commutes with limits and is conservative, it suffices to prove that
\[
(F^*(F_!(R)[\omega^{(t)}_{p^r}]))^{h(\ZZ/p^r)^\times} \simeq F^*F_!(R).
\]
By \cref{lem: pullback cyclo extension} we have an equivalence
\[
F^*(F_!(R)[\omega^{(t)}_{p^r}]) \simeq F^*F_!(R)\otimes_R R[\omega^{(n)}_{p^r}],
\]
so it remains to compute
\[
(F^*F_!(R)\otimes_R R[\omega^{(n)}_{p^r}])^{h(\ZZ/p^r)^\times}.
\]
Because $R[\omega^{(n)}_{p^r}]$ is a retract of $R[\B^n\ZZ/p^r]^{(-)}$, it is dualizable as an
$R$-module in $\Mod_R(\PCMoninf(\cC))$. Therefore the fixed points commute with tensoring with $F^*F_!(R)$, and we obtain
\[
(F^*F_!(R)\otimes_R R[\omega^{(n)}_{p^r}])^{h(\ZZ/p^r)^\times}
\simeq
F^*F_!(R)\otimes_R (R[\omega^{(n)}_{p^r}])^{h(\ZZ/p^r)^\times}
\simeq
F^*F_!(R),
\]
where the last equivalence uses that $R[\omega^{(n)}_{p^r}]$ is $(\ZZ/p^r)\units$ Galois over $R$.
\end{proof}

We finally prove that $F_!(R)[\omega^{(t)}_{p^r}]$ is a Galois extension.

\begin{corollary}\label{cor: galois after character}
Let $\cC$ be stable and let $R\in \CAlg(\CMoninf(\cC))$. Assume that
$R[\omega^{(n)}_{p^r}]$ is a $(\ZZ/p^r)\units$ Galois extension of $R$, and that $F$ is
$\ZZ/p^r$-orientable of dimension $\dim_{\ZZ/p^r}(F)=n-t$. Then
$F_!(R)[\omega^{(t)}_{p^r}]$ is a $(\ZZ/p^r)\units$ Galois extension of $F_!(R)$.
\end{corollary}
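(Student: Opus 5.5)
We verify the two conditions in the definition of a $G$-Galois extension for $G=(\ZZ/p^r)\units$, acting on $F_!(R)[\omega^{(t)}_{p^r}]$ through the equivariant identification of \cref{thm:cyclo blue shift}.

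The first condition, that the unit map $F_!(R)\to (F_!(R)[\omega^{(t)}_{p^r}])^{h(\ZZ/p^r)\units}$ is an equivalence, is exactly \cref{lem: descent for character of cyclotomic extension}. One checks that the equivalence produced there is the mate of the unit. This holds because every identification used there is an identification of $F^*F_!(R)$-algebras under $F^*F_!(R)$: \cref{lem: pullback cyclo extension}, the projection formula for the dualizable $R$-module $R[\omega^{(n)}_{p^r}]$, and the Galois hypothesis on $R$. Conservativity of $F^*$ then transports the statement back to $\CMoninf(\cC)$.

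For the second condition, write $B:=F_!(R)[\omega^{(t)}_{p^r}]$ and $A:=R[\omega^{(n)}_{p^r}]$. We must show that the map $B\otimes_{F_!(R)}B\to\prod_{(\ZZ/p^r)\units}B$ is an equivalence. I would prove this using $F_!$ rather than $F^*$. By \cref{lem: tensor product of cyc extn}, the algebra $A$ is already Galois over $R$ in $\CAlg(\PCMoninf(\cC))$, so $A\otimes_R A\simeq\prod_{(\ZZ/p^r)\units}A$ there. The functor $F_!$ is symmetric monoidal by \cref{lem: exists of L! and mult st}, so it induces a symmetric monoidal functor $\Mod_R(\PCMoninf(\cC))\to\Mod_{F_!R}(\CMoninf(\cC))$. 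This functor preserves relative tensor products, being a colimit-preserving symmetric monoidal left adjoint, and it preserves finite products, since $\cC$ is additive and finite products are finite coproducts. Applying it gives
\[
F_!(A)\otimes_{F_!R}F_!(A)\simeq F_!(A\otimes_R A)\simeq F_!\Bigl(\prod_{(\ZZ/p^r)\units}A\Bigr)\simeq\prod_{(\ZZ/p^r)\units}F_!(A).
\]
Combined with $F_!(A)\simeq B$ from \cref{thm:cyclo blue shift}, this yields the desired equivalence.

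The main obstacle is to verify that this composite equivalence is really the Galois shearing map, namely the mate of multiplication. Since $F_!$ is symmetric monoidal, it carries the multiplication of $A$ to that of $F_!(A)$, and it carries the $G$-action to the induced action. Hence $F_!$ of the shearing map for $A$ is the shearing map for $F_!(A)$. What remains is to transport this along the identification of \cref{thm:cyclo blue shift}, which is $(\ZZ/p^r)\units$-equivariant and an equivalence of $F_!(R)$-algebras. If this bookkeeping proves delicate, a fallback is to verify the second condition after applying the conservative functor $F^*$. Using \cref{lem: pullback cyclo extension}, this reduces to the base change along $R\to F^*F_!R$ of the equivalence $A\otimes_RA\simeq\prod A$ in $\PCMoninf(\cC)$. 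For that route one needs $F^*$ to be compatible with the relevant tensor products; this should follow from the fact that $F^*B$ is obtained from the dualizable $R$-module $A$ by base change, via \cref{cor: base change}.
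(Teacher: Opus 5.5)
Your proposal is correct and follows the paper's proof. In both, the descent condition is exactly \cref{lem: descent for character of cyclotomic extension}. The tensor-product condition comes from applying the symmetric monoidal functor $F_!$ to the $\PCMoninf(\cC)$-level Galois equivalence of \cref{lem: tensor product of cyc extn}, together with the equivariant identification $F_!(R[\omega^{(n)}_{p^r}])\simeq F_!(R)[\omega^{(t)}_{p^r}]$ of \cref{thm:cyclo blue shift}. Your extra checks on mates and on $F_!$ preserving finite products (by additivity) spell out what the paper leaves implicit, and the $F^*$ fallback is not needed.
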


\begin{proof}
By \cref{thm:cyclo blue shift}, $F_!(R)[\omega^{(t)}_{p^r}]$ is the image of $R[\omega^{(n)}_{p^r}]$ under $F_!$. As $F_!$ is symmetric monoidal this together with \cref{lem: tensor product of cyc extn}, verifies the tensor product condition
for a Galois extension. The descent condition is \cref{lem: descent for character of cyclotomic extension}.
\end{proof}

\subsection{Fourier Transform}\label{sec: Fourier Transform}

We now turn to relate the Fourier transform arising in character theory to the Fourier transform on $R$. We begin by recalling the notions of orientation and Fourier transform from \cite{BCSY-Fourier}. In that work, Barthel, Carmeli, Schlank, and Yanovski introduce \emph{orientations} for $\infty$-semiadditive categories, in the sense explained below, these play the role of roots of unity in the construction of Fourier transforms.

\begin{definition}
 Let $I^{(n)}_p := \tau_{\ge 0}\Sigma^{\chrHeight} I_{\QQ_p/\Zp}$ be the truncated and shifted $p$-typical Brown-Comenetz dual of the sphere.
\end{definition}

 The spectrum $I^{(n)}_p$ is a higher analogue of the group of $p$-typical roots of unity, in that $\pi_n(I^{(n)}_p)\simeq \mu_{p^\infty}$. Its $\chrHeight$-th homotopy group encodes higher roots of unity as in \cite{CSY-cyclotomic}. In particular, it determines a height $\chrHeight$ version of Pontryagin duality.

 \begin{definition}
 Let $M$ be a $p$-local connective spectrum. Its height $\chrHeight$ Pontryagin dual is
 \[
 I^{(n)}_pM\colon=\tau_{\ge 0}\hom(M,I^{(n)}_p).
 \]
 \end{definition}

 Let $\cD$ be presentably symmetric monoidal, and let $\mathcal{R}\in \CAlg(\cnSp)$. An $(\mathcal{R},\chrHeight)$-\emph{pre-orientation} of $\cD$ is a map
 \[
 \omega\colon I^{(n)}_p\mathcal{R}\to \ounit_{\cD}\units.
 \]
 A pre-orientation determines, for each $M\in \Mod_{\mathcal{R}}^{[0,\chrHeight]\hyphen\mrm{fin}}$, a Fourier transform \cite[\S 3.2]{BCSY-Fourier}
 \[
\mscr{F}_\omega\colon \ounit_{\cD}[M]\to \ounit_{\cD}^{\I^{(n)}_pM}
 \quad \in\; \CAlg(\cD).
 \]
 We say that $\omega$ is an \emph{orientation} if $\mscr{F}_\omega$ is an equivalence for all such $M$.

In $\Sp_{T(n)}$, and more generally in any category whose unit admits a faithful extension that is $(\FF_p,n)$-oriented, the data of a height $n$ $p^r$-th primitive root of unity is the same as an $(\ZZ/p^r,n)$ orientation see \cite[Proposition 6.6]{BCSY-Fourier}.  

We will impose the following standing assumption on $F$ for the remainder of this section.

\begin{assumption}\label{ass: F mapping manifold}
The functor
\[
F\colon \spcpi \to \spcpi
\]
is of the form $F(A)=\Map(X,A)$ for some oriented connected closed manifold $X$ of dimension $n-t$. We also fix a choice of a degree-one map $X \to S^{n-t}$. Note that such a map is unique up to non-canonical homotopy.
\end{assumption}

We now use the Fourier transform on $R$ to build one on $F_!(R)$. 

\begin{construction}\label{cons: candidate Fourier transform}
    Let $R,S\in\CAlg(\tsadi)$, $F$ be $\ZZ/p^r$-oriented of dimension $\dim_{\ZZ/p^r}(F)$ and 
    \[
    \chi\colon R \to F^*S
    \]
    be a character map of commutative algebras. Assume that $R$ is $T(n)$-local and $(\ZZ/p^r,n)$-oriented. For $M\in \Mod_{\ZZ/p^r}^{[0,t]-\fin}$ we define:
    \begin{gather*}
    \varphi_{M,S}^R\colon
    L_{T(t)}(R)[M]
    \xrightarrow{ \mathrm{assem} }
    L_{T(t)}(R[M])
    \simeq
    L_{T(t)}(R^{\I^{(n)}_{\ZZ/p^r}M})
    \to \\
    \to S^{F(\I^{(n)}M)} \to S^{\I^{(t)}M}.
    \end{gather*}
    as the composite of 
    \begin{enumerate}
        \item  The \emph{assembly map}  
            $(L_{T(t)}R)[M]  \to  L_{T(t)}(R[M])$,
        \item The functor $L_{T(t)}$ applied to the equivalence  
            $R[M]\simeq R^{\I^{(n)}_{\ZZ/p^r}M}$, coming from the chromatic Fourier transform corresponding to the $(\ZZ/p^r,n)$ orientation of $R$.
        \item The factorization of $\chi$ through the $T(t)$ localization coming from $\cref{cor: F chr single height T local}$.
        \item Pulling back along
        \[
        I^{(t)}_{\ZZ/p^r}M \to   I^{(t)}_{\ZZ/p^r}M \times \I^{(n)}_{\ZZ/p^r}M \simeq \Map(S^{n-t},\I^{(n)}_{\ZZ/p^r}M)  \to \Map(X,\I^{(n)}_{\ZZ/p^r}M)=F(\I^{(n)}_{\ZZ/p^r}M)
        \]
        where the map $X \to S^{n-t}$ is given by our choice of degree one map.     
    \end{enumerate}
    When $S$ or $R$ are clear from context we will omit them from the notation.
\end{construction}

We will need the following two lemmas. 

\begin{lemma} \label{lem: ZZ/pr action on root of unity}
    Let $\cC\in \CAlg(\Prinfad)$ and 
    \[
    \mathcal{F}_{w^{(n)}_{p^r},(-)}:\ounit_\cC[-] \to \ounit^{\I^{(n)}_{\ZZ/p^r}-}
    \]
    associated to a $p^r$-th height $n$ root of unity $\omega^{(n)}_{p^r}$. Then the map $\mathcal{F}_{w^{(n)}_{p^r},\ZZ/p^r}$ is equivalent to the image of the following composition under the $\ounit[-]\dashv(-)\units$ adjunction
    \[
    \ZZ/p^r \xto{\omega^{(n)}_{p^r}} \hom^{\ZZ/p^r}(\Sigma^{n}\ZZ/p^r,\ounit_\cC\units)\to \Map(\B^{n}\ZZ/p^r,\Omega^{\infty}_\cC\ounit_{\cC}\units)\simeq (\ounit_\cC^{\B^{n}\ZZ/p^r})\units
    \]
    where the first map chooses $\omega^{(n)}_{p^r}$ in the $\pi_0$ of the $\ZZ/{p^r}$ module $\hom^{\ZZ/p^r}(\Sigma^{n}\ZZ/p^r,\ounit_\cC\units)$ and the second map forgets the $\ZZ/p^r$ enrichment to the mapping space. 
\end{lemma}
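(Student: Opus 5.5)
We unwind the construction of the Fourier transform in \cite[\S 3.2]{BCSY-Fourier} in the special case $M=\ZZ/p^r$ (in degree $0$). There, for a pre-orientation $\omega\colon \I^{(n)}_p\mathcal{R}\to \ounit_\cC\units$ and $M\in\Mod_{\mathcal{R}}^{[0,n]\hyphen\mrm{fin}}$, $\mathcal F_\omega$ is the map $\ounit[M]\to \ounit^{\I^{(n)}_pM}$ adjoint, under $\ounit[-]\dashv(-)\units$, to a map of connective spectra
\[
M\to (\ounit^{\I^{(n)}_pM})\units .
\]
This map is obtained from the evaluation pairing $M\otimes \I^{(n)}_pM\to \I^{(n)}_p\mathcal R$ followed by $\omega$. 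Concretely, the pairing is curried into $M\to \hom(\I^{(n)}_pM,\I^{(n)}_p\mathcal R)\xto{\omega_*}\hom(\I^{(n)}_pM,\ounit\units)$, and then one takes $\Omega^\infty$ of the source variable to land in $\Map(\Omega^\infty\I^{(n)}_pM,\Omega^\infty_\cC\ounit\units)\simeq(\ounit^{\I^{(n)}_pM})\units$. The plan is to compute each of these pieces for $M=\ZZ/p^r$ and $\mathcal R=\ZZ/p^r$, and to match them with the composite in the statement.

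The steps are as follows.

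\textbf{Step 1.} Identify $\I^{(n)}_{\ZZ/p^r}\ZZ/p^r=\tau_{\ge0}\hom(\ZZ/p^r,\Sigma^n I_{\QQ_p/\ZZ_p})\simeq\Sigma^n\ZZ/p^r$. We have $\Omega^\infty$ of this equal to $\B^n\ZZ/p^r$, so the target is $\ounit^{\B^n\ZZ/p^r}$ as claimed.

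\textbf{Step 2.} Observe that a height $n$ $p^r$-th root of unity, viewed as a $(\ZZ/p^r,n)$ pre-orientation, is a map $\I^{(n)}_{\ZZ/p^r}\ZZ/p^r\simeq\Sigma^n\ZZ/p^r\to\ounit\units$ of $\ZZ/p^r$-modules. In other words, it is a point of $\hom^{\ZZ/p^r}(\Sigma^n\ZZ/p^r,\ounit\units)$.

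\textbf{Step 3.} Note that the curried pairing $\ZZ/p^r\to\hom_{\ZZ/p^r}(\Sigma^n\ZZ/p^r,\Sigma^n\ZZ/p^r)$ is the unit of the $\ZZ/p^r$-module structure, i.e.\ $1\mapsto\id$. Hence postcomposition with $\omega$ sends $1\mapsto\omega^{(n)}_{p^r}$. Because the source is the free $\ZZ/p^r$-module on one generator, this map of $\ZZ/p^r$-modules is precisely the map "choosing $\omega^{(n)}_{p^r}$" in the statement.

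\textbf{Step 4.} Apply $\Omega^\infty$ in the source variable, which forgets the $\ZZ/p^r$-enrichment to the mapping space $\Map(\B^n\ZZ/p^r,\Omega^\infty_\cC\ounit\units)$. Then identify this mapping space with $(\ounit^{\B^n\ZZ/p^r})\units$, using that $(-)\units$ is a right adjoint and hence commutes with the cotensor.

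\textbf{Step 5.} Check naturality: the above agrees as a map of connective spectra, not merely on $\pi_0$. This holds because all identifications are induced by the functoriality of the pairing in BCSY.

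I expect the main obstacle to be bookkeeping rather than mathematics. The task is to make sure that the $\ZZ/p^r$-module enrichment used in BCSY's definition of the pairing, namely the passage from $\Mod_{\mathcal R}$ to $\Sp\cn$, coincides with the "forget enrichment" map of the statement. It also has to be checked that the identification $\I^{(n)}_{\ZZ/p^r}\ZZ/p^r\simeq\Sigma^n\ZZ/p^r$ is the one implicitly used when a root of unity is regarded as a pre-orientation, via \cite[Proposition 6.6]{BCSY-Fourier}. To handle this, I would first reduce to the universal case $\cC$-linear over $\ounit[\Sigma^n\ZZ/p^r]$, i.e.\ take $\omega$ to be the tautological map. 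There both sides are adjoint to the canonical map, and the comparison becomes the triangle identity for the adjunction.
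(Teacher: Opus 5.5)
Your argument is correct, but it takes a different route from the paper.

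\textbf{The paper's route.} The paper does not evaluate the Fourier transform at $\ZZ/p^r$ directly. It evaluates it at the Pontryagin dual $\Sigma^n\ZZ/p^r$, where the target $\ounit^{\ZZ/p^r}\simeq\prod_{\ZZ/p^r}\ounit$ is a product over a discrete set. Using the description of $\mathcal F$ as a right Kan extension \cite[Remark 3.12]{BCSY-Fourier}, the transform is then determined coordinatewise: the $i$-th coordinate is the character $\ounit[\B^n\ZZ/p^r]\to\ounit$ adjoint to $\omega^i$, i.e.\ $\omega$ precomposed with multiplication by $i$. The paper then transports this to $M=\ZZ/p^r$ by taking duals and invoking \cite[Corollary 3.26]{BCSY-Fourier}.

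\textbf{Your route.} You curry the defining pairing in the $M$-variable and compute at $M=\ZZ/p^r=\mathcal R$ directly. There the curried map is simply the free $\ZZ/p^r$-linear map $1\mapsto\omega$.

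\textbf{Comparison.} Your route avoids the duality input entirely. It also produces the map of connective spectra $\ZZ/p^r\to(\ounit^{\B^n\ZZ/p^r})\units$ with its full $\ZZ/p^r$-linear structure by construction, rather than through its components. The paper's route needs only the family-of-characters description of $\mathcal F$, and it makes the powers $\omega^i$ explicit; these are used later in \cref{prop: Fourier in chr}. The price is reliance on the compatibility of $\mathcal F$ with duality.

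\textbf{A formal step you may need.} If BCSY's official definition is the family-of-characters one, you need the swap between the two curryings. This is formal: $\Map_{\Sp\cn}(M,\lim_X\ounit\units)\simeq\Map(X,\Map_{\Sp\cn}(M,\ounit\units))$ with $X=\Omega^\infty\I^{(n)}M$.

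\textbf{A wording point.} In Step 2, $\omega$ is only a map of spectra, since $\ounit\units$ is not a $\ZZ/p^r$-module. The $\ZZ/p^r$-module structure lives on $\hom(\Sigma^n\ZZ/p^r,\ounit\units)$ and is induced from the source. Accordingly, in Step 3 you must first forget along $\hom_{\ZZ/p^r}(\Sigma^n\ZZ/p^r,\Sigma^n\ZZ/p^r)\to\hom(\Sigma^n\ZZ/p^r,\Sigma^n\ZZ/p^r)$ and only then postcompose with $\omega$. This is what makes the result the free extension of the point $\omega$, matching the first map in the statement.
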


\begin{proof}
By unwinding the definition of the chromatic Fourier transform as a right Kan extension (see \cite[Remark 3.12]{BCSY-Fourier}), we find that 
$\mathcal{F}_{w^{(n)}_{p^r},\B^{n}\ZZ/p^r}$, when composed with the projection to the $i$-th coordinate in $\prod_{j=0}^{p^r} \ounit_\cC^{\times}$, is the map $(w^{(n)}_{p^r})^{i}$, namely, the action of $i\in \ZZ/p^r$ on 
$\Sigma^{n}\ZZ/p^r$ followed by $w^{(n)}_{p^r}$. 
The claim then follows by taking duals and applying \cite[Corollary 3.26]{BCSY-Fourier}.
\end{proof}

\begin{lemma}\label{lem: fourier equvilance iff}
    Let $R\in \CAlg(\Sp_{T(n)})$  and
    \[
    \mathcal{F}_{\omega_{p^r}^{(n)},(-)}\colon R[-] \to R^{\I^{(n)}(-)}\in \Fun(\Mod_{\ZZ/p^r}^{[0,n]-\fin},\CAlg_R(\Sp_{T(n)}))
    \]
    be a Fourier transform. Then $\mathcal{F}_{\omega_{p^r}^{(n)},(-)}$ is an equivalence if and only if $\mathcal{F}_{\omega_{p^r}^{(n)},\ZZ/p^r}$ is an equivalence.
\end{lemma}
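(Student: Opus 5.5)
The ``only if'' direction is immediate, since $\ZZ/p^r$ is itself an object of $\Mod_{\ZZ/p^r}^{[0,n]\text{-}\fin}$. For the converse, I will reduce to the known criterion of \cite{BCSY-Fourier}: a pre-orientation is an orientation exactly when the associated root of unity is primitive, and in $\Sp_{T(n)}$ primitivity of a height $n$ root of unity already forces the Fourier transform to be an equivalence on every $M\in\Mod_{\ZZ/p^r}^{[0,n]\text{-}\fin}$ (see \cite[Proposition 6.6]{BCSY-Fourier} and the surrounding discussion). So it suffices to show that if $\mathcal{F}_{\omega,\ZZ/p^r}$ is an equivalence, then $\omega=\omega^{(n)}_{p^r}$ is primitive.

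\textbf{Step 1: rewrite $\mathcal{F}_{\omega,\ZZ/p^r}$ explicitly.} By \cref{lem: ZZ/pr action on root of unity}, the map $R[\ZZ/p^r]\to R^{\B^n\ZZ/p^r}$ is the $R$-algebra map adjoint to $i\mapsto\omega^i$, viewed as units of $R^{\B^n\ZZ/p^r}$. Its source is $\prod_{\ZZ/p^r}R$ only after base change to a cyclotomic extension, so I will work with it in this adjoint form.

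\textbf{Step 2: test primitivity.} Suppose $g\colon R\to T$ is a map of commutative algebras along which $\omega$ factors through $\Sigma^n\ZZ/p^{r}\to\Sigma^n\ZZ/p^{r-1}$. Base-changing the equivalence $\mathcal{F}_{\omega,\ZZ/p^r}$ along $g$ gives an equivalence $T[\ZZ/p^r]\xrightarrow{\sim}T^{\B^n\ZZ/p^r}$. By naturality, this map factors through $T^{\B^n\ZZ/p^{r-1}}\to T^{\B^n\ZZ/p^r}$, pulled back along $\B^n\ZZ/p^r\to\B^n\ZZ/p^{r-1}$. Concretely, the element $p^{r-1}\in\ZZ/p^r$ maps to the unit $1$, so the algebra map kills $[p^{r-1}]-1$.

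\textbf{Step 3: conclude $T=0$.} Since the map is an equivalence, $[p^{r-1}]-1=0$ in $T[\ZZ/p^r]$. I then apply the augmentation-type idempotent decomposition: $T[\ZZ/p^r]$ splits according to characters, and the summand supported away from the subgroup $p^{r-1}\ZZ/p^r$ is nonzero unless $T=0$. Here I would use that $T$ is $p$-local, so $1/p$ exists and these idempotents are defined. Hence $T=0$ and $\omega$ is primitive. The main obstacle I expect is making Step 3 rigorous. The cleanest route I would try first is to compare cardinalities: the equivalence $T[\ZZ/p^r]\simeq T^{\B^n\ZZ/p^r}$, together with the factorization through $\B^n\ZZ/p^{r-1}$, forces $|\B^{n}\ZZ/p|_T$-type relations to contradict height $n$ unless $T=0$. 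Alternatively, since $\Sp_{T(n)}$ is $(\FF_p,n)$-orientable after a faithful extension, I would check primitivity after base change to an orientable faithful extension and conclude via \cite[Proposition 6.6]{BCSY-Fourier}.
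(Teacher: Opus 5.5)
Your overall route is sound and differs from the paper's, but Step 3 as written is wrong, and it is also much easier than you expect. A $p$-local ring need not have $p$ invertible. In fact, for $n\ge 1$ no nonzero $T(n)$-local ring does, and $\pi_0(T[\ZZ/p^r])$ generally has no character idempotents; already $\ZZ_p[\ZZ/p]$ has none.

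What you need is the observation that your Step 1 remark obscures. $\ZZ/p^r$ is a finite discrete set, so $T[\ZZ/p^r]\simeq\bigoplus_{\ZZ/p^r}T$, and $\pi_0(T[\ZZ/p^r])$ is the ordinary group ring $\pi_0(T)[\ZZ/p^r]$, free over $\pi_0T$ on the group elements. Since $[p^{r-1}]-1$ has coefficient $-1$ at $[0]$, its vanishing forces $1=0$ in $\pi_0T$, i.e.\ $T\simeq 0$. With this fix, and the ambidexterity identification $T\otimes_R R^{\B^n\ZZ/p^r}\simeq T^{\B^n\ZZ/p^r}$ that your base change in Step 2 silently uses, Steps 2--3 prove primitivity. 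The cardinality and faithful-extension alternatives can be discarded.

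The paper never mentions primitivity. It stays inside the closure properties of the class of $\omega$-oriented modules from \cite{BCSY-Fourier}:
\begin{itemize}
\item from $\ZZ/p^r$ being oriented it gets $\ZZ/p$ oriented via $\ZZ/p\to\ZZ/p^r\xrightarrow{p}\ZZ/p^r$ (Proposition 4.12);
\item it reduces to the restricted root $\omega_p\colon\B^n\ZZ/p\to\B^n\ZZ/p^r\to R^\times$ (4.39, 4.4);
\item it reaches all required modules from $\Sigma^i\ZZ/p$ by the sequences $\Sigma^i\ZZ/p\to 0\to\Sigma^{i+1}\ZZ/p$ and finite sums (4.12, 4.10).
\end{itemize}
You instead observe that invertibility of $\mathcal{F}_{\omega,\ZZ/p^r}$ detects primitivity by a one-line group-ring computation. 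You then delegate everything else to the criterion that primitive roots are orientations (Proposition 6.6 of \cite{BCSY-Fourier}). That criterion needs the faithful $(\FF_p,n)$-oriented extension $R\to R\otimes\SS_{T(n)}[\omega^{(n)}_p]$. This is harmless here, since the paper itself quotes that criterion for $\Sp_{T(n)}$. Your argument is shorter and more conceptual but rests on a heavier black box with an extra hypothesis. The paper's is more formal and self-contained within the Fourier-transform machinery.
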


\begin{proof}
    The first direction is immediate. Assume that $\mathcal{F}_{\omega_{p^r}^{(n)}, \ZZ/p^r}$ is an equivalence, that is, the object $\ZZ/p^r \in \Mod_{\ZZ/p^r}^{[0,n]\text{-}\fin}$ is $\omega_{p^r}^{(n)}$-oriented. Note first that by the exact sequence 
    \[
    \ZZ/p \to \ZZ/p^{r} \xto{p} \ZZ/p^{r}
    \]
     and \cite[Proposition 4.12]{BCSY-Fourier} we get that $\ZZ/p \in \Mod_{\ZZ/p^r}^{[0,n]\text{-}\fin}$ is $\omega_{p^r}^{(n)}$-oriented. By \cite[4.39]{BCSY-Fourier}, it suffices to check that the composite
    \[
    \omega_p^{(n)} \colon \B^{n}\ZZ/p \to \B^{n}\ZZ/p^r \to R^{\times}
    \]
    defines an orientation. By \cite[4.4]{BCSY-Fourier}, the module $\ZZ/p$ is $\omega_p^{(n)}$-oriented. Using the exact sequences
    \[
    \Sigma^i\ZZ/p \to 0 \to \Sigma^{i+1}\ZZ/p
    \]
    and \cite[Proposition 4.12]{BCSY-Fourier}, we conclude that each suspension $\Sigma^i\ZZ/p$ is also $\omega_p^{(n)}$-oriented. Furthermore, by \cite[Proposition 4.10]{BCSY-Fourier}, finite sums of $\omega_p^{(n)}$-oriented modules are themselves oriented. We conclude that $\omega_p$ defines an orientation, as required.
\end{proof}

We can now prove the main result of this subsection.

\begin{proposition}\label{prop: Fourier in chr}
    Let $R,S\in\CAlg(\tsadi)$ and 
    \[
    \chi\colon R \to F^*S
    \]
    be a character map of commutative algebras. Assume that $R$ is $T(n)$-local and $(\ZZ/p^r,n)$-oriented. Then the factorization of $\varphi$ from \cref{cons: candidate Fourier transform} through:
    \[
    \bar{\varphi}_{M} \colon L_{T(t)}(R)[M] \widehat{\otimes}_{L_{T(t)}R}S\simeq S[M] \to S^{\I^{(t)}M}
    \]
    is the Fourier transform induced by the height $t$ $p^r$-th root from \cref{cons: root of unity candidate} and in particular by \cref{cor: character roots of unity} an equivalence for every $M$.
\end{proposition}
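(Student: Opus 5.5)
The plan is to reduce to the case $M=\ZZ/p^r$ and then compare both maps through the adjunction $\ounit[-]\dashv(-)\units$. First observe that $\bar\varphi_M$ is natural in $M\in\Mod_{\ZZ/p^r}^{[0,t]\text{-}\fin}$ and is a map of commutative $S$-algebras. Each of the four constituents of \cref{cons: candidate Fourier transform} is natural in $M$ and multiplicative: the assembly map, the localized height $n$ Fourier transform, $\chi$ (a map of commutative algebras), and pullback along a map of spaces natural in $M$. The Fourier transform $\mathcal{F}_{\chi(\omega)}$ attached to the height $t$ root $\chi(\omega^{(n)}_{p^r})$ of \cref{cons: root of unity candidate} is also a natural transformation $S[-]\to S^{\I^{(t)}(-)}$ of commutative $S$-algebras. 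By \cite[Remark 3.12]{BCSY-Fourier}, it is the right Kan extension of its value on $\ZZ/p^r$. Using the universal property of that right Kan extension, together with the fact that both sides preserve the relevant finite limits and colimits in $M$, it should suffice to produce a natural comparison and to identify the two maps at $M=\ZZ/p^r$.

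At $M=\ZZ/p^r$, the source $S[\ZZ/p^r]$ is free on the commutative monoid $\ZZ/p^r$. A map of $S$-algebras out of it is therefore determined by a map of commutative monoids $\ZZ/p^r\to (S^{\I^{(t)}\ZZ/p^r})\units=(S^{\B^t\ZZ/p^r})\units$. By \cref{lem: ZZ/pr action on root of unity}, the height $n$ Fourier transform $\mathcal{F}_{\omega^{(n)}_{p^r},\ZZ/p^r}$ corresponds to the monoid map $\ZZ/p^r\to (R^{\B^n\ZZ/p^r})\units$ sending $1$ to $\omega^{(n)}_{p^r}$ regarded as a unit on $\B^n\ZZ/p^r$. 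Running this through $\chi$ and using \cref{prop: L! maps of Sp to maps of Sp}, the image of this map is $\ZZ/p^r=F(\ZZ/p^r)\to (S^{F(\B^n\ZZ/p^r)})\units$, sending $1$ to $\chi(\omega^{(n)}_{p^r})$ before restriction. Pulling back along $\B^t\ZZ/p^r\to \B^t\ZZ/p^r\times\B^n\ZZ/p^r\simeq\Map(S^{n-t},\B^n\ZZ/p^r)\to F(\B^n\ZZ/p^r)$ recovers exactly the restriction used in \cref{cons: root of unity candidate}. This holds provided the splitting of \cref{lem: F(B^nZ/pr) splits} is the one induced by the degree one map $X\to S^{n-t}$. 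That compatibility I would check directly: the degree one map induces an isomorphism on $H^{n-t}(-;\ZZ/p^r)$, hence on $\pi_t$. Applying \cref{lem: ZZ/pr action on root of unity} again, now at height $t$ for $S$, identifies the resulting map with $\mathcal{F}_{\chi(\omega),\ZZ/p^r}$.

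For general $M$, I would extend the identification using that both $\bar\varphi$ and $\mathcal{F}_{\chi(\omega)}$ turn fiber sequences and finite sums of $\ZZ/p^r$-modules into pushouts and products compatibly. This is the same dévissage as in the proof of \cref{lem: fourier equvilance iff}, via \cite[Propositions 4.10, 4.12]{BCSY-Fourier}. Alternatively, I would use the right Kan extension description to get the comparison for all $M$ at once. Finally, \cref{cor: character roots of unity} shows that $\chi(\omega^{(n)}_{p^r})$ is primitive. Since $S$ is $T(t)$-local by \cref{cor: F chr single height T local}, \cite[Proposition 6.6]{BCSY-Fourier} makes it an orientation, and \cref{lem: fourier equvilance iff} gives that $\bar\varphi_M$ is an equivalence for all $M$.

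The main obstacle is the middle step. There one must verify that the assembly map followed by $L_{T(t)}$ and $\chi$ is genuinely compatible with the group-algebra adjunction, i.e.\ that the multiplicative transport of \cref{prop: L! maps of Sp to maps of Sp} agrees with evaluating $\chi$ on the Fourier transform. One must also check that the chosen projection $\B^t\ZZ/p^r\to F(\B^n\ZZ/p^r)$ matches the orientation splitting. I would first try to handle this by writing everything as maps out of free algebras on $\ZZ/p^r$ and tracking the unstraightening functor $\overCat{\spcpi}{\underline R_\times}\to\overCat{\spcpi}{\underline S_\times}$ on the single object $\B^n\ZZ/p^r\to\underline R_\times$.
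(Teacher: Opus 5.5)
Your proposal is correct and follows essentially the paper's route. You reduce to $M=\ZZ/p^r$; the paper does this simply by invoking \cref{lem: fourier equvilance iff}, without any right Kan extension or d\'evissage argument. You then identify both maps under the $\ounit[-]\dashv(-)\units$ adjunction via \cref{lem: ZZ/pr action on root of unity} at heights $n$ and $t$, using that $\chi$ sends $\omega^{(n)}_{p^r}$ to the root $\chi(\omega^{(n)}_{p^r})$ of \cref{cons: root of unity candidate}.

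Your extra checks are details the paper leaves implicit. These are that the degree-one map $X\to S^{n-t}$ induces the orientation splitting on $\pi_t$, and that $S$ is $T(t)$-local by \cref{cor: F chr single height T local}, so primitivity from \cref{cor: character roots of unity} yields an orientation via \cite[Proposition 6.6]{BCSY-Fourier}.
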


\begin{proof} 
By \cref{lem: fourier equvilance iff}, it suffices to show that $\bar{\varphi}_{\ZZ/p^r}$ is an equivalence. In that case, the assembly map is an equivalence, and it remains to verify that the resulting map agrees with the Fourier transform of $S$ induced by $\chi(\omega_{p^r}^{(n)})$ evaluated at $\ZZ/p^r$. In other words, we must show that the composite
\[
\ZZ/p^r \to (R[\ZZ/p^r])^{\times} \to (R^{\B^n \ZZ/p^r})^{\times} \to (S^{\B^t \ZZ/p^r})^{\times} \to (S[\ZZ/p^r])^{\times}
\]
is the unit map.

We claim this follows from the construction of the height $t$ root of unity associated to $\omega^{(n)}_{p^r}$ by $\chi$ (see \cref{cons: root of unity candidate}). Indeed, the map
\[
\ZZ/p^r \to (R[\ZZ/p^r])^{\times} \to (R^{\B^n \ZZ/p^r})^{\times}
\]
selects the element $\omega_{p^r}^{(n)} \in (R^{\B^n \ZZ/p^r})^{\times}$, equipped with its height $0$, $p^r$-th root of unity structure coming from \cref{lem: ZZ/pr action on root of unity}. Under the subsequent map
\[
(R^{\B^n \ZZ/p^r})^{\times} \to (S^{\B^t \ZZ/p^r})^{\times},
\]
this element is sent to $\chi(\omega_{p^r}^{(n)})$, which carries the structure of a height $0$, $p^r$-th root of unity in $(S^{\B^t \ZZ/p^r})^{\times}$, coming from \cref{lem: ZZ/pr action on root of unity}.

Finally, composing with the Fourier transform of $S$, namely $\mathcal{F}_{\chi(\omega_{p^r}^{(n)}),\ZZ/p^r}$, yields the unit map
\[
\ZZ/p^r \to (S[\ZZ/p^r])^{\times},
\]
as required.
\end{proof}

\section{Revisiting the Construction of the Splitting Algebra}\label{sec: splitting algebra}

In this section we use the chromatic Fourier transform \cite{BCSY-Fourier}, together with the theory of alternating powers of $p$-divisible groups from \cite{Hopkins-Lurie-2013-ambi}, to
give an alternative perspective on parts of the construction of the $T(t)$-local splitting
algebra developed by Hopkins, Kuhn, Ravenel, Stapleton, and Lurie
\cite{Stapleton-2013-HKR,Hopkins-Kuhn-Ravanel-2000-HKR,Lurie-2019-Elliptic3}.

In \cref{sec: Set-Up} we recall the general set-up for the splitting algebra and show that the problem reduces to finding the initial extension over which a certain map of etale finite flat group schemes becomes an isomorphism. In \cref{sec: Exterior Powers} we show that this condition can be detected on a top exterior power of the relevant group schemes, and we identify the exterior power that appears. Finally, in
\cref{sec: Inverting the Determinant} we invert the resulting determinant map and assemble these ingredients to obtain a new semiadditive construction of the splitting algebra.

\subsection{The Set-Up}\label{sec: Set-Up}

Recall that $\En{}(\overline{\FF}_p,\gamma)=\En$ comes equipped with the universal deformation formal group $\GG_{\En}$\footnote{We use the theory of spectral p-divisible groups as developed in \cite{lurie-2017-elliptic1, Lurie-2018-Elliptic2, Lurie-2019-Elliptic3}}. After $T(t)$-localization, this formal group becomes a $p$-divisible group whose formal part has height $t$. This $p$-divisible group fits into a connected-etale exact sequence
\[
    \GG_{L_{T(t)}\En}^{0} \to \GG_{L_{T(t)}\En} \to \GG_{L_{T(t)}\En}^{\et}.
\]
The splitting algebra, $\widehat{C}_t^n$, is the initial $T(t)$-local $L_{T(t)}\En^0$-algebra over which this sequence splits\footnote{Here, \emph{splits} means that the exact sequence is split and the etale part is trivialized.}. In particular, over $\widehat{C}_t^{n}$ there is a canonical isomorphism of $p$-divisible groups
\[
    \GG_{\widehat{C}_t^{n}} \simeq \GG^0_{\widehat{C}_t^{n}} \times \underline{(\QQ_p/\ZZ_p)}^{\chrHeight-t}.
\]
Recall that a $p$-divisible group is a compatible system of finite flat group schemes given by the $p^r$-torsion. Hence, splitting the above exact sequence is equivalent to splitting the corresponding sequence for the $p^r$-torsion and then taking the colimit over $r$.

We will adopt the following notation:

\begin{notation}
    For $r\ge 1$ and $0\leq t < n$ we write
    \[
    \Zpr := (\ZZ/p^r)^{n-t},
    \qquad
    L_{t,r} := L_{T(t)}(\En^{\B \Zpr}).
    \]
    For an abelian group $A$, we denote its Pontryagin dual by $\widehat{A}$.
\end{notation}


\begin{lemma} \label{lem: Ltr functor of points}
    The functor \[
    \CAlg_{L_{T(t)}\En}(\Sp_{T(t)})\to \Mod_\ZZ, \quad R\mapsto \Hom(\underline{\Zprdbl}_R , \GG_R[p^r])
    \]
    is represented by 
    \[
    \Hom_{\CAlg_{L_{T(t)}\En}}(L_{t,r},-)
    \]  
\end{lemma}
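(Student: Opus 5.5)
The plan is to reduce the statement to the corresponding fact over $\En$ itself, and then base change along $\En\to L_{T(t)}\En$. Throughout write $A=\Zpr=(\ZZ/p^r)^{n-t}$, so that $\widehat{A}\simeq \Zprdbl$.

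\emph{Step 1: the statement over $\En$.} The key input is the identification of $\En^{\B A}$ with the ring of functions on the finite flat scheme $\Hom(\underline{\widehat{A}},\GG_{\En}[p^r])$, as spectral schemes and not only on $\pi_0$. This uses that $\En$ is oriented and that $\GG_{\En}$ is its Quillen $p$-divisible group, and is the spectral form of the Hopkins--Kuhn--Ravenel computation as developed in Lurie's Elliptic III. Concretely, for $A=\ZZ/p^r$ the orientation gives
\[
\En^{\B\ZZ/p^r}\simeq \mathcal O_{\GG_{\En}[p^r]},
\]
a finite flat $\En$-algebra. For $A=(\ZZ/p^r)^{n-t}$, the Künneth equivalence
\[
\En^{\B(A_1\times A_2)}\simeq \En^{\B A_1}\otimes_{\En}\En^{\B A_2},
\]
which holds because these are finite free $\En$-modules, identifies $\En^{\B A}$ with functions on $\GG_{\En}[p^r]^{\times(n-t)}\simeq \Hom(\underline{\widehat{A}},\GG_{\En}[p^r])$. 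Hence for any $\En$-algebra $R$,
\[
\Map_{\CAlg_{\En}}(\En^{\B A},R)\simeq \Hom(\underline{\widehat{A}}_R,\GG_R[p^r]),
\]
where we use that formation of $\GG[p^r]$ commutes with base change.

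\emph{Step 2: base change to $T(t)$.} Since $\En^{\B A}$ is a finite free $\En$-module, the tensor product $L_{T(t)}\En\otimes_{\En}\En^{\B A}$ is a finite free $L_{T(t)}\En$-module, hence already $T(t)$-local. It follows that
\[
L_{t,r}=L_{T(t)}(\En^{\B A})\simeq L_{T(t)}\En\otimes_{\En}\En^{\B A}.
\]
Therefore, for $R\in \CAlg_{L_{T(t)}\En}(\Sp_{T(t)})$ we get
\[
\Map_{\CAlg_{L_{T(t)}\En}}(L_{t,r},R)\simeq \Map_{\CAlg_{\En}}(\En^{\B A},R)\simeq \Hom(\underline{\widehat{A}}_R,\GG_R[p^r]).
\]
Here $\GG_R$ is the base change of $\GG_{L_{T(t)}\En}$, which is itself the base change of $\GG_{\En}$. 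All identifications are natural in $R$, so they assemble into the asserted equivalence of functors.

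\emph{Step 3: the abelian group structure.} It remains to check that the $\ZZ$-module structure on the right-hand side, induced by the group structure of $\GG_R[p^r]$, matches the one on the left. The left-hand structure comes from the cogroup structure on $L_{t,r}$ induced by the group structure of $\B A$, that is, the coproduct $\En^{\B A}\to \En^{\B A\times \B A}$. Under the orientation identification this coproduct is the comultiplication of $\mathcal O_{\GG[p^r]}$, so the two structures agree.

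The main obstacle is Step 1 at the $\EE_\infty$/spectral level: we need the equivalence $\En^{\B\ZZ/p^r}\simeq\mathcal O_{\GG_{\En}[p^r]}$ compatibly with group structures, rather than only on $\pi_0$ as in HKR. I would cite this directly from Lurie's theory of oriented $p$-divisible groups and tempered cohomology, where $\En^{\B A}$ corepresents $\Hom(\widehat{A},\GG)$, instead of reproving it.
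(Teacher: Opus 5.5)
Your proposal is correct and follows the same strategy as the paper. Both arguments rest on Lurie's identification of $\En^{\B A}$ with the ring of functions on $\Hom(\widehat{A},\GG[p^r])$, and then pass to $T(t)$-local algebras. The paper cites this identification directly for $A=\Zpr$; your reduction to $A=\ZZ/p^r$ via K\"unneth is fine but unnecessary once you cite Lurie.

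The only real difference is in the passage to $T(t)$-local algebras. The paper works with the connective representing object $\tau_{\ge 0}\En^{\B\Zpr}$ and invokes invariance of $T(t)$-localization under passage to connective covers. Taken literally, this uses that $L_{T(t)}$ kills bounded-above spectra, which holds for $t\ge 1$. It fails for $t=0$: there $L_{T(0)}=L_\QQ$, and $L_\QQ(\tau_{\ge 0}\En^{\B A})\not\simeq L_\QQ(\En^{\B A})$. Your argument identifies $L_{t,r}\simeq L_{T(t)}\En\otimes_{\En}\En^{\B A}$ by finite freeness and then uses the extension-of-scalars adjunction. It therefore works uniformly for all $0\le t<n$, including the rational case the paper relies on later.

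The one point you should make explicit is Lurie's convention that for a nonconnective $R$, the group $\GG_R$ means the $p$-divisible group over $\tau_{\ge 0}R$. With that convention, passing from the connective representability statement to arbitrary $\En$-algebras is again a base change, this time along $\tau_{\ge 0}\En\to\En$. It uses the equivalence $\En^{\B A}\simeq\En\otimes_{\tau_{\ge 0}\En}\tau_{\ge 0}\En^{\B A}$, which holds because $\En^{\B A}$ is finite free over $\En$.
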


\begin{proof}
    By \cite[Construction 4.6.2]{Lurie-2018-Elliptic2} and \cite[Remark 2.7.6]{Lurie-2019-Elliptic3}, the functor 
    \[
    \CAlg_{L_{T(t)}\En}(\Sp) \to \Mod_\ZZ, \quad R\mapsto \Hom(\underline{\Zprdbl}_R , \GG_R[p^r])
    \]
    is represented by $\tau_{\geq 0}\En{}^{\B\Zpr}$. Since $T(t)$ localization remains unchanged under passage to a connected cover, the result follows. 
\end{proof}

To proceed, note that splitting the $p^r$-th torsion of the exact sequence above is equivalent to specifying a map
\[
\underline{\Zprdbl}\to \GG[p^r] 
\]
that induces an equivalence after composing with
\[
\GG[p^r] \to \GG^\et[p^r]
\]
Using \cref{lem: Ltr functor of points} we see that the identity map
\[
L_{t,r} \xto{\id} L_{t,r}
\]
gives a map of finite flat group schemes 
\[
\underline{\Zprdbl}_{L_{t,r}} \to \GG_{L_{t,r}}^\et[p^r]
\]
and therefore a map of global sections
\[
 \mathcal{O}_{\GG_{L_{t,r}}^\et[p^r]}\to \mathcal{O}_{\underline{\Zprdbl}_{L_{t,r}}} 
\]
These are free modules over $L_{t,r}$ (in the proof of \cite[Proposition 2.5]{Stapleton-2013-HKR}, Stapleton explicitly constructs a basis for $\mathcal{O}_{\GG_{L_{t,r}}}^\et$). Therefore, to ensure that the map is an equivalence, it suffices to invert the determinant\footnote{Here when we say determinant we mean determinant up to a unit.}.

In \cite{Hopkins-Kuhn-Ravanel-2000-HKR, Stapleton-2013-HKR} the determinant of this map is identified through an explicit computation. We will take a different approach, drawing on the frameworks developed in \cite{Hopkins-Lurie-2013-ambi} and \cite{BCSY-Fourier}. As we are looking for an element in $\pi_0L_{T(t)}(\En^{\B\Zpr})$, we will work with $\GG^\heart$ from this point forward see \cite[Remark 2.7.17]{Lurie-2019-Elliptic3}.

\subsection{Exterior Powers}\label{sec: Exterior Powers}

 We will be using extensively the theory of exterior powers of $p$-divisible groups. See \cite{exterior-ofpdiv-group} and \cite[section 3]{Hopkins-Lurie-2013-ambi} for the relevant background.

\begin{lemma} \label{lem: top extrior conservative on etale}
    Let $f:\GG_1,\to \GG_2$ be a map between two etale $p$-divisible groups over $R$ of the same height, $h$. Then $f$ is an isomorphism if and only if  $\Lambda^h f$ is an isomorphism.
\end{lemma}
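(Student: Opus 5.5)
The forward direction is formal: $\Lambda^h$ is a functor, so if $f$ is an isomorphism then so is $\Lambda^h f$. For the converse, the plan is to reduce to a statement about finite étale group schemes that can be checked étale-locally, and then to linear algebra over $\ZZ/p^r$. Since $f$ is a map of $p$-divisible groups, it is an isomorphism if and only if each $f[p^r]\colon \GG_1[p^r]\to\GG_2[p^r]$ is. The exterior power is compatible with passage to $p^r$-torsion, in the sense that $(\Lambda^h\GG)[p^r]\simeq\Lambda^h_{\ZZ/p^r}(\GG[p^r])$ in the sense of \cite[section 3]{Hopkins-Lurie-2013-ambi}. So it suffices to prove the following: if $f[p^r]$ induces an isomorphism on $\Lambda^h$ for every $r$, then $f[p^r]$ is an isomorphism.

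Both groups are étale of height $h$, so étale-locally on $R$ they become constant, with $\GG_i[p^r]\simeq\underline{(\ZZ/p^r)^h}$. Being an isomorphism is étale-local on the base, so we may pass to an étale cover. We may further work one connected component of $\Spec R$ at a time (or one geometric point at a time, since a map of finite étale schemes is an isomorphism if and only if it is so on geometric fibers). There $f[p^r]$ becomes a $\ZZ/p^r$-linear map $A\colon(\ZZ/p^r)^h\to(\ZZ/p^r)^h$. Over such a base, $\Lambda^h$ of the constant group is the constant group $\underline{\Lambda^h_{\ZZ/p^r}(\ZZ/p^r)^h}=\underline{\ZZ/p^r}$, and $\Lambda^h f[p^r]$ is multiplication by $\det A$.

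The hypothesis therefore says that $\det A\in(\ZZ/p^r)^\times$. Cramer's rule (the adjugate) then shows that $A$ is invertible, so $f[p^r]$ is an isomorphism on each fiber, hence globally. Passing to the colimit over $r$ gives the claim.

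The main obstacle is the identification of $\Lambda^h$ of an étale $p$-divisible group with the sheafified top exterior power of its Tate module. In particular we need that $\Lambda^h$ commutes with étale base change and with evaluation on constant groups, and that it is computed levelwise on $p^r$-torsion. I would take this from the construction in \cite{exterior-ofpdiv-group} and \cite[section 3]{Hopkins-Lurie-2013-ambi}: in the étale case, étale $p$-divisible groups of height $h$ are equivalent to $\pi_1^{\et}$-representations on free $\ZZ_p$-modules of rank $h$, and under this equivalence the exterior power corresponds to $\Lambda^h_{\ZZ_p}$ of the Tate module. If only a levelwise statement is available, I would instead argue that $\Lambda^h_{\ZZ/p^r}$ of a finite étale locally free $\ZZ/p^r$-module scheme is the étale sheafification of the naive exterior power, which reduces to the constant case directly.
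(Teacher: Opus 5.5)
Your proposal is correct and follows the same route as the paper: pass to an étale cover where $\GG_i[p^r]\simeq(\underline{\ZZ/p^r})^h$, identify $\Lambda^h f$ with multiplication by $\det$ via the universal property of the exterior power, and conclude. Your extra steps (levelwise compatibility of $\Lambda^h$ with $p^r$-torsion, working one component at a time, and invoking the adjugate) make explicit what the paper leaves implicit.
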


\begin{proof}
    It suffices to show that the map is an isomorphism after an etale extension. Thus, we may assume that 
    \[
    \GG_1[p^r] = \GG_2[p^r] = (\underline{\ZZ/p^r})^h.
    \]
    In this case, at the level of functors of points, $f$ is given by the map
    \[
    (\ZZ/p^r)^h \to (\ZZ/p^r)^h.
    \]
    By the universal property of the exterior power, the induced map on exterior powers is the map of $\ZZ/p^r$-modules given by multiplication by the determinant.
\end{proof}

\begin{lemma} \label{lem: cup product extrior}
    The global sections of $\Lambda^{n-t}\GG_{L_{T(t)}\En{}}^\heart[p^r]$ are given by:
    \[
    \mathcal{O}_{\Lambda^{n-t}\GG_{L_{T(t)}\En{}}^\heart[p^r]} =\pi_0\L_{T(t)}(\En^{\B^{n-t}\ZZ/p^r})
    \]
    The global sections of the map
    \[
    (\GG_{L_{T(t)}\En{}}^\heart[p^r])^{\times n-t}\to \Lambda^{n-t}\GG_{L_{T(t)}\En{}}^\heart[p^r]
    \]
    are given by the map induced from the cup product
    \[
    \pi_0L_{T(t)}(\En^{\B^{n-t}\ZZ/p^r})\to \pi_0L_{T(t)}(\En^{\B(\Zpr)})=\pi_0(L_{t,r})
    \]
\end{lemma}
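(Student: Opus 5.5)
The plan is to use Hopkins--Lurie's description of alternating powers of $p$-divisible groups over $K(n)$-local $E$-theory, and then carry it over to the $T(t)$-local base change. Recall from \cite[Section 3]{Hopkins-Lurie-2013-ambi} that for the universal deformation $\GG_{\En}$ over $\En$, the alternating powers $\Lambda^k\GG_{\En}[p^r]$ are represented by $\Spec \pi_0(\En^{\B^k\ZZ/p^r})$. The universal alternating multilinear map
\[
(\GG_{\En}[p^r])^{\times k}\to \Lambda^k\GG_{\En}[p^r]
\]
corresponds, on global sections, to pullback along the cup product map $\B(\ZZ/p^r)^k \simeq (\B\ZZ/p^r)^{k}\to \B^k\ZZ/p^r$. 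The key input is the Hopkins--Lurie theorem that $\pi_0 \En^{\B^k\ZZ/p^r}$ is finite flat over $\pi_0\En$ and corepresents the exterior power. I will take $k=n-t$ throughout.

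First I base change along $\En\to L_{T(t)}\En$ and show that this identification survives $T(t)$-localization. Exterior powers of $p$-divisible groups, or of their $p^r$-torsion truncations, are compatible with base change. This is part of the formalism in \cite{exterior-ofpdiv-group} and in Hopkins--Lurie, and it holds because the exterior power of a truncated $p$-divisible group is again finite flat and is characterized by a universal property that is stable under pullback. It follows that $\Lambda^{n-t}\GG^\heart_{L_{T(t)}\En}[p^r]$ is the spectrum of
\[
\pi_0\En^{\B^{n-t}\ZZ/p^r}\otimes_{\pi_0\En}\pi_0 L_{T(t)}\En .
\]
It then remains to identify this tensor product with $\pi_0 L_{T(t)}(\En^{\B^{n-t}\ZZ/p^r})$. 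Since $\En^{\B^{n-t}\ZZ/p^r}$ is a finite free $\En$-module, by Hopkins--Lurie (or via the Fourier transform of \cite{BCSY-Fourier}), we have
\[
L_{T(t)}(\En^{\B^{n-t}\ZZ/p^r})\simeq L_{T(t)}\En\otimes_{\En}\En^{\B^{n-t}\ZZ/p^r},
\]
and taking $\pi_0$ commutes with this base change by freeness. The same argument applied to $\B\Zpr$ identifies $\pi_0(L_{t,r})$ with $\pi_0\En^{\B\Zpr}\otimes_{\pi_0\En}\pi_0L_{T(t)}\En$. This matches \cref{lem: Ltr functor of points}, and so identifies the global sections of $(\GG^\heart[p^r])^{\times n-t}$ with $\pi_0(L_{t,r})$, using that $\GG[p^r]$ has global sections $\pi_0\En^{\B\ZZ/p^r}$ and the Künneth formula for finite free modules.

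For the second claim I apply the functor $L_{T(t)}\En\otimes_{\En}(-)$ to the Hopkins--Lurie identification of the universal alternating map with the cup product. Naturality of the base-change equivalences then shows that the map on global sections is the map induced by the cup product $\B\Zpr\to\B^{n-t}\ZZ/p^r$.

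I expect the main obstacle to be justifying that the heart $\GG^\heart$ over the non-complete, $T(t)$-local base still has its exterior power computed by base change. Hopkins--Lurie work over $K(n)$-local Lubin--Tate theory, so one has to check that the relevant finite flat group schemes, and the universal property of the alternating power, are preserved by the flat base change $\pi_0\En\to\pi_0L_{T(t)}\En$. This base change is flat because it is a localization followed by a completion. My approach is to first prove compatibility of $\Lambda^k$ with arbitrary base change for finite flat truncated $p$-divisible groups of the given height, and then apply it here.
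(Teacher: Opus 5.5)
Your proposal follows the same route as the paper. The paper cites \cite[Theorem 4.4.16]{Lurie-2019-Elliptic3} for the statement over $\En$, uses freeness of $\En^{\B^{n-t}\ZZ/p^r}$ and $\En^{\B\Zpr}$ to write their $T(t)$-localizations as base changes along $\En\to L_{T(t)}\En$, and concludes because exterior powers commute with base change. The paper treats that last compatibility as immediate from the universal property, so flagging it is a refinement rather than a different approach: it is better handled by citing the Hopkins--Lurie base-change result for alternating powers than by reproving it, and flatness of $\pi_0\En\to\pi_0L_{T(t)}\En$ is not needed.
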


\begin{proof}
   By \cite[Theorem 4.4.16]{Lurie-2019-Elliptic3} we have
    \[
    \mathcal{O}_{\Lambda^{n-t}\GG_{\En{}}^\heart[p^r]}  =  \pi_0(\En^{\B^{n-t}\ZZ/p^r}),
    \]
    and that the map induced from the cup product
    \[
    \pi_0(\En^{\B^{n-t}\ZZ/p^r}) \to \pi_0(\En^{\B(\Zpr)})
    \]
    is the map on global sections induced by 
    \[
    (\GG_{\En{}}^\heart[p^r])^{\times (n-t)} \to \Lambda^{n-t}\GG_{\En{}}^\heart[p^r].
    \]
    Since these are free, we have
    \[
    L_{T(t)}(\En^{\B^{n-t}\ZZ/p^r}) 
    \simeq L_{T(t)}\En \otimes_{\En} \En^{\B^{n-t}\ZZ/p^r},
    \quad
    L_{T(t)}(\En^{(\B\ZZ/p^r)^{n-t}}) 
    \simeq L_{T(t)}\En \otimes_{\En} \En^{(\B\ZZ/p^r)^{n-t}}.
    \]
    In particular, both sides of the desired equivalence are obtained from the non $T(t)$ localized statement by base change along $L_{T(t)}\En$.  
    By the universal property the exterior power functor commutes with base change, and hence the claim follows.
\end{proof}

\begin{corollary}\label{cor: maps from const to extrior}
The functor
\[
\CAlg_{L_{T(t)}\En^0}(\Ab)^{\wedge}_{I_t}\to \Mod_\ZZ^\heart,
\qquad
R \mapsto \Hom\!\bigl(\underline{\widehat{\ZZ/p^r}},\, \Lambda^{n-t}\GG_R^\heart[p^r]\bigr)
\]
is represented by $\pi_0 L_{T(t)}\En^{\B^{n-t}\ZZ/p^r}$.

Under this identification, the map of representing objects
\[
\pi_0 L_{T(t)}\En^{\B^{n-t}\ZZ/p^r}\to \pi_0 L_{T(t)}\En^{\B(\Zpr)}
\]
corresponds, on functors of points, to the exterior power map
\[
\Hom_\ZZ(\underline{\widehat{\Zpr}},\,\GG_R^\heart)
\to
\Hom_\ZZ\!\bigl(\Lambda^{n-t}\underline{\widehat{\ZZ/p^r}}^{\,n-t},\, \Lambda^{n-t}\GG_R^\heart\bigr).
\]
\end{corollary}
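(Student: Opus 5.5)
The plan is to deduce both parts of the corollary from \cref{lem: cup product extrior} and the representability statement \cref{lem: Ltr functor of points}, passing to affine group schemes over $\pi_0$.

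\textbf{Step 1: the Cartier dual of the exterior power.} By \cref{lem: cup product extrior}, $\Lambda^{n-t}\GG^\heart_R[p^r]$ is a finite flat commutative group scheme over $R$. Its ring of global sections over the universal base $L_{T(t)}\En^0$ is
\[
A:=\pi_0 L_{T(t)}\En^{\B^{n-t}\ZZ/p^r}.
\]
Since this ring is free of finite rank, and the exterior power commutes with base change (as used in the proof of that lemma), the formation of $\Lambda^{n-t}\GG^\heart_R[p^r]$ is compatible with any $R$ in $\CAlg_{L_{T(t)}\En^0}(\Ab)^{\wedge}_{I_t}$.

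\textbf{Step 2: representability.} A homomorphism from the constant group scheme $\underline{\widehat{\ZZ/p^r}}$ into a finite flat group scheme $H$ over $R$ is the same as a homomorphism $\widehat{\ZZ/p^r}\to H(R)$. Since $\widehat{\ZZ/p^r}$ is cyclic, this is the same as a point of $H[p^r](R)=H(R)$, because $H$ is already $p^r$-torsion. By Step 1, $H(R)=\Hom_{L_{T(t)}\En^0}(A,R)$. This gives representability by $A$, and the isomorphism is natural in $R$.

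There is a subtlety about which Hom we take: ring maps out of $A$, or maps out of its $I_t$-completion. Here we use that $A$ is already $I_t$-complete, being finite free over $\pi_0L_{T(t)}\En$, so maps into complete $R$ agree with ring maps.

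\textbf{Step 3: compatibility with the exterior power map.} The analogous argument with \cref{lem: Ltr functor of points}, passed to $\pi_0$ using \cite[Remark 2.7.17]{Lurie-2019-Elliptic3}, shows that $\pi_0 L_{t,r}$ represents $\Hom(\underline{\widehat{\Zpr}},\GG^\heart_R[p^r])$. This Hom is $(\GG^\heart_R[p^r])^{\times(n-t)}(R)$ after choosing the dual basis of $\widehat{\Zpr}$.

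The second part of \cref{lem: cup product extrior} says that the cup product map $A\to\pi_0L_{t,r}$ is the map on global sections of the universal alternating map
\[
(\GG^\heart[p^r])^{\times(n-t)}\to\Lambda^{n-t}\GG^\heart[p^r].
\]
By Yoneda, it therefore sends a tuple $(x_1,\dots,x_{n-t})$ to $x_1\wedge\dots\wedge x_{n-t}$. On the other hand, the map $\Lambda^{n-t}$ applied to homomorphisms sends $f$ to $\Lambda^{n-t}f$. Evaluating on the generator $e_1\wedge\dots\wedge e_{n-t}$ of
\[
\Lambda^{n-t}\underline{\widehat{\ZZ/p^r}}^{\,n-t}\simeq\underline{\widehat{\ZZ/p^r}}
\]
gives $f(e_1)\wedge\dots\wedge f(e_{n-t})$, so the two descriptions agree.

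\textbf{Main obstacle.} The hardest point is the identification of $\Lambda^{n-t}$ of the constant group with $\underline{\widehat{\ZZ/p^r}}$, together with the choice of generator; this identification is what pins down the comparison in Step 3. A second point needing care is making sure the heart versus spectral and completion conventions match between \cref{lem: Ltr functor of points} and \cref{lem: cup product extrior}. I would handle the first by reducing, after an étale cover, to the trivialized case, as in \cref{lem: top extrior conservative on etale}.
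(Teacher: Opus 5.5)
Your proposal is correct and follows essentially the same route as the paper. There, a homomorphism out of the constant cyclic group scheme is identified with the image of a generator, i.e.\ an $R$-point of $\Lambda^{n-t}\GG^\heart[p^r]$, hence (by \cref{lem: cup product extrior}) an algebra map out of $\pi_0 L_{T(t)}\En^{\B^{n-t}\ZZ/p^r}$, and the compatibility statement is deduced from the second part of that lemma together with the universal property of exterior powers. Your Step 3 spells out the Yoneda and choice-of-generator bookkeeping that the paper leaves implicit. Your Step 1 heading is misleading, since neither Cartier duality nor an étale-local reduction is needed: $\Lambda^{n-t}$ of a constant group is computed directly.
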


\begin{proof}
For the representability statement, let
\[
f\colon \underline{\ZZ/p^r}_R \to \Lambda^{n-t}\GG_R^\heart[p^r]
\]
be a morphism. By \cref{lem: cup product extrior}, the induced map on global sections is
\[
f^*\colon \pi_0R\otimes_{\pi_0 L_{t,r}} \pi_0 L_{T(t)}\En^{\B^{n-t}\ZZ/p^r}
\to
\prod_{\ZZ/p^r}\pi_0R.
\]
Choosing a generator of $\widehat{\ZZ/p^r}$ yields an $L_{T(t)}\En$-algebra map
\[
\pi_0 L_{T(t)}\En^{\B^{n-t}\ZZ/p^r} \to \prod_{\ZZ/p^r} R \to R.
\]
The reverse process shows that such a map determines $f$, and this correspondence is functorial in $R$. Hence we
obtain a natural transformation
\[
\Hom(\pi_0 L_{T(t)}\En^{\B^{n-t}\ZZ/p^r},\, (-))
\to
\Hom(\underline{\widehat{\ZZ/p^r}},\, \Lambda^{n-t}\GG_{(-)}^\heart[p^r])
\]
which is an equivalence, proving the first claim.

The final statement follows from the second part of \cref{lem: cup product extrior} together with
the universal property of exterior powers.
\end{proof}

\begin{lemma}  \label{lem: coassembly as map of extrior}
    $(L_{T(k)}\En)^{\B^{t}\ZZ/p^r}$ is a projective $L_{T(k)}\En$ module and the co-assembly map 
    \[
   \pi_0L_{T(k)}(\En^{\B^{t}\ZZ/p^r})\to  \pi_0(L_{T(k)}\En)^{\B^{t}\ZZ/p^r}
    \]
    classifies the map
    \[
    \Lambda^t \GG_{L_{T(k)}\En}^{0,\heart}[p^r]\to \Lambda^t \GG_{L_{T(k)}\En}^\heart[p^r]
    \]
\end{lemma}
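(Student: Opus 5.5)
We will prove both assertions by reducing to the non-localized statements for $\En$ together with the connected part of $\GG$ over $L_{T(k)}\En$. Throughout, $k<t$ is not assumed; we only use that $L_{T(k)}\En$ is a $K(k)$-local (equivalently $T(k)$-local) Landweber-type $E_\infty$-ring whose formal part $\GG^{0}_{L_{T(k)}\En}$ is a formal group of height $k$.

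\textbf{Projectivity.} $L_{T(k)}\En$ carries the connected $p$-divisible group $\GG^0$ of height $k$, and it is $K(k)$-local with formal group $\GG^0$. We apply the Hopkins--Lurie computation to it, in the form of \cite[Theorem 4.4.16]{Lurie-2019-Elliptic3} for the formal group of an $E_\infty$-ring, which identifies the $\pi_0$ of $(L_{T(k)}\En)^{\B^t\ZZ/p^r}$, after $K(k)$-localization, with global sections of $\Lambda^t\GG^{0,\heart}[p^r]$. Since $\Lambda^t$ of a height-$k$ connected $p$-divisible group is again a $p$-divisible group (Hopkins--Lurie), its $p^r$-torsion is finite flat, so $\mathcal{O}$ is finite projective over $\pi_0L_{T(k)}\En$. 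Moreover $\pi_*$ of the cotensor is concentrated in even degrees, so $(L_{T(k)}\En)^{\B^t\ZZ/p^r}\simeq L_{T(k)}\En\otimes_{\pi_0}\mathcal{O}$ is projective. The one subtle point is that $(L_{T(k)}\En)^{\B^t\ZZ/p^r}$ is already $K(k)$-local, being a finite limit of local objects, so no completion issue arises. This is exactly where the notation $\Lambda^t \GG^{0}$, rather than $\Lambda^t\GG$, enters.

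\textbf{Coassembly.} On the other side, $\pi_0L_{T(k)}(\En^{\B^t\ZZ/p^r})$ is $\mathcal{O}_{\Lambda^t\GG^\heart_{L_{T(k)}\En}[p^r]}$ by the argument of \cref{lem: cup product extrior}, with $k$ in place of $t$. That argument uses freeness of $\En^{\B^t\ZZ/p^r}$ over $\En$ and the compatibility of $\Lambda^t$ with base change. The coassembly map $L_{T(k)}(\En^{A})\to (L_{T(k)}\En)^A$ is natural in $A$ and multiplicative. By the functor-of-points description (the analogue of \cref{lem: Ltr functor of points}), a map of $E_\infty$-rings out of $R^{\B^t\ZZ/p^r}$ corresponds to a map $\underline{\widehat{\ZZ/p^r}}\to\Lambda^t\GG_R[p^r]$, as in \cref{cor: maps from const to extrior}. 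The coassembly is induced by the ring map $\En\to L_{T(k)}\En$, so on points it sends a map into $\Lambda^t\GG^0_R$ to its composite with $\Lambda^t$ of the inclusion $\GG^0\hookrightarrow\GG$. Equivalently, the coassembly is the map on global sections of $\Lambda^t(\GG^0[p^r]\to\GG[p^r])$.

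To make this rigorous, we first treat $t=1$. There the coassembly is $\mathcal{O}_{\GG[p^r]}\to\mathcal{O}_{\GG^0[p^r]}$, the restriction to the connected component: $\GG^0$ over $L_{T(k)}\En$ is by definition the formal group $\mathrm{Spf}\,(L_{T(k)}\En)^{\CC P^\infty}$ (Lurie's identification of the identity component). For general $t$, we use the cup product maps $\B(\ZZ/p^r)^t\to\B^t\ZZ/p^r$. Coassembly commutes with these, and by \cref{lem: cup product extrior} they realize $\GG^{\times t}\to\Lambda^t\GG$ on both sides. Since the cup-product maps on global sections are injective (faithfully flat quotient maps, as $\Lambda^t$ is a quotient of the $t$-fold product), the identification reduces to $t=1$ applied factorwise.

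\textbf{Main obstacle.} The main obstacle is this last injectivity/epimorphism claim. The map $\GG^{\times t}[p^r]\to\Lambda^t\GG[p^r]$ is not literally an epimorphism of finite flat schemes but only "universal for alternating maps". I would instead argue via Yoneda on functors of points with arbitrary test algebras $R$. A map $\underline{\widehat{\ZZ/p^r}}\to\Lambda^t\GG_R$ is detected by its pullback along $\underline{\widehat{\ZZ/p^r}}^{t}\to\Lambda^t$ after an étale (hence faithfully flat) extension, where $\Lambda^t\underline{\widehat{\ZZ/p^r}}^t\simeq\underline{\widehat{\ZZ/p^r}}$.
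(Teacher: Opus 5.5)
Your overall route is the paper's. You identify the coassembly on $\B(\ZZ/p^r)^t$ with the $t$-fold inclusion $(\GG^0[p^r])^t\to(\GG[p^r])^t$ via the case $t=1$ and K\"unneth. You compare the cup-product maps on both sides using Lurie's Theorem 4.4.16, together with the identification of $\GG^0$ with the Quillen formal group of $L_{T(k)}\En$. You conclude by universality of $\Lambda^t$. The gap is exactly in that last step, and neither of your two arguments closes it.

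Write $c\colon \Lambda^t\GG^0[p^r]\to\Lambda^t\GG[p^r]$ for the map classified by the coassembly, $\iota\colon\GG^0[p^r]\to\GG[p^r]$ for the inclusion, and $\lambda^0,\lambda$ for the universal alternating maps. Naturality of coassembly along the cup product gives only $c\circ\lambda^0=\lambda\circ\iota^{\times t}=\Lambda^t(\iota)\circ\lambda^0$.

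Your first argument needs $\lambda^0$ to be an epimorphism of schemes, and in general it is not. Its image consists of decomposable wedges only. At a characteristic-zero geometric point, where $\GG^0[p^r]$ becomes $(\ZZ/p^r)^k$, the wedge map $((\ZZ/p^r)^k)^2\to\Lambda^2(\ZZ/p^r)^k$ misses $e_1\wedge e_2+e_3\wedge e_4$ once $k\ge 4$.

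Your Yoneda fallback has the same defect. Pulling a point back along $\underline{\widehat{\ZZ/p^r}}^{\,t}\to\Lambda^t\underline{\widehat{\ZZ/p^r}}^{\,t}$ produces an alternating map into $\Lambda^t\GG$, not a $t$-tuple of points of $\GG$. Points of $\Lambda^t\GG^0[p^r]$ are not \'etale-locally decomposable; the constant section above never is. So agreement on the image of $\lambda^0$ does not pin down an arbitrary morphism of schemes out of $\Lambda^t\GG^0[p^r]$.

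The missing input is that $c$ is a homomorphism of group schemes. The coassembly $L_{T(k)}(\En^A)\to(L_{T(k)}\En)^A$ is natural in $A$, in particular along the addition map $\B^t\ZZ/p^r\times\B^t\ZZ/p^r\to\B^t\ZZ/p^r$. Under Lurie's identification, this map induces the group law on $\Lambda^t$. The K\"unneth maps are equivalences on both sides: on the source by freeness over $\En$, on the target by the finite projectivity you proved. Hence the coassembly respects the comultiplications.

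Then $c$ and $\Lambda^t(\iota)$ are two homomorphisms with the same restriction along $\lambda^0$. The universal property of the alternating power is a bijection between homomorphisms $\Lambda^tG\to H$ and alternating multilinear maps $G^t\to H$, so it forces $c=\Lambda^t(\iota)$. This is what the paper's closing appeal to ``the universal property of exterior power'' uses. With this replacement your argument becomes the paper's.
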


\begin{proof}
By \cite[Theorem 4.4.16]{Lurie-2019-Elliptic3} and \cite[Proposition 2.4]{Stapleton-2013-HKR} the map induced by pulling back along the cup product after $T(k)$ localization:
\[
\pi_0(L_{T(k)}\En{})^{\B(\ZZ/p^r)^{t}} \to \pi_0(L_{T(k)}\En{})^{\B^{t}\ZZ/p^r}
\]
classifies the map of finite flat group schemes:
\[
(\GG_{L_{T(k)}\En)}^{0,\heart}[p^r])^{\times t} \to \Lambda^t (\GG_{L_{T(k)}\En)}^{0,\heart}[p^r])
\]
By \cref{lem: cup product extrior} we have that the map induced by pulling back along the cup product:
\[
\pi_0L_{T(k)}(\En^{\B(\ZZ/p^r)^{t}}) \to \pi_0L_{T(k)}(\En^{\B^{t}\ZZ/p^r})
\]
classifies the map of finite flat group schemes
\[
(\GG_{L_{T(k)}\En}^\heart[p^r])^{\times t}\to \Lambda^t \GG_{L_{T(k)}\En}^\heart[p^r]
\]
By \cref{lem: Ltr functor of points} the map
\[
\pi_0(L_{T(k)}\En)^{\B(\ZZ/p^r)^{t}} \to \pi_0L_{T(k)}(\En^{\B(\ZZ/p^r)^{t}})
\]
classifies the map of finite flat group schemes
\[
 (\GG_{L_{T(k)}\En)}^{0,\heart}[p^r])^{t} \to (\GG_{L_{T(k)}\En)}^\heart[p^r])^{t}
\]
So the claim follows from the universal property of exterior power.
\end{proof}

\begin{corollary} \label{cor: assembly as map of alt}
The assembly map
\[
\pi_0L_{T(t)}(\En{})[\B^t\ZZ/p^r] \to \pi_0L_{T(t)}(\En{}[\B^t\ZZ/p^r])
\]
classifies the map of finite flat group schemes:
\[
\Alt^t(\GG_{L_{T(t)}\En}^\heart[p^r]) \to \Alt^t(\GG_{L_{T(t)}\En}^{0,\heart}[p^r])
\]
\end{corollary}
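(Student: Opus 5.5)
The plan is to deduce the corollary from \cref{lem: coassembly as map of extrior} (with $k=t$) by duality. On the semiadditive side the duality is the Fourier transform; on the group-scheme side it is Cartier duality, under which $\Alt^t$ corresponds to $\Lambda^t$ of the dual.

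The first step is to record that both $L_{T(t)}(\En)[\B^t\ZZ/p^r]$ and $L_{T(t)}(\En[\B^t\ZZ/p^r])$ are finite projective over $L_{T(t)}\En$. For the former this follows from \cref{lem: coassembly as map of extrior}. For the latter it follows from \cref{lem: cup product extrior} together with the freeness used in its proof. Next, ambidexterity in $\Sp_{T(t)}$ (resp.\ $\Sp_{K(n)}$ before localizing) identifies $R[\B^t\ZZ/p^r]$ with the $R$-linear dual of $R^{\B^t\ZZ/p^r}$. Under these identifications, the assembly map
\[
L_{T(t)}(\En)[\B^t\ZZ/p^r]\to L_{T(t)}(\En[\B^t\ZZ/p^r])
\]
is the $L_{T(t)}\En$-linear dual of the co-assembly map
\[
L_{T(t)}(\En^{\B^t\ZZ/p^r})\to (L_{T(t)}\En)^{\B^t\ZZ/p^r}.
\]
I would check this by writing assembly and co-assembly as mates of the same lax/oplax structure of $L_{T(t)}$ on constant (co)limits over $\B^t\ZZ/p^r$. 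The norm equivalences are compatible with $L_{T(t)}$, since $L_{T(t)}$ of a $K(n)$-local object is computed by the base change $L_{T(t)}\En\otimes_{\En}(-)$ on these free modules.

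The second step translates this to group schemes. Taking $\pi_0$, the linear dual of the Hopf algebra of a finite flat commutative group scheme is the Hopf algebra of its Cartier dual. By \cref{lem: coassembly as map of extrior}, the co-assembly map classifies $\Lambda^t\GG^{0,\heart}[p^r]\to\Lambda^t\GG^{\heart}[p^r]$. Its dual, the assembly map, therefore classifies the Cartier dual map
\[
(\Lambda^t\GG^\heart[p^r])^\vee\to(\Lambda^t\GG^{0,\heart}[p^r])^\vee .
\]
Finally, I would invoke the identification $(\Lambda^t G)^\vee\simeq\Alt^t(G^\vee)$, and its variant relevant here, from \cite[Section 3]{Hopkins-Lurie-2013-ambi}. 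This is the point where the convention of the statement matters: the multiplicative side $\pi_0 R[\B^t\ZZ/p^r]$ should be read as $\Alt^t$ of the $p^r$-torsion via the Fourier/Pontryagin pairing $\I^{(n)}$. With it one obtains the stated map $\Alt^t(\GG^\heart[p^r])\to\Alt^t(\GG^{0,\heart}[p^r])$ of finite flat group schemes. Functoriality of the Cartier dual with respect to the inclusion $\GG^0\to\GG$ reverses the direction, consistent with the statement.

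I expect the main obstacle to be the first step: verifying that assembly is exactly the linear dual of co-assembly under the ambidexterity norms after $T(t)$-localization. The norm for $\En$ is $K(n)$-local, while the target duality is $T(t)$-local, so one must check that $L_{T(t)}$ preserves the duality data. I would handle this using the base-change formula $L_{T(t)}(\En^A)\simeq L_{T(t)}\En\otimes_{\En}\En^A$ and the fact that a symmetric monoidal functor preserves duality pairings and their mates. A secondary point of care is the precise Cartier-duality bookkeeping between $\Lambda^t$ and $\Alt^t$ (including the dual $\GG^\vee$ versus $\GG$), for which I would cite the relevant statements from \cite{Hopkins-Lurie-2013-ambi}.
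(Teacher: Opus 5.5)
Your approach is the paper's: there the corollary is obtained in one line as the linear dual of \cref{lem: coassembly as map of extrior}. Your check that assembly is dual to co-assembly, because $L_{T(t)}$ is symmetric monoidal and so preserves the duality data between $R[\B^t\ZZ/p^r]$ and $R^{\B^t\ZZ/p^r}$, simply fills in that line.

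The only correction concerns your last bookkeeping step. Here $\Alt^t(G)$ means the Cartier dual $(\Lambda^t G)^\vee$, i.e.\ alternating maps $G^t\to\mathbb{G}_m$, so the Cartier dual of $\Lambda^t\GG^{0,\heart}[p^r]\to\Lambda^t\GG^{\heart}[p^r]$ is already the stated map. You therefore need neither the formula $(\Lambda^t G)^\vee\simeq\Alt^t(G^\vee)$, which as written inserts a spurious dual, nor the Fourier/Pontryagin pairing.
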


\begin{proof}
    This is the linear dual of \cref{lem: coassembly as map of extrior}.
\end{proof}

\begin{lemma}\label{lem: height of etale part}
    The height of the etale part of
    $\Lambda^{n-t}\GG_{L_{T(t)}\En}$ is $1$.
\end{lemma}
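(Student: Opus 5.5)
The plan is to reduce the statement to linear algebra of heights, using that exterior powers of $p$-divisible groups are compatible with the connected--etale sequence. Over $L_{T(t)}\En$, the $p$-divisible group $\GG=\GG_{L_{T(t)}\En}$ has height $n$, with connected part $\GG^0$ of height $t$ and etale part $\GG^\et$ of height $n-t$. The height of the etale part of a $p$-divisible group can be checked after any faithfully flat extension, and also on geometric points. I would therefore first pass to the splitting algebra $\widehat{C}^n_t$, or to any faithful extension over which the connected--etale sequence splits: since $L_{T(t)}\En\to\widehat{C}^n_t$ is faithfully flat, this does not change heights. Over such an extension,
\[
\GG\simeq \GG^0\times(\QQ_p/\ZZ_p)^{n-t}.
\]

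Next I would use the behaviour of exterior powers on products, as developed in \cite{exterior-ofpdiv-group} and \cite[Section 3]{Hopkins-Lurie-2013-ambi}. The exterior powers of a product decompose as
\[
\Lambda^{k}(\GG_1\times\GG_2)\simeq\prod_{i+j=k}\Lambda^i\GG_1\otimes\Lambda^j\GG_2 .
\]
Here $\Lambda^j$ of a constant group $(\QQ_p/\ZZ_p)^{m}$ is the constant group $(\QQ_p/\ZZ_p)^{\binom{m}{j}}$. I would also use that $\Lambda^i\GG^0$ is connected of height $\binom{t}{i}$ and dimension $\binom{t-1}{i-1}$, which is the Hopkins--Lurie computation for a connected group of height $t$ and dimension one. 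Tensoring a connected group with a constant etale group gives a connected group, namely a product of copies of it.

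With these facts, taking $k=n-t$, the summand with $i=0$ and $j=n-t$ is $\Lambda^{n-t}(\QQ_p/\ZZ_p)^{n-t}\simeq\QQ_p/\ZZ_p$, which is etale of height $\binom{n-t}{n-t}=1$. Every summand with $i\ge1$ is connected, because $\Lambda^i\GG^0$ is connected for $1\le i\le t$ and vanishes for $i>t$. Hence the etale quotient of $\Lambda^{n-t}\GG$ is exactly $\QQ_p/\ZZ_p$, which has height $1$. As a sanity check, the total height comes out to
\[
\sum_i\binom{t}{i}\binom{n-t}{n-t-i}=\binom{n}{n-t},
\]
as it should.

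The main obstacle is justifying the product decomposition, and in particular the claim that $\Lambda^i\GG^0\otimes(\text{etale})$ is connected, in the spectral, $T(t)$-local setting rather than over a classical base. I would try to avoid this by working with $\GG^\heart$ over the classical ring $\pi_0$, where the results of \cite{exterior-ofpdiv-group} apply. Failing that, I would argue on geometric points: a point $L_{T(t)}\En\to \overline{K}$ of height exactly $t$, where everything becomes a classical $p$-divisible group over a field of characteristic $p$. There, the connected--etale sequence splits canonically, and heights of etale parts can be read off directly.
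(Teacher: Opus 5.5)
Your proposal is correct and follows essentially the same route as the paper. Both base change to the splitting algebra and reduce to (perfect) residue fields where $\GG\simeq\GG^0\times(\QQ_p/\ZZ_p)^{n-t}$. Both then use the binomial decomposition of $\Lambda^{n-t}$, in which only the summand coming from $\Lambda^{n-t}(\QQ_p/\ZZ_p)^{n-t}$ is etale. The step you flag as the main obstacle is exactly what the paper settles with Dieudonne modules over a perfect residue field $\kappa(x)$. There $\Lambda^{n-t}M\simeq\bigoplus_i\Lambda^iM^0\otimes\Lambda^{n-t-i}M^{\et}$ is $F$-stable, and $F^{\wedge(n-t)}$ is nilpotent mod $p$ on every summand with $i>0$. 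This also supplies the connectedness of $\Lambda^i\GG^0$ for $i\ge 1$, which your proposal asserts without proof.
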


\begin{proof}
    Set $d=n-t$. Base changing to the $T(t)$-local splitting
    algebra $C_t^n$ and choosing $x\in \lvert\Spf_{I_t}(C_t^n)\rvert$,
    we see by \cite[4.4.14]{Lurie-2018-Elliptic2} that it suffices
    to prove the claim after base change to $\kappa(x)$. After a
    further base change, we may assume that $\kappa(x)$ is perfect.
    Over $\kappa(x)$, we have
    \[
        \GG_{\kappa(x)}
        \simeq
        \GG_{\kappa(x)}^0
        \times
        (\QQ_p/\ZZ_p)^d.
    \]

    Let
    \[
        M=\DD(\GG_{\kappa(x)}),
        \qquad
        M^0=\DD(\GG_{\kappa(x)}^0),
        \qquad
        M^\et=\DD((\QQ_p/\ZZ_p)^d),
    \]
    where $\DD$ is the Dieudonne module functor.  We use the contravariant convention for Dieudonne modules, so
    that $F$ is invertible on $M^\et$ and nilpotent modulo $p$ on
    $M^0$.

    By the Dieudonne-module description of exterior powers, there is
    an identification
    \[
        \DD(\Lambda^d\GG_{\kappa(x)})
        \simeq
        \Lambda^d M,
    \]
    under which Frobenius and Verschiebung are given by
    \[
        F_d(a_1\wedge\dots\wedge a_d)
        =
        F(a_1)\wedge\dots\wedge F(a_d)
    \]
    and
    \[
        V_d(a_1\wedge\dots\wedge a_d)
        =
        p^{1-d}
        V(a_1)\wedge\dots\wedge V(a_d)
    \]

    see the proof of \cite[Theorem 3.3.1]{Hopkins-Lurie-2013-ambi} for the derivation of the above. The decomposition $M\simeq M^0\oplus M^\et$ induces an
    $F_d$-stable decomposition
    \[
        \Lambda^dM
        \simeq
        \mathop{\oplus}_{i=0}^d
        \Lambda^iM^0\otimes\Lambda^{d-i}M^\et.
    \]
    Since $F$ is nilpotent modulo $p$ on $M^0$, the restriction of
    $F_d$ to every summand with $i>0$ is nilpotent modulo $p$.
    On the other hand, $F_d$ is invertible on
    \[
        \Lambda^dM^\et.
    \]
    It follows that the maximal submodule of $\Lambda^dM$ on which
    $F_d$ is invertible is precisely $\Lambda^dM^\et$.

    Finally, $M^\et$ has rank $d$, so $\Lambda^dM^\et$ has rank
    one. Thus the etale part of
    $\Lambda^d\GG_{\kappa(x)}$ has height $1$, and hence so does the
    etale part of $\Lambda^{n-t}\GG_{L_{T(t)}\En}$.
\end{proof}

\subsection{Inverting the Determinant} \label{sec: Inverting the Determinant}

We now use the chromatic Fourier Transform to tie everything together and give a new semiadditive construction of the splitting algebra.

\begin{corollary}\label{cor: restriction to et after extrior}
    The map 
\[
\pi_0 L_{T(t)}(\En{})[\B^t\ZZ/p^r] \to \pi_0L_{T(t)}(\En{}[\B^t\ZZ/p^r])
\]
is the global sections of the map of finite flat group schemes:
\[
\Lambda^{n-t}(\GG_{L_{T(t)}\En}[p^r]) \to \Lambda^{n-t}(\GG_{L_{T(t)}\En}^\et[p^r])
\]
in particular the etale part of $\Lambda^{n-t}(\GG_{L_{T(t)}\En}[p^r])$ is the group of height $t$ roots of unity.
\end{corollary}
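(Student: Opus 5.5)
The plan is to deduce the statement from \cref{cor: assembly as map of alt} by a duality argument, with the chromatic Fourier transform supplying the duality. \cref{cor: assembly as map of alt} says the assembly map classifies
\[
\Alt^t(\GG_{L_{T(t)}\En}^\heart[p^r]) \to \Alt^t(\GG_{L_{T(t)}\En}^{0,\heart}[p^r]),
\]
which is the Cartier dual of $\Lambda^t\GG^{0}[p^r]\to\Lambda^t\GG[p^r]$. So it suffices to identify, compatibly, the Cartier dual of $\Lambda^t$ with $\Lambda^{n-t}$, and the dual of $\Lambda^t$ of the connected part with $\Lambda^{n-t}$ of the \'etale part.

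\textbf{Step 1 (pairing).} I would use the wedge pairing
\[
\Lambda^t\GG[p^r]\times\Lambda^{n-t}\GG[p^r]\to\Lambda^n\GG[p^r].
\]
Since $\GG$ has height $n$, Hopkins--Lurie \cite{Hopkins-Lurie-2013-ambi} show that $\Lambda^n\GG$ has height one and that this pairing is perfect. This gives
\[
\Alt^t\GG[p^r]\simeq\Lambda^{n-t}\GG[p^r]\otimes(\Lambda^n\GG[p^r])^\vee .
\]

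\textbf{Step 2 (trivializing the twist).} Next I trivialize $(\Lambda^n\GG[p^r])^\vee$. This is exactly what a height $n$ primitive $p^r$-th root of unity does: semiadditively, the Fourier equivalence
\[
\En[\B^t\ZZ/p^r]\simeq\En^{\B^{n-t}\ZZ/p^r}
\]
induced by the orientation (\cref{lem: fourier equvilance iff}) identifies the global sections of $\Alt^t\GG[p^r]$ with those of $\Lambda^{n-t}\GG[p^r]$, using \cref{lem: cup product extrior}. If $\En$ itself lacks these roots, I would work over $\En[\omega^{(n)}_{p^r}]$ and descend along the $(\ZZ/p^r)^\times$-Galois extension. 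The identification is only up to the action of $(\ZZ/p^r)^\times$, but that is harmless for a statement about the map.

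\textbf{Step 3 (identifying the two sides).} The connected--\'etale sequence $\GG^0\to\GG\to\GG^\et$ filters $\Lambda^n\GG$, with top graded piece $\Lambda^t\GG^0\otimes\Lambda^{n-t}\GG^\et$; for height reasons this is all of $\Lambda^n\GG$. Under this, the pairing restricts to a perfect pairing between $\Lambda^t\GG^0$ and $\Lambda^{n-t}\GG^\et$. By naturality of the wedge pairing, the dual of $\Lambda^t\GG^0\to\Lambda^t\GG$ is then the projection
\[
\Lambda^{n-t}\GG[p^r]\to\Lambda^{n-t}\GG^\et[p^r],
\]
which proves the first claim.

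\textbf{Second claim.} By \cref{lem: height of etale part}, the \'etale part of $\Lambda^{n-t}\GG$ has height one. By the duality in Step 3 it is Cartier dual, up to the trivialized twist, to $\Lambda^t\GG^0$, the top exterior power of a connected height $t$ group. Semiadditively, this corresponds to $L_{T(t)}(\En)[\B^t\ZZ/p^r]$, and its \'etale quotient is represented by $L_{T(t)}\En^{\B^t\ZZ/p^r}$ via the target of the assembly map. That object corepresents height $t$ $p^r$-th roots of unity, which gives the claim.

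\textbf{Main obstacle.} The hardest part is the compatibility in Step 2: showing that the semiadditive Fourier isomorphism agrees with the algebro-geometric wedge-pairing duality, and not merely abstractly. I would check this on functors of points using \cref{cor: maps from const to extrior}, reducing via \cref{lem: top extrior conservative on etale} to an \'etale-local situation where everything is constant and the pairing is the determinant.
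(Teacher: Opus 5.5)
Your outline is sound, but it takes a different route from the paper. The paper never writes down a duality. By \cref{cor: assembly as map of alt}, after applying $\operatorname{Spec}$ the assembly map is the Cartier dual of the monomorphism $\Lambda^t\GG^0[p^r]\to\Lambda^t\GG[p^r]$. It is therefore an epimorphism from $\Lambda^{n-t}\GG[p^r]$ onto an \'etale group scheme of rank $p^r$, where the source is identified with $\Lambda^{n-t}\GG[p^r]$ via the Fourier transform and \cref{lem: cup product extrior}. Such a map kills the connected part, so it factors through the \'etale quotient. That quotient has rank $p^r$ by \cref{lem: height of etale part}, and an epimorphism of finite \'etale group schemes of equal rank is an isomorphism.

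You instead exhibit the map as the transpose of the inclusion under the wedge pairing into $\Lambda^n\GG$, using $\Lambda^n\GG\simeq\Lambda^t\GG^0\otimes\Lambda^{n-t}\GG^{\et}$. This is more conceptual and lands directly on $\Lambda^{n-t}(\GG^{\et}[p^r])$. The cost is that it needs inputs the paper does not establish: perfectness of the wedge pairing over $L_{T(t)}\En$, and the behaviour of exterior powers on the non-split connected--\'etale extension. Both can presumably be checked fiberwise with Dieudonn\'e modules, as in \cref{lem: height of etale part}.

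Two corrections. First, the compatibility you single out as the main obstacle is not needed. The Fourier identification and your wedge-pairing identification of $\Alt^t\GG[p^r]$ with $\Lambda^{n-t}\GG[p^r]$ differ by an automorphism of $\Lambda^{n-t}\GG[p^r]$. Any automorphism preserves the identity component, so it commutes with the projection to the \'etale quotient up to an automorphism of that quotient. This is exactly the slack the paper's rank argument exploits.

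Second, your argument for the second claim confuses chains and cochains. By \cref{lem: coassembly as map of extrior}, $(L_{T(t)}\En)^{\B^t\ZZ/p^r}$ is the ring of functions on the connected group $\Lambda^t\GG^0[p^r]$, and it does not corepresent roots of unity. The object corepresenting height $t$ $p^r$-th roots of unity is the group ring $L_{T(t)}(\En)[\B^t\ZZ/p^r]$, which is the source of the assembly map. Once the first claim is proved, the second is immediate: the \'etale quotient of $\Lambda^{n-t}\GG[p^r]$ has ring of functions $\pi_0L_{T(t)}(\En)[\B^t\ZZ/p^r]$.
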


\begin{proof}
    By \cref{cor: assembly as map of alt} this map yields an epimorphism of finite flat group schemes to an etale group scheme of height $1$. By \cref{lem: height of etale part} the etale part of the source has height $1$.
\end{proof}

\begin{corollary} \label{cor: trivializing etale part of extrior}
The functor $\CAlg_{\pi_0\L_{T(t)}\En}\to \Set$ assigning to an $\pi_0\L_{T(t)}\En$-algebra $R$ the subset
\[
\Iso_R (\underline{\widehat{\ZZ/p^r}}_R , \Lambda^{n-t}\GG_R^{\et,\heart}) \subseteq \hom_R (\underline{\widehat{\ZZ/p^r}}_R , \Lambda^{n-t}\GG_R^\heart )
\]
consisting of morphisms $\underline{\ZZ/p^r}_R \to \Lambda^{n-t}\GG_R^\heart$ for which the
composite morphism
\[
\underline{\widehat{\ZZ/p^r}}_R  \to \Lambda^{n-t}\GG_R^\heart \to \Lambda^{n-t}\GG_R^{\et,\heart}
\]
is an isomorphism is represented by 
\[
\pi_0L_{T(t)}\En{}[\omega^{(t)}_{p^r}]\otimes_{\pi_0 {L_{T(t)}(\En{})[\B^t\ZZ/p^r]} }\pi_0L_{T(t)}(\En^{\B^{n-t}\ZZ/p^r}).
\]
\end{corollary}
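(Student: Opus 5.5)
The plan is to present the subfunctor as a fiber product of two representable functors and then read off the representing object as a tensor product of the representing rings. By \cref{cor: maps from const to extrior}, the functor $R\mapsto \hom_R(\underline{\widehat{\ZZ/p^r}}_R,\Lambda^{n-t}\GG^\heart_R)$ is represented by $\pi_0L_{T(t)}\En^{\B^{n-t}\ZZ/p^r}$. Applying the same reasoning to the étale quotient, the functor $R\mapsto\hom_R(\underline{\widehat{\ZZ/p^r}}_R,\Lambda^{n-t}\GG^{\et,\heart}_R)$ is represented by the global sections of $\Lambda^{n-t}\GG^{\et,\heart}[p^r]$ as a group-algebra-type object. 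By \cref{cor: restriction to et after extrior}, this is $\pi_0L_{T(t)}(\En[\B^t\ZZ/p^r])$.

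The first step is to identify the representing ring. Since $\Lambda^{n-t}\GG^{\et}[p^r]$ is étale of height $1$ by \cref{lem: height of etale part}, it is the group of height $t$ $p^r$-th roots of unity. A map from the constant group $\underline{\widehat{\ZZ/p^r}}$ into it is therefore a height $t$ root of unity of order dividing $p^r$. The functor of such maps is represented by $\pi_0L_{T(t)}\En[\B^t\ZZ/p^r]$, read through the Fourier transform $\pi_0 L_{T(t)}\En[\B^t\ZZ/p^r]\simeq \pi_0 L_{T(t)}\En^{\widehat{\ZZ/p^r}}$-type identification. Under this dictionary, \cref{cor: restriction to et after extrior} says that postcomposition with $\Lambda^{n-t}\GG^\heart\to\Lambda^{n-t}\GG^{\et,\heart}$ is induced by the ring map
\[
\pi_0L_{T(t)}(\En)[\B^t\ZZ/p^r]\to \pi_0L_{T(t)}(\En^{\B^{n-t}\ZZ/p^r}),
\]
which is the composite of the assembly map with the Fourier identification. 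The tensor product in the statement is taken along this map.

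The second step is to impose the isomorphism condition. A map $\underline{\widehat{\ZZ/p^r}}_R\to\Lambda^{n-t}\GG^{\et,\heart}_R$ between étale group schemes of order $p^r$ which are étale-locally cyclic is an isomorphism if and only if the corresponding root of unity is primitive. The argument is that after an étale cover it becomes $\ZZ/p^r\to\ZZ/p^r$, and this is an isomorphism iff the generator does not factor through the $p^{r-1}$-torsion, compare \cref{lem: top extrior conservative on etale}. By the representability statement recalled before \cref{cons: root of unity candidate}, primitive height $t$ $p^r$-th roots of unity are represented by the factor $L_{T(t)}\En[\omega^{(t)}_{p^r}]$ of $L_{T(t)}\En[\B^t\ZZ/p^r]$. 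On $\pi_0$ this factor is obtained by inverting the idempotent $e$. Since the subfunctor is the pullback of the primitive locus along the étale-projection map, it is represented by the pushout $\pi_0L_{T(t)}\En[\omega^{(t)}_{p^r}]\otimes_{\pi_0L_{T(t)}(\En)[\B^t\ZZ/p^r]}\pi_0L_{T(t)}(\En^{\B^{n-t}\ZZ/p^r})$. This pushout is simply the localization inverting the image of $e$.

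The main obstacle is the first step: making precise that the map of representing rings agrees with the assembly map under the Fourier identification, and that primitivity in the spectral sense matches the classical isomorphism condition on $\pi_0$-level group schemes. For this I would use that $L_{T(t)}\En$ carries a primitive height $t$ root of unity over its cyclotomic extension, together with \cite[Proposition 6.6]{BCSY-Fourier}. I would also check on geometric points $R\to\kappa$ that the idempotent $e$ vanishes exactly on non-primitive characters, using the integral formula for $e$ from the proof of \cref{lem: pullback cyclo extension}.
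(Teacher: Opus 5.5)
Your proposal follows the paper's proof: you represent $\hom_R(\underline{\widehat{\ZZ/p^r}}_R,\Lambda^{n-t}\GG_R^\heart)$ via \cref{cor: maps from const to extrior}, identify postcomposition with the etale projection with the assembly-plus-Fourier map via \cref{cor: restriction to et after extrior}, and pull back the primitive locus represented by $\pi_0L_{T(t)}\En[\omega^{(t)}_{p^r}]$; your etale-local check that ``isomorphism'' means ``not landing in the $p^{r-1}$-torsion'' makes explicit a step the paper leaves implicit. There are two small slips that do not affect the argument: the ring of $\Lambda^{n-t}\GG^{\et,\heart}[p^r]$ is the source $\pi_0L_{T(t)}(\En)[\B^t\ZZ/p^r]$ of the assembly map, not $\pi_0L_{T(t)}(\En[\B^t\ZZ/p^r])$ as in your first paragraph (you use the correct ring afterwards), and no height $t$ Fourier identification over $L_{T(t)}\En$ is needed; it is in general not available anyway, since $L_{T(t)}\En$ typically has no primitive height $t$ roots of unity, and maps out of a constant cyclic group are simply points of the target.
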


\begin{remark}
    The ring 
    \[
    \pi_0L_{T(t)}\En{}[\omega^{(t)}_{p^r}]\otimes_{\pi_0 {L_{T(t)}(\En{})[\B^t\ZZ/p^r]} }\pi_0L_{T(t)}(\En^{\B^{n-t}\ZZ/p^r}),
    \]
    can be obtained from $\pi_0L_{T(t)}(\En^{\B^{n-t}\ZZ/p^r})$ by inverting the image of the idempotent that gives higher cyclotomic extension under the map  
    \[
    \pi_0 L_{T(t)}(\En{})[\B^t\ZZ/p^r] \to \pi_0 L_{T(t)}(\En{}[\B^t\ZZ/p^r])\simeq \pi_0L_{T(t)}(\En^{\B^{n-t}\ZZ/p^r} )
    \]
    where the first map is the assembly map and the second map is the chromatic Fourier transform.
\end{remark}

\begin{proof}(of \cref{cor: trivializing etale part of extrior})
    By \cref{cor: maps from const to extrior} the functor $R\mapsto \hom_R (\underline{\widehat{\ZZ/p^r}}_R , \Lambda^{n-t}\GG_R^\heart )$ is represented by the $\pi_0L_{T(t)}\En$ algebra $\pi_0L_{T(t)}\En^{\B^{n-t}\ZZ/p^r}$. By \cref{cor: restriction to et after extrior} composing with the map 
    \[
    \pi_0L_{T(t)}(\En)[\B^t\ZZ/p^r]  \to \pi_0L_{T(t)}(\En{}[\B^t\ZZ/p^r])\simeq \pi_0L_{T(t)}(\En{}^{\B^{n-t}\ZZ/p^r})
    \] 
    composes with the map
    \[
    \Lambda^{n-t}\GG_R^\heart  \to \Lambda^{n-t}\GG_R^{\et,\heart} =(\mu_{p^r}^{(t)})_R
    \]
    We therefore only need to find the extension of $L_{T(t)}(\En[\B^t\ZZ/p^r])$ over which the universal height $t$ root of unity becomes primitive. That is, we need to extend scalars to $\pi_0L_{T(t)}\En{}[\omega^{(t)}_{p^r}]$
\end{proof}

\begin{corollary}\label{cor: classifying the spliiting algebra on pi0}
The functor $\CAlg_{\pi_0\L_{T(t)}\En}\to \Set$ assigning to a $\pi_0\L_{T(t)}\En$-algebra $R$ the subset
\[
\Iso_R (\underline{\Zprdbl}_R , \GG_R^{\et,\heart}) \subseteq \hom_R (\underline{\Zprdbl}_R ,\GG_R^\heart )
\]
consisting of morphisms $\underline{\Zprdbl}_R \to \GG_R^\heart$ for which the composite morphism
\[
\underline{\Zprdbl}_R  \to \GG_R^\heart \to \GG_R^{\et,\heart}
\]
is an isomorphism is represented by 
\[
\pi_0L_{T(t)}(\En{})[\omega^{(t)}_{p^r}]\otimes_{\pi_0 L_{T(t)}({\En{})[\B^t\ZZ/p^r]} }\pi_0L_{T(t)}(\En^{\B\Zpr}).
\]
\end{corollary}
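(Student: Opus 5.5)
We reduce the statement to \cref{cor: trivializing etale part of extrior} by passing through top exterior powers, using \cref{lem: top extrior conservative on etale}. By \cref{lem: Ltr functor of points}, together with its $\pi_0$-level shadow (working with $\GG^\heart$ as in the set-up), the functor $R\mapsto \hom_R(\underline{\Zprdbl}_R,\GG^\heart_R)$ is represented by $\pi_0 L_{t,r}=\pi_0L_{T(t)}(\En^{\B\Zpr})$. We must then identify the subfunctor of those $f$ for which the composite $\bar f\colon\underline{\Zprdbl}_R\to\GG^{\et,\heart}_R[p^r]$ is an isomorphism.

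\textbf{Step 1.} Both $\underline{\Zprdbl}_R$ and $\GG^{\et}_R[p^r]$ are étale of the same rank. Étale-locally both are $(\underline{\ZZ/p^r})^{n-t}$, and $\bar f$ is then a matrix. By the argument of \cref{lem: top extrior conservative on etale}, $\bar f$ is an isomorphism if and only if $\Lambda^{n-t}\bar f$ is an isomorphism. Here $\Lambda^{n-t}\bar f$ is a map from $\Lambda^{n-t}\underline{\Zprdbl}_R\simeq\underline{\widehat{\ZZ/p^r}}_R$ to $\Lambda^{n-t}\GG^{\et,\heart}_R[p^r]$. Since exterior powers commute with étale base change, this criterion descends to $R$.

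\textbf{Step 2.} We identify $\Lambda^{n-t}\bar f$ with the composite of $\Lambda^{n-t}f\colon\underline{\widehat{\ZZ/p^r}}_R\to\Lambda^{n-t}\GG^\heart_R[p^r]$ with the map induced by $\GG\to\GG^{\et}$. This is functoriality of $\Lambda^{n-t}$. We also need the identification, from \cref{lem: height of etale part} and \cref{cor: restriction to et after extrior}, of $\Lambda^{n-t}(\GG^{\et}[p^r])$ with the étale part $\Lambda^{n-t}(\GG[p^r])^{\et}$. By \cref{cor: maps from const to extrior}, the assignment $f\mapsto\Lambda^{n-t}f$ is represented by the cup-product map
\[
\pi_0L_{T(t)}(\En^{\B^{n-t}\ZZ/p^r})\to\pi_0L_{t,r}.
\]
Hence the subfunctor in question is the pullback along this map of the subfunctor described in \cref{cor: trivializing etale part of extrior}.

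\textbf{Step 3.} A pullback of functors corresponds to a tensor product of representing rings. The representing object is therefore
\[
\Bigl(\pi_0L_{T(t)}\En[\omega^{(t)}_{p^r}]\otimes_{\pi_0L_{T(t)}(\En)[\B^t\ZZ/p^r]}\pi_0L_{T(t)}(\En^{\B^{n-t}\ZZ/p^r})\Bigr)\otimes_{\pi_0L_{T(t)}(\En^{\B^{n-t}\ZZ/p^r})}\pi_0L_{t,r}.
\]
Cancelling the middle factor gives the claimed ring. The same cancellation shows that it is obtained from $\pi_0L_{t,r}$ by inverting the image of the cyclotomic idempotent.

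The main obstacle is Step 2: matching the map $\Lambda^{n-t}\GG[p^r]\to\Lambda^{n-t}\GG^{\et}[p^r]$ with the projection onto the étale part of $\Lambda^{n-t}\GG[p^r]$. This matters because \cref{cor: trivializing etale part of extrior} is phrased using the latter. I would argue as follows. The connected part of $\GG$ has height $t$, and $\Lambda^{n-t}$ of the connected-étale sequence has an étale quotient $\Lambda^{n-t}\GG^{\et}$ of height one, the same height as the étale part computed in \cref{lem: height of etale part}. The universal property of the étale quotient then forces the two to agree. If needed, I would verify this pointwise over the residue fields used in \cref{lem: height of etale part}, via the Dieudonné decomposition $\Lambda^dM\simeq\oplus_i\Lambda^iM^0\otimes\Lambda^{d-i}M^{\et}$.
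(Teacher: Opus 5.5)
Your proposal is correct and follows the paper's argument. The paper proves this corollary in one line by combining \cref{cor: trivializing etale part of extrior} with \cref{lem: top extrior conservative on etale}. Your Steps 1--3 unpack exactly that: detect invertibility on $\Lambda^{n-t}$, pull back along the cup-product map via \cref{cor: maps from const to extrior}, then cancel the middle tensor factor.

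The identification you flag as the main obstacle is what \cref{cor: restriction to et after extrior} supplies. Run it at finite level: a surjection between finite etale group schemes of the same order $p^r$ is automatically an isomorphism. Your height-one comparison phrased for $p$-divisible groups would, by itself, only give an isogeny.
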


\begin{remark}\label{rem: classifying the spliiting algebra on pi0}
    The ring 
    \[
   \pi_0L_{T(t)}\En{}[\omega^{(t)}_{p^r}]\otimes_{\pi_0 L_{T(t)}({\En{})[\B^t\ZZ/p^r]} }\pi_0L_{T(t)}(\En^{\B\Zpr}).
    \]
    can be obtained from $\pi_0L_{T(t)}(\En^{\B\Zpr})$ by inverting the image of the idempotent that gives higher cyclotomic extension under the map  
    \[
    \pi_0 L_{T(t)}(\En{})[\B^t\ZZ/p^r] \to \pi_0 L_{T(t)}(\En{}[\B^t\ZZ/p^r])\simeq \pi_0L_{T(t)}(\En^{\B^{n-t}\ZZ/p^r} ) \to \pi_0L_{T(t)}(\En^{\B\Zpr})
    \]
    where the first map is the assembly map, the second map is the chromatic Fourier transform, and the third map is induced from the cup product. 
\end{remark}

\begin{proof}(of \cref{cor: classifying the spliiting algebra on pi0})
    This follows from \cref{cor: trivializing etale part of extrior} and \cref{lem: top extrior conservative on etale}.
\end{proof}

\begin{corollary}\label{cor: classifying the splitting algebra}
    The $T(t)$-local splitting algebra of the $p$-divisible group on $L_{T(t)}\En$ is the $T(t)$-localization of the colimit
    \[
   \colim_r (L_{T(t)}\En{}[\omega^{(t)}_{p^r}]\otimes_{ {L_{T(t)}(\En{})[\B^t\ZZ/p^r]} }L_{T(t)}(\En^{\B\Zpr}))
    \]
\end{corollary}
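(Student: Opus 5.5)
The plan is to lift the $\pi_0$-level statement of \cref{cor: classifying the spliiting algebra on pi0} to $T(t)$-local commutative algebras at each finite level $r$, and then pass to the colimit over $r$. Write
\[
C_{t,r}\coloneqq L_{T(t)}\En{}[\omega^{(t)}_{p^r}]\otimes_{L_{T(t)}(\En{})[\B^t\ZZ/p^r]}L_{T(t)}(\En^{\B\Zpr}).
\]
By the definition of the higher cyclotomic extension, $L_{T(t)}\En{}[\omega^{(t)}_{p^r}]$ is the localization of $L_{T(t)}(\En{})[\B^t\ZZ/p^r]$ at an idempotent. Hence $C_{t,r}$ is simply $L_{t,r}$ with the image $e_r$ of that idempotent inverted, as described in \cref{rem: classifying the spliiting algebra on pi0}. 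Inverting an idempotent is a split localization. It therefore commutes with $\pi_0$, that is, $\pi_0 C_{t,r}\simeq \pi_0(L_{t,r})[e_r^{-1}]$, and the relative tensor product needs no further completion. The same holds for $T(t)$-localization, since $L_{t,r}[e_r^{-1}]$ is a retract of $L_{t,r}$ and is thus already $T(t)$-local. So the finite-level object is an honest direct factor of $L_{t,r}$.

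Next I identify the functor of points of $C_{t,r}$ on all of $\CAlg_{L_{T(t)}\En}(\Sp_{T(t)})$, and not only on $\pi_0$-algebras. By \cref{lem: Ltr functor of points}, maps $L_{t,r}\to R$ correspond to maps $\underline{\Zprdbl}_R\to\GG_R[p^r]$. Maps out of $L_{t,r}[e_r^{-1}]$ are exactly those maps whose induced ring map $\pi_0 L_{t,r}\to\pi_0R$ sends $e_r$ to $1$. Because $e_r$ is an idempotent, this is a condition on $\pi_0$ alone. By \cref{cor: classifying the spliiting algebra on pi0}, applied to the $\pi_0 L_{T(t)}\En$-algebra $\pi_0R$ (after base change along $\pi_0 L_{T(t)}\En\to\pi_0R$), the condition says precisely that the composite $\underline{\Zprdbl}_R\to\GG^\heart_R[p^r]\to\GG^{\et,\heart}_R[p^r]$ is an isomorphism. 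By the discussion in \cref{sec: Set-Up}, this is exactly a splitting of the connected-étale sequence on $p^r$-torsion together with a trivialization of the étale part. Consequently $C_{t,r}$ represents level-$r$ splittings. Here I use that being an isomorphism of finite flat group schemes can be checked on the underlying classical (heart) truncation.

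Finally, I check that the transition maps $C_{t,r}\to C_{t,r+1}$ are induced by restricting splittings along $p^r$-torsion. At the level of $L_{t,r}$, these maps come from the quotient $(\ZZ/p^{r+1})^{n-t}\to(\ZZ/p^r)^{n-t}$. One must also see that $e_r$ maps to a unit times, or divides, $e_{r+1}$. This holds because a splitting at level $r+1$ restricts to one at level $r$, and the representability of the previous paragraph then gives a map of representing objects. Taking the colimit, the object $\colim_r C_{t,r}$ represents compatible systems of level-wise splittings, which are splittings of the $p$-divisible group. Its $T(t)$-localization is therefore the initial $T(t)$-local algebra over which the sequence splits, namely the splitting algebra $C^n_t$.

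I expect the main obstacle to be the second step: promoting the $\pi_0$ representability to the spectral setting. The key point is that inverting an idempotent is detected on $\pi_0$, so that the spectral mapping space out of $C_{t,r}$ is the union of components of $\Map(L_{t,r},R)$ satisfying a $\pi_0$-condition. One also needs the \emph{isomorphism} condition for spectral group schemes to reduce to the heart. I would handle this via \cite[Remark 2.7.17]{Lurie-2019-Elliptic3}, using that both sides are finite flat.
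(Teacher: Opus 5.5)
Your argument is correct, but it lifts the $\pi_0$-statement to spectra by a different mechanism than the paper.

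\textbf{The paper's route.} The paper does not verify a universal property. It takes the classical finite-level splitting algebra $C^n_{t,r}$ (Hopkins--Kuhn--Ravenel, Stapleton) together with its structure map $L_{t,r}\to C^n_{t,r}$. It then uses \cref{cor: classifying the spliiting algebra on pi0} to see that $L_{t,r}[e_r^{-1}]\to C^n_{t,r}$ is an isomorphism on $\pi_0$. This is upgraded to an equivalence because both sides are $2$-periodic with vanishing $\pi_1$, and the colimit over $r$ is taken on both sides.

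\textbf{Your route.} You show directly that $L_{t,r}[e_r^{-1}]$ corepresents level-$r$ splittings on all of $\CAlg_{L_{T(t)}\En}(\Sp_{T(t)})$. This combines \cref{lem: Ltr functor of points} with two facts: inverting an idempotent imposes only a $\pi_0$-condition, and an equivalence of finite flat group schemes is detected on the heart. You then conclude by Yoneda and by passing to the limit of mapping spaces.

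\textbf{Comparison.} The paper's argument is shorter and identifies the new formula with the classical object as already constructed. However, it relies on external input about $C^n_{t,r}$: the existence of the comparison map and the even-periodic homotopy of the target. Your argument matches the definition of the splitting algebra as an initial object in \cref{sec: Set-Up}. It needs no information about the homotopy of $C^n_{t,r}$, and it makes the transition maps and the colimit explicit.

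One small correction: the condition you want is $e_{r+1}\cdot\phi(e_r)=e_{r+1}$ in $\pi_0L_{t,r+1}$, rather than ``unit times, or divides.'' This is exactly what your Yoneda argument produces.

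The cost of your route is the compatibility you already flag. You need that the ring map $\pi_0L_{t,r}\to\pi_0R$ classifies the heart of the spectral level structure over $\pi_0R$. You also need that the spectral étale quotient of $\GG_R[p^r]$ restricts to $\GG^{\et,\heart}$ there. This is what one extracts from \cite[Remark 2.7.17]{Lurie-2019-Elliptic3}, together with the existence of the connected-étale sequence over arbitrary $T(t)$-local $L_{T(t)}\En$-algebras.
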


\begin{proof}
By \cref{cor: classifying the spliiting algebra on pi0} and \cref{rem: classifying the spliiting algebra on pi0} , the map
\[
L_{t,r}\to C_{t,r}^n
\]
becomes an isomorphism on $\pi_0$ after inverting the idempotent arising from the height $t$ cyclotomic extension. Since both the source and target are $2$-periodic and have vanishing $\pi_1$,  it follows that the map becomes an equivalence after inverting the idempotent. We now finish by taking colimit over $r$ on both sides and localizing.
\end{proof}

We note that the above construction of the splitting algebra  can be applied to any $T(n)$-local commutative algebra with all height $n$ roots of unity.

\section{The $K(n)$-Local Universal Character} \label{sec: The $K(n)$-Local Universal Character}

In this section we give a full description of the universal character of a
$K(n)$-local object. We begin in \cref{sec: Trans universal} by showing that the transchromatic character is the universal character of $\En$, see
\cref{cor: indtifying the spliting algebra}.

Next, in \cref{sec: universal character K(n)} we observe that one can run a
construction analogous to the height $t$ splitting algebra for any object
\[
M \in \Mod_{\SS_{K(n)}[\omega^{(n)}_{p^\infty}]}(\Sp_{K(n)}),
\]
which we denote by $C_t(M)$, see \cref{const: Ct}. (In particular,
$C_t(\En)=C_t^n$.) Using descent theory, we deduce that this construction
computes the universal character of any such $M$, namely
\[
(\L_p^{n-t})^*(\L^{n-t}_{p})_!M \simeq C_t(M),
\]
see \cref{thm: k(n) local character with roots of unity}. In particular, for any $\pi$-finite space $A$ we obtain a character isomorphism
\[
C_t(M^A) \simeq C_t(M)^{\L_p^{n-t}A}.
\]
Under additional finiteness assumptions (satisfied for $\En$), this can be
rewritten as a base-change formula, recovering the usual character isomorphism of \cite{Hopkins-Kuhn-Ravanel-2000-HKR, Lurie-2019-Elliptic3, Stapleton-2013-HKR}, see \cref{cor:L!-tensor-RA}.

We then use this to compute the universal character of an arbitrary $K(n)$-local object by adding roots of unity, applying $C_t$, and taking fixed points, see \cref{cor: L!-as-hZ}. Finally, in \cref{sec: GLn-t action} we show that the
functors $(\L_p^{n-t})^*$ and $(\L^{n-t}_p)_!$ carry a natural action of the profinite group $\GL_{n-t}(\ZZ_p)$, and that the universal character map
\[
M^{(-)} \to (\L_p^{n-t})^*(\L^{n-t}_p)_!M
\]
is $\GL_{n-t}(\ZZ_p)$-equivariant when the source is given the trivial action,
see \cref{lem:GL-action-Ln-t}.

In the case $t=0$, this identifies fixed points: for any
$M\in \Sp_{K(n)}$ we have
\[
L_\QQ (M^{(-)}) \simeq ((\L_p^{n})^*(\L_p^{n})_!M)^{h\GL_n(\ZZ_p)},
\]
see \cref{thm: unit-rational-hGL}. When $M=R$ is a $K(n)$-local algebra with all height $n$ roots of unity, the resulting action is particularly well behaved, and the universal character exhibits $(\L_p^{n})^*(\L_p^{n})_!R$ as a $\GL_n(\ZZ_p)$ Galois extension of $L_{\QQ}(R^{(-)})$ in
$\PCMoninf(\Sp_{\QQ})$, see \cref{prop: Rognes-Galois-Ln-PCMon}. Under further finiteness assumptions on $R$ (satisfied for $\En$), the same map evaluated at $\pt$ exhibits $(\L_p^{n})_!R$ as a Galois extension of $L_{\QQ}(R)$ in $\Sp_{\QQ}$, see \cref{prop:L!-R-Galois}. To simplify notation throughout this section we omit the $p$ and write $\L^{n-t}=\L_p^{n-t}$ for the $p$-typical free loop space.

\subsection{Universality of the Transchromatic Character}\label{sec: Trans universal}

In this subsection we show that the transchromatic character agrees with the universal character
of $\En$. We begin by analyzing the $(n-t)$-fold free loop space of the cup product map
\[
(\B\ZZ/p^r)^{n-t} \to \B^{n-t}\ZZ/p^r .
\]
Fix an orientation on $S^1$. This induces a canonical orientation on $(S^1)^{n-t}$. By \cref{lem: F(B^nZ/pr) splits}, such an orientation yields a splitting of $\ZZ/p^r$-modules
\[
\Map((S^1)^{n-t},\B^{n-t}\ZZ/p^r)
\simeq
\ZZ/p^r \times \tau_{\ge 1}\Map((S^1)^{n-t},\B^{n-t}\ZZ/p^r).
\]
Moreover, essentially the same argument gives a splitting of $\ZZ/p^r$-modules
\[
\Map((S^1)^{n-t},\B(\ZZ/p^r)^{n-t})
\simeq
\prod_{i=1}^{n-t}(\ZZ/p^r)^{n-t}
\times
\tau_{\ge 1}\Map((S^1)^{n-t},\B(\ZZ/p^r)^{n-t}).
\]
We now verify that the relevant diagram commutes.

\begin{lemma} \label{lem: orientation diag commutes}
    With the choice of splitting above, the diagram
    \[
    \begin{tikzcd}
	{\ZZ/p^r} & {\Map((S^1)^{n-t},\B^{n-t}\ZZ/p^r)} & {\ZZ/p^r} \\
	{\prod_{i=1}^{n-t}(\ZZ/p^r)^{n-t}} & {\Map((S^1)^{n-t},\B(\ZZ/p^r)^{n-t})} & {\prod_{i=1}^{n-t}(\ZZ/p^r)^{n-t}}
	\arrow[from=1-1, to=1-2]
	\arrow[from=1-2, to=1-3]
	\arrow["{\pi_0(\L^{n-t}\cup^{n-t})}", from=2-1, to=1-1]
	\arrow[from=2-1, to=2-2]
	\arrow["{\L^{n-t}\cup^{n-t}}", from=2-2, to=1-2]
	\arrow[from=2-2, to=2-3]
	\arrow["{\pi_0(\L^{n-t}\cup^{n-t})}"', from=2-3, to=1-3]
    \end{tikzcd}
    \]
    commutes.
\end{lemma}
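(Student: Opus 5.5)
The diagram has two squares, and I would handle each by reducing it to a statement about maps of $\ZZ/p^r$-module spectra. Both $\B^{n-t}\ZZ/p^r$ and $\B(\ZZ/p^r)^{n-t}$ are infinite loop spaces of connective $\ZZ/p^r$-modules. The functor $\Map((S^1)^{n-t},-)$ sends such a module $N$ to $\tau_{\ge 0}\hom(\Sigma^\infty_+(S^1)^{n-t},N)$. Splitting $\Sigma^\infty_+ (S^1)^{n-t}\simeq \bigoplus_{I\subseteq\{1,\dots,n-t\}}\SS^{|I|}$ stably, via the chosen orientation of $S^1$, gives both horizontal splittings. In $\Map((S^1)^{n-t},\B^{n-t}\ZZ/p^r)$ the $\pi_0$-summand $\ZZ/p^r$ comes from the top cell $I=\{1,\dots,n-t\}$. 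In $\Map((S^1)^{n-t},\B(\ZZ/p^r)^{n-t})$ the $\pi_0$-summand comes from the $n-t$ one-cells, each contributing $(\ZZ/p^r)^{n-t}$. The left horizontal maps are the inclusions of these $\pi_0$-summands and the right maps are the projections onto them.

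The key point is that the cup product $\cup^{n-t}\colon(\B\ZZ/p^r)^{n-t}\to\B^{n-t}\ZZ/p^r$ is not a map of modules. The vertical map $\L^{n-t}\cup^{n-t}$ therefore need not respect the splittings, and it is only a map of spaces. I would avoid this issue by passing to $\pi_0$. The right square only involves the projection to $\pi_0$, because the target summand $\ZZ/p^r$ is exactly $\pi_0\Map((S^1)^{n-t},\B^{n-t}\ZZ/p^r)$ and the projection is $\pi_0$ itself. So the right square commutes by definition, with the right vertical map defined as $\pi_0(\L^{n-t}\cup^{n-t})$, and likewise for the left vertical label. The left square commutes by the same reasoning: composing the inclusion of the $\pi_0$-summand with the projection to $\pi_0$ is the identity on $\pi_0$, so on $\pi_0$ both composites equal $\pi_0(\L^{n-t}\cup^{n-t})$. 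The one thing to verify is that the left square commutes up to homotopy as maps of spaces, and not only after applying $\pi_0$. This is automatic, since the source is discrete and homotopy classes of maps out of a discrete set are determined on $\pi_0$.

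It remains to compute $\pi_0(\L^{n-t}\cup^{n-t})$ explicitly, so that the diagram is meaningful. A point of $\pi_0\Map((S^1)^{n-t},\B(\ZZ/p^r)^{n-t})$ is an $(n-t)\times(n-t)$ matrix $(a_{ij})$, namely a collection of classes $x_j=\sum_i a_{ij}\theta_i\in H^1((S^1)^{n-t};\ZZ/p^r)$. Its image is the cup product $x_1\cup\dots\cup x_{n-t}=\det(a_{ij})\,\theta_1\cdots\theta_{n-t}$. Evaluating against the fundamental class fixed by the orientation identifies this with $\det(a_{ij})\in\ZZ/p^r$.

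I expect the main obstacle to be a bookkeeping point rather than a conceptual one. The splitting of $\Map((S^1)^{n-t},\B^{n-t}\ZZ/p^r)$ produced by \cref{lem: F(B^nZ/pr) splits} must coincide with the cell splitting, so that its $\ZZ/p^r$-factor is really $\pi_0$ via evaluation on the fundamental class. This holds because $\pi_0=H^{n-t}((S^1)^{n-t};\ZZ/p^r)$ and the orientation is the one induced from $S^1$. I would check the compatibility with the ordering and sign conventions of the cup product directly in the case $n-t=1$, and then extend it by the Künneth formula.
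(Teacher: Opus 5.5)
Your proposal is correct and follows essentially the same route as the paper: both squares reduce to functoriality of $\pi_0$ once the horizontal splittings are identified with the orientation/basis identifications of $\pi_0$. Your observations that $\L^{n-t}\cup^{n-t}$ is not linear, and that maps out of (resp.\ into) a discrete space are determined by their effect on $\pi_0$, are exactly what justifies the paper's appeal to ``functoriality of $\pi_0$''; your explicit determinant computation goes beyond what this statement needs, and the paper carries it out separately in the following lemma identifying $\pi_0(\L^m\cup^m)$ with $\det$.
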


\begin{proof}
    Set $m:=n-t$, $A:=\ZZ/p^r$, and $T^m:=(S^1)^m$. By \cref{lem: F(B^nZ/pr) splits} applied to $\Map(T^m,-)$,
    the top row splitting
    \[
    \Map(T^m,\B^mA)\simeq A\times \tau_{\ge 1}\Map(T^m,\B^mA)
    \]
    is obtained by choosing the generator in
    $\pi_0\Map(T^m,\B^mA)\simeq H^m(T^m,A)$
    corresponding to the fundamental class of $T^m$.
    Likewise, the bottom row splitting identifies $\pi_0\Map(T^m,(\B A)^m)\simeq (H^1(T^m,A))^m\simeq (A^m)^m$
    using the basis $x_1,\dots,x_m\in H^1(T^m,A)$ coming from the oriented circle factors.
    
    The map $\L^m\cup^m:(\B A)^m\to \B^mA$ classifies the $m$-fold cup product, hence on $\pi_0$ it induces
    \[
    (H^1(T^m,A))^m \xrightarrow{\ \smile\cdots\smile\ } H^m(T^m,A).
    \]
    Under the above identifications, this sends $(x_1,\dots,x_m)$ to
    $x_1\smile\cdots\smile x_m$, which is exactly the top generator determined by the product orientation. Therefore $\L^m\cup^m$ carries the bottom section to the top section, and functoriality of $\pi_0$ gives commutativity of both squares, hence of the whole diagram.
\end{proof}

We now identify this map as the determinant map.

\begin{lemma}\label{lem: Lcup is det}
    Let $m=n-t$ and $A=\ZZ/p^r$. Under the identifications
    \[
    \pi_0\Map(T^m,(\B A)^m)\simeq(H^1(T^m,A))^m \simeq (A^m)^m \simeq M_m(A),
    \qquad
    \pi_0\Map(T^m,\B^mA) \simeq H^m(T^m,A)\simeq A
    \]
    coming from the chosen orientation on $T^m$, the map $\pi_0(\L^m\cup^m): (A^m)^m \to A$ is the determinant map.
\end{lemma}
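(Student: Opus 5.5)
The plan is to reduce the statement to a computation in the cohomology ring $H^*(T^m,A)$, as in the proof of \cref{lem: orientation diag commutes}. First I would identify $\pi_0$ of both mapping spaces with cohomology: $\pi_0\Map(T^m,(\B A)^m)\simeq H^1(T^m,A)^{m}$ and $\pi_0\Map(T^m,\B^mA)\simeq H^m(T^m,A)$. Under these identifications, postcomposition with $\cup^m\colon(\B A)^m\to \B^mA$ induces on $\pi_0$ the map $(y_1,\dots,y_m)\mapsto y_1\smile\cdots\smile y_m$, since $\cup^m$ classifies the product $\iota_1\smile\cdots\smile\iota_m$ of the fundamental classes and pullback of classes is compatible with cup products. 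This was already used in the proof of \cref{lem: orientation diag commutes}.

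Next I would make the identification $H^1(T^m,A)^m\simeq M_m(A)$ explicit. Let $x_1,\dots,x_m\in H^1(T^m,A)$ be the basis pulled back from the oriented circle factors. Writing $y_j=\sum_i a_{ij}x_i$ gives the matrix $(a_{ij})$. The identification $H^m(T^m,A)\simeq A$ sends $x_1\smile\cdots\smile x_m$ to $1$, which is the product orientation by \cref{lem: orientation diag commutes}.

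The computation is then
\[
y_1\smile\cdots\smile y_m=\sum_{i_1,\dots,i_m}a_{i_11}\cdots a_{i_mm}\,x_{i_1}\smile\cdots\smile x_{i_m}.
\]
By the Künneth formula, $H^*(T^m,A)$ is the exterior algebra over $A$ on the degree-one classes $x_i$. So $x_i\smile x_i=0$, and the degree-one classes anticommute by graded commutativity. Terms with a repeated index therefore vanish, and a permutation $\sigma$ contributes $\mathrm{sgn}(\sigma)\,x_1\smile\cdots\smile x_m$. The sum is $\det(a_{ij})\,x_1\smile\cdots\smile x_m$, which gives the claim.

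The step that needs the most care is the exterior algebra input, specifically $x_i^2=0$, which is stronger than anticommutativity when $p=2$. I would get this from $x_i$ being pulled back from $H^1(S^1,A)$, where $H^2(S^1,A)=0$. The Künneth isomorphism holds over $A$ because $H^*(S^1,A)$ is free. The other point to handle carefully is that the matrix convention (rows versus columns) only transposes the matrix and so does not change the determinant.
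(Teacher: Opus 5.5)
Your proposal is correct and follows the paper's proof essentially verbatim. You identify $\pi_0(\L^m\cup^m)$ with the iterated cup product $H^1(T^m,A)^m\to H^m(T^m,A)$, use $H^*(T^m,A)\simeq\Lambda_A(x_1,\dots,x_m)$, and expand multilinearly to get $\det(a_{ij})\,x_1\smile\cdots\smile x_m$. Your extra remarks are correct refinements of points the paper uses without comment: $x_i^2=0$ is needed beyond anticommutativity when $p=2$ and follows by pulling back from $S^1$, and the row/column convention only transposes the matrix.
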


\begin{proof}
    Let $T^m=(S^1)^m$ and let $x_1,\dots,x_m\in H^1(T^m,A)$ be the standard basis classes
    pulled back from the circle factors (determined by the fixed orientation on $S^1$).
    Then
    \[
    H^*(T^m,A)\simeq \Lambda_A(x_1,\dots,x_m),
    \]
    and the chosen product orientation identifies $H^m(T^m,A)\simeq A$ by sending
    $x_1\smile\cdots\smile x_m$ to $1\in A$.
    
    By definition, $\L^m\cup^m:(\B A)^m\to \B^mA$ classifies the $m$-fold cup product, so on
    $\pi_0$ it induces
    \[
    (H^1(T^m,A))^m \xrightarrow{\ \smile\cdots\smile\ } H^m(T^m,A)\simeq A.
    \]
    Now take $(\alpha_1,\dots,\alpha_m)\in (H^1(T^m,A))^m$, and write
    \[
    \alpha_i=\sum_{j=1}^m a_{ij}x_j \qquad (a_{ij}\in A).
    \]
    Using multilinearity and the exterior-algebra relation (so $x_j\smile x_j=0$ and
    $x_j\smile x_k=-\,x_k\smile x_j$), we expand:
    \[
    \alpha_1\smile\cdots\smile \alpha_m
    =
    \sum_{j_1,\dots,j_m} a_{1j_1}\cdots a_{mj_m}\, x_{j_1}\smile\cdots\smile x_{j_m}.
    \]
    All terms with repeated indices vanish, and reordering each surviving term to
    $x_1\smile\cdots\smile x_m$ contributes the sign of the corresponding permutation. Hence
    \[
    \alpha_1\smile\cdots\smile \alpha_m
    =
    (\sum_{\sigma\in S_m}\mathrm{sgn}(\sigma)\,a_{1\sigma(1)}\cdots a_{m\sigma(m)})
    \,(x_1\smile\cdots\smile x_m)
    =
    \det(a_{ij})\,(x_1\smile\cdots\smile x_m).
    \]
    Under the orientation identification $H^m(T^m,A)\simeq A$, this maps to $\det(a_{ij})\in A$.
    Thus $\pi_0(\L^m\cup^m)$ is the determinant map.
\end{proof}

With this in hand, we turn to showing that the transchromatic character is the universal character of $\En$. We first introduce some notation and then state the key lemma that will be
used in the proof.

\begin{notation}\label{not: evI}
    Given $\cC \in \PrL$, $R \in \cC$, and $n-t\in \NN$, we introduce the following notation:
    \begin{enumerate}
    \item The map
        \[
        \ev_{/p^r} \colon  R^{\L^{\,n-t}(\B\ZZ/p^r)^{\,n-t}} \to R
        \]
    is defined as evaluation at the mod $p^r$ map  $\ZZ_{p}^{\,n-t} \to (\ZZ/p^r)^{\,n-t}$.
    \item The map
        \[
        \ev_{I} \colon R^{M_{n-t}} \to R
        \]
        is defined as evaluation at the unit matrix.
\end{enumerate}

\end{notation}

\begin{remark}\label{rem: why evI}
    The mod $p^r$ morphism $\ZZ_{p}^{\,n-t} \to (\ZZ/p^r)^{\,n-t}$ corresponds to the unit matrix in $\pi_0 \Map(\B\ZZ_{p}^{\,n-t}, (\B\ZZ/p^r)^{\,n-t}) \simeq M_{n-t}(\ZZ/p^r)$.
    Therefore, using the section coming from our chosen orientation we obtain a commutative diagram
    \[
    \begin{tikzcd}
	   {R^{\L^{n-t}(\B\ZZ/p^r)^{n-t}}} & {R^{M_{n-t}(\ZZ/p^r)}} \\
	   & R
	   \arrow["{s^{*}}", from=1-1, to=1-2]
	   \arrow["{\ev_{/p^r}}"', from=1-1, to=2-2]
	   \arrow["{\ev_I}", from=1-2, to=2-2]
    \end{tikzcd}
    \]
    where $\ev_{/p^r}$ and $\ev_I$ are as in \cref{not: evI}.
\end{remark}

Our goal now is to show that the canonical map inverts the relevant idempotent.

\begin{corollary} \label{cor: chooses root of unity}
    Let $R,S\in\CAlg(\tsadi)$ and 
    \[
    \chi\colon R \to (\L^{n-t})^*S
    \]
    be a character map of commutative algebras. Assume that $R$ is $T(n)$-local and $(\ZZ/p^r,n)$-oriented. Let $\varphi =\varphi_{\ZZ/p^r,S}^R$ be from \cref{cons: candidate Fourier transform}. Then the composition
    \[
    L_{T(t)}(R)[\B^t\ZZ/p^r]\xto{\varphi} S^{\I^{t}\B^t\ZZ/p^r}\simeq S^{\ZZ/p^r}\xto{\L^{n-t}\cup_{n-t}^*} S^{\L^{n-t}\B(\ZZ/p^r)^{n-t}}\xto{ev_{/p^r}} S
    \]
    chooses a primitive root of unity under the adjunction $R[-]\dashv (-)\units$.
\end{corollary}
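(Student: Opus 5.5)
The plan is to identify the composite in the statement with the map $S[\B^t\ZZ/p^r]\to S$ classifying the height $t$ root of unity $\chi(\omega^{(n)}_{p^r})$ from \cref{cons: root of unity candidate}. Primitivity then follows from \cref{cor: character roots of unity}. Concretely, I would factor the composite through $\bar\varphi$ of \cref{prop: Fourier in chr}. Since the composite is an algebra map out of $L_{T(t)}(R)[\B^t\ZZ/p^r]$, its adjoint root of unity is unchanged by base change along $L_{T(t)}R\to S$. So it suffices to treat
\[
S[\B^t\ZZ/p^r]\xto{\bar\varphi_{\B^t\ZZ/p^r}} S^{\ZZ/p^r}\xto{\L^{n-t}\cup^*_{n-t}} S^{\L^{n-t}\B(\ZZ/p^r)^{n-t}}\xto{\ev_{/p^r}} S .
\]
By \cref{prop: Fourier in chr}, $\bar\varphi$ is the height $t$ Fourier transform $\mathcal F_{\chi(\omega^{(n)}_{p^r}),\B^t\ZZ/p^r}$, with $\I^{(t)}\B^t\ZZ/p^r\simeq\ZZ/p^r$.

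Next I would analyse the last two maps. Both are pullbacks along maps of $\pi$-finite spaces: first the mod $p^r$ point $\pt\to\L^{n-t}(\B\ZZ/p^r)^{n-t}$, then $\L^{n-t}\cup^{n-t}$. Since the target $S$ has height $t$, only $\pi_0$ data matters after projecting away the $\tau_{\ge1}$ parts via the orientation splittings. Two earlier results pin this down. \cref{lem: orientation diag commutes} says $\L^{n-t}\cup^{n-t}$ is compatible with the chosen sections. \cref{rem: why evI} says $\ev_{/p^r}$ becomes $\ev_I$ on $S^{M_{n-t}(\ZZ/p^r)}$. By \cref{lem: Lcup is det}, $\pi_0(\L^{n-t}\cup^{n-t})$ is the determinant. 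Hence the last two maps together amount to evaluation $S^{\ZZ/p^r}\to S$ at $\det(I)=1\in\ZZ/p^r$.

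It then remains to see that $\ev_1\circ\mathcal F_{\chi(\omega),\B^t\ZZ/p^r}$ corresponds under $S[-]\dashv(-)\units$ to $\chi(\omega)$ itself. This follows from the explicit description in \cref{lem: ZZ/pr action on root of unity}, applied at height $t$ and dualised. The $i$-th coordinate of the Fourier transform is $\chi(\omega)^i$, that is, the $i$-action on $\Sigma^t\ZZ/p^r$ followed by $\chi(\omega)$. Taking $i=1$ recovers $\chi(\omega)$.

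The main obstacle will be the compatibility bookkeeping in the second paragraph. One must check that three projections agree: the projection $S^{F(\I^{(n)}M)}\to S^{\I^{(t)}M}$ in \cref{cons: candidate Fourier transform}, built from the degree-one map $\TT^{n-t}\to S^{n-t}$; the orientation splitting of \cref{lem: F(B^nZ/pr) splits}; and the splitting used in \cref{lem: orientation diag commutes}. In particular, ``evaluation at $1$'' must really be the coordinate picked out by the fundamental class. I would first verify that both splittings are determined by the same generator of $H^{n-t}(\TT^{n-t},\ZZ/p^r)$. If a unit discrepancy appeared, it would only replace $\chi(\omega)$ by a power $\chi(\omega)^u$ with $u\in(\ZZ/p^r)\units$, which is still primitive, so the conclusion would survive.
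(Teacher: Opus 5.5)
Your proposal is correct and matches the paper's proof step for step. The paper also identifies the last leg of $\varphi$ (collapsing the top cell of $\TT^{n-t}$) with the section of the orientation splitting, which is your ``bookkeeping'' step; it then uses \cref{lem: orientation diag commutes}, \cref{lem: Lcup is det} and \cref{rem: why evI} to reduce the remaining maps to evaluation at $\det(I)=1\in\ZZ/p^r$, and concludes from \cref{prop: Fourier in chr}, with \cref{cor: character roots of unity} giving primitivity.

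One small correction: the reduction to $\pi_0$ data in your second paragraph holds because the composite ends with evaluation at a point, so only the component of that point matters. It does not follow from $S$ having height $t$: for $t\ge 1$, height $t$ does not trivialize the $\B^i$-factors with $1\le i\le t$ in $\tau_{\ge1}\L^{n-t}\B^{n-t}\ZZ/p^r$.
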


\begin{proof}
    Recall that $\varphi$ factors as
    \[
    L_{T(t)}(R)[\B^t\ZZ/p^r]\to S^{L^{n-t}\I^{n}\B^{t}\ZZ/p^r}
    \simeq S^{L^{n-t}\B^{n-t}\ZZ/p^r}\to S^{\ZZ/p^r},
    \]
    where the last arrow is obtained by collapsing the top cell of $\TT^{\,n-t}$ and using the induced orientation on $S^{n-t}$ (see \cref{cons: candidate Fourier transform}). Hence this last map agrees with the projection map
    determined by our chosen orientation splitting
    \[
    \L^{n-t}\B^{n-t}\ZZ/p^r \simeq \ZZ/p^r \times \tau_{\ge 1}\L^{n-t}\B^{n-t}\ZZ/p^r,
    \]
    i.e.\ the induced map $S^{L^{n-t}\B^{n-t}\ZZ/p^r}\to S^{\ZZ/p^r}$ is exactly the one coming from
    this splitting. Then by \cref{lem: orientation diag commutes}, \cref{lem: Lcup is det} and \cref{rem: why evI} we have a commutative diagram:
    \[
    \begin{tikzcd}
	{L_{T(t)}(R)[\B^t\ZZ/p^r]} & {S^{L^{n-t}\B^{n-t}\ZZ/p^r}} & {S^{\ZZ/p^r}} \\
	{} & {S^{L^{n-t}(\B\ZZ/p^r)^{n-t}}} & {S^{M_{n-t}(\ZZ/p^r)}} \\
	&& S
	\arrow[from=1-1, to=1-2]
	\arrow["\varphi", shift left, curve={height=-12pt}, from=1-1, to=1-3]
	\arrow[from=1-2, to=1-3]
	\arrow["{(L^{n-t}\cup_{n-t})^*}"', from=1-2, to=2-2]
	\arrow["{\det^*}", from=1-3, to=2-3]
	\arrow[from=2-2, to=2-3]
	\arrow["{\ev_{/p^r}}"', from=2-2, to=3-3]
	\arrow["{\ev_I}", from=2-3, to=3-3]
    \end{tikzcd}
    \]
    As the composition $\ev_I\circ\det^*$ is just the projection to the coordinate $1\in \ZZ/p^r$ the claim follows by \cref{prop: Fourier in chr}. 
\end{proof}

\begin{remark}\label{rem: L! module over}
    Note that, by definition of $\L_!$, for any
    \[
    R\in \CAlg(\Mod_{\SS_{T(n)}[\omega^{(n)}_{p^\infty}]}(\Sp_{T(n)}))
    \quad\text{and}\quad r\in \NN,
    \]
    the diagram
    \[
    \begin{tikzcd}
    	{L_{T(t)}(R^{(-)})[\B^t\ZZ/p^r]} &
    	{L_{T(t)}(R[\B^t\ZZ/p^r]^{(-)})\simeq L_{T(t)}(R^{\B^{n-t}\ZZ/p^r\times (-)})} &
    	{\L_!^{n-t}(R)^{\L^{n-t}(\B^{n-t}\ZZ/p^r\times (-))}} \\
    	&
    	{L_{T(t)}(R^{\B(\ZZ/p^r)^{n-t}\times (-)})} &
    	{\L_!^{n-t}(R)^{\L^{n-t}(\B(\ZZ/p^r)^{n-t}\times (-))}} \\
    	&& {\L_!(R)^{\L^{n-t}(-)}}
    	\arrow[from=1-1, to=1-2]
    	\arrow[from=1-2, to=1-3]
    	\arrow[from=1-2, to=2-2]
    	\arrow[from=1-3, to=2-3]
    	\arrow[from=2-2, to=2-3]
    	\arrow[from=2-3, to=3-3]
    \end{tikzcd}
    \]
    commutes, and by \cref{cor: chooses root of unity} the composite chooses a
    height $t$ primitive $p^r$-th root of unity. In particular, $(\L^{n-t})^*\L_!R$ is naturally
    a module over both
    \[
    L_{T(t)}(R^{(-)})[\omega^{(t)}_{p^\infty}]
    \qquad\text{and}\qquad
    L_{T(t)}(R^{\B(\ZZ/p^r)^{n-t}\times (-)})[e_r^{-1}],
    \]
    where $e_r$ is the image of the idempotent splitting off the primitive height $t$ root of unity in $L_{T(t)}(R)[\B^t\ZZ/p^r]$.
    
\end{remark}

To put everything together we will also need the following lemma that follows immediately from the main result of \cite{ben-mosh-Transchromatic}.

\begin{lemma} \label{lem : maps from CtL determined by a point}
Let $M\in\Mod_{L_{T(t)}(\En^{(-)})}\bigl(\PCMoninf(\SpTn[t])\bigr)$.
Then evaluation at the point induces a natural equivalence
\[
        \Map_{\Mod_{L_{T(t)}(\En^{(-)})}(\PCMoninf(\SpTn[t]))}
        \bigl(C_t^{L_p^{n-t}(-)},M\bigr)
        \isoto
        \Map_{\Mod_{L_{T(t)}\En}(\SpTn[t])}
        \bigl(C_t,M(\pt)\bigr).
\]
In particular, every $L_{T(t)}(\En^{(-)})$-linear map
\[
        C_t^{L_p^{n-t}(-)} \longrightarrow M
\]
is determined by its value at $\pt$.
\end{lemma}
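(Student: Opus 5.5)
The plan is to reduce to a statement about representable-type objects, using the character isomorphism of Ben-Moshe \cite{ben-mosh-Transchromatic}. That result says the transchromatic character exhibits
\[
C_t^{\L_p^{n-t}(-)} \simeq C_t\otimes_{L_{T(t)}\En} L_{T(t)}(\En^{(-)})
\]
as functors on $\Span(\spcpi)$, i.e.\ in $\PCMoninf(\SpTn[t])$. Concretely, the tempered character map $L_{T(t)}(\En^A)\otimes_{L_{T(t)}\En} C_t \to C_t^{\L_p^{n-t}A}$ is an equivalence natural in spans. Granting this, $C_t^{\L_p^{n-t}(-)}$ is the free $L_{T(t)}(\En^{(-)})$-module generated by the object $C_t$ placed at the unit. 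The claim is then the free–forgetful adjunction.

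The steps, in order:

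\begin{enumerate}
\item Identify the Day-convolution unit of $\PCMoninf(\SpTn[t])$ with $\pt_!\SS_{T(t)}$, the functor $\Map_{\Span}(\pt,-)\otimes\SS_{T(t)}$. By \cref{lem: tensor left kan}, the left adjoint of evaluation at $\pt$ on $\PCMoninf(\SpTn[t])$ is $X\mapsto \pt_!X \simeq (\Map(\pt,-)\otimes\SS_{T(t)})\otimes X$.
\item Pass to modules. The left adjoint of
\[
\ev_\pt\colon \Mod_{L_{T(t)}(\En^{(-)})}(\PCMoninf)\to\Mod_{L_{T(t)}\En}(\SpTn[t])
\]
is $N\mapsto L_{T(t)}(\En^{(-)})\otimes_{\pt_!L_{T(t)}\En}\pt_!N$. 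Since $\Map(\pt,A)\simeq \pi_0$ of the span category, which by semiadditivity is the free commutative monoid on $A$, the functor $\pt_!N$ is the free $\infty$-commutative-monoid-like object. We need $\pt_! L_{T(t)}\En\to L_{T(t)}(\En^{(-)})$ to be compatible, which reduces the left adjoint to $N\mapsto L_{T(t)}(\En^{(-)})\otimes_{L_{T(t)}\En}N$ computed pointwise.
\item Apply this to $N=C_t$ and combine with the character isomorphism to get $C_t^{\L_p^{n-t}(-)}\simeq \ev_\pt^L(C_t)$. The adjunction then gives the equivalence $\Map(C_t^{\L_p^{n-t}(-)},M)\simeq\Map(C_t,M(\pt))$.
\item Check that the counit-induced map in this equivalence is literally evaluation at the point. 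This holds because the unit $C_t\to \ev_\pt(C_t^{\L_p^{n-t}(-)})=C_t$ is the identity, since $\L_p^{n-t}\pt=\pt$.
\end{enumerate}

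The main obstacle is step 2: identifying the free module functor pointwise as $L_{T(t)}(\En^{A})\otimes_{L_{T(t)}\En}N$. A priori, left Kan extension along $\pt\to\Span(\spcpi)$ produces $N\otimes \Sigma^\infty_+\Map_{\Span}(\pt,A)$, not a tensor with $L_{T(t)}(\En^A)$. To handle it, I would instead prove the equivalence directly from the character isomorphism, working pointwise and naturally in $A$.

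First, an $L_{T(t)}(\En^{(-)})$-linear map $f$ out of $C_t\otimes_{L_{T(t)}\En}L_{T(t)}(\En^{(-)})$ is determined on each $A$ by the composite
\[
C_t\to C_t\otimes L_{T(t)}(\En^A)\xrightarrow{f_A} M(A).
\]
Second, $C_t\to C_t\otimes L_{T(t)}(\En^A)$ is the restriction along $A\to\pt$. So $f_A$ is recovered from $f_\pt$ via restriction and linearity. This shows that $\ev_\pt$ is fully faithful on maps out of such modules, with transfers handled automatically by naturality in spans. The inverse direction is obtained by extending a map $g\colon C_t\to M(\pt)$ by linearity as $g\otimes$ (restriction) $\to M(A)$, and checking span-naturality using the projection formula for transfers in module objects.
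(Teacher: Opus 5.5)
Your steps 1--4 are the paper's proof: it identifies the left adjoint of $\ev_\pt$ with $N\mapsto L_{T(t)}(\En^{(-)})\otimes_{L_{T(t)}\En}N$, uses \cite[Theorem A]{ben-mosh-Transchromatic} to recognize $C_t^{\L_p^{n-t}(-)}$ as the free module on $C_t$, and concludes by the free--forgetful adjunction. The obstacle you flag in step 2 is not real, so the pointwise fallback (which as sketched would need a hand-made coherence check) is unnecessary: $\pt_!N\simeq \pt_!\SS_{T(t)}\otimes N$ with $\pt_!\SS_{T(t)}$ the Day-convolution unit, so by $\Sp_{T(t)}$-linearity each term of the bar construction for $L_{T(t)}(\En^{(-)})\otimes_{\pt_!L_{T(t)}\En}\pt_!N$ is computed pointwise and its realization at $A$ is $L_{T(t)}(\En^{A})\otimes_{L_{T(t)}\En}N$; the factor $\Sigma^\infty_+\Map(\pt,A)$ appears only in $\pt_!N$ itself and is cancelled by the relative tensor product.
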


\begin{proof}
The forgetful functor is evaluation at the point,
\[
        \ev_{\pt}\colon
        \Mod_{L_{T(t)}(\En^{(-)})}\bigl(\PCMoninf(\SpTn[t])\bigr)
        \longrightarrow
        \Mod_{L_{T(t)}\En}(\SpTn[t]),
        \qquad
        M\longmapsto M(\pt).
\]
Its left adjoint is the free functor
\[
        N\longmapsto
        L_{T(t)}(\En^{(-)})\otimes_{L_{T(t)}\En}N .
\]
Taking $N=C_t$, and using \cite[Theorem A]{ben-mosh-Transchromatic}, we get
an equivalence
\[
        L_{T(t)}(\En^{(-)})\otimes_{L_{T(t)}\En}C_t
        \isoto
        C_t^{L_p^{n-t}(-)}
\]
in $\Mod_{L_{T(t)}(\En^{(-)})} \bigl(\PCMoninf(\SpTn[t])\bigr)$. Thus $C_t^{L_p^{n-t}(-)}$ is the free
$L_{T(t)}(\En^{(-)})$-module precommutative monoid generated on $C_t$ at the
point. Applying the free-forgetful adjunction gives
\[
\begin{aligned}
        \Map_{\Mod_{L_{T(t)}(\En^{(-)})}(\PCMoninf(\SpTn[t]))}
        \bigl(C_t^{L_p^{n-t}(-)},M\bigr)
        &\simeq
        \Map_{\Mod_{L_{T(t)}\En}(\SpTn[t])}
        \bigl(C_t,M(\pt)\bigr).
\end{aligned}
\]
\end{proof}

We now put everything together and identify the universal character of $\En$.

\begin{corollary}\label{cor: indtifying the spliting algebra} \
    The canonical map 
    \[
    \L_!^{n-t}\En \to C^{n}_t
    \]
    that factors the transchromatic character is an equivalence.
\end{corollary}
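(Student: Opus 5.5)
We construct maps in both directions and check that both composites are the identity. Throughout, set $\chi^{\mrm{tch}}\colon \En^{(-)}\to (C^n_t)^{\L^{n-t}(-)}$ for the transchromatic character. By \cite[Theorem A]{ben-mosh-Transchromatic} this is an $(n-t)$-fold character of commutative algebras, so the universal property gives a factorization through a map $\alpha\colon \L^{n-t}_!\En\to C^n_t$ of commutative algebras; this is the canonical map of the statement. By \cref{cor: F chr single height T local}, $\L^{n-t}_!\En$ is $T(t)$-local, so both sides lie in $\CAlg(\Sp_{T(t)})$.

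For the inverse, we use \cref{rem: L! module over} with $R=\En$. Since $\En$ is $K(n)$-local and admits all height $n$ primitive roots of unity, that remark, together with \cref{cor: chooses root of unity}, shows the following. The composite
\[
L_{T(t)}(\En^{\B(\ZZ/p^r)^{n-t}})\to \L^{n-t}_!(\En)^{\L^{n-t}\B(\ZZ/p^r)^{n-t}}\xto{\ev_{/p^r}}\L^{n-t}_!\En
\]
sends the idempotent $e_r$ to a unit. Hence it factors through $L_{T(t)}(\En^{\B\Zpr})[e_r^{-1}]$, which by \cref{cor: classifying the splitting algebra} (and \cref{rem: classifying the spliiting algebra on pi0}) is the finite-level splitting algebra $C^n_{t,r}$. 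These maps are compatible in $r$, since the evaluations $\ev_{/p^r}$ are compatible with the reduction maps $\ZZ_p^{n-t}\to(\ZZ/p^r)^{n-t}$. Passing to the colimit in $\CAlg(\Sp_{T(t)})$ yields $\beta\colon C^n_t\to \L^{n-t}_!\En$.

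Next, we check that $\alpha\circ\beta\simeq \id$. The map $\alpha\circ\beta$ is an $L_{T(t)}\En$-algebra endomorphism of $C^n_t$. By the moduli description in \cref{cor: classifying the spliiting algebra on pi0}, such a map is determined by where it sends the universal splitting $\underline{\Zprdbl}\to \GG^\heart$ for each $r$. By construction, $\beta$ sends the universal level structure to the one obtained from the universal character by evaluating at the mod $p^r$ map. Moreover, $\alpha$ carries this back to the tautological level structure on $C^n_t$, because the transchromatic character evaluated on $\B\Zpr$ and then at the tautological point $\ZZ_p^{n-t}\to(\ZZ/p^r)^{n-t}$ is exactly the map $L_{t,r}\to C^n_{t,r}$ classifying the universal splitting. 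Since $C^n_t$ is $2$-periodic with $\pi_1=0$, as in the proof of \cref{cor: classifying the splitting algebra}, it suffices to check this on $\pi_0$, so $\alpha\circ\beta\simeq\id$.

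The main obstacle is the other composite, $\beta\circ\alpha\simeq\id$. Here we use the universal property: it suffices to show that $(\L^{n-t})^*(\beta)\circ\chi^{\mrm{tch}}$ agrees with the universal character $\En^{(-)}\to(\L^{n-t})^*\L^{n-t}_!\En$. Both maps are maps of $L_{T(t)}(\En^{(-)})$-modules out of $(C^n_t)^{\L^{n-t}(-)}$ (after precomposing with $\chi^{\mrm{tch}}$), and \cref{rem: L! module over} makes the target such a module. So \cref{lem : maps from CtL determined by a point} reduces the comparison to their values at $\pt$. At $\pt$, one map is $\beta$ itself. The other is the map $C^n_t\to \L^{n-t}_!\En$ obtained from the module structure of \cref{rem: L! module over}, which by construction is again $\beta$. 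The delicate point is to verify that the module structure used in \cref{lem : maps from CtL determined by a point} agrees with the one coming from \cref{rem: L! module over}. This requires that the identification of \cite[Theorem A]{ben-mosh-Transchromatic} is compatible with evaluation at the mod $p^r$ maps, which we would check by tracing the commutative diagram in \cref{rem: L! module over}. Given this, $\beta\circ\alpha$ induces the identity on universal characters, hence $\beta\circ\alpha\simeq\id$, and $\alpha$ is an equivalence.
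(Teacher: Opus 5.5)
Your overall architecture is the paper's, apart from building $\beta$ directly at $\pt$ instead of as a map $\gamma\colon(\L^{n-t})^*C^n_t\to(\L^{n-t})^*\L^{n-t}_!\En$ factoring $\chi^{\mrm{un}}$. The $\alpha\circ\beta$ half is fine: checking on $\pi_0$ shows $\alpha\circ\beta$ is an equivalence, which is all you need. The gap is in the $\beta\circ\alpha$ half. There you assert that $(\L^{n-t})^*\beta$ is a map of $L_{T(t)}(\En^{(-)})$-modules, with the source a module via $\chi^{\mrm{tch}}$ and the target via $\chi^{\mrm{un}}$. But $(\L^{n-t})^*\beta$ is a map of commutative algebras. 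Upgrading it to a linear map already amounts to a homotopy $(\L^{n-t})^*\beta\circ\chi^{\mrm{tch}}\simeq\chi^{\mrm{un}}$ (evaluate the linear structure on the unit), and this is exactly what this step is supposed to establish. So the argument is circular. The compatibility you isolate as the delicate point is that, under \cite[Theorem A]{ben-mosh-Transchromatic}, the structure map $\En^{(-)}\to\colim_r L_{T(t)}(\En^{\B\Zpr\times(-)})[e_r^{-1}]$ is $\chi^{\mrm{tch}}$. That only gives $\chi^{\mrm{un}}\simeq\gamma\circ\chi^{\mrm{tch}}$ and the linearity of $\gamma$; it says nothing about $(\L^{n-t})^*\beta$.

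Concretely, \cref{lem : maps from CtL determined by a point} identifies $\gamma$ with the free linear extension of $\beta$. In element notation on $C^n_t\otimes_{L_{T(t)}\En}L_{T(t)}(\En^A)\simeq(C^n_t)^{\L^{n-t}A}$, this is $c\otimes x\mapsto\beta(c)\,\chi^{\mrm{un}}_A(x)$. By contrast, $(\L^{n-t})^*\beta$ is $c\otimes x\mapsto\beta(c)\,\beta(\chi^{\mrm{tch}}_A(x))$. The two agree precisely when $\beta\circ\chi^{\mrm{tch}}_A\simeq\chi^{\mrm{un}}_A$. Without linearity, agreement at $\pt$ carries no information. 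For $n>t$, the $\GL_{n-t}(\ZZ_p)$-action on $\L^{n-t}$ gives algebra automorphisms of $(\L^{n-t})^*C^n_t$ that are the identity at $\pt$ but permute the components of $\L^{n-t}A$. The paper's assertion that $\gamma$ is $(\L^{n-t})^*$ of a map rests on the same identity. So an independent comparison of $(\L^{n-t})^*\beta\circ\chi^{\mrm{tch}}$ with $\chi^{\mrm{un}}$ is required. A natural route is the evaluation-map description of $\chi^{\mrm{tch}}_A$ via $\B\Zpr\times\Map(\B\Zpr,A)\to A$. Combined with the adjunction $(\B\Zpr\times(-))^*\dashv\Map(\B\Zpr,-)^*$ on functors $\spcpi\op\to\Sp$, it shows that both transformations, as far as naturality in restrictions goes, are determined by the same maps $\ev_{/p^r}\circ\chi_{\B\Zpr}=\beta\circ(L_{t,r}\to C^n_t)$. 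Compatibility with transfers must still be dealt with before you can invoke the universal property.
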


\begin{proof}
    By \cref{cor: chooses root of unity} and \cref{rem: L! module over}, for every $r$ the unit map 
    \[
    \En \to (\L^{n-t})^*\L_!^{n-t}\En
    \]
    factors through
    \[
     L_{T(t)}(\En^{\B\Zpr \times (-)})[e_r^{-1}] \to (\L^{n-t})^*\L_!^{n-t}\En.
    \]
    By \cref{cor: classifying the splitting algebra}, the fact that $\En^{\B\Zpr}$ is finite and free over $\En$ and \cite[Theorem A]{ben-mosh-Transchromatic} we get that the canonical map
    \[
    \En^{(-)}\to  \colim L_{T(t)}(\En^{\B\Zpr \times (-)})[e_r^{-1}] \simeq C_t^n \otimes_{\En} L_T(t)(\En^{(-)})\simeq (\L^{n-t})^*C_t^n
    \]
    is the transchromatic character. Here the tensor product is $T(t)$-completed.
    Taking colimit over $r$, we obtain a commutative diagram:
    \[
    \begin{tikzcd}
	& \En^{(-)} \\
	{(\L^{n-t})^*C^n_t} & {(\L^{n-t})^*\L_!^{n-t}\En} 
	\arrow["{\chi^\mrm{tch}}"', from=1-2, to=2-1]
	\arrow["{\chi^\un}"{description}, from=1-2, to=2-2]
	\arrow[from=2-1, to=2-2]
    \end{tikzcd} 
    \]
    By \cref{lem : maps from CtL determined by a point} the map $(\L^{n-t})^*C^n_t\to (\L^{n-t})^*\L_!^{n-t}\En$ is $(\L^{n-t})^*$ applied to a map of  $\infty$-commutative monoids and
    by the universal property of the universal character we also have a map
    \[
    (\L^{n-t})^*\L_!^{n-t}\En \to (\L^{n-t})^*C^n_t
    \]
    factoring the transchromatic character. Thus it remains to check that the composition
    \[
    (\L^{n-t})^*C^n_t \to (\L^{n-t})^*\L_!^{n-t}\En \to (\L^{n-t})^*C_t^n
    \]
    is the identity. This follows as by \cref{rem: L! module over} for every $A$ the above map is a map of $\colim L_{T(t)}(\En^{\B\Zpr \times A})$ algebras and $(C^n_t)^{\L^{n-t}A}$ is obtained from $\colim L_{T(t)}(\En^{\B\Zpr \times A})$ by inverting an element.
\end{proof}

\begin{remark}\label{rem:splitting-algebra-proof_PCMon}
    The proof of \cref{cor: indtifying the spliting algebra} in fact shows slightly more. Namely, there are natural equivalences
    \[
    (\L^{n-t})^{*}C_t^{n}
    \;\simeq\;
    \En^{(-)}\widehat{\otimes}_{\En} C_t^{n}
    \;\simeq\;
    \colim_r L_{T(t)}(\En^{\B(\ZZ/p^{r})^{n-t}\times(-)})
    \;\simeq\;
    (\L^{n-t})^{*}\L^{\,n-t}_{!}\En ,
    \]
    where the colimit is taken in $\PCMoninf(\Sp_{T(t)})$. The same proof also gives an analogous formula for the universal character of $\En^{A\times(-)}$.
\end{remark}

\subsection{The $K(n)$-Local Universal Character} \label{sec: universal character K(n)}

We now bootstrap our computation of the universal character of $\En{}$ to any $K(n)$-local object.

We start by extending the construction of the splitting algebra to any element in $\Mod_{\SS_{K(n)}[\omega^{(n)}_{p^{\infty}}]}(\Sp_{K(n)})$.

\begin{construction}\label{const: Ct}
    We define
    \[
    C_{r,t} \colon \Mod_{\SS_{K(n)}[\omega^{(n)}_{p^{\infty}}]}(\Sp_{K(n)}) \to \PCMoninf(\Sp_{T(t)})
    \]
    as the following composite. First, we apply fixed points for $\B(\ZZ/p^r)^{n-t}$:
    \[
    \Mod_{\SS_{K(n)}[\omega^{(n)}_{p^{\infty}}]}(\Sp_{K(n)})
    \to
    \Mod_{\SS_{K(n)}[\omega^{(n)}_{p^{\infty}}]^{\B(\ZZ/p^r)^{n-t}}}(\Sp_{K(n)}),
    \qquad
    M \mapsto M^{\B(\ZZ/p^r)^{n-t}}.
    \]
    Next, we restrict scalars along
    \[
    \SS_{K(n)}[\omega^{(n)}_{p^{\infty}}][\B^{n-t}\ZZ/p^r]
    \simeq
    \SS_{K(n)}[\omega^{(n)}_{p^{\infty}}]^{\B^{n-t}\ZZ/p^r}
    \to
    \SS_{K(n)}[\omega^{(n)}_{p^{\infty}}]^{\B(\ZZ/p^r)^{n-t}},
    \]
    where the equivalence is the semiadditive Fourier transform and the arrow is induced by restriction along the cup product
    \[
    \B(\ZZ/p^r)^{n-t} \to \B^{n-t}\ZZ/p^r.
    \]
    This yields a functor
    \[
    \Mod_{\SS_{K(n)}[\omega^{(n)}_{p^{\infty}}]^{\B(\ZZ/p^r)^{n-t}}}(\Sp_{K(n)})
    \to
    \Mod_{\SS_{K(n)}[\omega^{(n)}_{p^{\infty}}][\B^{n-t}\ZZ/p^r]}(\Sp_{K(n)}).
    \]
    We then $T(t)$-localize pointwise and restrict scalars along the assembly map
    \[
    L_{T(t)}(\SS_{K(n)}[\omega^{(n)}_{p^{\infty}}]^{(-)})[\B^{t}\ZZ/p^r]
    \to
    L_{T(t)}(\SS_{K(n)}[\omega^{(n)}_{p^{\infty}}][\B^{t}\ZZ/p^r]^{(-)}),
    \]
    obtaining a functor
    \[
    \Mod_{\SS_{K(n)}[\omega^{(n)}_{p^{\infty}}][\B^{t}\ZZ/p^r]}(\Sp_{K(n)})
    \to
    \Mod_{L_{T(t)}(\SS_{K(n)}[\omega^{(n)}_{p^{\infty}}]^{(-)})[\B^{t}\ZZ/p^r]}(\PCMoninf
    (\Sp_{T(t)})).
    \]
    Finally, we base change along
    \[
    L_{T(t)}(\SS_{K(n)}[\omega^{(n)}_{p^{\infty}}]^{(-)})[\B^{t}\ZZ/p^r]
    \to
    L_{T(t)}(\SS_{K(n)}[\omega^{(n)}_{p^{\infty}}]^{(-)})[\omega^{(t)}_{p^\infty}],
    \]
    and then apply the forgetful functor to $\Sp_{T(t)}$, yielding
    \[
    \Mod_{L_{T(t)}(\SS_{K(n)}[\omega^{(n)}_{p^{\infty}}])[\B^{t}\ZZ/p^r]}(\Sp_{T(t)})
    \to
    \PCMoninf(\Sp_{T(t)}).
    \]
    By construction we have natural maps $C_{r,t} \to C_{r+1,t}$, and we set
    \[
    C_t := \colim_r C_{r,t},
    \]
    where the colimit is taken in $\PCMoninf(\Sp_{T(t)})$.
\end{construction}

\begin{remark}
    Unwinding the definition of $C_t$ in \cref{const: Ct}, we may identify the functor
    \[
    C_t \colon \Mod_{\SS_{K(n)}[\omega^{(n)}_{p^{\infty}}]}(\Sp_{K(n)}) \to \PCMoninf(\Sp_{T(t)})
    \]
    as
    \[
    C_t(M)\simeq \colim_{r}\, L_{T(t)}\!(M^{\B(\ZZ/p^r)^{n-t}\times (-)})[e_r^{-1}],
    \]
    where $e_r$ is the idempotent in $L_{T(t)}(\SS_{K(n)}[\omega^{(n)}_{p^{\infty}}])[\B^t\ZZ/p^r]$ splitting off the
    primitive root of unity. The
    $L_{T(t)}(\SS_{K(n)}[\omega^{(n)}_{p^{\infty}}])[\B^t\ZZ/p^r]$-module structure on $M$ is obtained by restriction of scalars along the composite
    \begin{align*}
    L_{T(t)}(\SS_{K(n)}[\omega^{(n)}_{p^{\infty}}])[\B^t\ZZ/p^r]
    &\to L_{T(t)}\!\bigl(\SS_{K(n)}[\omega^{(n)}_{p^{\infty}}][\B^t\ZZ/p^r]) \\
    &\simeq L_{T(t)}\!\bigl(\SS_{K(n)}[\omega^{(n)}_{p^{\infty}}]^{\B^{n-t}\ZZ/p^r}) \\
    &\to L_{T(t)}\!(\SS_{K(n)}[\omega^{(n)}_{p^{\infty}}]^{\B(\ZZ/p^r)^{n-t}}).
    \end{align*}
\end{remark}

We note that by \cref{cor: classifying the splitting algebra}  $C_{t}(\En)=C^n_t$. We will now show that there is always a map $C_t(M) \to (\L^{n-t})^*\L_!^{n-t}(M) $.

\begin{lemma}\label{lem: eq Ft L! candidate}
    There is a natural transformation $C_t(-) \to (\L^{n-t})^*\L_!^{n-t}(-)$ extending the equivalence $\L_!\En \simeq C^n_t$. Here we abuse notation and write $\L_!^{n-t}(-)$ also for the restriction of $\L_!^{n-t}(-)$ to $\Mod_{\SS_{K(n)}[\omega^{(n)}_{p^{\infty}}]}(\Sp_{K(n)})$ and think of $(\L^{(n-t)})^*$ as having values in $\PCMoninf(\Sp_{T(t)})$.
\end{lemma}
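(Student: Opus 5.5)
The plan is to build the natural transformation level by level in $r$ and then pass to the colimit, following \cref{rem: L! module over}. Fix $M\in \Mod_{\SS_{K(n)}[\omega^{(n)}_{p^{\infty}}]}(\Sp_{K(n)})$. The unit of $\L_!^{n-t}\dashv(\L^{n-t})^*$ gives a natural map $M^{(-)}\to (\L^{n-t})^*\L_!^{n-t}M$ in $\PCMoninf$. Its target is pointwise $T(t)$-local by \cref{cor: F chr single height T local}, so this map factors through the pointwise $T(t)$-localization. Evaluating at $\B(\ZZ/p^r)^{n-t}\times(-)$ and composing with $\ev_{/p^r}$, we obtain a natural map
\[
L_{T(t)}\bigl(M^{\B(\ZZ/p^r)^{n-t}\times(-)}\bigr)\to \L_!^{n-t}(M)^{\L^{n-t}(\B(\ZZ/p^r)^{n-t}\times(-))}\to \L_!^{n-t}(M)^{\L^{n-t}(-)} .
\]
This is the right vertical column of the diagram in \cref{rem: L! module over}, with $R$ replaced by $M$.

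Next, check that this map is linear over the relevant ring objects. Take the unit $R=\SS_{K(n)}[\omega^{(n)}_{p^\infty}]$. The universal character of $R$ is a map of commutative algebras, and the universal character of $M$ is a map of modules over it, since $\L_!^{n-t}$ is symmetric monoidal by \cref{lem: exists of L! and mult st}. Hence the displayed map is a map of modules over $L_{T(t)}(R^{(-)})[\B^t\ZZ/p^r]$, where the source carries the module structure of \cref{const: Ct} (assembly, then Fourier transform, then cup product), and the target carries the structure through the diagram of \cref{rem: L! module over}. By \cref{cor: chooses root of unity} (with \cref{prop: Fourier in chr}), the composite $L_{T(t)}(R)[\B^t\ZZ/p^r]\to \L_!^{n-t}(R)^{\L^{n-t}(-)}$ selects a primitive height $t$ root of unity. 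So the idempotent $e_r$ maps to an invertible element, and the target is a module over $L_{T(t)}(R^{(-)})[\omega^{(t)}_{p^r}]$. Base change then gives a natural map $C_{r,t}(M)\to(\L^{n-t})^*\L_!^{n-t}M$.

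It remains to make these maps compatible with the transition maps $C_{r,t}\to C_{r+1,t}$. These come from restriction along $\B(\ZZ/p^{r+1})^{n-t}\to\B(\ZZ/p^r)^{n-t}$, and $\ev_{/p^{r}}$ factors through $\ev_{/p^{r+1}}$ because the mod $p^r$ map factors through the mod $p^{r+1}$ map. Naturality of the unit in the span variable then gives compatibility, so we can pass to $C_t=\colim_r C_{r,t}$.

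Finally, for $M=\En$ (an $R$-algebra, since $\En$ has all height $n$ roots of unity), the resulting map is exactly the one constructed in the proof of \cref{cor: indtifying the spliting algebra}. By \cref{rem:splitting-algebra-proof_PCMon} it is the equivalence $(\L^{n-t})^*C^n_t\simeq(\L^{n-t})^*\L_!^{n-t}\En$.

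I expect the main obstacle to be the coherence in the second step. The idempotent-inverting module structures must be compatible naturally in $M$ and in the span variable, not just objectwise. I would handle this by doing everything in $\Mod_{R^{(-)}}(\PCMoninf)$ and using that base change along the localization $[e_r^{-1}]$ is a localization, so that maps out of the localized module are determined by the underlying maps once $e_r$ acts invertibly on the target.
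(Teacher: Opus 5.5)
Your proposal is correct and is essentially the paper's argument. The paper likewise uses \cref{rem: L! module over} (via \cref{cor: chooses root of unity}) to see that $(\L^{n-t})^*\L^{n-t}_!M$ is a module over $L_{T(t)}(\SS_{K(n)}[\omega^{(n)}_{p^\infty}]^{\B(\ZZ/p^r)^{n-t}\times(-)})[e_r^{-1}]$, extends the unit by scalars to this ring, and takes the colimit over $r$. The only cosmetic difference is that the paper identifies the source of the extended map with $C_{t,r}(M)$ via \cref{cor: base change}, whereas you write that map directly on $L_{T(t)}(M^{\B(\ZZ/p^r)^{n-t}\times(-)})$ by evaluating the unit and applying $\ev_{/p^r}$; by monoidality of the unit these are the same map.
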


\begin{proof}
    By \cref{rem: L! module over}, the functor $(\L^{n-t})^*\L^{n-t}_!$ is a module over
    \[
    L_{T(t)}(\SS_{K(n)}[\omega^{(n)}_{p^{\infty}}]^{\B(\ZZ/p^r)^{n-t}\times (-)})[e_r^{-1}].
    \]
    In particular, the unit natural transformation
    \[
    \id \to (\L^{n-t})^*\L^{n-t}_! \in \Mod_{L_{T(t)}\SS_{K(n)}[\omega^{(n)}_{p^{\infty}}]}(\PCMoninf(\Sp))
    \]
    factors through extension of scalars along $L_{T(t)}(\SS_{K(n)}[\omega^{(n)}_{p^{\infty}}])\to L_{T(t)}(\SS_{K(n)}[\omega^{(n)}_{p^{\infty}}]^{\B(\ZZ/p^r)^{n-t}\times (-)})[e_r^{-1}]$. Concretely,
    this gives a natural map
    \[
    L_{T(t)}(\SS_{K(n)}[\omega^{(n)}_{p^{\infty}}]^{\B(\ZZ/p^r)^{n-t}\times (-)})[e_r^{-1}]
    \otimes_{L_{T(t)}(\SS_{K(n)}[\omega^{(n)}_{p^{\infty}}])} \id
    \to
    (\L^{n-t})^*\L^{n-t}_!.
    \]
    By \cref{cor: base change} we can identify the source with $C_{t,r}$. Taking colimit over $r$ gives the required natural transformation.
\end{proof}

We note that both functors
\[
C_t,\ (\L^{n-t})^*\L_!^{n-t}\colon \Mod_{\SS_{K(n)}[\omega^{(n)}_{p^{\infty}}]}(\Sp_{K(n)}) \to \PCMoninf(\Sp_{T(t)})
\]
commute with finite limits. Our goal is to show that they are equivalent. We will do this by first proving that they agree on $\En$-modules, and then applying descent theory. We will start by defining a variant of $C^t$ on $\Mod_{\En^{(-)}}(\PCMoninf(\Sp))$.

\begin{construction}
    We define a functor
    \[
    \overline{C}_{t,r} \colon \Mod_{\En^{(-)}}(\PCMoninf(\Sp))
    \to
    \Mod_{L_{T(t)}(\En^{(-)})[\omega^{(t)}_{p^\infty}]}(\PCMoninf(\Sp_{T(t)}))
    \]
    as follows. We first evaluate at $\B(\ZZ/p^r)^{n-t}$, obtaining a functor
    \[
    \Mod_{\En^{(-)}}(\PCMoninf(\Sp))
    \to
    \Mod_{\En^{\B(\ZZ/p^r)^{n-t}\times (-)}}(\PCMoninf(\Sp)).
    \]
    Next, we apply $T(t)$-localization and then restrict scalars along the composite
    \[
    L_{T(t)}(\En^{(-)})[\B^{t}\ZZ/p^r]
    \to
    L_{T(t)}(\En{} [\B^{t}\ZZ/p^r]^{(-)})
    \simeq
    L_{T(t)}\En^{\B^{n-t}\ZZ/p^r\times (-)}
    \to
    L_{T(t)}\En^{\B(\ZZ/p^r)^{n-t} \times (-)},
    \]
    thereby obtaining a functor
    \[
    \Mod_{\En^{\B(\ZZ/p^r)^{n-t}\times (-)}}(\PCMoninf(\Sp))
    \to
    \Mod_{L_{T(t)}(\En^{(-)})[\B^{t}\ZZ/p^r]}(\PCMoninf(\Sp_{T(t)})).
    \]
    Finally, we base change along
    \[
    L_{T(t)}(\En^{(-)})[\B^{t}\ZZ/p^r]
    \to
    L_{T(t)}(\En^{(-)})[\omega^{(t)}_{p^\infty}],
    \]
    which yields a functor
    \[
    \Mod_{L_{T(t)}(\En^{(-)})[\B^{t}\ZZ/p^r]}(\PCMoninf(\Sp_{T(t)}))
    \to
    \Mod_{L_{T(t)}(\En^{(-)})[\omega^{(t)}_{p^\infty}]}(\PCMoninf(\Sp_{T(t)})).
    \]
    The composite is $\overline{C}_{t,r}$. We then set
    \[
    \overline{C}_{t} := \colim_r \overline{C}_{t,r}.
    \]
\end{construction}

We record two immediate consequences of the construction.

\begin{lemma}\label{lem: alpha omega and Ft with and without bar}
\begin{enumerate}
    \item A choice of a compatible family $\{\omega^{(n)}_{p^r}\}_r$ of height $n$ primitive roots of unity in $\En$ determines a natural transformation
    \[
     \overline{C}_t(-)\to (\L^{n-t})^*L_{T(t)}^\tsadi\L_!^{n-t}(-)
    \quad\in\quad
    \Fun\bigl(\Mod_{\En^{(-)}}(\PCMoninf(\Sp)),\ \Mod_{L_{T(t)}(\En^{(-)})[\omega^{(t)}_{p^\infty}]}(\PCMoninf(\Sp_{T(t)})\bigr)
    \]
    extending the equivalence $L_!\En \simeq C^n_t$.  
    \item For $M\in \Mod_{\En} (\Sp_{K(n)})\subseteq \Mod_{\En^{(-)}}(\PCMoninf(\Sp_{K(n)}))$ we have $\overline{C}_t(M)=C_t(M)$.
\end{enumerate}
\end{lemma}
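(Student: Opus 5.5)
For part (1) I will mimic the proof of \cref{lem: eq Ft L! candidate}, working over $\En^{(-)}$ in place of $\SS_{K(n)}[\omega^{(n)}_{p^\infty}]$. The chosen compatible family $\{\omega^{(n)}_{p^r}\}_r$ makes $\En$ a $(\ZZ/p^r,n)$-oriented $T(n)$-local algebra for every $r$. So \cref{cor: chooses root of unity} and \cref{rem: L! module over} apply to $R=\En$, and they show that $(\L^{n-t})^*L^{\tsadi}_{T(t)}\L_!^{n-t}\En$ is an algebra in $\PCMoninf(\Sp_{T(t)})$ over each $L_{T(t)}(\En^{\B(\ZZ/p^r)^{n-t}\times(-)})[e_r^{-1}]$, compatibly in $r$. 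Here the $e_r$ are the idempotents obtained via the assembly map, the Fourier transform of the chosen root of unity, and the cup product, exactly as in the definition of $\overline{C}_{t,r}$.

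For a general $\En^{(-)}$-module $N$, the functor $(\L^{n-t})^*L_{T(t)}\L_!^{n-t}$ is lax monoidal, being right adjoint to a symmetric monoidal functor composed with the localization. Hence $(\L^{n-t})^*L_{T(t)}\L_!^{n-t}N$ is a module over $(\L^{n-t})^*L_{T(t)}\L_!^{n-t}\En$, and therefore over the algebras above. The unit $N\to(\L^{n-t})^*L_{T(t)}\L_!^{n-t}N$ is $\En^{(-)}$-linear. After $T(t)$-localizing the target and extending scalars, it yields a natural map
\[
L_{T(t)}(\En^{\B(\ZZ/p^r)^{n-t}\times(-)})[e_r^{-1}]\widehat{\otimes}_{\En^{(-)}}N\to(\L^{n-t})^*L_{T(t)}\L_!^{n-t}N .
\]
The main step is to identify the source with $\overline{C}_{t,r}(N)$. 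For this I use \cref{cor: base change} with $A=\B(\ZZ/p^r)^{n-t}$, which gives $\En^{A\times(-)}\otimes_{\En^{(-)}}N\simeq N(A\times -)$. I then need to check that inverting $e_r$ commutes with this identification, which holds because inverting an idempotent is a base change along a localization of $L_{T(t)}(\En^{(-)})[\B^t\ZZ/p^r]$. Both sides are compatible with the transition maps in $r$, so passing to the colimit gives the natural transformation. When $N=\En^{(-)}$ it is the map of \cref{cor: indtifying the spliting algebra}, hence it extends that equivalence.

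The obstacle is the identification of $\overline{C}_{t,r}(N)$ with the base change. \cref{cor: base change} is stated in $\Mod_{F^*F_!R}$, so I would either restate it for arbitrary $\En^{(-)}$-modules or argue directly through \cref{lem: representable_tensor_dual}, where tensoring with the representable at $A$ is precomposition with $A\times(-)$. A further point is linearity over $L_{T(t)}(\En^{(-)})[\omega^{(t)}_{p^\infty}]$, which follows from \cref{rem: L! module over}.

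For part (2), let $M\in\Mod_{\En}(\Sp_{K(n)})$, regarded through the Segal condition as $M^{(-)}$. Then evaluation at $\B(\ZZ/p^r)^{n-t}\times(-)$ gives $M^{\B(\ZZ/p^r)^{n-t}\times(-)}$. This is the first step of $C_{r,t}$, and the remaining steps coincide. The module structures also agree: the structure map used in \cref{const: Ct} factors through $\SS_{K(n)}[\omega^{(n)}_{p^\infty}]\to\En$, which is determined by the chosen roots of unity, and the semiadditive Fourier transform is natural in the oriented algebra. So the restrictions of scalars match, the base changes match, and after taking colimits $\overline{C}_t(M)=C_t(M)$.
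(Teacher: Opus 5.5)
Your proposal is correct and takes the paper's route. The paper proves (1) by repeating the argument of \cref{lem: eq Ft L! candidate}: use \cref{rem: L! module over} to factor the unit through extension of scalars along $\En^{(-)}\to L_{T(t)}(\En^{\B(\ZZ/p^r)^{n-t}\times(-)})[e_r^{-1}]$, identify the source with $\overline{C}_{t,r}$ via \cref{cor: base change}, and pass to the colimit over $r$; it proves (2) by unwinding the definitions, exactly as you do. Your fix for the obstacle you flag is also the right one: \cref{cor: base change} holds for arbitrary $\En^{(-)}$-modules, because its proof never uses the $F^*F_!R$-module structure.
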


\begin{proof}
    Part (2) is immediate from the definition of $\overline{C_t}$, and the proof of (1) is essentially the same as the proof of \cref{lem: eq Ft L! candidate}.
\end{proof}

\begin{lemma}\label{lem: L*-colimit}
The functor
\[
(\L^{n-t})^* \colon \Sp_{T(t)} \simeq \CMoninf(\Sp_{T(t)}) \to \PCMoninf(\Sp_{T(t)})
\]
commutes with colimits.
\end{lemma}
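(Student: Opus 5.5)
Since colimits in $\PCMoninf(\Sp_{T(t)})=\Fun(\Span(\spcpi),\Sp_{T(t)})$ are computed pointwise, it suffices to show that for each fixed $\pi$-finite space $A$ the functor
\[
\Sp_{T(t)}\to \Sp_{T(t)},\qquad X\mapsto X^{\L^{n-t}A}
\]
preserves colimits. Here $X^{B}$ is computed in $\Sp_{T(t)}$, and we identify $X$ with its $\infty$-commutative monoid $X^{(-)}$ via the equivalence $\CMoninf(\Sp_{T(t)})\simeq\Sp_{T(t)}$. The restriction and transfer maps are then automatically compatible, because a natural transformation in $\Fun(\Span(\spcpi),-)$ is determined objectwise once the components are equivalences.

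The plan is to use ambidexterity. By \cite{CSY-teleambi}, $\Sp_{T(t)}$ is $\infty$-semiadditive. So for any $\pi$-finite space $B$, the norm map gives an equivalence $X[B]\simeq X^B$ between the constant colimit and the constant limit. The functor $X\mapsto X[B]=\colim_B X$ plainly commutes with colimits in $X$, hence so does $X\mapsto X^B$. Applying this with $B=\L^{n-t}A$ gives the claim, because $\L^{n-t}A=\Map(\B\ZZ_p^{n-t},A)$ is again $\pi$-finite. This holds since the $p$-typical free loop space construction preserves $\spcpi$, as used throughout the paper to define $\L_p^c\colon\Span(\spcpi)\to\Span(\spcpi)$, or via \cite[Proposition~3.4.7]{Lurie-2019-Elliptic3}.

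The only point needing care, which I expect to be the main (mild) obstacle, is making the pointwise identification natural in the span variable. A colimit $\colim_i X_i$ in $\Sp_{T(t)}$ gives a comparison map $\colim_i X_i^{\L^{n-t}(-)}\to(\colim_i X_i)^{\L^{n-t}(-)}$ in $\PCMoninf(\Sp_{T(t)})$. It is natural in spans since it is induced by the functor $(\L^{n-t})^*$, and it is an equivalence if and only if it is so after evaluating at every $A$. At each $A$, the argument above shows it is an equivalence, using that the norm equivalences are natural in $X$. If $T(t)$-localization of the colimit is a concern, note that colimits in $\Sp_{T(t)}$ are localized colimits and that $X\mapsto X[B]$ commutes with them; so no issue arises.
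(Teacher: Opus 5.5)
Your argument is correct and essentially matches the paper's. Both rest on colimits in $\PCMoninf(\Sp_{T(t)})$ being computed pointwise, together with the agreement of limits and colimits indexed by $\pi$-finite spaces in $\Sp_{T(t)}$. The only difference is packaging: the paper first notes that the inclusion $\CMoninf(\Sp_{T(t)})\subseteq\PCMoninf(\Sp_{T(t)})$ preserves colimits and then that precomposition does, whereas you evaluate the composite directly at each $\L^{n-t}A$.
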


\begin{proof}
Recall that $\CMoninf(\Sp_{T(t)})$ is the full subcategory of
$\PCMoninf(\Sp_{T(t)})$ spanned by those functors that commute with limits indexed by $p$-local $\pi$-finite spaces. Thus it suffices to note that in $\Sp_{T(t)}$ limits and colimits indexed by a $p$-local $\pi$-finite space agree.
It follows that the inclusion
\[
\CMoninf(\Sp_{T(t)}) \to \PCMoninf(\Sp_{T(t)})
\]
preserves colimits, hence so does $(\L^{n-t})^*$.
\end{proof}

\begin{lemma}\label{lem: alpha omega is eq}
    The natural transformation $\overline{C}_t(-)\to (\L^{n-t})^*L_{T(t)}^\tsadi\L^{n-t}_!(-)$ from \cref{lem: alpha omega and Ft with and without bar}~(1) is an equivalence.
\end{lemma}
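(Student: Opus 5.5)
We will prove that the natural transformation $\alpha\colon \overline{C}_t(-)\to (\L^{n-t})^*L_{T(t)}^\tsadi\L^{n-t}_!(-)$ is an equivalence. The strategy is to check this on free modules and then extend by colimits. The source category $\Mod_{\En^{(-)}}(\PCMoninf(\Sp))$ is generated under colimits by the free modules $\En^{(-)}\otimes \Map(A,-)\otimes X$ for $A\in\spcpi$ and $X\in\Sp$. By \cref{lem: representable_tensor_dual}, these are $\En^{A\times(-)}\otimes X$. So we proceed in three steps.

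\textbf{Step 1: both sides commute with colimits.} For $\overline{C}_t$ this is clear from its definition. Evaluation at $\B(\ZZ/p^r)^{n-t}\times(-)$ is pointwise and preserves colimits. Pointwise $T(t)$-localization, restriction of scalars, base change along $L_{T(t)}(\En^{(-)})[\B^t\ZZ/p^r]\to L_{T(t)}(\En^{(-)})[\omega^{(t)}_{p^\infty}]$, and the filtered colimit over $r$ all preserve colimits as well.

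For the target, $\L^{n-t}_!$ is a left adjoint, and so is $L_{T(t)}^\tsadi$. The functor $(\L^{n-t})^*$ preserves colimits by \cref{lem: L*-colimit}, since its values are $T(t)$-local.

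Both sides are also exact, so we may reduce from spectra $X$ to $X=\SS$. Concretely, both functors are $\Sp$-linear, so tensoring with $X$ commutes with both sides.

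\textbf{Step 2: the free module $\En^{A\times(-)}$.} It remains to check that $\alpha$ is an equivalence on $\En^{A\times(-)}$, naturally in $A$.

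\begin{itemize}
\item On the target side, \cref{thm: F! of R A} gives $\L^{n-t}_!(\En^{A\times(-)})\simeq(\L^{n-t}_!\En)^{\L^{n-t}A\times(-)}$. By \cref{cor: indtifying the spliting algebra} this is $(C^n_t)^{\L^{n-t}A\times(-)}$. After applying $(\L^{n-t})^*$ we obtain $(C_t^n)^{\L^{n-t}(A\times -)}$.
\item On the source side, by definition $\overline{C}_t(\En^{A\times(-)})\simeq\colim_r L_{T(t)}(\En^{\B\Zpr\times A\times(-)})[e_r^{-1}]$.
\end{itemize}

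\cref{rem:splitting-algebra-proof_PCMon} identifies these two objects, with the remark that the same proof applies to $\En^{A\times(-)}$. The identification uses \cref{cor: classifying the splitting algebra}, the finiteness and freeness of $\En^{\B\Zpr}$ over $\En$, and \cite[Theorem A]{ben-mosh-Transchromatic}.

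\textbf{Step 3: the identification is $\alpha$.} We must check that this abstract equivalence is the map $\alpha_{\En^{A\times(-)}}$. Both maps are $L_{T(t)}(\En^{(-)})$-linear maps out of $C_t^{\L^{n-t}(A\times -)}$. By \cref{lem : maps from CtL determined by a point}, applied after shifting by $A$ via \cref{cor: base change}, each is determined by its value at the point. At the point, both restrict to the map $\L^{n-t}_!\En\simeq C^n_t$ that $\alpha$ was built to extend in \cref{lem: alpha omega and Ft with and without bar}. Hence the two maps agree.

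\textbf{Main obstacle.} The main difficulty is making the naturality in $A$ precise, including spans, so that the free generators are handled coherently. The worry is that the colimit over $r$ and the idempotent inversion might not interact cleanly with this naturality. We would first try to handle this by arguing only objectwise. Equivalences are detected objectwise, so it suffices to show that $\alpha_{\En^{A\times(-)}}$ is an equivalence for each $A$ separately; no coherent naturality in $A$ is needed. Since $\alpha$ is already a natural transformation, checking it on a generating set and using Step 1 then suffices.
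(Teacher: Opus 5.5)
Your proposal follows the paper's proof: both functors preserve colimits (the target via \cref{lem: L*-colimit}), so it suffices to check the generators $\En^{A\times(-)}$, where both sides are identified with $\colim_r L_{T(t)}(\En^{\B(\ZZ/p^r)^{n-t}\times A\times(-)})[e_r^{-1}]$ using \cref{thm: F! of R A} (or \cref{rem:splitting-algebra-proof_PCMon}) and \cref{cor: indtifying the spliting algebra}. Your Step 3 makes explicit something the paper leaves implicit (that this identification is $\alpha$ itself), but as phrased it is loose: for $A\neq\pt$, a linear map out of $C_t^{\L^{n-t}(A\times -)}$ is pinned down by a map $C_t^{\L^{n-t}A}\to(C^n_t)^{\L^{n-t}A}$ (or, after dualizing $A$, by a map into the value at $A$), not by $\L^{n-t}_!\En\simeq C^n_t$, so what you actually need is that $\alpha$ is compatible with the shift equivalences $G(A\times -)\simeq\Map(A,-)\otimes G$; this holds because $\alpha$ is the extension of scalars of the unit, and the unit of the symmetric monoidal adjunction $\L^{n-t}_!\dashv(\L^{n-t})^*$ is a monoidal transformation.
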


\begin{proof}
    By construction, $\overline{C}_t$ commutes with colimits. Moreover, by
    \cref{lem: L*-colimit}, the composite $(\L^{n-t})^* L_{T(t)}^\tsadi\L^{n-t}_!$ commutes with colimits. Hence it suffices to check that the map is an equivalence on objects of the form
    \[
    \En^{(-)}\otimes \Map(A,-)\simeq \En^{A\times (-)},
    \]
    where the displayed equivalence is \cref{cor: base change}.
    
    Using \cref{cor: base change}, the functor $\overline{C}_{t,r}$ is identified with extension of scalars, so that
    \[
    \overline{C}_{t,r}(\En^{A\times(-)})
    \simeq
    L_{T(t)}(\En^{\B(\ZZ/p^r)^{n-t}\times (-)})[e_r^{-1}]
    \otimes_{L_{T(t)}(\En)}
    L_{T(t)}(\En^{A\times(-)}).
    \]
    On the other hand, by \cref{cor: indtifying the spliting algebra} we have
    \[
    (\L^{n-t})^*\L^{n-t}_!(\En)
    \simeq
   \colim_r L_{T(t)}(\En^{\B(\ZZ/p^r)^{n-t}\times (-)})[e_r^{-1}].
    \]
    Therefore, using \cref{thm: F! of R A} \footnote{One can use \cref{rem:splitting-algebra-proof_PCMon} instead}, we compute
    \begin{align*}
    (\L^{n-t})^*\L^{n-t}_!(\En^{A\times(-)})
    &\simeq
    (\L^{n-t})^*\L^{n-t}_!(\En)\otimes_{L_{T(t)}(\En)} L_{T(t)}(\En^{A\times(-)}) \\
    &\simeq
    \colim_r L_{T(t)}(\En^{\B(\ZZ/p^r)^{n-t}\times (-)})[e_r^{-1}]
    \otimes_{L_{T(t)}(\En)}
    L_{T(t)}(\En^{A\times(-)}),
    \end{align*}
    which agrees with the formula for $\overline{C}_t(\En^{A\times(-)}\bigr)$.
    This identifies the natural transformation on these generators with an
    equivalence, and hence it is an equivalence on all objects.
\end{proof}

We now recall the following result from descendability theory.

\begin{lemma}\label{lem: dedent for fix point by close subgroup}
    Let $H \subseteq \GG_n$ be a closed subgroup of the Morava stabilizer group. Then the essential image of the forgetful functor
    \[
    \Mod_{\En} \to \Mod_{\En^{h_cH}}^{\wedge}
    \]
    generates $\Mod_{\En^{h_cH}}^{\wedge}$ under finite limits. Here $(-)^{h_cH}$ denotes continuous fixed points (see \cite{behrens_Davis_profinte_gal} for the relevant definitions).
\end{lemma}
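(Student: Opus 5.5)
The plan is to reduce the statement to the descendability of $\En$ over $\En^{h_cH}$ in the $K(n)$-local category, and then translate descendability into finite-limit generation. First I would recall from Devinatz--Hopkins, as packaged by Mathew \cite{Mathew-2016-Galois} and Behrens--Davis \cite{behrens_Davis_profinte_gal}, that $\SS_{K(n)}\to\En$ is descendable in $\Sp_{K(n)}$. Since $\En^{h_cH}$ sits between $\SS_{K(n)}$ and $\En$, base change gives that $\En^{h_cH}\to \En$ is also descendable. Concretely, I would tensor the descendability datum over $\SS_{K(n)}$ with $\En^{h_cH}$ in $K(n)$-local modules. This shows that the unit $\En^{h_cH}$ lies in the thick $\otimes$-ideal generated by $\En$ inside $\Mod^{\wedge}_{\En^{h_cH}}$.

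Next I would convert this into the claimed generation statement. Set $A=\En^{h_cH}$ and $B=\En$. Descendability means the cobar (Amitsur) tower $A\to \mathrm{Tot}(B^{\widehat\otimes_A \bullet+1})$ is pro-constant with a finite-stage retraction. Hence for every $A$-module $N$, the module $N$ is a retract of a finite totalization $\mathrm{Tot}^{\le m}(B^{\widehat\otimes_A \bullet+1}\widehat\otimes_A N)$. Each term $B^{\widehat\otimes_A k}\widehat\otimes_A N$ is a $B$-module, via the first factor, and so lies in the essential image of the forgetful functor. A finite partial totalization is a finite limit. Retracts are also obtained by finite limits in the stable setting: a retract is the equalizer of the identity and the idempotent. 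So $N$ lies in the closure of the image under finite limits.

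The step I expect to be the main obstacle is the first one: making sure the descendability input genuinely applies for an arbitrary closed subgroup $H$, and in the completed module category. In particular I need $B^{\widehat\otimes_A k}$ to be computed in $K(n)$-local modules, where one has $\En\widehat\otimes_{\En^{h_cH}}\En\simeq C(\GG_n/H,\En)$ by \cite{behrens_Davis_profinte_gal}. I would cite Mathew's result that $\En$ is descendable over $\SS_{K(n)}$ and deduce the $A$-relative statement formally by base change. I would avoid analyzing $H$ directly, because descendability is stable under base change along any map of $K(n)$-local commutative rings.
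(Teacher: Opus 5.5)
Your second step, which turns descendability of $\En^{h_cH}\to\En$ into the generation statement via the cobar construction, is the same identification the paper makes. The paper's whole proof is that observation plus a citation of \cite[Corollary~3.2.10(1)]{Li-Zhang-profinite-descent} for the descendability. The gap is in your first step, which is not formal.

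Base changing the descendable map $\SS_{K(n)}\to\En$ along $\SS_{K(n)}\to\En^{h_cH}$ shows that $\En^{h_cH}\to\En^{h_cH}\widehat\otimes\En$ is descendable. That is, $\En^{h_cH}$ lies in the thick $\otimes$-ideal generated by $\En^{h_cH}\widehat\otimes\En$ (module structure from the first factor), not in the one generated by $\En$. For a tower $A\to B\to C$ with $A\to C$ descendable, the formal conclusion is that the \emph{lower} map $A\to B$ is descendable; here that is $\SS_{K(n)}\to\En^{h_cH}$, which is not what you want. The upper map need not be descendable. For example, $k\to k[x]\to k$ with $x\mapsto 0$ has composite the identity, yet $k[x]\to k$ is not descendable, since $x$ acts nilpotently on every object of the thick $\otimes$-ideal generated by $k$. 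So ``$\En^{h_cH}$ sits between'' does not give what you need, and, contrary to your last sentence, you must use something specific about $\En$ and $H$.

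One way to close the gap uses that descendability can be checked after a descendable base change. If $A\to A'$ and $A'\to A'\otimes_A B$ are descendable, then so is $A\to A'\otimes_A B$, hence so is $A\to B$, because $A'\otimes_A B$ is a $B$-module. Take $A=\En^{h_cH}$, $B=\En$ and $A'=\En^{h_cH}\widehat\otimes\En$, which is descendable over $A$ by your base-change argument. Then $A'\to A'\widehat\otimes_A B$ is the map $\En^{h_cH}\widehat\otimes\En\to\En\widehat\otimes\En$. Devinatz--Hopkins identify it, up to conventions, with pullback $C(\GG_n/H,\En)\to C(\GG_n,\En)$ along the projection. A continuous section of $\GG_n\to\GG_n/H$ exists because $H$ is closed, and it gives an algebra retraction, so this map is descendable.

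Note that it is the absolute tensor product $\En\widehat\otimes\En^{h_cH}$ that equals $C(\GG_n/H,\En)$. The relative one, $\En\widehat\otimes_{\En^{h_cH}}\En$, is $C(H,\En)$; the formula in your last paragraph conflates the two.

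A smaller point concerns retracts. In a stable $\infty$-category, the equalizer of $\mathrm{id}$ and an idempotent $e$ on $X\simeq Y\oplus Y'$ is $\mathrm{fib}(\mathrm{id}-e)\simeq Y\oplus\Omega Y$, not $Y$, and retracts are not in general finite limits. The lemma must therefore be read as generation under finite limits and retracts (thick generation). That is exactly what descendability provides, and it is all the paper's later applications use.
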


\begin{proof}
    In the terminology of \cite{Mathew-2016-Galois}, the claim is that
    the map
    \[
    \En^{h_cH}\longrightarrow\En
    \]
    is descendable. This is precisely
    \cite[Corollary~3.2.10(1)]{Li-Zhang-profinite-descent}.
\end{proof}

We now assemble the preceding results to obtain an explicit formula for the universal character of a $\SS_{K(n)}[\omega^{(n)}_{p^\infty}]$-module.

\begin{theorem}\label{thm: k(n) local character with roots of unity}
    The map $C_t \to (\L^{n-t})^*\L^{n-t}_!$ from \cref{lem: eq Ft L! candidate} is an equivalence. 
\end{theorem}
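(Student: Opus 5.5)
The plan is descent from $\En$-modules. Both functors
\[
C_t,\ (\L^{n-t})^*\L^{n-t}_!\colon \Mod_{\SS_{K(n)}[\omega^{(n)}_{p^{\infty}}]}(\Sp_{K(n)}) \to \PCMoninf(\Sp_{T(t)})
\]
are exact, i.e.\ commute with finite limits, as noted before \cref{const: Ct}. So the full subcategory on which the natural transformation of \cref{lem: eq Ft L! candidate} is an equivalence is closed under finite limits and retracts. It therefore suffices to exhibit a class of objects, on which the transformation is an equivalence, that generates the source under finite limits (and retracts).

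First I identify $\SS_{K(n)}[\omega^{(n)}_{p^\infty}]$ with a continuous fixed point algebra $\En^{h_cH}$ for a closed subgroup $H\subseteq \GG_n$. Concretely, $\SS_{K(n)}=\En^{h_c\GG_n}$, and adjoining all height $n$ roots of unity is the pro-Galois extension corresponding to the kernel $H$ of the determinant-type character $\GG_n\to\ZZ_p\units$ through which $\GG_n$ acts on the higher roots of unity. I would cite \cite{CSY-cyclotomic} or \cite{BMCSY-cycloshift} for this identification. By \cref{lem: dedent for fix point by close subgroup}, the forgetful image of $\Mod_{\En}(\Sp_{K(n)})$ then generates $\Mod_{\SS_{K(n)}[\omega^{(n)}_{p^\infty}]}(\Sp_{K(n)})$ under finite limits (and retracts, via descendability).

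It then remains to check the claim on $\En$-modules $M$, viewed as $\SS_{K(n)}[\omega^{(n)}_{p^\infty}]$-modules by restriction. By \cref{lem: alpha omega and Ft with and without bar}(2), $C_t(M)\simeq\overline{C}_t(M)$. By \cref{lem: alpha omega is eq}, $\overline{C}_t(M)\to(\L^{n-t})^*L_{T(t)}^{\tsadi}\L^{n-t}_!(M)$ is an equivalence. Moreover, since $M$ is $K(n)$-local, \cref{cor: F chr single height T local} shows $\L^{n-t}_!M$ is already $T(t)$-local, so the $L_{T(t)}^{\tsadi}$ is harmless.

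The step I expect to be the main obstacle is compatibility of the two natural transformations: the one of \cref{lem: eq Ft L! candidate}, restricted along $\Mod_{\En}\to\Mod_{\SS_{K(n)}[\omega^{(n)}_{p^\infty}]}$, and the one of \cref{lem: alpha omega and Ft with and without bar}(1). A secondary subtlety is that $\L^{n-t}_!$ on $\En$-modules is computed in $\Mod_{\En^{(-)}}(\PCMoninf)$ versus in $\CMoninf$. Both are built from the same ingredients: the factorization of the unit through $L_{T(t)}(R^{\B(\ZZ/p^r)^{n-t}\times(-)})[e_r^{-1}]$ from \cref{rem: L! module over}, followed by \cref{cor: base change} and a colimit over $r$. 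The idempotents $e_r$ for $\En$ are the images of those for $\SS_{K(n)}[\omega^{(n)}_{p^\infty}]$ once we choose the roots of unity of $\En$ compatibly with the chosen map $\SS_{K(n)}[\omega^{(n)}_{p^\infty}]\to\En$. So I would check commutativity by tracing the construction, using naturality of \cref{rem: L! module over} in the algebra $R$.
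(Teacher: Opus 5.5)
Your proposal is correct and takes essentially the same route as the paper. Exactness of both functors, together with descendability of $\En^{h_cH}\to\En$ (\cref{lem: dedent for fix point by close subgroup}), reduces the claim to $\En$-modules, and there \cref{lem: alpha omega and Ft with and without bar}~(2) and \cref{lem: alpha omega is eq} finish the argument. The paper's two-line proof leaves implicit both the compatibility of the two natural transformations and the removal of $L_{T(t)}^{\tsadi}$ via \cref{cor: F chr single height T local}; your plan to check the compatibility by tracing \cref{rem: L! module over} makes explicit what the paper takes for granted.
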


\begin{proof}
    Since both $C_t$ and $(\L^{n-t})^*\L_!^{n-t}$ commute with finite limits by \cref{lem: dedent for fix point by close subgroup} it suffices to show that the map is an equivalence on $\En$-modules. This follows by \cref{lem: alpha omega and Ft with and without bar} (2) and \cref{lem: alpha omega is eq}.
\end{proof}

\begin{remark}\label{rem: k(n) local character with roots of unity in corrdinates}
    Unwinding the definition of $C_t$, \cref{thm: k(n) local character with roots of unity} states that for $M\in \Mod_{\SS_{K(n)}[\omega^{(n)}_{p^{\infty}}]}(\Sp_{K(n)})$ there is a canonical identification
    \[
    (\L^{n-t})^*\L^{n-t}_!(M)\simeq \colim_{r}\, L_{T(t)}(M^{\B(\ZZ/p^r)^{n-t}\times (-)})[e_r^{-1}],
    \]
    where $e$ is the idempotent in
    $L_{T(t)}(\SS_{K(n)}[\omega^{(n)}_{p^{\infty}}])[\B^t\ZZ/p^r]$ that splits off the primitive root of unity and the $L_{T(t)}(\SS_{K(n)}[\omega^{(n)}_{p^{\infty}}])[\B^t\ZZ/p^r]$ module
    structure on $M$ is obtained by restriction of scalars along the composite
    \begin{align*}
    L_{T(t)}(\SS_{K(n)}[\omega^{(n)}_{p^{\infty}}])[\B^t\ZZ/p^r]
    &\to L_{T(t)}(\SS_{K(n)}[\omega^{(n)}_{p^{\infty}}][\B^t\ZZ/p^r]) \\
    &\simeq L_{T(t)}(\SS_{K(n)}[\omega^{(n)}_{p^{\infty}}]^{\B^{n-t}\ZZ/p^r}) \\
    &\to L_{T(t)}(\SS_{K(n)}[\omega^{(n)}_{p^{\infty}}]^{\B(\ZZ/p^r)^{n-t}}).
    \end{align*}
\end{remark}

As a direct consequence of the above, we have:

\begin{corollary}\label{cor: char-iso}
    Let $M\in \Mod_{\SS_{K(n)}[\omega^{(n)}_{p^{\infty}}]}(\Sp_{K(n)})$. Then for every $0\le t\le n$, and every $p$-local $\pi$-finite space $A$, there is a natural equivalence
    \[
    C_t(M^A)\simeq C_t(M)^{\L^{n-t}A}
    \]
\end{corollary}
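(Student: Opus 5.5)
We deduce the statement by combining \cref{thm: k(n) local character with roots of unity} with the base-change formula \cref{thm: F! of R A}. By \cref{thm: k(n) local character with roots of unity}, for any $N\in \Mod_{\SS_{K(n)}[\omega^{(n)}_{p^{\infty}}]}(\Sp_{K(n)})$ we have a natural equivalence $C_t(N)\simeq (\L^{n-t})^*\L^{n-t}_!(N)$ in $\PCMoninf(\Sp_{T(t)})$. We apply this twice: once to $N=M^A$, and once to $N=M$. For the second application we evaluate the resulting functor on $\L^{n-t}$-shifted arguments. Here $M^A$ is again a module over $\SS_{K(n)}[\omega^{(n)}_{p^{\infty}}]$, since $K(n)$-local module categories are closed under limits. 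So the theorem applies to it.

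The key step is to identify $\L^{n-t}_!(M^A)$ with $\L^{n-t}_!(M)^{\L^{n-t}A\times(-)}$. \cref{thm: F! of R A} is stated for commutative algebras $R$. For a module $M$ over $R=\SS_{K(n)}[\omega^{(n)}_{p^{\infty}}]$, the same argument goes through verbatim. One writes
\[
M^{A\times(-)}\simeq M^{(-)}\otimes (\Map(A,-)\otimes\ounit)
\]
using \cref{lem: representable_tensor_dual} and \cref{lem: tensor left kan}. This is an equivalence in $\PCMoninf$, using the Segal condition to identify $M^{A\times B}\simeq (M^A)^B$. One then uses that $\L^{n-t}_!$ is symmetric monoidal and sends $A_!\ounit$ to $(\L^{n-t}A)_!\ounit$. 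This gives
\[
\L^{n-t}_!(M^{A\times(-)})\simeq \L^{n-t}_!(M)^{\L^{n-t}A\times(-)}.
\]
Evaluating the underlying $\infty$-commutative monoid at a point, $M^A$ as an object of $\CMoninf$ is exactly $M^{A\times(-)}$. Hence $\L^{n-t}_!(M^A)\simeq \L^{n-t}_!(M)^{\L^{n-t}A}$ in $\Sp_{T(t)}\simeq \CMoninf(\Sp_{T(t)})$, naturally in $A$. Here $\L^{n-t}A$ is $\pi$-finite because $A$ is $p$-local $\pi$-finite (cf.\ \cite[Proposition~3.4.7]{Lurie-2019-Elliptic3}).

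Next we evaluate both sides at $\pt$. On the left this gives $C_t(M^A)(\pt)\simeq \L^{n-t}_!(M^A)$. On the right, we evaluate $(\L^{n-t})^*\L^{n-t}_!(M)$ at $A$, which gives $\L^{n-t}_!(M)^{\L^{n-t}A}$. By \cref{thm: k(n) local character with roots of unity}, this is $C_t(M)(A)$. Read with $C_t(M)$ denoting its value at the point, this is $C_t(M)^{\L^{n-t}A}$, by the Segal condition for $\L^{n-t}_!(M)\in\CMoninf(\Sp_{T(t)})$. Chaining these equivalences gives the claim.

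The main obstacle is naturality. We must check that the two uses of \cref{thm: k(n) local character with roots of unity} are compatible with the base-change equivalence, so that the composite is natural in $A$ (including spans) and in $M$. We will handle this by noting that the comparison $C_t\to(\L^{n-t})^*\L^{n-t}_!$ of \cref{lem: eq Ft L! candidate} is a natural transformation of functors on the module category. Both $C_t$ and the base-change map commute with the exponentiation $(-)^A$. For $C_t$ this holds because it is built from limits, exponentials, localizations and filtered colimits applied pointwise in $(-)$, and $L_{T(t)}$ commutes with finite $p$-local $\pi$-finite limits in this setting. So naturality reduces to the functoriality already recorded in \cref{thm: F! of R A}.
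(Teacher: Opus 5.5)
Your argument is the paper's: its proof is exactly \cref{thm: k(n) local character with roots of unity} combined with \cref{thm: F! of R A}. You are right that the latter applies to modules. Its proof only uses $M^{A\times(-)}\simeq M^{(-)}\otimes(\Map(A,-)\otimes\ounit)$ and the symmetric monoidality of $\L^{n-t}_!$, never the algebra structure.

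One correction concerns your last paragraph. $L_{T(t)}$ does \emph{not} commute with limits indexed by $p$-local $\pi$-finite spaces, and $C_t$ does not commute with $(-)^A$. If it did, the corollary would give $C_t(M)^{A}\simeq C_t(M)^{\L^{n-t}A}$, which is false in general.

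For a concrete counterexample, $\En^{\B C_p}$ is free of rank $p^n$ over $\En$, so $L_\QQ(\En^{\B C_p})$ has rank $p^n$ over $L_\QQ\En$. On the other hand, $(L_\QQ\En)^{\B C_p}\simeq L_\QQ\En$. More generally, the coassembly map of \cref{lem: coassembly as map of extrior} is not an equivalence. This failure is precisely where the $\L^{n-t}$ shift comes from.

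You do not need that claim anyway. The equivalence $C_t\simeq(\L^{n-t})^*\L^{n-t}_!$ is a natural transformation of functors on the module category, so it is natural in $A$ via the span-functoriality of $A\mapsto M^A$. The base-change equivalence of \cref{thm: F! of R A} is itself functorial in $A$. The composite of these natural equivalences is therefore natural with no further argument.
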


\begin{proof}
    This is just \cref{thm: k(n) local character with roots of unity} and \cref{thm: F! of R A}.
\end{proof}

\begin{remark}
    One can directly deduce \cref{cor: char-iso} and \cref{thm: F! of R A}, in the case relevant here, namely for $F=\L^{n-t}$ and $R\in \Sp_{K(n)}$, from the character isomorphism of \cite[Theorem A]{ben-mosh-Transchromatic}
    \[
    C_t^n \otimes_{\En} \En^A \simeq (C_t^n)^{\L^{n-t}A}.
    \]
\end{remark}

In some cases this can be written as a base change formula.

\begin{corollary}\label{cor:L!-tensor-RA}
    Let $R\in \CAlg_{\SS_{K(n)}[\omega^{(n)}_{p^\infty}]}(\Sp_{K(n)})$. Assume that for each $r\ge 1$ the object $R^{\B(\ZZ/p^r)^{n-t}}$ is a perfect $R$-module. Then for any $p$-local $\pi$-finite space $A$ and  $M\in \Mod_R(\Sp_{T(n)})$ there is a natural equivalence
    \[
    \L^{n-t}_!(R)\otimes_R M^A
    \simeq
    C_t(R) \otimes_R M^A
    \simeq
    C_t(M)^{\L^{n-t}A}.
    \]
    where the tensor product is implicitly $T(t)$-localized.
\end{corollary}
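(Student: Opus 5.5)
The plan is to reduce both equivalences to the explicit colimit formula of \cref{rem: k(n) local character with roots of unity in corrdinates}, and to use the perfectness hypothesis to pull the tensor product with $M^A$ out of the colimit.

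First identify the first equivalence $\L^{n-t}_!(R)\otimes_R M^A\simeq C_t(R)\otimes_R M^A$. By \cref{thm: k(n) local character with roots of unity} evaluated at the point, $\L^{n-t}_!(R)\simeq C_t(R)(\pt)$. This is an equivalence of $R$-modules, since the map of \cref{lem: eq Ft L! candidate} is compatible with the module structure coming from the unit. So it remains to prove
\[
C_t(R)\otimes_R M^A\simeq C_t(M)^{\L^{n-t}A}.
\]

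By \cref{cor: char-iso} applied to $M$, the right-hand side is $C_t(M^A)(\pt)$. Hence it suffices to construct, for every $N\in\Mod_R(\Sp_{T(n)})$, a natural equivalence $C_t(R)\otimes_R N\simeq C_t(N)$, and then set $N=M^A$. Unwinding \cref{const: Ct} at the point gives
\[
C_t(N)\simeq \colim_r L_{T(t)}\bigl(N^{\B(\ZZ/p^r)^{n-t}}\bigr)[e_r^{-1}].
\]
Since $\B(\ZZ/p^r)^{n-t}$ is $\pi$-finite and $\Sp_{T(n)}$ is $\infty$-semiadditive, there is a natural comparison map
\[
R^{\B(\ZZ/p^r)^{n-t}}\otimes_R N\to N^{\B(\ZZ/p^r)^{n-t}}.
\]
This map is an equivalence when $R^{\B(\ZZ/p^r)^{n-t}}$ is a perfect $R$-module. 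To see this, note that for $N=R$ it is the identity, and both sides are exact in $N$. To get all of $\Mod_R$, I would use instead that the right side is $\hom_R(R[\B(\ZZ/p^r)^{n-t}],N)$. The module $R[\B(\ZZ/p^r)^{n-t}]\simeq R^{\B(\ZZ/p^r)^{n-t}}$ is then dualizable, with dual $R^{\B(\ZZ/p^r)^{n-t}}$, via the self-duality of ambidextrous limits. So $\hom_R(R[\B(\ZZ/p^r)^{n-t}],N)\simeq R^{\B(\ZZ/p^r)^{n-t}}\otimes_R N$.

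Given this, the remaining steps are formal. The comparison equivalence is compatible with the module structures over $L_{T(t)}(\SS_{K(n)}[\omega^{(n)}_{p^\infty}])[\B^t\ZZ/p^r]$, because that structure is defined through the $R^{\B(\ZZ/p^r)^{n-t}}$-algebra structure by restriction along the cup product. Then $T(t)$-localization and inverting $e_r$ commute with $-\otimes_R N$, where the tensor product is $T(t)$-localized and inverting an idempotent is a filtered colimit. The colimit over $r$ also commutes with $-\otimes_R N$. Combining these yields $C_t(R)\otimes_R N\simeq C_t(N)$, naturally in $N$.

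I expect the main obstacle to be the dualizability step. One must check that perfectness gives the equivalence $R^{\B(\ZZ/p^r)^{n-t}}\otimes_R N\simeq N^{\B(\ZZ/p^r)^{n-t}}$ for arbitrary, not necessarily compact, $N$, and that this identification respects the idempotent $e_r$. I would try to handle both together by writing everything as base change along the algebra map $R\to R^{\B(\ZZ/p^r)^{n-t}}$ and using that $e_r$ lives in the source ring of this map.
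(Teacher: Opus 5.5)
Your proposal is correct and follows the paper's argument. Perfectness of $R^{\B(\ZZ/p^r)^{n-t}}$ gives $L_{T(t)}(R^{\B(\ZZ/p^r)^{n-t}})[e_r^{-1}]\otimes_R M^A\simeq L_{T(t)}(M^{\B(\ZZ/p^r)^{n-t}\times A})[e_r^{-1}]$, and passing to the colimit over $r$ and applying \cref{cor: char-iso} finishes.

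One point is worth making explicit: your comparison $R^{\B(\ZZ/p^r)^{n-t}}\otimes_R N\simeq N^{\B(\ZZ/p^r)^{n-t}}$ must be for the \emph{unlocalized} relative tensor product. The $T(n)$-local version already follows from ambidexterity with no hypothesis, and perfectness is precisely what makes the plain tensor product agree with it; this matters because $L_{T(t)}L_{T(n)}\not\simeq L_{T(t)}$. A small slip: $e_r$ lives in $L_{T(t)}(R^{\B(\ZZ/p^r)^{n-t}})$, not in the source of $R\to R^{\B(\ZZ/p^r)^{n-t}}$. This is harmless, since the comparison map is $R^{\B(\ZZ/p^r)^{n-t}}$-linear.
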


\begin{proof}
    As $R^{\B(\ZZ/p^r)^{n-t}}$ is perfect 
    \[
    \colim_r L_{T(t)}(R^{\B(\ZZ/p^r)^{n-t}})[e_r^{-1}]\otimes_R M^A\simeq \colim_r L_{T(t)}(M^{\B(\ZZ/p^r)^{n-t}\times A})[e_r^{-1}]
    \]
    so the claim follows by \cref{cor: char-iso}.
\end{proof}

We use \cref{thm: k(n) local character with roots of unity} and cyclotomic descent to deduce a general formula for the $K(n)$-local universal character. 

\begin{corollary}\label{cor: L!-as-hZ}
    Let $M\in \Sp_{K(n)}$, and set
    \[
    M[\omega^{(n)}_{p^\infty}] := M\otimes \SS_{K(n)}[\omega^{(n)}_{p^\infty}].
    \]
    If $p$ is odd, then
    \[
    \L^{n-t}_!(M) \simeq (C_t(M[\omega^{(n)}_{p^\infty}]))^{h\ZZ},
    \]
    where $\ZZ$ acts via a choice of topological generator of $\ZZ_p^\times$.
    
    If $p=2$, then
    \[
     \L^{n-t}_!(M)[i^{(t)}] \simeq \L^{n-t}_!(M[i^{(n)}])\simeq
    (C_t(M[\omega^{(n)}_{p^\infty}]))^{h\ZZ},
    \]
    where $\ZZ$ acts via a choice of topological generator of the subgroup
    $1+4\ZZ_2\subseteq \ZZ_2^\times$. Here $i^{(n)}$ and $i^{(t)}$ are the height $n$ and height $t$ primitive fourth roots of unity.
\end{corollary}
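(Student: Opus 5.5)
The plan is to combine Galois descent along the cyclotomic tower at height $n$ with the compatibility of $\L^{n-t}_!$ with cyclotomic extensions from \cref{thm:cyclo blue shift}, and then to apply \cref{thm: k(n) local character with roots of unity} to $M[\omega^{(n)}_{p^\infty}]$.

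\textbf{Step 1: descent at height $n$.} For $p$ odd, the extension $\SS_{K(n)}\to \SS_{K(n)}[\omega^{(n)}_{p^\infty}]$ is a pro-Galois extension with group $\ZZ_p^\times\simeq \ZZ/(p-1)\times \ZZ_p$. The finite part $\ZZ/(p-1)$ has order prime to $p$, so its fixed points are computed by an exact averaging idempotent. For the pro-$p$ part, after $K(n)$-localization the continuous fixed points agree with the $h\ZZ$ fixed points for a topological generator; this is the usual fiber of $1-\gamma$. Hence
\[
M\simeq \bigl(M[\omega^{(n)}_{p^\infty}]\bigr)^{h\ZZ}\quad\text{with the $\ZZ/(p-1)$-fixed points built in,}
\]
and $M\simeq M[\omega^{(n)}_p]^{h(\ZZ/p)^\times}$ at finite level.

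\textbf{Step 2: passing through $\L^{n-t}_!$.} Since $\L^{n-t}_!$ is a left adjoint, it does not obviously commute with the fixed points in Step 1. I would therefore work at finite levels $r$, where the fixed points are finite limits.

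\begin{itemize}
\item By \cref{cor: galois after character} and \cref{lem: descent for character of cyclotomic extension}, applied to $R=\SS_{K(n)}$, we get
\[
\L_!(M)\simeq \L_!(M)[\omega^{(t)}_{p^r}]^{h(\ZZ/p^r)^\times}.
\]
\item \cref{thm:cyclo blue shift} identifies the cyclotomic term equivariantly:
\[
\L_!(M)[\omega^{(t)}_{p^r}]\simeq \L_!(M[\omega^{(n)}_{p^r}]).
\]
The module version of this follows by base change over the algebra statement, since $\L_!$ is symmetric monoidal.
\item Combining the two gives
\[
\L_!(M)\simeq \L_!(M[\omega^{(n)}_{p^r}])^{h(\ZZ/p^r)^\times}.
\]
\end{itemize}

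\textbf{Step 3: passing to the colimit in $r$.} This is the main obstacle, since $\L_!$ does not commute with the filtered colimit defining $\omega^{(n)}_{p^\infty}$ (see the remark after \cref{thm:cyclo blue shift}). I would avoid taking the colimit of $\L_!$ directly. Instead, I would use that $\L_!(M)[\omega^{(t)}_{p^\infty}]:=\colim_r \L_!(M)[\omega^{(t)}_{p^r}]$ is a $T(t)$-local pro-Galois extension. For $T(t)$-local objects, \cite[Proposition 6.11]{BCSY-Fourier} gives that $\SS_{T(t)}[\omega^{(t)}_{p^\infty}]$ is pro-Galois. The same Step 1 argument at height $t$ then gives
\[
\L_!(M)\simeq \bigl(\L_!(M)[\omega^{(t)}_{p^\infty}]\bigr)^{h\ZZ}.
\]
It remains to identify $\L_!(M)[\omega^{(t)}_{p^\infty}]$ with $C_t(M[\omega^{(n)}_{p^\infty}])$, equivariantly for $\ZZ_p^\times$. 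For this I would use \cref{rem: L! module over} and \cref{thm: k(n) local character with roots of unity}:
\begin{itemize}
\item $C_t(M[\omega^{(n)}_{p^\infty}])$ is $\colim_r$ of $L_{T(t)}(\cdots)[e_r^{-1}]$.
\item This is a module over $L_{T(t)}(\SS_{K(n)}[\omega^{(n)}_{p^\infty}])$, on which the height $t$ roots of unity are already adjoined by \cref{cor: chooses root of unity}.
\item The comparison map is then checked on finite levels using \cref{thm:cyclo blue shift} again.
\end{itemize}
The equivariance, namely that the $\ZZ_p^\times$-action on the height $n$ roots corresponds to the action on the height $t$ roots, is built into \cref{thm:cyclo blue shift}.

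\textbf{The case $p=2$.} Here $\ZZ_2^\times\simeq \ZZ/2\times(1+4\ZZ_2)$, and the $\ZZ/2$ part cannot be averaged away. I would therefore run the same argument over the base $M[i^{(n)}]$ in place of $M$, with the procyclic group $1+4\ZZ_2$. The identification $\L_!(M[i^{(n)}])\simeq \L_!(M)[i^{(t)}]$ is exactly \cref{thm:cyclo blue shift} with $r=2$.
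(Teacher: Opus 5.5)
\textbf{The gap is in Step 3.} The identification $\L^{n-t}_!(M)[\omega^{(t)}_{p^\infty}]\simeq C_t(M[\omega^{(n)}_{p^\infty}])$, which you propose to check on finite levels, is false in general. By \cref{thm: k(n) local character with roots of unity}, the right-hand side is $\L^{n-t}_!(M[\omega^{(n)}_{p^\infty}])$. So you are asserting $F_!(R)[\omega^{(t)}_{p^\infty}]\simeq F_!(R[\omega^{(n)}_{p^\infty}])$, which is exactly what the remark after \cref{thm:cyclo blue shift} says fails, for instance for $\SS_{K(1)}$ and $F=\L$. The finite-level equivalences do not assemble, for the very reason you flagged: $\L^{n-t}_!$ does not commute with the $K(n)$-local colimit defining $M[\omega^{(n)}_{p^\infty}]$.

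Concretely, take $n=1$, $t=0$, $M=\SS_{K(1)}$. By \cref{cor: universal character SK(n)}, $\L^{1}_!(\SS_{K(1)})\simeq L_\QQ\SS_{K(1)}$ has $\pi_{-1}\cong\QQ_p$. Hence $\L^1_!(\SS_{K(1)})[\omega^{(0)}_{p^\infty}]\simeq L_\QQ\SS_{K(1)}\otimes_\QQ\QQ(\omega_{p^\infty})$ has nonzero odd homotopy. On the other hand, using $\SS_{K(1)}[\omega^{(1)}_{p^\infty}]\simeq\KU_p$, the object $C_0(\KU_p)$ is a colimit of rationalizations of the even modules $\KU_p^{\B\ZZ/p^r}$, so it is even.

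There is a second problem at $t=0$: your height-$t$ descent $\L^{n}_!(M)\simeq(\L^n_!(M)[\omega^{(0)}_{p^\infty}])^{h\ZZ}$ also fails. For a discrete rational module such as $\QQ(\omega_{p^\infty})$, the fiber of $g-1$ has $\pi_{-1}$ equal to the coinvariants, which is $\QQ\neq0$. Equating $h\ZZ$ with continuous fixed points uses $p$-completeness, which holds $K(n)$-locally but not rationally.

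\textbf{The missing idea} is that your worry in Step 2 is unfounded. The functor $\L^{n-t}_!$ is exact: the inclusions $\Sp_{K(n)}\subseteq\CMoninf(\Sp)\subseteq\PCMoninf(\Sp)$ preserve fibers, and $\L^{n-t}_!$ is a left adjoint between stable categories. The $h\ZZ$-fixed points of Step 1 are the fiber of $g-1$, which is a finite limit. For $p$ odd, $\ZZ_p^\times$ is itself procyclic, so a single topological generator $g$ also accounts for the $\ZZ/(p-1)$ part and no separate averaging is needed. Apply $\L^{n-t}_!$ directly to the fiber sequence $M\to M[\omega^{(n)}_{p^\infty}]\xrightarrow{g-1}M[\omega^{(n)}_{p^\infty}]$ in $\Sp_{K(n)}$. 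Then identify $\L^{n-t}_!(M[\omega^{(n)}_{p^\infty}])$ with $C_t(M[\omega^{(n)}_{p^\infty}])$, compatibly with $g$, via \cref{thm: k(n) local character with roots of unity}.

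This is the paper's argument. Because $\L^{n-t}_!$ is applied to $M[\omega^{(n)}_{p^\infty}]$ as a single object, neither the filtered colimit in $r$ nor any descent at height $t$ ever enters. Your treatment of $p=2$ is fine once the main step is replaced this way: work over $M[i^{(n)}]$ with the procyclic group $1+4\ZZ_2$, and use \cref{thm:cyclo blue shift} with $p^r=4$ for the first equivalence.
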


\begin{proof}
    For $p$ odd, the continuous $\ZZ_p^\times$-action is procyclic, so its continuous fixed points are computed as the fiber of $g-1$ for a choice of topological generator $g\in \ZZ_p^\times$ and the functor $\L_!^{n-t}$ commutes with finite limits. For $p=2$ the same description holds after restricting to the procyclic subgroup $1+4\ZZ_2\subseteq \ZZ_2^\times$ and choosing a topological generator.
\end{proof}

We can also get a formula for the universal character that only sees the finite level cyclotomic extensions.

\begin{corollary}\label{lem:L!-as-finite-quotient-hfps}
    Let $M\in \Sp_{K(n)}$.
    Then there is a canonical equivalence
        \[
        \L^{n-t}_!(M)
        \simeq
        \bigl(C_t(M[\omega^{(n)}_{p^\infty}])\bigr)^{h_c\ZZ_p\units}
        \simeq
        \colim_r\,
        \Bigl(
        L_{T(t)}(M[\omega^{(n)}_{p^r}]^{\B(\ZZ/p^r)^{n-t}})[e_r^{-1}]
        \Bigr)^{h(\ZZ/p^r)\units}.
         \]
\end{corollary}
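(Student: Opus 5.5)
The plan is to establish two equivalences: first the continuous $\ZZ_p\units$-fixed points description, then the finite-level colimit description.

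For the first equivalence, the key input is that $\SS_{K(n)}\to\SS_{K(n)}[\omega^{(n)}_{p^\infty}]$ is a pro-Galois extension with group $\ZZ_p\units$. In particular, for every $M\in\Sp_{K(n)}$ we have
\[
M\simeq \colim_r \bigl(M[\omega^{(n)}_{p^r}]\bigr)^{h(\ZZ/p^r)\units},
\]
and each finite stage is actually an equivalence $M\simeq M[\omega^{(n)}_{p^r}]^{h(\ZZ/p^r)\units}$. Since $(\ZZ/p^r)\units$ is finite, its homotopy fixed points are a finite limit in the $T(n)$-local (ambidextrous) setting. So I will apply $\L^{n-t}_!$, which commutes with finite limits (this was used in \cref{thm: k(n) local character with roots of unity}), to get
\[
\L^{n-t}_!(M)\simeq \bigl(\L^{n-t}_!(M[\omega^{(n)}_{p^r}])\bigr)^{h(\ZZ/p^r)\units}
\]
for each $r$.

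Next I identify $\L^{n-t}_!(M[\omega^{(n)}_{p^r}])$ at finite level. The module $M[\omega^{(n)}_{p^r}]$ is a module over $\SS_{K(n)}[\omega^{(n)}_{p^r}]$, so by \cref{thm:cyclo blue shift} and the module version of the Fourier/splitting argument (\cref{rem: L! module over}), $(\L^{n-t})^*\L^{n-t}_!$ of it is a module over $L_{T(t)}(\ldots^{\B(\ZZ/p^r)^{n-t}\times(-)})[e_r^{-1}]$. This gives a comparison map
\[
L_{T(t)}\bigl(M[\omega^{(n)}_{p^r}]^{\B(\ZZ/p^r)^{n-t}}\bigr)[e_r^{-1}]\to \L^{n-t}_!(M[\omega^{(n)}_{p^r}]),
\]
equivariant for $(\ZZ/p^r)\units$. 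The colimit over $r$ of these maps is the identification of \cref{thm: k(n) local character with roots of unity}, after base change along $M[\omega^{(n)}_{p^r}]\to M[\omega^{(n)}_{p^\infty}]$. Taking fixed points and the colimit in $r$ then produces a natural map from the right-hand colimit to $\L^{n-t}_!(M)$.

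To show this map is an equivalence, I reduce by descent. Both sides commute with finite limits in $M$: on the right, finite fixed points commute with finite limits and filtered colimits are exact in $\Sp_{T(t)}$. By \cref{lem: dedent for fix point by close subgroup}, $\Sp_{K(n)}$ is generated under finite limits by $\En$-modules, so it suffices to treat $M$ an $\En$-module. Then $M[\omega^{(n)}_{p^\infty}]\simeq \prod$-type Galois base change, and $(C_t(M[\omega^{(n)}_{p^\infty}]))^{h_c\ZZ_p\units}$ is computed as the colimit of finite-level fixed points. This uses that continuous fixed points of a discrete colimit-of-finite-quotient action are $\colim_r(-)^{h(\ZZ/p^r)\units}$ on the level-$r$ pieces, which holds because the action on the $r$-th piece factors through $(\ZZ/p^r)\units$. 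This yields the middle term as well.

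The main obstacle is this last compatibility: $C_t(M[\omega^{(n)}_{p^\infty}])$ is a colimit over $r$ of level-$r$ cyclotomic pieces, but the $r$-th term of $C_t$ is built using $\omega_{p^\infty}$ rather than $\omega_{p^r}$. One must check that $C_{r,t}$ only depends on $M[\omega^{(n)}_{p^r}]$, since the idempotent $e_r$ only uses $p^r$-th roots. Then one must check that the $\ZZ_p\units$-action on the $r$-th term is induced from $(\ZZ/p^r)\units$ via the Galois base change $M[\omega^{(n)}_{p^\infty}]\simeq M[\omega^{(n)}_{p^r}]\otimes\SS_{K(n)}[\omega^{(n)}_{p^r}]\to\SS_{K(n)}[\omega^{(n)}_{p^\infty}]$. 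I would handle this by Galois descent for the finite-level pieces, for which ambidexterity gives $(-)^{h(\ZZ/p^r)\units}$ commuting with the $T(t)$-localization and the inversion of $e_r$.
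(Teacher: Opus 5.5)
Your route differs from the paper's. The paper gets the first equivalence directly from the procyclic descent of \cref{cor: L!-as-hZ}: continuous fixed points are the fiber of $g-1$, an honest finite limit that $\L^{n-t}_!$ preserves, combined with \cref{thm: k(n) local character with roots of unity} applied to $M[\omega^{(n)}_{p^\infty}]$. It then gets the second equivalence by descending coefficients along $G_r=\ker(\ZZ_p\units\to(\ZZ/p^r)\units)$. You instead go straight for the outer equivalence via finite-level Galois descent, level-$r$ comparison maps, and a new reduction to $\En$-modules.

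This is viable, but your load-bearing first step rests on a false principle. Homotopy fixed points of a finite group are \emph{not} a finite limit in $\Sp_{T(n)}$, and exact functors out of $\Sp_{K(n)}$ need not preserve them. For example, $L_\QQ\colon\Sp_{K(n)}\to\Sp_\QQ$ is exact, yet by \cref{cor: rationalization SK(n)BG} one has $L_\QQ(\SS_{K(n)}^{\B\ZZ/p})\simeq L_\QQ(\SS_{K(n)})\times L_\QQ(\SS_{K(n)})$, whereas $(L_\QQ\SS_{K(n)})^{h\ZZ/p}\simeq L_\QQ\SS_{K(n)}$. Ambidexterity does not rescue this, since $\L^{n-t}_!$ need not preserve $K(n)$-local colimits; the remark after \cref{thm:cyclo blue shift} shows it does not even commute with the colimit defining the infinite cyclotomic extension. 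The same objection applies to your later claim that ambidexterity makes $(-)^{h(\ZZ/p^r)\units}$ commute with $T(t)$-localization.

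What actually makes these steps true is that the actions are Galois and descendable. The map $\SS_{K(n)}\to\SS_{K(n)}[\omega^{(n)}_{p^r}]$ is descendable because $\SS_{K(n)}\to\En$ is descendable and factors through it. So $M$ is the totalization of an Amitsur complex with pro-constant Tot tower. Via $\SS_{K(n)}[\omega^{(n)}_{p^r}]^{\otimes(k+1)}\simeq\prod_{((\ZZ/p^r)\units)^{k}}\SS_{K(n)}[\omega^{(n)}_{p^r}]$, this complex is the cobar complex computing $M[\omega^{(n)}_{p^r}]^{h(\ZZ/p^r)\units}$. Any exact functor $\Phi$ preserves pro-constant towers and finite products, hence $\Phi(M)\simeq\Phi(M[\omega^{(n)}_{p^r}])^{h(\ZZ/p^r)\units}$.

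Two further steps are asserted rather than proved.

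\emph{The $\En$-module case.} After reducing to $\En$-modules you never show that your comparison map is an equivalence; your sentence about $M[\omega^{(n)}_{p^\infty}]$ addresses the middle term instead. What is needed is this: for $M\in\Mod_{\En}$, $M[\omega^{(n)}_{p^r}]\simeq\prod_{(\ZZ/p^r)\units}M$ is coinduced. So the $r$-th term of your colimit is $C_{t,r}(M)$, with $M$ viewed over $\SS_{K(n)}[\omega^{(n)}_{p^\infty}]$ through roots of unity in $\En$. The colimit then maps to $\L^{n-t}_!(M)$ by the equivalence of \cref{thm: k(n) local character with roots of unity}, and you must check this agrees with your comparison map.

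\emph{The middle term.} The $\ZZ_p\units$-action on $X_r=L_{T(t)}(M[\omega^{(n)}_{p^\infty}]^{\B(\ZZ/p^r)^{n-t}})[e_r^{-1}]$ does \emph{not} factor through $(\ZZ/p^r)\units$. Rather, $X_r$ is base changed from the level-$r$ object along $\SS_{K(n)}[\omega^{(n)}_{p^r}]\to\SS_{K(n)}[\omega^{(n)}_{p^\infty}]$, with $G_r$ acting only on coefficients, so $X_r^{hG_r}$ is the level-$r$ object. You also need to move $(-)^{h_c\ZZ_p\units}$ past $\colim_r$. The paper does this using three facts: $G_r$ is procyclic, so $(-)^{hG_r}$ is a fiber; finite-group fixed points commute with colimits in $\Sp_{T(t)}$; and the diagonal $\NN\to\NN\times\NN$ is cofinal.

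With these repairs your argument goes through. The cost is that it rebuilds comparison maps and reruns the $\En$-descent, which the paper's route inherits from \cref{thm: k(n) local character with roots of unity}.
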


\begin{proof}
    Assume $p$ is odd, the proof for $p$ even is analogous. For every $r\ge 1$ let
    \[
    G_r:=\ker\!\bigl(\ZZ_p\units \to (\ZZ/p^r)\units\bigr).
    \]
    By the same descent argument as in \cref{cor: L!-as-hZ}, but without choosing
    a topological generator, one gets a canonical equivalence $\L^{n-t}_!(M)\simeq
    \bigl(C_t(M[\omega^{(n)}_{p^\infty}])\bigr)^{h_c\ZZ_p\units}$.
    It therefore remains to identify these continuous fixed points.

    For every $r\ge 1$, set
    \[
    X_r=
    L_{T(t)}\!\bigl(M[\omega^{(n)}_{p^\infty}]^{\B(\ZZ/p^r)^{n-t}}\bigr)[e_r^{-1}].
    \]
    Then by \cref{rem: k(n) local character with roots of unity in corrdinates}, $C_t(M[\omega^{(n)}_{p^\infty}])\simeq \colim_r X_r$.

    The subgroup $G_r$ acts trivially on $\B(\ZZ/p^r)^{n-t}$ and
    only on the coefficient object $M[\omega^{(n)}_{p^\infty}]$. Therefore, by definition of the action, taking $G_r$-fixed points on the $r$-th stage descends the coefficients from all height $n$ roots of unity to the height $n$ primitive $p^r$-th roots of unity:
    \[
    X_r^{hG_r}
    \simeq
    L_{T(t)}\!\bigl(M[\omega^{(n)}_{p^r}]^{\B(\ZZ/p^r)^{n-t}}\bigr)[e_r^{-1}].
    \]
    Here we use also that the idempotent $e_r$ is $G_r$-invariant, so it descends under fixed points. Since the subgroups $G_r$
    form a cofinal system of open normal subgroups of $\ZZ_p\units$, the formula for continuous fixed points gives
    \[
    \bigl(C_t(M[\omega^{(n)}_{p^\infty}])\bigr)^{h_c\ZZ_p\units}
    \simeq
    \colim_r \Bigl(\bigl(C_t(M[\omega^{(n)}_{p^\infty}])\bigr)^{hG_r}\Bigr)^{h(\ZZ_p\units/G_r)}.
    \]
    Now write $C_t(M[\omega^{(n)}_{p^\infty}])\simeq \colim_s X_s$.
    Using this presentation, we get a bifiltered diagram. Noting that $G_r$ is procyclic and that in the $T(t)$-local category fixed point of finite groups commute with colimits, we may interchange the two filtered colimits and then pass to the diagonal, since the diagonal
    \[
    \NN \to \NN\times \NN,\qquad r\mapsto (r,r)
    \]
    is cofinal. Hence
    \[
    \bigl(C_t(M[\omega^{(n)}_{p^\infty}])\bigr)^{h_c\ZZ_p\units}
    \simeq
    \colim_r \bigl(X_r^{hG_r}\bigr)^{h(\ZZ_p\units/G_r)}.
    \]
    Since $\ZZ_p\units/G_r\simeq (\ZZ/p^r)\units$, and since
    \[
    X_r^{hG_r}\simeq
    L_{T(t)}\!\bigl(M[\omega^{(n)}_{p^r}]^{\B(\ZZ/p^r)^{n-t}}\bigr)[e_r^{-1}],
    \]
    we obtain
    \[
    \bigl(C_t(M[\omega^{(n)}_{p^\infty}])\bigr)^{h_c\ZZ_p\units}
    \simeq
    \colim_r
    \Bigl(
    L_{T(t)}\!\bigl(M[\omega^{(n)}_{p^r}]^{\B(\ZZ/p^r)^{n-t}}\bigr)[e_r^{-1}]
    \Bigr)^{h(\ZZ/p^r)\units}.
    \]
\end{proof}

\subsection{The $\mrm{GL}_n(\ZZ_p)$ Action and its Fixed Points} \label{sec: GLn-t action}

In this section we prove that for any $M\in \Sp_{T(n)}$ the object $\L^{n-t}_!M$ carries a natural action of the profinite group $\GL_{n-t}(\ZZ_p)$. If $M\in \Sp_{K(n)}$, then the fixed points recover rationalization:
\[
(\L^{n}_!M)^{h\GL_{n}(\ZZ_p)} \simeq L_{\QQ}M.
\]
We also show that the induced map
\[
L_{\QQ}(M^{(-)}) \to (\L^{n})^*\L^{n}_!M
\]
exhibits $(\L^{n})^*\L^{n}_!M$ as a $\GL_{n}(\ZZ_p)$ Galois
extension of $L_{\QQ}(M^{(-)})$ in $\PCMoninf(\Sp)$ when $M$ is a commutative ring that has all roots of unity.

We start by defining the action.

\begin{lemma}\label{lem:GL-action-Ln-t}
    For each $0\le t\le n$, the profinite group $\GL_{n-t}(\ZZ_p)$ acts on $(\L^{n-t})^*$ and $\L^{n-t}_!(M)$. Moreover, the unit of the
    adjunction
    \[
    \eta\colon \id \to (\L^{n-t})^* \L^{n-t}_!
    \]
    is $\GL_{n-t}(\ZZ_p)$-equivariant.
\end{lemma}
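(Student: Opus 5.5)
The action comes from the automorphisms of the source of the mapping space, and everything else is formal functoriality. The group $\GL_{n-t}(\ZZ_p)$ acts continuously on the profinite group $\ZZ_p^{n-t}$, hence on $\B\ZZ_p^{n-t}$, and by precomposition on
\[
\L^{n-t}A=\Map(\B\ZZ_p^{n-t},A)\simeq \colim_r \Map(\B(\ZZ/p^r)^{n-t},A).
\]
Each $g$ gives a natural automorphism of the endofunctor $\L^{n-t}\colon\spcpi\to\spcpi$. Since $g$ is an equivalence of the source, precomposition preserves products, pullbacks and finite coproducts. So these automorphisms are natural with respect to spans, and they assemble into a symmetric monoidal action on the functor $\L^{n-t}\colon\Span(\spcpi)\to\Span(\spcpi)$, that is, a functor $\B\GL_{n-t}(\ZZ_p)\to\End^{\otimes}(\Span(\spcpi))$ based at $\L^{n-t}$.

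To make this a \emph{profinite} action, I would use Lurie's identification of $\L_p^{n-t}A$ as the above colimit. For $\pi$-finite $A$ the colimit stabilises at a finite stage, uniformly in a span diagram involving finitely many spaces. At stage $r$ the action factors through the finite group $\GL_{n-t}(\ZZ/p^r)$, acting on $\B(\ZZ/p^r)^{n-t}$. I would therefore define the action levelwise through finite quotients, which is what continuity means here.

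Next, I pass the action to the adjoint pair by contravariant functoriality of precomposition. Applying $\Fun(-,\cC)$ to the action on $\L^{n-t}$ gives an action on $(\L^{n-t})^*\colon\PCMoninf\to\PCMoninf$, and hence on its restriction to $\CMoninf$. Taking left adjoints is functorial for natural isomorphisms, contravariantly. So the action on $(\L^{n-t})^*$ induces one on $\L^{n-t}_!$, twisted by inversion on the group, and in particular an action on $\L^{n-t}_!(M)$ for each $M$.

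Equivariance of the unit should follow from the formal mate calculus. For a natural isomorphism $\alpha\colon(\L^{n-t})^*\Rightarrow(\L^{n-t})^*$ with mate $\alpha^{\dagger}$ on $\L^{n-t}_!$, the triangle identities give $(\alpha\star(\alpha^{-1})^{\dagger})\circ\eta=\eta$. This is exactly the statement that $\eta\colon\id\to(\L^{n-t})^*\L^{n-t}_!$ is equivariant when $\id$ carries the trivial action and the target carries the diagonal (conjugation) action. To handle coherence rather than a single $g$, I would phrase the whole step as a functor $\B G\to$ (adjunctions). A good framework is a family of adjunctions parametrised over $\B G$, or equivalently an adjunction in $\Fun(\B G,\Cat)$; passing to left adjoints is functorial there (Lurie, HA/HTT on relative adjunctions), and the unit is automatically a morphism in $\Fun(\B G,-)$.

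The main obstacle is the continuity/profiniteness. Making a discrete-group action coherent is formal, but a genuine profinite action needs more. I would first try to show that for each $\pi$-finite $A$ the action on $\L^{n-t}A$ factors through some finite $\GL_{n-t}(\ZZ/p^r)$, and then present the action on the functor as a compatible system over the finite quotients. If a cleaner formulation is needed, I would define the action as an object of $\lim_r\Fun(\B\GL_{n-t}(\ZZ/p^r),-)$ on the finite-level functors $\Map(\B(\ZZ/p^r)^{n-t},-)$, and take the colimit.
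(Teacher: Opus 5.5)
Your proposal follows the paper's proof. The action on $\L^{n-t}$ comes from precomposition via Lurie's presentation $\L^{n-t}\simeq\colim_r\Map(\B(\ZZ/p^r)^{n-t},-)$. It is pulled back to $(\L^{n-t})^*$ and transported to $\L^{n-t}_!$ across the adjunction. The unit is equivariant because the adjunction equivalence $\Map(\L^{n-t}_!,\L^{n-t}_!)\simeq\Map(\id,(\L^{n-t})^*\L^{n-t}_!)$ sends the conjugation-fixed identity to $\eta$. Your mate identity is the unstated reason this equivalence intertwines the two conjugation actions. Your remark that natural \emph{isomorphisms} of a left exact functor automatically induce natural isomorphisms on $\Span(\spcpi)$ fills in a step the paper skips.

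One claim should be dropped, because it is false: for $\pi$-finite $A$ the colimit need not stabilise at a finite stage. Take $n-t=1$ and $A=\B^2\ZZ/p$. Then $\pi_0$ of the $r$-th stage is $H^2(\B\ZZ/p^r;\ZZ/p)\cong\mathrm{Ext}(\ZZ/p^r,\ZZ/p)\cong\ZZ/p$. The transition map on it is multiplication by $p$, hence zero, so the colimit is connected while no stage is. (It does stabilise for $1$-types such as $\B G$.)

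You do not need stabilisation. Let $\GL_{n-t}(\ZZ_p)$ act on the $r$-th stage through $\GL_{n-t}(\ZZ/p^r)$, note that the transition maps are equivariant, and pass to the colimit, as the paper does. Continuity on homotopy then holds because $\pi_0\L^{n-t}A$ and the homotopy groups of $\L^{n-t}A$ are finite. Each is therefore the image of a single stage, on which $\ker(\GL_{n-t}(\ZZ_p)\to\GL_{n-t}(\ZZ/p^r))$ acts trivially.

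Similarly, restriction along the quotients $\GL_{n-t}(\ZZ/p^{r+1})\to\GL_{n-t}(\ZZ/p^r)$ runs towards increasing $r$. So the compatible system you want is a diagram of equivariant finite-level functors, or a colimit over $r$. It is not an object of $\lim_r\Fun(\B\GL_{n-t}(\ZZ/p^r),-)$, which would just collapse to the $r=1$ term.
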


\begin{proof}
    By \cite[Proposition 3.4.7]{Lurie-2019-Elliptic3}, the endofunctor
    \[
    \L^{n-t}\colon \spcpi\to \spcpi
    \]
    admits a presentation
    \[
    \L^{n-t}\simeq \colim_r\, \Map(\B(\ZZ/p^r)^{n-t},-).
    \]
    From this we get an action of the profinite group  $\GL_{n-t}(\ZZ_p)$, and therefore on the associated pullback functor $(\L^{n-t})^*$.
    
    Transporting this action across the adjunction, the left adjoint $\L^{n-t}_!$ also inherits a natural $\GL_{n-t}(\ZZ_p)$-action.
    
    It remains to check equivariance of the unit map. The action of $\GL_{n-t}(\ZZ_p)$ on $\L^{n-t}_!$ induces an action on the mapping space $\Map(\L^{n-t}_!,\L^{n-t}_!)$ by conjugation. Under the adjunction isomorphism
    \[
    \Map(\L^{n-t}_!,\L^{n-t}_!) \simeq \Map(\id,(\L^{n-t})^*\L^{n-t}_!),
    \]
    the identity transformation $\id_{\L^{n-t}_!}$ corresponds to the unit
    $\eta\colon \id \to (\L^{n-t})^*\L^{n-t}_!$. Since $\id_{\L^{n-t}_!}$ is fixed under conjugation, its image $\eta$ is fixed as well. Hence $\eta$ is $\GL_{n-t}(\ZZ_p)$-equivariant.
\end{proof}

\begin{remark}
    In the case $t=0$ of \cref{lem:GL-action-Ln-t}, for $M\in \Sp_{K(n)}$, the action of the profinite group $\GL_n(\ZZ_p)$ on $\L_!^{n}(M)$ is taken with respect to the discrete topology on $\L_!^{n}(M)$. This differs from the action of $(\ZZ_p)^\times$ on
    \[
        C_0(M[\omega_p^{(n),\infty}]) = \L_!^n(M[\omega_p^{(n),\infty}]),
    \]
    where this object is instead regarded as carrying its $p$-adic topology.
\end{remark}

We turn to understand the fixed points for $\En$-modules.

\begin{lemma}\label{lem: hGL-over-En}
    Let $M\in \ModEn$. Then the unit of the adjunction induces an equivalence
    \[
    L_{\QQ}(M^{(-)}) \simeq ((\L^{n})^*\L^{n}_!M)^{h\GL_{n}(\ZZ_p)}
    \]
    in $\PCMoninf(\Sp)$.
\end{lemma}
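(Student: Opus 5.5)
We reduce to $M=\En$ and then use the explicit description of the universal character of $\En$ as the HKR character at height $0$. Since $M$ is an $\En$-module, \cref{thm: k(n) local character with roots of unity} and \cref{lem: alpha omega is eq} give
\[
(\L^{n})^*\L^{n}_!M\simeq \overline{C}_0(M)\simeq C_0^n\,\widehat{\otimes}_{\En} M^{(-)},
\]
with $C_0^n=\colim_r L_{\QQ}(\En^{\B(\ZZ/p^r)^{n}})[e_r^{-1}]$. The equivalence is compatible with the $\GL_n(\ZZ_p)$-actions: on the right, $\GL_n(\ZZ_p)$ acts through its finite quotients $\GL_n(\ZZ/p^r)$ on $\B(\ZZ/p^r)^n$. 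To see that this matches the action of \cref{lem:GL-action-Ln-t}, I will use that the latter comes from the presentation $\L^n\simeq\colim_r\Map(\B(\ZZ/p^r)^n,-)$, together with \cref{lem : maps from CtL determined by a point}, which reduces comparing the two actions to comparing them at the point.

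Next I want to see that the action is by finite-level Galois automorphisms. At level $r$, after inverting $e_r$, the ring $C_{0,r}^n=L_{\QQ}(\En^{\B(\ZZ/p^r)^n})[e_r^{-1}]$ classifies level structures $(\ZZ/p^r)^n\xrightarrow{\sim}\GG[p^r]$ over the rational generic fiber. This uses \cref{cor: classifying the spliting algebra on pi0} with $t=0$, where the connected part vanishes rationally. Consequently $C_{0,r}^n$ is a $\GL_n(\ZZ/p^r)$-Galois extension of $L_{\QQ}\En$; classically this is the statement that the HKR ring at level $r$ is Galois over $p^{-1}\pi_0\En$, since $\GG[p^r]$ is étale of rank $p^{nr}$ rationally. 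Because we are in rational spectra and the groups are finite, homotopy fixed points are ordinary invariants of $\pi_*$, and they commute with filtered colimits. Since the action on $\L^n_!M$ is discrete, homotopy fixed points for the profinite group are the colimit over $r$ of fixed points of the finite quotients, and we get $(C_0^n)^{h\GL_n(\ZZ_p)}\simeq\colim_r (C^n_{0,r})^{h\GL_n(\ZZ/p^r)}\simeq L_{\QQ}\En$.

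To pass to $M^{A}$ and arbitrary modules, I will use the character isomorphism $C_0^n\widehat{\otimes}_{\En}\En^A\simeq (C_0^n)^{\L^nA}$ (\cref{cor: char-iso}), evaluated levelwise at finite $r$. At level $r$ the object $C^n_{0,r}\otimes_{L_\QQ\En}L_{\QQ}(M^A)$ is a Galois base change, so Galois descent gives $(C^n_{0,r}\otimes L_\QQ(M^A))^{h\GL_n(\ZZ/p^r)}\simeq L_\QQ(M^A)$. Here I use that rationally $L_\QQ\En\to C^n_{0,r}$ is faithfully flat finite étale with a free $\GL_n(\ZZ/p^r)$-torsor structure, so fixed points commute with the tensor product. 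Taking the colimit over $r$ then gives the claimed equivalence, naturally in $A$ as a map in $\PCMoninf(\Sp)$. It remains to check that the map is induced by the unit: the unit $M^{(-)}\to(\L^n)^*\L^n_!M$ is equivariant for the trivial action on the source by \cref{lem:GL-action-Ln-t}, and it factors through $L_\QQ$ because the target is rational (\cref{cor: F chr single height T local} with $t=0$).

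The main obstacle is the Galois statement at finite level together with the matching of the two actions. In particular, I need the idempotent $e_r$ to be exactly what makes the level-$r$ torsor free, so that invariants are computed correctly. I expect this to follow from \cref{cor: classifying the spliting algebra on pi0}, but the equivariance of the identification in \cref{cor: indtifying the spliting algebra} needs care.
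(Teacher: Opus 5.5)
Your proposal is correct and follows essentially the same route as the paper: rewrite $(\L^{n})^*\L^{n}_!M$ at $A$ as $L_{\QQ}(M^A)\otimes_{L_{\QQ}\En}\L^{n}_!\En$, using that $\En^{\B(\ZZ/p^r)^n}$ is finite free, and then apply Galois descent. The one difference is that the paper cites \cite[Remark~2.7.10]{Lurie-2019-Elliptic3} for the $\GL_n(\ZZ_p)$-Galois property of $\L^{n}_!\En\simeq C_0^n$ over $L_{\QQ}\En$, whereas you derive it level by level from the moduli interpretation of $C^n_{0,r}$ and pass to the colimit; the detour through \cref{cor: char-iso} is unnecessary, and the paper also leaves implicit the compatibility of the two $\GL_n(\ZZ_p)$-actions that you flag.
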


\begin{proof}
    By \cref{thm: k(n) local character with roots of unity} and
    \cref{rem: k(n) local character with roots of unity in corrdinates}, there is an equivalence
    \[
    \L^{n}_!(M) \simeq C_n(M) \simeq
    \colim_r\,L_{\QQ}(M^{\B(\ZZ/p^r)^n\times(-)})[e_r^{-1}].
    \]
    As $\En^{\B(\ZZ/p^r)^n}$ is a free $\En$-module, evaluating at a $\pi$-finite space $A$ gives
    \[
    \colim_r L_{\QQ}(M^A)\otimes_{L_{\QQ}(\En)}
    L_{\QQ}(\En^{\B(\ZZ/p^r)^n})[e_r^{-1}]
    \simeq
    L_{\QQ}(M^A)\otimes_{L_{\QQ}(\En)}\L^n_!\En.
    \]
    By \cite[Remark~2.7.10]{Lurie-2019-Elliptic3}, $\L^n_!\En$ is a
    $\GL_n(\ZZ_p)$-Galois extension of $L_{\QQ}(\En)$. Galois descent therefore identifies the homotopy fixed points of the last expression with $L_{\QQ}(M^A)$. This identification is natural in $A$ and is induced by the unit, proving the claim.
\end{proof}

Using \cref{lem: dedent for fix point by close subgroup} we deduce the corresponding fixed point statement for any $M\in \Sp_{K(n)}$.

\begin{theorem}\label{thm: unit-rational-hGL}
    Let $M\in \Sp_{K(n)}$. Then the unit map induces an equivalence
    \[
    L_{\QQ}(M^{(-)}) \simeq  ((\L^{n})^*\L^n_!M)^{h\GL_{n}(\ZZ_p)}
    \]
    in $\PCMoninf(\Sp)$. In particular, evaluation at $\pt$ gives an equivalence of spectra
    \[
    L_{\QQ}M \simeq (\L^{n}_!M)^{h\GL_{n}(\ZZ_p)}.
    \]
\end{theorem}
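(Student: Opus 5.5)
The plan is to reduce to the case of $\En$-modules, which is \cref{lem: hGL-over-En}, by descent along $\SS_{K(n)}\to\En$. Both sides of the comparison map are functors $\Sp_{K(n)}\to\PCMoninf(\Sp)$ equipped with a natural transformation induced by the unit. Since the unit $\eta\colon M^{(-)}\to(\L^n)^*\L^n_!M$ is $\GL_n(\ZZ_p)$-equivariant for the trivial action on the source (\cref{lem:GL-action-Ln-t}), it factors through the homotopy fixed points. The target of the unit is rational: by \cref{cor: F chr single height T local} it is $T(0)$-local, so it is a $\QQ$-module. Hence the comparison map factors uniquely through $L_\QQ(M^{(-)})$. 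Here I use that $L_\QQ$ is smashing, that fixed points of rational objects are rational, and that $(\L^n)^*$ lands pointwise in rational spectra.

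Next I check that both functors $M\mapsto L_\QQ(M^{(-)})$ and $M\mapsto((\L^n)^*\L^n_!M)^{h\GL_n(\ZZ_p)}$ preserve finite limits. They are exact functors between stable categories. Indeed, $M\mapsto M^A$ is exact, $L_\QQ$ is exact, $\L^n_!$ is a left adjoint between stable categories and hence exact, $(\L^n)^*$ is exact, and homotopy fixed points are a limit and so exact. Therefore the class of $M\in\Sp_{K(n)}$ on which the comparison map is an equivalence is closed under finite limits and retracts.

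By \cref{lem: dedent for fix point by close subgroup} with $H=\GG_n$, we have $\En^{h_c\GG_n}\simeq\SS_{K(n)}$ and $\SS_{K(n)}\to\En$ is descendable. So the thick subcategory generated by the image of $\Mod_{\En}(\Sp_{K(n)})\to\Sp_{K(n)}$ is all of $\Sp_{K(n)}$. Concretely, $M$ is a retract of a finite totalization of $\En^{\otimes k}\otimes M$, each of which is an $\En$-module. On $\En$-modules the map is an equivalence by \cref{lem: hGL-over-En}, so it is an equivalence for every $M$. Evaluating at $\pt$ then gives the spectrum-level statement.

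The main obstacle is ensuring that the map produced abstractly agrees, on $\En$-modules, with the one in \cref{lem: hGL-over-En}. The concern is that the $\GL_n(\ZZ_p)$-action and the factorization through $L_\QQ$ are compatible with restriction along $\Mod_{\En}\to\Sp_{K(n)}$. Since both actions arise from naturality of $\L^n$ in \cref{lem:GL-action-Ln-t}, and \cref{lem: hGL-over-En} uses the unit, I expect this to follow by naturality of the unit.

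A second subtlety is that the $\GL_n(\ZZ_p)$ fixed points are taken for the discrete topology, a profinite-indexed limit. Finite limits commute with this limit, so exactness survives.
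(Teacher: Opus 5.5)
Your proposal is correct and follows the paper's proof: the paper likewise notes that both sides commute with finite limits in $M$, uses the descendability of $\SS_{K(n)}\to\En$ from \cref{lem: dedent for fix point by close subgroup} to reduce to $\En$-modules, and concludes by \cref{lem: hGL-over-En}. Your extra discussion of how the unit factors through the homotopy fixed points and through $L_\QQ(M^{(-)})$ spells out details the paper leaves implicit.
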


\begin{proof}
    As both sides commute with finite limits this follows from \cref{lem: dedent for fix point by close subgroup} and \cref{lem: hGL-over-En}. 
\end{proof}

Using the formula for the universal character of a $K(n)$-local object with all height $n$ roots of unity, together with the base-change result, we deduce that the resulting action gives rise to a Galois extension in $\PCMoninf(\Sp_{\QQ})$.

\begin{proposition}\label{prop: Rognes-Galois-Ln-PCMon}
    Let $R\in \CAlg_{\SS_{K(n)}[\omega^{(n)}_{p^\infty}]}(\Sp_{K(n)})$. Then the map
    \[
    L_{\QQ}(R^{(-)}) \to (\L^{n})^*\L^{n}_!R
    \]
    exhibits $(\L^{n})^*\L^{n}_!R$ as a $\GL_n(\ZZ_p)$ Galois extension of $L_{\QQ}(R^{(-)})$ in $\PCMoninf(\Sp_{\QQ})$.
\end{proposition}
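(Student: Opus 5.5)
We need to verify the two Galois conditions for $L_\QQ(R^{(-)}) \to (\L^n)^*\L^n_!R$ with its $\GL_n(\ZZ_p)$-action: the fixed-point condition and the condition that $A\otimes A \to \prod_{\GL_n(\ZZ_p)} A$ (continuous functions) is an equivalence. The plan is to reduce both to the known statement for $\En$, namely \cite[Remark~2.7.10]{Lurie-2019-Elliptic3}: $\L^n_!\En = C_0^n$ is a $\GL_n(\ZZ_p)$-Galois extension of $L_\QQ\En$. We transport this along base change using the explicit formula of \cref{thm: k(n) local character with roots of unity}.

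The fixed-point condition is already \cref{thm: unit-rational-hGL}. It gives $L_\QQ(R^{(-)})\simeq ((\L^n)^*\L^n_!R)^{h\GL_n(\ZZ_p)}$ in $\PCMoninf(\Sp)$, and it is induced by the unit. It remains to check that the unit is an algebra map equivariant for the trivial action on the source, which is \cref{lem:GL-action-Ln-t} together with symmetric monoidality of $\L^n_!$ from \cref{lem: exists of L! and mult st}.

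For the tensor condition, write $A:=(\L^n)^*\L^n_!R$ and $B:=L_\QQ(R^{(-)})$, so we must show $A\otimes_B A\to \prod^{\mathrm{cts}}_{\GL_n(\ZZ_p)}A$ is an equivalence in $\PCMoninf(\Sp_\QQ)$. By \cref{rem: k(n) local character with roots of unity in corrdinates} we have $A\simeq\colim_r L_\QQ(R^{\B(\ZZ/p^r)^n\times(-)})[e_r^{-1}]$. By \cref{cor: base change} and the module structure in \cref{rem: L! module over}, each stage is obtained from $B$ by extension of scalars along $L_\QQ(\SS_{K(n)}[\omega^{(n)}_{p^\infty}]^{(-)})\to L_\QQ(\SS_{K(n)}[\omega^{(n)}_{p^\infty}]^{\B(\ZZ/p^r)^n\times(-)})[e_r^{-1}]$. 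Hence $A\simeq B\otimes_{B_0}A_0$, where $B_0,A_0$ are the corresponding objects for $R_0=\SS_{K(n)}[\omega^{(n)}_{p^\infty}]$, and the identification is equivariant because the $\GL_n(\ZZ_p)$-action only moves the $\B(\ZZ/p^r)^n$ factor. The Galois tensor condition is stable under base change, with filtered colimits commuting with $\otimes$ and with continuous functions on a profinite group written as $\colim_r$ of functions on $\GL_n(\ZZ/p^r)$. So it suffices to treat $R_0$, or more generally any $R$ where we can verify the condition.

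For $R_0$ we apply descent. The map $\SS_{K(n)}[\omega^{(n)}_{p^\infty}]\to\En$ is descendable by \cref{lem: dedent for fix point by close subgroup}, since $\SS_{K(n)}[\omega^{(n)}_{p^\infty}]\simeq\En^{h_cH}$ for $H$ the kernel of the determinant-type map $\GG_n\to\ZZ_p\units$. Both sides of the comparison map are finite-limit-preserving functors of the $K(n)$-local input, because $A\otimes_B A\simeq (\L^n)^*\L^n_!(-)$ applied to $R\otimes R$-type inputs by symmetric monoidality. It therefore suffices to check after base change to $\En$. There the map is identified, via \cref{rem:splitting-algebra-proof_PCMon} and \cite[Theorem A]{ben-mosh-Transchromatic}, with $(C_0^n\otimes_{L_\QQ\En}C_0^n)\otimes L_\QQ(\En^{(-)})\to \prod^{\mathrm{cts}}_{\GL_n(\ZZ_p)} C_0^n\otimes L_\QQ(\En^{(-)})$, which is an equivalence by Lurie's Galois statement.

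The main obstacle is the descent step. The functors involved are only shown to commute with finite limits in the variable $M$, while $A\otimes_B A$ is a relative tensor product in $\PCMoninf$ that is a colimit. I would handle this by using \cref{thm: F! of R A} and symmetric monoidality to rewrite $A\otimes_B A\simeq (\L^n)^*\L^n_!(R\otimes_{R} R[\ldots])$, more concretely as a colimit over $r,s$ of $L_\QQ(R^{\B(\ZZ/p^r)^n\times\B(\ZZ/p^s)^n\times(-)})[e_r^{-1},e_s^{-1}]$. This expresses it as an exact functor of $R$, to which the descendability argument applies directly.
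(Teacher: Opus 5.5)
Your proposal is correct, but for the tensor condition it takes a genuinely different route from the paper.

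\textbf{The paper's route.} It uses neither descent nor Lurie's Galois theorem at this point. It starts with the same first step as yours: it writes
$C_n(R)\otimes_{L_\QQ(R^{(-)})}C_n(R)$ as $\colim_{r,s}L_\QQ(R^{\B(\ZZ/p^r)^n\times\B(\ZZ/p^s)^n\times(-)})[e_r^{-1}][e_s^{-1}]$.
It then applies \cref{thm: k(n) local character with roots of unity} once more, which is the character isomorphism, to rewrite this as $\colim_s C_n(R)^{\L^n(\B(\ZZ/p^s)^n\times(-))}[e_s^{-1}]$. Since $\pi_0(\L^n\cup^n)$ is the determinant (\cref{lem: Lcup is det}), inverting $e_s$ cuts $\pi_0\L^n\B(\ZZ/p^s)^n\simeq M_n(\ZZ/p^s)$ down to $\GL_n(\ZZ/p^s)$. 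This gives $\colim_s\Map(\GL_n(\ZZ/p^s),C_n(R))\simeq\Map_c(\GL_n(\ZZ_p),C_n(R))$ directly, for all $R$ at once. That computation is self-contained, shows where $\GL_n$ comes from, and in fact reproves the tensor half of Lurie's statement for $\En$.

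\textbf{Your route.} You base change to $R_0=\SS_{K(n)}[\omega^{(n)}_{p^\infty}]$ and then descend along $R_0\to\En$ to \cite[Remark~2.7.10]{Lurie-2019-Elliptic3}. This avoids identifying $e_s$ with the indicator of invertible matrices. The cost is that you import Lurie's theorem, including the compatibility of his $\GL_n(\ZZ_p)$-action with the functorial one (the paper assumes this too, in \cref{lem: hGL-over-En}).

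\textbf{Points your sketch leaves loose.}
\begin{itemize}
\item You must present the Galois comparison as a natural transformation of exact functors on $\Mod_{R_0}(\Sp_{K(n)})$, and you only sketch this. The cleanest choice is $M\mapsto A_0\otimes_{B_0}C_n(M)\to\Map_c(\GL_n(\ZZ_p),C_n(M))$. It is exact in $M$ simply because tensoring with a fixed object is exact, so the colimit issue you flag is not a real obstacle.
\item If you instead build the natural map via the character isomorphism $C_n(M^{\B(\ZZ/p^s)^n})\simeq C_n(M)^{\L^n\B(\ZZ/p^s)^n}$, you are already running the paper's argument, and descent becomes superfluous.
\item Equivariance of $B\otimes_{B_0}A_0\simeq A$ is better justified by naturality. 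The map is induced by the unit, which is equivariant for the trivial action by \cref{lem:GL-action-Ln-t}, and by $\L^n_!$ applied to $R_0\to R$. Saying the action ``only moves the $\B(\ZZ/p^r)^n$ factor'' does not work as stated, because the action is defined through functoriality of $\L^n$, not on the colimit presentation.
\end{itemize}
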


\begin{proof}
    The first Galois condition, namely
    \[
    \bigl((\L^{n})^*\L^{n}_!R\bigr)^{h\GL_n(\ZZ_p)} \simeq L_{\QQ}(R^{(-)}),
    \]
    is \cref{thm: unit-rational-hGL}. By \cref{thm: k(n) local character with roots of unity} we have
    \[
    (\L^{n})^*\L^{n}_!R \simeq C_n(R) \simeq \colim_r\, L_{\QQ}(R^{\B(\ZZ/p^r)^{n}\times (-)})[e_r^{-1}],
    \]
    Using \cref{cor: base change} and that tensor products commute with colimits, we obtain
    \[
    C_n(R)\otimes_{L_{\QQ}(R^{(-)})} C_n(R)
    \simeq
    \colim_{r,s}\, L_{\QQ}(R^{\B(\ZZ/p^r)^{n}\times \B(\ZZ/p^s)^{n}\times (-)})[e_r^{-1}][e_s^{-1}].
    \]
    Applying \cref{thm: k(n) local character with roots of unity} again, the right-hand side may be rewritten as
    \[
    \colim_{s}\,
    C_n(R)^{\L^n(\B(\ZZ/p^s)^{n}\times (-))}[e_s^{-1}].
    \]
    Under the identification $\pi_0(\L^n(\B(\ZZ/p^s)^{n}))\simeq M_n(\ZZ/p^s)$,
    inverting $e_s$ restricts to the invertible matrices, hence to $\GL_n(\ZZ/p^s)$.
    Therefore the previous expression identifies with
    \[
    \colim_s\, \Map(\GL_n(\ZZ/p^s),C_n(R))
    \simeq
    \Map_c(\GL_n(\ZZ_p),C_n(R)).
    \]
\end{proof}

Under suitable finiteness hypotheses (satisfied, for example, by $\En$), the same argument produces a Galois extension in $\Sp_{\QQ}$.

\begin{proposition}\label{prop:L!-R-Galois}
    Let $R\in \CAlg_{\SS_{K(n)}[\omega^{(n)}_{p^\infty}]}(\Sp_{K(n)})$. Assume that for each $r\ge 1$ the object $R^{\B(\ZZ/p^r)^{n}}$ is a perfect $R$-module. Then the map
    \[
    L_{\QQ}(R) \to \L^{n}_!R
    \]
    exhibits $\L^{n}_!R$ as a $\GL_n(\ZZ_p)$ Galois extension of $L_{\QQ}(R)$ in $\Sp_{\QQ}$.
\end{proposition}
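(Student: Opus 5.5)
The approach is to evaluate \cref{prop: Rognes-Galois-Ln-PCMon} at the point, using the perfectness hypothesis to commute the relevant tensor products with evaluation. The first Galois condition is immediate. Evaluating \cref{thm: unit-rational-hGL} at $\pt$ gives $(\L^n_!R)^{h\GL_n(\ZZ_p)}\simeq L_\QQ R$, and this equivalence is induced by the unit. So only the second condition remains: the mate of multiplication should give
\[
\L^n_!R\otimes_{L_\QQ R}\L^n_!R\simeq \Map_c(\GL_n(\ZZ_p),\L^n_!R).
\]

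For this we use \cref{thm: k(n) local character with roots of unity} and \cref{rem: k(n) local character with roots of unity in corrdinates} at $\pt$ to write
\[
\L^n_!R\simeq\colim_r L_\QQ(R^{\B(\ZZ/p^r)^n})[e_r^{-1}].
\]
Since tensor products commute with filtered colimits and with inverting idempotents, the left-hand side becomes
\[
\colim_{r,s}\,L_\QQ(R^{\B(\ZZ/p^r)^n})\otimes_{L_\QQ R}L_\QQ(R^{\B(\ZZ/p^s)^n})[e_r^{-1}][e_s^{-1}].
\]
Here we invoke perfectness. Because $R^{\B(\ZZ/p^r)^n}$ is a perfect $R$-module, the Künneth-type map
\[
R^{\B(\ZZ/p^r)^n}\otimes_R R^{\B(\ZZ/p^s)^n}\to R^{\B(\ZZ/p^r)^n\times\B(\ZZ/p^s)^n}
\]
is an equivalence. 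To see this, note that $N\otimes_R R^{B}\to N^{B}$ is an equivalence for perfect $N$: it holds for $N=R$, and both sides are exact in $N$ and preserve retracts. Rationalization is symmetric monoidal, so the rational tensor product matches as well, exactly as in the proof of \cref{cor:L!-tensor-RA}. This recovers, at the point, the expression appearing in the proof of \cref{prop: Rognes-Galois-Ln-PCMon}. We then rewrite it via \cref{thm: k(n) local character with roots of unity}, applied with $A=\B(\ZZ/p^s)^n$, and \cref{cor: char-iso}, as
\[
\colim_s (\L^n_!R)^{\L^n\B(\ZZ/p^s)^n}[e_s^{-1}].
\]

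Next we identify the inversion of $e_s$. Since $\L^n_!R$ is rational, of height $0$, the higher homotopy of $\L^n\B(\ZZ/p^s)^n$ is invisible. Concretely, $(\L^n_!R)^{\L^n\B(\ZZ/p^s)^n}\simeq(\L^n_!R)^{M_n(\ZZ/p^s)}$ via the orientation splitting of \cref{lem: Lcup is det} and \cref{rem: why evI}. Under this identification $e_s$ becomes the indicator of the matrices with invertible determinant; this uses \cref{cor: chooses root of unity}, which says that the image of the cyclotomic idempotent, evaluated through $\det$, is primitive exactly on units. Inverting $e_s$ therefore yields $\Map(\GL_n(\ZZ/p^s),\L^n_!R)$, and the colimit over $s$ is $\Map_c(\GL_n(\ZZ_p),\L^n_!R)$.

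The main obstacle is checking that the composite equivalence is really the mate of the multiplication map, i.e.\ that it is $\GL_n(\ZZ_p)$-equivariant and compatible with the action of \cref{lem:GL-action-Ln-t}. I would handle this by noting that every identification above is pulled back from the $\PCMoninf$-level statement \cref{prop: Rognes-Galois-Ln-PCMon} by evaluation at $\pt$. Evaluation is symmetric monoidal on the relevant dualizable pieces precisely because of the perfectness hypothesis, so the mate at $\pt$ is the evaluation of the mate there, which is already an equivalence.
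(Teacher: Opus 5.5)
Your proposal is correct and follows the paper's proof. The first Galois condition comes from evaluating \cref{thm: unit-rational-hGL} at $\pt$. For the second, perfectness of $R^{\B(\ZZ/p^r)^n}$ gives the K\"unneth equivalence, which identifies $\L^n_!R\otimes_{L_\QQ R}\L^n_!R$ with the evaluation at $\pt$ of the double colimit in the proof of \cref{prop: Rognes-Galois-Ln-PCMon}, and the remaining computation there produces $\Map_c(\GL_n(\ZZ_p),\L^n_!R)$. You make explicit several steps the paper leaves implicit: the thick-subcategory argument for K\"unneth, the identification of $e_s$ with the indicator function of $\GL_n(\ZZ/p^s)$ via the determinant, and the factorization of the pointwise Galois comparison map through the evaluated one.
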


\begin{proof}
    The first Galois condition
    \[
    (\L^{n}_!R)^{h\GL_n(\ZZ_p)}\simeq L_{\QQ}(R)
    \]
    follows from \cref{thm: unit-rational-hGL} by evaluation at $\pt$.

    For the second Galois condition, the perfect $R$-module
    $R^{\B(\ZZ/p^r)^n}$ is dualizable, so tensoring with it preserves
    limits. Consequently, for every $r,s\geq 1$, the canonical map
    \[
    R^{\B(\ZZ/p^r)^n}\otimes_R R^{\B(\ZZ/p^s)^n}
    \longrightarrow
    R^{\B(\ZZ/p^r)^n\times\B(\ZZ/p^s)^n}
    \]
    is an equivalence. 

    By \cref{thm: k(n) local character with roots of unity}, we have
    $\L^{n}_!R\simeq\colim_r
    L_{\QQ}(R^{\B(\ZZ/p^r)^n})[e_r^{-1}]$. Since tensor products
    commute with colimits, the preceding equivalence gives
    \[
    \L^{n}_!R\otimes_{L_{\QQ}(R)}\L^{n}_!R
    \simeq
    \colim_{r,s}
    L_{\QQ}
    (R^{\B(\ZZ/p^r)^n\times\B(\ZZ/p^s)^n})
    [e_r^{-1}][e_s^{-1}].
    \]
    This is the evaluation at $\pt$ of the double-colimit expression
    appearing in the proof of
    \cref{prop: Rognes-Galois-Ln-PCMon}. The remainder of the
    computation there therefore identifies the canonical Galois
    comparison map with an equivalence
    \[
    \L^{n}_!R\otimes_{L_{\QQ}(R)}\L^{n}_!R
    \simeq
    \Map_c(\GL_n(\ZZ_p),\L^{n}_!R).
    \]
    Thus both Galois conditions hold.
\end{proof}

\section{Character Theory for the $K(n)$-Local Sphere} \label{sec: Character Theory for the K(n) Local Sphere}

In this section we compute the $n$-fold universal character of the $K(n)$-local sphere and deduce from it a formula for the rationalization of $\SS_{K(n)}^A$ for any $\pi$-finite space $A$.

The general strategy is based on \cref{lem:L!-as-finite-quotient-hfps}, which identifies
\[
    \L_!^n(\SS_{K(n)})
    \simeq
    \colim_r\,
    \Bigl(
    L_{\QQ}\bigl((\SS_{K(n)}[\omega^{(n)}_{p^r}])^{\B(\ZZ/p^r)^{n}}\bigr)[e_r^{-1}]
    \Bigr)^{h(\ZZ/p^r)\units}.
\]
Thus the main task is to compute, for varying $r$, the rational spectra
\[
L_{\QQ}\bigl((\SS_{K(n)}[\omega^{(n)}_{p^r}])^{\B(\ZZ/p^r)^{n}}\bigr)[e_r^{-1}].
\]

To do this, we use the two-towers isomorphism and follow the general strategy of
\cite{Barthe-Schlank-Stapleton-Weinstein-rationalization}. In \cref{sec: Lubin-Tate Side}, we study the rigid-analytic Lubin-Tate cover with full level-$p^r$ structure and compute its rational condensed global sections. Taking continuous fixed points by a suitable subgroup of the Morava stabilizer group will identify the resulting rational algebra object with
\[
L_{\QQ}\bigl((\SS_{K(n)}[\omega^{(n)}_{p^r}])^{\B(\ZZ/p^r)^{n}}\bigr)[e_r^{-1}],
\]
up to adjoining a square-zero class $\epsilon$ in cohomological degree $1$.

In \cref{sec:The Drinfeld Side}, we carry out the parallel computation on the Drinfeld side. More precisely, we analyze the rigid-analytic $(\ZZ/p^r)\units$-cover of Drinfeld upper half-space and compute its rational condensed global sections. In \cref{sec: two towers} we use the two-towers
isomorphism, to show that the corresponding continuous fixed points under an appropriate subgroup of $\GL_n(\ZZ_p)$ recover 
\[
L_{\QQ}\bigl((\SS_{K(n)}[\omega^{(n)}_{p^r}])^{\B(\ZZ/p^r)^{n}}\bigr)[e_r^{-1}],
\]
again up to the same square-zero class $\epsilon$ in degree $1$.

In \cref{sec: SK(n) universal character}, we combine these computations to determine the universal $n$-fold character of $\SS_{K(n)}$ and deduce the rationalization of $\SS_{K(n)}^A$ for any $\pi$-finite space $A$.

Finally, in \cref{sec: Rational K(n) Local Power Operations} we use our computation to identify the ring of rational $K(n)$-local power operations.

Throughout this section, we adopt the notation and conventions of \cite{Barthe-Schlank-Stapleton-Weinstein-rationalization}.

\subsection{The Lubin-Tate Side}\label{sec: Lubin-Tate Side}

We begin by introducing the main objects that will appear throughout this subsection.

\begin{notation}\label{ass:setup}
Fix the height $n$ Honda formal group $\Gamma$ over $\overline{\FF}_p$, and let $A$ be its
Lubin--Tate deformation ring. Thus
\[
A \simeq W(\overline{\FF}_p)[[u_1,\dots,u_{n-1}]]
\]
non-canonically. Set
\[
K:=W(\overline{\FF}_p)[1/p],\qquad \LT:=\Spf A,
\]
and write
\[
\LT_K:=(\LT)_\eta=(\Spf A)_\eta
\]
for the rigid-analytic generic fiber of $\LT$.

For each $r\ge 1$, let $D_r$ be the Drinfeld full level-$p^r$ deformation ring, and set
\[
\Level_r:=\Spf D_r,
\qquad
\LevelG:=(\Level_r)_\eta=(\Spf D_r)_\eta.
\]
The structure map $A\to D_r$ induces morphisms
\[
\Level_r\to \LT
\qquad\text{and}\qquad
f_r\colon \LevelG\to \LT_K .
\]

By \cite[Thm.~2.2(ii)]{StrauchLT}, $D_r$ is finite flat over $A$, and
$D_r[1/p]$ is finite etale over $A[1/p]$, in particular,
\[
f_r\colon \LevelG\to \LT_K
\]
is a finite etale morphism. Since $A$ is local, $D_r$ is finite free over $A$.

 Let $\GG_n$ be the Morava stabilizer group acting continuously on $A$.
Hence $\GG_n$ acts on $\LT=\Spf A$, and therefore on its generic fiber $\LT_K$.
The group $\GL_n(\ZZ/p^r)$ acts trivially on $\LT$ and $\LT_K$, and acts on $\Level_r$ and $\LevelG$. Thus $f_r$ is equivariant for
\[
J_r:=\GG_n\times \GL_n(\ZZ/p^r).
\]
\end{notation}

Our goal in this subsection is to compute the $J_r$-equivariant condensed global sections of $\LevelG$. We do this by bounding the $p$-torsion of the cofiber of
\[
(D_r\otimes_A \Ohatp_{\LTG})_{\mathrm{cond}}
\to
Rf_{r,\proet,*}\Ohatpc_{\LevelG}.
\]
This reduces the computation to \cite[Thm.~3.9.1]{Barthe-Schlank-Stapleton-Weinstein-rationalization}. The argument is local on the pro-etale site: we verify the claim after evaluating on an affinoid perfectoid basis of $(\LTG)_{\proet}$. We therefore need to understand how the cover $\LevelG\to\LTG$ behaves on affinoid perfectoid objects.

We start by recording how a map of affinoids is pulled back under a finite etale map.

\begin{lemma}\label{lem:affinoid-fet-basechange}
Let
\[
X=\Spa(A,A^+),\qquad Y=\Spa(B,B^+),\qquad Z=\Spa(C,C^+)
\]
be affinoid adic spaces over a complete nonarchimedean field $K$, and assume that
\[
X\to Y
\]
is finite etale. Then
\[
X\times_Y Z \simeq \Spa(D,D^+),
\]
where
\[
D=A\otimes_B C
\]
and $D^+$ is the integral closure of $C^+$ in $D$. In particular, $D$ is finite etale over $C$.
\end{lemma}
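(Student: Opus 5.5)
The plan is to reduce to the standard description of fiber products of affinoid adic spaces together with the fact that finite étale morphisms of affinoids are ``algebraic''. First, since $X\to Y$ is finite étale between affinoids, I will use Huber's theory (e.g.\ \cite{...}-style results, or Scholze's étale site conventions) to see that $A$ is a finite étale $B$-algebra. Moreover, $A$ carries the canonical topology as a finite $B$-module, and $A^+$ is the integral closure of $B^+$ in $A$. In other words, the pair $(A,A^+)$ is determined by $A$ as an abstract finite étale $B$-algebra, and $X=\Spa(A,A^+)$ represents the functor $T\mapsto \Hom_B(A,\mathcal O(T))$ on adic spaces over $Y$.

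Next, set $D=A\otimes_B C$. Because $A$ is finite projective over $B$, $D$ is a finite projective $C$-module, hence finite étale over $C$ (étaleness is stable under base change). I will give $D$ its canonical topology as a finite $C$-module, which makes it complete, since $C$ is complete and $D$ is a direct summand of some $C^m$. I then define $D^+$ to be the integral closure of $C^+$ in $D$. It must be checked that $(D,D^+)$ is a Huber pair: $D^+$ is open (it contains the image of $C^+\otimes_{B^+}A^+$ adjoined suitably, which is open) and consists of power-bounded elements, because integral elements over $C^+$ are power-bounded in a finite étale extension. These are the standard facts for finite étale extensions of Tate rings over a nonarchimedean field.

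Then I will verify the universal property. For an adic space $T$ over $Z$, maps $T\to \Spa(D,D^+)$ over $Z$ correspond to continuous $C$-algebra maps $D\to\mathcal O(T)$ sending $D^+$ into $\mathcal O^+(T)$. A $C$-algebra map out of $D$ is the same as a $B$-algebra map out of $A$ compatible with $C$. Continuity is automatic for maps out of finite modules with the canonical topology. The integrality condition is also automatic, because $\mathcal O^+(T)$ is integrally closed in $\mathcal O(T)$ and contains the image of $C^+$. The same automaticity identifies maps $T\to X$ over $Y$ with $B$-algebra maps $A\to\mathcal O(T)$. Hence $\Spa(D,D^+)$ represents $X\times_Y Z$. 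The final sentence of the statement is the finite-étaleness already obtained.

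The main obstacle is the first step: identifying finite étale maps of affinoid adic spaces with finite étale algebra maps $B\to A$ with $A^+$ the integral closure. This needs the affinoids to be sheafy, which is implicit here (rigid spaces or perfectoid affinoids over $K$), and I would simply cite Huber's result (\cite[Example 1.6.6(ii)]{} / Scholze, Perfectoid spaces, Lemma 7.3) for it rather than reprove it.
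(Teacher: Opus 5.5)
Your argument is correct, but it is not the route the paper takes. The paper gives no argument of its own: it cites \cite[Lem.~7.3(iv)]{ScholzePerfectoid} in characteristic $p$ and \cite[Prop.~7.10]{ScholzePerfectoid} in characteristic $0$.

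You instead verify the universal property directly. You use Huber's description $\Hom(T,\Spa(D,D^+))\simeq\Hom\bigl((D,D^+),(\mathcal O(T),\mathcal O^+(T))\bigr)$ together with two facts that make the conditions automatic:
\begin{itemize}
\item a $C$-algebra map out of a finite $C$-module with the canonical topology is automatically continuous;
\item the condition $D^+\to\mathcal O^+(T)$ holds automatically, because $\mathcal O^+(T)$ is integrally closed in $\mathcal O(T)$ and contains the image of $C^+$.
\end{itemize}

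Your route isolates exactly what is used. First, $B\to A$ is finite \'etale, with $A$ carrying the canonical topology and $A^+$ the integral closure of $B^+$. Second, $(D,D^+)$ is sheafy. This matches how the lemma is actually applied in \cref{constr:comparison-morphism} and \cref{lem:local-comparison-ball}. There the finite \'etale map is the cover of a standard closed ball in a rigid space over the non-perfectoid field $W(\overline{\FF}_p)[1/p]$, and only the third space is affinoid perfectoid. Scholze's statements are formulated for perfectoid spaces, whereas your first input is exactly what \cref{lem:closed-ball-model} supplies. The paper's route buys brevity.

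Two points to tighten.

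First, the sheafiness you need is not only that of $X$, $Y$, $Z$, which is built into ``affinoid adic spaces'', but that of $(D,D^+)$ itself. Without it, $\Spa(D,D^+)$ is not an adic space and Huber's description of maps into it is unavailable. You attach the sheafiness caveat to your first step instead. It does hold in the relevant cases:
\begin{itemize}
\item if $C$ is perfectoid, then $D$, being finite \'etale over $C$, is perfectoid by almost purity;
\item if $C$ is a rigid affinoid algebra, then $D$ is strongly noetherian.
\end{itemize}

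Second, ``adjoined suitably'' can be made precise. Choose $B$-module generators $a_1,\dots,a_m$ of $A$ lying in $A^+$, which is possible since $A^+$ is open. Then $\sum_i C^+(a_i\otimes 1)\subseteq D^+$. Its preimage in $C^m$ is $(C^+)^m+\ker(C^m\to D)$, which is open, so this submodule is open for the quotient topology.
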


\begin{proof}
If $K$ has characteristic $p$, this is exactly
\cite[Lem.~7.3(iv)]{ScholzePerfectoid}. In characteristic $0$, it follows from
\cite[Prop.~7.10]{ScholzePerfectoid}, which states that parts (iii) and (iv) of \cite[Lem.~7.3]{ScholzePerfectoid} remain valid in characteristic $0$.
\end{proof}

The next step is to choose standard closed balls in $\LTG$ together with their formal models.

\begin{lemma}\label{lem:closed-ball-model}
Let
\[
B=\Spa(C,C^+)\into \LTG
\]
be a standard closed ball in the chosen Lubin--Tate coordinates, and let
\[
\mathfrak B=\Spf(C^+)\to \LT
\]
be the corresponding formal model of topologically finite type over $\ZZ_p$, whose generic
fiber is $B$. Let
\[
Y:=\LevelG\times_{\LTG} B
\]
with projection $f\colon Y\to B$. Set
\[
S^+:=D_r\otimes_A C^+,
\qquad
S:=S^+[1/p]\simeq D_r[1/p]\otimes_{A[1/p]} C.
\]
Then $S^+$ is finite free over $C^+$, and if $\widetilde S^+$ denotes the integral closure of
$C^+$ in $S$, then
\[
Y\simeq \Spa(S,\widetilde S^+).
\]
\end{lemma}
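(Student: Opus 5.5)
The plan is to reduce everything to \cref{lem:affinoid-fet-basechange} applied to the finite étale map $f_r\colon \LevelG\to\LTG$ and the affinoid $B=\Spa(C,C^+)$. The fibre product $Y=\LevelG\times_{\LTG}B$ only depends on the part of $\LevelG$ lying over the affinoid $B$, so the argument splits into two steps. First, identify $f_r^{-1}(B)$ as an affinoid. Second, read off the rings from the lemma.

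\textbf{Freeness.} Since $D_r$ is finite free over $A$, say of rank $N$, base change along $A\to C^+$ makes $S^+=D_r\otimes_A C^+$ free of rank $N$ over $C^+$. In particular $S^+$ is already $p$-adically complete. Inverting $p$ gives
\[
S=S^+[1/p]\simeq D_r[1/p]\otimes_{A[1/p]}C,
\]
because $C=C^+[1/p]$. Since $D_r[1/p]$ is finite étale over $A[1/p]$, the ring $S$ is finite étale over $C$.

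\textbf{Preimage of $B$.} Here I would use formal models. The map $\Level_r=\Spf D_r\to\LT=\Spf A$ is finite, so its base change along $\mathfrak B\to\LT$ is
\[
\Spf(D_r\widehat\otimes_A C^+)=\Spf(S^+),
\]
a finite formal scheme over $\mathfrak B$. Taking generic fibres commutes with fibre products of formal schemes of this type, locally of topologically finite type. Hence
\[
Y\simeq(\Spf S^+)_\eta=\Spa(S^+[1/p],\widetilde S^+),
\]
where $\widetilde S^+$ is the integral closure of the image of $S^+$ in $S$. That image is finite over $C^+$, so its integral closure in $S$ coincides with the integral closure of $C^+$ in $S$. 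This matches the description in \cref{lem:affinoid-fet-basechange} with $D=S$ and $D^+=\widetilde S^+$.

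\textbf{Alternative route.} If one prefers to avoid formal models, one can argue directly. The map $f_r$ is finite, hence affinoid over affinoids, so $f_r^{-1}(B)$ is an affinoid $\Spa(S',S'^+)$. \Cref{lem:affinoid-fet-basechange}, applied after covering $\LTG$ by the increasing union of closed balls over which $\LevelG$ is $\Spa(D_r[1/p]\otimes C,-)$, then identifies $S'$ with $S$.

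\textbf{Main obstacle.} The main difficulty is justifying that the generic fibre of $\Spf D_r$ over a closed ball is computed by the naive tensor product. Two points need checking: $\LevelG$ is the Berthelot-style generic fibre of a non-noetherian-looking but noetherian formal scheme, and the ball is not the whole disc. I would handle this with Berthelot's construction, under which $(\Spf D_r)_\eta$ is the union of affinoids $(\Spf D_r\widehat\otimes_A C_m^+)_\eta$ over an exhausting family of balls $B_m$. This is compatible with finite base change precisely because $D_r$ is finite over $A$.
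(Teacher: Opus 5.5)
Your main argument is the same as the paper's: base change the finite free $A$-algebra $D_r$ along $A\to C^+$ to obtain the formal model $\Spf(S^+)$, use that taking generic fibres commutes with this fibre product, and identify $(\Spf S^+)_\eta$ with $\Spa(S,\widetilde S^+)$ via integrality of $S^+$ over $C^+$. You rightly flag that $\Spf D_r\to\Spf A$ is not topologically of finite type, so compatibility of generic fibres with this fibre product needs justification; the paper simply asserts it, and your proposal to settle it via Berthelot's construction and finiteness of $D_r$ over $A$ is the right fix. Your ``alternative route'', however, presupposes the description of $\LevelG$ over closed balls, i.e.\ the statement itself, so it should not be relied on.
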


\begin{proof}
For a standard closed ball in the chosen coordinates, the corresponding formal model
\[
\mathfrak B=\Spf(C^+)\to \LT=\Spf(A)
\]
is an affine formal scheme of topologically finite type over $W(\overline{\FF}_p)$.

Now form the fiber product on the formal side:
\[
\mathfrak Y:=\Level_r\times_{\LT}\mathfrak B.
\]
So  $\mathfrak Y\simeq \Spf(D_r\widehat\otimes_A C^+)$.
Since $D_r$ is finite over $A$, the completed tensor product agrees with the ordinary tensor
product, so
\[
\mathfrak Y\simeq \Spf(D_r\otimes_A C^+)=\Spf(S^+).
\]
Because $D_r$ is finite free over the local ring $A$, it follows that $S^+$ is finite free
over $C^+$.

Moreover, $C^+$ is topologically of finite type over $W(\overline{\FF}_p)$, and $S^+$ is finite over
$C^+$, hence $S^+$ is again topologically of finite type over $W(\overline{\FF}_p)$. Therefore
$\mathfrak Y=\Spf(S^+)$ is an affine admissible formal $\ZZ_p$-scheme in the sense of \cite[Def. 3(iii) and Def. 1]{BoschFormalRigid}.

We now explain the generic fiber of $\mathfrak Y$. By the explicit description of the adic generic fiber of an affine admissible formal scheme,
\[
(\Spf(S^+))_\eta \simeq \Spa(S,\widetilde S^{\,+}_{\mathrm{int}}), \footnote{Bosch identifies the rigid generic fiber of the admissible affine formal scheme
$\Spf(S^+)$ with the affinoid rigid space $\Sp(S)$, where $S=S^+[1/p]$, see
\cite[Prop.~3]{BoschFormalRigid}. To pass from this to adic notation, we use Huber's
functor $\operatorname{rk}$ from rigid analytic varieties to adic spaces: for an affinoid
rigid space $\Sp(S)$, the proof of \cite[Prop.~4.3]{HuberFormalRigid}, together with
\cite[Lem.~4.4]{HuberFormalRigid}, identifies $\operatorname{rk}(\Sp(S))$ with
$\Spa(S,S^\circ)$.}
\]
where $S=S^+[1/p]$ and $\widetilde S^{\,+}_{\mathrm{int}}$ is the integral closure of $S^+$ in $S$.
Since $S^+$ is integral over $C^+$, the integral closure of $S^+$ in $S$ coincides with the
integral closure of $C^+$ in $S$. Thus
\[
\mathfrak Y_\eta \simeq \Spa(S,\widetilde S^+).
\]

Finally, the rigid generic fiber functor commutes with finite limits. Hence
\[
\mathfrak Y_\eta
\simeq
(\Level_r\times_{\LT}\mathfrak B)_\eta
\simeq
(\Level_r)_\eta\times_{\LT_\eta}\mathfrak B_\eta
=
\LevelG\times_{\LTG} B
=
Y.
\]
Combining the two identifications gives $Y\simeq \Spa(S,\widetilde S^+)$, as claimed. 
\end{proof}

We now build our comparison map.

\begin{construction}\label{constr:comparison-morphism}
We construct a natural morphism in the derived category of sheaves of condensed abelian groups on
$(\LTG)_{\proet}$
\[
\beta_{r,\mathrm{cond}}\colon
\bigl(D_r\otimes_A \Ohatp_{\LTG}\bigr)_{\cond}
\to
Rf_{r,\proet,*}\Ohatpc_{\LevelG}.
\]

We first define an underived morphism of sheaves
\[
\beta_r'\colon
D_r\otimes_A \Ohatp_{\LTG}
\to
f_{r,\proet,*}\Ohatpc_{\LevelG}
\]
on the affinoid perfectoid basis of $(\LTG)_{\proet}$.

Let
\[
U=\Spa(R,R^+)\to \LTG
\]
be an affinoid perfectoid object. Since $\LTG$ is the open unit ball, the image of the
quasicompact space $U$ is contained in some standard closed ball
\[
B=\Spa(C,C^+)\into \LTG.
\]
Let
\[
Y:=\LevelG\times_{\LTG} B
\qquad\text{and}\qquad
f\colon Y\to B
\]
be the base change of $f_r$. By \cref{lem:closed-ball-model}, if we set
\[
S^+:=D_r\otimes_A C^+,
\qquad
S:=S^+[1/p],
\]
then $Y\simeq \Spa(S,\widetilde S^+)$, where $\widetilde S^+$ is the integral closure of $C^+$ in $S$. Now form the pullback $Y_U:=Y\times_B U$.
Since $Y\to B$ is finite etale, \cref{lem:affinoid-fet-basechange} shows that $Y_U$ is
again affinoid perfectoid, say $Y_U\simeq \Spa(T,T^+)$, where
\[
T\simeq S\otimes_C R
\simeq D_r[1/p]\otimes_{A[1/p]} R
\]
and $T^+$ is the integral closure of $R^+$ in $T$. On the object $U$, the source evaluates to
\[
\bigl(D_r\otimes_A \Ohatp_{\LTG}\bigr)(U)
=
D_r\otimes_A R^+,
\]
while the target evaluates to
\[
\bigl(f_{r,\proet,*}\Ohatpc_{\LevelG}\bigr)(U)
=
H^0\bigl((Y_U)_{\proet},\Ohatpc_{\LevelG}\bigr)
=
T^+.
\]
The natural map
\[
D_r\otimes_A R^+
=
S^+\otimes_{C^+}R^+
\to
S\otimes_C R
=
T
\]
lands in $T^+$, because $D_r\otimes_A R^+$ is finite over $R^+$ and hence its image
consists of elements integral over $R^+$. Thus we obtain a map
\[
D_r\otimes_A R^+\to T^+.
\]
This map is independent of the chosen closed ball $B$, and therefore defines the desired morphism of sheaves
\[
\beta_r'\colon
D_r\otimes_A \Ohatp_{\LTG}
\to
f_{r,\proet,*}\Ohatpc_{\LevelG}.
\]

Passing to condensed sheaves and composing with the canonical morphism
\[
f_{r,\proet,*}\Ohatpc_{\LevelG}
\to
Rf_{r,\proet,*}\Ohatpc_{\LevelG},
\]
we obtain the morphism
\[
\beta_{r,\mathrm{cond}}\colon
\bigl(D_r\otimes_A \Ohatp_{\LTG}\bigr)_{\cond}
\to
Rf_{r,\proet,*}\Ohatpc_{\LevelG}.
\]
\end{construction}

We study the relevant cofiber restricted to the closed ball.

\begin{lemma}\label{lem:local-comparison-ball}
Let
\[
B=\Spa(C,C^+)\into \LTG
\]
be a standard closed ball, let
\[
f\colon Y:=\LevelG\times_{\LTG} B \to B
\]
be the base change of $f_r$, and let
\[
S^+:=D_r\otimes_A C^+,
\qquad
S:=S^+[1/p].
\]
Then there exists an integer $N\ge 0$, depending only on the finite free $A$-algebra $D_r$, such that for every affinoid perfectoid object $U=\Spa(R,R^+) \to B$
the cofiber
\[
K_U:=
\cofib\!\Bigl(
R\Gamma\bigl(U,(S^+\otimes_{C^+}\Ohatp_B)_{\cond}\bigr)
\to
R\Gamma\bigl(U,Rf_{\proet,*}\Ohatpc_Y\bigr)
\Bigr),
\]
has $p^N$-torsion cohomology. That is
\[
p^N\cdot H^i(K_U)=0
\qquad \text{for all } i\ge 0.
\]
\end{lemma}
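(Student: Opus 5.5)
We reduce the statement to a comparison of integral structures on a single affinoid perfectoid, using the almost vanishing of higher cohomology of $\Ohatp$ on affinoid perfectoids. For $U=\Spa(R,R^+)\to B$, the source is easy. Since $S^+$ is finite free over $C^+$ (\cref{lem:closed-ball-model}), the sheaf $S^+\otimes_{C^+}\Ohatp_B$ is a finite direct sum of copies of $\Ohatp_B$. By Scholze's almost purity/acyclicity, $R\Gamma(U,\Ohatp)$ is almost concentrated in degree $0$ with value $R^+$. So the source has cohomology almost equal to $S^+\otimes_{C^+}R^+$ in degree $0$ and almost zero above; almost zero modules are killed by $p^{1/m}$ for all $m$, in particular by $p$. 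For the target, $R\Gamma(U,Rf_{\proet,*}\Ohatpc_Y)\simeq R\Gamma(Y_U,\Ohatp)$. By \cref{lem:affinoid-fet-basechange}, $Y_U=\Spa(T,T^+)$ is affinoid perfectoid, so this is almost $T^+$ in degree $0$ and almost zero above. The condensed structures cause no extra issues, since all identifications are of $p$-adically complete modules and are compatible with the condensed enhancement.

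The question therefore becomes bounding the cokernel and kernel of $S^+\otimes_{C^+}R^+\to T^+$, up to almost-zero error, by a fixed power of $p$. The kernel vanishes, since $S^+\otimes_{C^+}R^+$ is free over $R^+$, hence $p$-torsion free, and injects into $T=S\otimes_C R$. For the cokernel we use a discriminant (trace form) argument. Since $D_r[1/p]$ is finite étale over $A[1/p]$, the discriminant $\delta\in A$ of the trace form of $D_r$ with respect to an $A$-basis $e_1,\dots,e_d$ becomes invertible after inverting $p$. The key claim is that $\delta$ divides $p^{N}$ in $A$ for some $N$, i.e.\ $p^N=\delta u$ with $u\in A$.

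Granting this, the standard argument works. Let $x\in T^+$, write $x=\sum a_ie_i$ with $a_i\in R$, and let $(e_i^\vee)$ be the trace-dual basis. Then $\mathrm{Tr}(xe_j)\in R^+$, because $x e_j$ is integral over $R^+$ and $R^+$ is integrally closed in $R$. The dual-basis formula expresses $\delta\, a_i$ as an $R^+$-combination of these traces, with coefficients from the adjugate of the trace matrix, which lies in $A$. Hence $p^N a_i\in R^+$, and $p^N$ kills the cokernel, uniformly in $U$. The bound depends only on $D_r$. Combining with the almost-zero errors (absorbed by replacing $N$ by $N+1$) gives $p^N H^i(K_U)=0$ for all $i$: for $i=0,1$ from the cokernel and almost-zero terms, and for $i\ge 2$ from almost-zero terms only.

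The main obstacle is the key claim that $\delta\mid p^N$ in $A$. We know only that $\delta$ is a unit in $A[1/p]$, so $\delta\cdot a=p^k$ for some $a\in A$ and $k$, which is exactly what is needed. The subtle point is that $A[1/p]$ here should be the ring whose units are controlled by étaleness on the generic fiber. If finite étaleness is only known over the rigid generic fiber $\LTG$ rather than over $\mathrm{Spec}\,A[1/p]$, we would instead invoke the statement quoted from \cite{StrauchLT} that $D_r[1/p]$ is finite étale over $A[1/p]$ algebraically. This yields the divisibility directly, and it is the step we would check first.
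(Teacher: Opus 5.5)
Your proposal is correct and follows essentially the same route as the paper: almost acyclicity of $\Ohatp$ on the affinoid perfectoids $U$ and $Y_U$, a uniform inclusion $p^{N}T^+\subset M\subset T^+$ for $M:=S^+\otimes_{C^+}R^+$ coming from the trace form of $D_r/A$, and then the long exact sequence in cohomology; your discriminant--adjugate computation is just the coordinate version of the paper's argument that $p^{N}$ kills $\coker(D_r\to D_r^\vee)$ together with $T^+\subset M^\vee$. The input you flag as the main obstacle, namely that $D_r[1/p]$ is finite etale over $A[1/p]$ as rings, is exactly what the paper takes from Strauch in its setup.
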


\begin{proof}
Since $D_r$ is finite free over $A$, the trace pairing over the generic fiber defines an
$A$-linear map
\[
\tau_A\colon D_r \to D_r\dual:=\Hom_A(D_r,A),
\qquad
x\mapsto \bigl(y\mapsto \Tr_{D_r[1/p]/A[1/p]}(xy)\bigr).
\]
After inverting $p$, this becomes an isomorphism because $D_r[1/p]$ is finite etale over
$A[1/p]$. Hence there exists an integer $N_{\mathrm{tr}}\ge 0$ such that
\[
p^{N_{\mathrm{tr}}}\cdot \coker(\tau_A)=0.
\]
By
\cref{lem:affinoid-fet-basechange,lem:closed-ball-model}, the pullback $Y_U:=Y\times_B U$ is again affinoid perfectoid, say
\[
Y_U\simeq \Spa(T,T^+),
\qquad
T=S\otimes_C R,
\]
where $T^+$ is the integral closure of $R^+$ in $T$. By definition of derived pushforward,
\[
R\Gamma\bigl(U,Rf_{\proet,*}\Ohatpc_Y\bigr)
\simeq
R\Gamma\bigl((Y_U)_{\proet},\Ohatpc_Y\bigr).
\]

Because $S^+$ is finite free over $C^+$, we have an isomorphism 
\[
R\Gamma\bigl(U,(S^+\otimes_{C^+}\Ohatp_B)_{\cond}\bigr)
\simeq
R\Gamma(U,\Ohatp_B)^{\oplus d}.
\]

Since $U=\Spa(R,R^+)$ is affinoid perfectoid, $H^0(U,\Ohatp_B)=R^+$ and for every $j>0$, the cohomology $H^j(U,\Ohatp_B)$ is annihilated by the ideal of topologically nilpotent elements of $R^+$, in particular, it is
killed by $p$, see \cite[Lemma 4.10. (v)]{scholze2013adic}. Likewise, since $Y_U\simeq \Spa(T,T^+)$ is affinoid perfectoid,
\[
H^0\bigl((Y_U)_{\proet},\Ohatpc_Y\bigr)=T^+
\]
and for every $j>0$, the cohomology $H^j\bigl((Y_U)_{\proet},\Ohatpc_Y\bigr)$ is annihilated by $p$. 

Set $M:=S^+\otimes_{C^+}R^+$. Since $S^+$ is finite free over $C^+$, the $R^+$-module $M$ is finite free. Base-changing
$\tau_A$ along $A\to C^+\to R^+$ gives
\[
\tau_U\colon M \to M\dual:=\Hom_{R^+}(M,R^+)
\]
with
\[
p^{N_{\mathrm{tr}}}\cdot \coker(\tau_U)=0.
\]
Equivalently, $p^{N_{\mathrm{tr}}}M\dual\subset M$. We claim that $T^+\subset M\dual$. Indeed, let $t\in T^+$ and $m\in M$. Since both $t$ and $m$ are integral over $R^+$,
their product $tm$ is integral over $R^+$. Therefore its trace $\Tr_{T/R}(tm)$ is integral
over $R^+$, as $R^+$ is integrally closed in $R$, it follows that $\Tr_{T/R}(tm)\in R^+$. Thus $t$ defines an $R^+$-linear functional on $M$, proving the claim.

Combining the two inclusions, we obtain
\[
p^{N_{\mathrm{tr}}}T^+ \subset p^{N_{\mathrm{tr}}}M\dual \subset M \subset T^+.
\]
Hence the cokernel of $M \to T^+$ is killed by $p^{N_{\mathrm{tr}}}$.

Now consider the long exact cohomology sequence attached to the defining triangle
\[
R\Gamma\bigl(U,(S^+\otimes_{C^+}\Ohatp_B)_{\cond}\bigr)
\to
R\Gamma\bigl(U,Rf_{\proet,*}\Ohatpc_Y\bigr)
\to
K_U.
\]

In degree $0$, the exact segment
\[
H^0\Bigl(R\Gamma\bigl(U,(S^+\otimes_{C^+}\Ohatp_B)_{\cond}\bigr)\Bigr)
\to
H^0\Bigl(R\Gamma\bigl(U,Rf_{\proet,*}\Ohatpc_Y\bigr)\Bigr)
\to
H^0(K_U)
\to
H^1\Bigl(R\Gamma\bigl(U,(S^+\otimes_{C^+}\Ohatp_B)_{\cond}\bigr)\Bigr)
\]
shows that $H^0(K_U)$ is an extension of a group killed by
$p^{N_{\mathrm{tr}}}$, by a subgroup of
\[
H^1\Bigl(R\Gamma\bigl(U,(S^+\otimes_{C^+}\Ohatp_B)_{\cond}\bigr)\Bigr),
\]
which is killed by $p$. Therefore $p^{N_{\mathrm{tr}}+1}\cdot H^0(K_U)=0$.

For $i>0$, the exact segment
\[
H^i\Bigl(R\Gamma\bigl(U,(S^+\otimes_{C^+}\Ohatp_B)_{\cond}\bigr)\Bigr)
\to
H^i\Bigl(R\Gamma\bigl(U,Rf_{\proet,*}\Ohatpc_Y\bigr)\Bigr)
\to
H^i(K_U)
\to
H^{i+1}\Bigl(R\Gamma\bigl(U,(S^+\otimes_{C^+}\Ohatp_B)_{\cond}\bigr)\Bigr)
\]
shows that $H^i(K_U)$ is an extension of $p$-torsion groups, hence
\[
p^2\cdot H^i(K_U)=0
\qquad i>0.
\]

Thus, if we set $N:=\max\{N_{\mathrm{tr}}+1,2\}$,
then
\[
p^N\cdot H^i(K_U)=0
\qquad \text{for all } i\ge 0,
\]
as claimed.
\end{proof}

As every affinoid factors through the closed ball, we can globalize the previous lemma. 

\begin{proposition}\label{thm:finite-level-rational}
There exists a $J_r$-equivariant morphism of algebra objects
\[
\alpha_{\Gamma,r}\colon
D_r[\epsilon]\to \RGammacond((\LevelG)_{\proet},\Ohatp_{\LevelG})
\]
in $D(\Solid_{J_r})$, where $\epsilon$ is square-zero in cohomological degree $1$
and $J_r$ acts trivially on $\epsilon$.

Moreover, if $C$ denotes the cofiber of $\alpha_{\Gamma,r}$, then for every $i\in \ZZ$
there exists an integer $M_i\ge 0$ such that
\[
p^{M_i}\cdot H^i(C)=0.
\]
\end{proposition}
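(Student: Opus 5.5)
The plan is to deduce the statement from \cref{constr:comparison-morphism} and \cref{lem:local-comparison-ball} together with the level-zero computation \cite[Thm.~3.9.1]{Barthe-Schlank-Stapleton-Weinstein-rationalization}. That result gives a $\GG_n$-equivariant algebra map
\[
A[\epsilon]\to \RGammacond((\LTG)_{\proet},\Ohatp_{\LTG}),
\]
with $\epsilon$ square-zero in cohomological degree $1$, whose cofiber has cohomology killed by a bounded power of $p$ in each degree.

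First I tensor this map with $D_r$ over $A$. Since $D_r$ is finite free over $A$, tensoring commutes with global sections, so we obtain
\[
D_r[\epsilon]\to D_r\otimes_A \RGammacond(\LTG,\Ohatp_{\LTG})\simeq \RGammacond\bigl(\LTG,(D_r\otimes_A\Ohatp_{\LTG})_{\cond}\bigr),
\]
and its cofiber is a finite direct sum of copies of the old cofiber, so it has the same torsion bounds. Next I compose with $R\Gamma(\LTG,-)$ applied to $\beta_{r,\mathrm{cond}}$ from \cref{constr:comparison-morphism}. This lands in
\[
R\Gamma\bigl(\LTG,Rf_{r,\proet,*}\Ohatpc_{\LevelG}\bigr)\simeq \RGammacond((\LevelG)_{\proet},\Ohatp_{\LevelG}).
\]
The composite is $\alpha_{\Gamma,r}$. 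It is an algebra map because $\beta'_r$ is a ring map on each affinoid perfectoid $U$: the map $D_r\otimes_A R^+\to T^+$ is multiplicative. It is $J_r$-equivariant because every construction is natural. Here $\GG_n$ acts compatibly on $A$, $D_r$ and the spaces, while $\GL_n(\ZZ/p^r)$ acts only on $D_r$ and $\LevelG$, and trivially on $A$ and on $\epsilon$. To make this rigorous I would run the construction in $D(\Solid_{J_r})$ by working equivariantly throughout.

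For the torsion bound, the cofiber of $\alpha_{\Gamma,r}$ sits in a triangle with the two cofibers above, by the octahedral axiom. The first cofiber has bounded torsion in each degree. For the second, I write $\LTG$ as an increasing union of standard closed balls $B_m$. On each $B_m$, \cref{lem:local-comparison-ball} and pro-étale descent over an affinoid perfectoid hypercover give a cofiber $K$ whose local cohomology is killed by $p^N$. A descent spectral sequence then shows that $H^i(R\Gamma(B_m,K))$ is killed by $p^{N(i+1)}$, or a similar bound depending only on $i$.

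The main obstacle is making these bounds uniform in $m$, so that they survive passing to $R\lim_m$. The constant $N$ in \cref{lem:local-comparison-ball} depends only on $D_r$, through the trace cokernel $N_{\mathrm{tr}}$ and the almost-vanishing of higher cohomology. Hence the bound on each $B_m$ is independent of $m$. The $\lim^1$ term then contributes one extra factor of $p^{N(i+2)}$ in degree $i$. The resulting exponent $M_i$ depends on $i$ only, which is why the statement allows degree-dependent exponents.
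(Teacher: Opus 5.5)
Your proposal is correct and follows the paper's proof: you tensor the level-zero map from \cite{Barthe-Schlank-Stapleton-Weinstein-rationalization} with $D_r$, compose with $R\Gamma$ of $\beta_{r,\mathrm{cond}}$, and control the cofiber via the octahedral axiom and a descent spectral sequence fed by the uniform constant $N$ of \cref{lem:local-comparison-ball}. The only difference is the globalization step: the paper runs the \v{C}ech spectral sequence for a single affinoid perfectoid cover of all of $\LTG$ and gets $p^{N(i+1)}$ directly, whereas you work on the closed balls $B_m$ and pass to $R\lim_m$ via the Milnor sequence, which costs an extra factor (the $\lim^1$ term is in fact killed by $p^{Ni}$) but is equally valid and avoids any question about covers of the non-quasicompact $\LTG$.
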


\begin{proof}
By \cite[Thm.~3.9.4(1)]{Barthe-Schlank-Stapleton-Weinstein-rationalization}, there is a
$\GG_n$-equivariant morphism of algebra objects
\[
\alpha_{\Gamma}\colon
A[\epsilon]\to \RGammacond((\LTG)_{\proet},\Ohatp_{\LTG})
\]
in $D(\Solid_{\GG_n})$, with trivial $\GG_n$-action on $\epsilon$, whose cofiber is
annihilated by a power of $p$. We may regard $\alpha_\Gamma$ as a $J_r$-equivariant morphism
by letting $\GL_n(\ZZ/p^r)$ act trivially on both source and target. 
Tensoring $\alpha_\Gamma$ with $D_r$ yields a $J_r$-equivariant morphism
\[
\alpha_\Gamma'\colon
D_r[\epsilon]
=
D_r\otimes_A A[\epsilon]
\to
D_r\otimes_A \RGammacond((\LTG)_{\proet},\Ohatp_{\LTG}),
\]
whose cofiber is also annihilated by a power of $p$. Since $D_r$ is finite free over $A$, there is a natural
$J_r$-equivariant identification
\[
D_r\otimes_A \RGammacond((\LTG)_{\proet},\Ohatp_{\LTG})
\simeq
\RGammacond((\LTG)_{\proet},D_r\otimes_A \Ohatp_{\LTG}).
\]

Let
\[
\beta_{r,\mathrm{cond}}\colon
(D_r\otimes_A \Ohatp_{\LTG})_{\mathrm{cond}}
\to
Rf_{r,\proet,*}\Ohatpc_{\LevelG}
\]
be the natural morphism. Applying derived condensed global sections and using the adjunction
isomorphism
\[
\RGammacond\bigl((\LTG)_{\proet},Rf_{r,\proet,*}\Ohatpc_{\LevelG}\bigr)
\simeq
\RGammacond((\LevelG)_{\proet},\Ohatp_{\LevelG}),
\]
we obtain a $J_r$-equivariant morphism of algebra objects
\[
\gamma_r\colon
\RGammacond((\LTG)_{\proet},D_r\otimes_A \Ohatp_{\LTG})
\to
\RGammacond((\LevelG)_{\proet},\Ohatp_{\LevelG}).
\]

We claim that for every $i\ge 0$ there exists $N_i\ge 0$ such that
\[
p^{N_i}\cdot H^i(\cofib(\gamma_r))=0.
\]
To see this, choose a cover $\{U_j\}_{j\in I}$ of $\LTG$ by affinoid perfectoid objects such
that every finite intersection
\[
U_{j_0\cdots j_q}:=U_{j_0}\times_{\LTG}\cdots\times_{\LTG}U_{j_q}
\]
is again affinoid perfectoid. Consider the induced map of cosimplicial objects
\[
\alpha_{\mathcal U}\colon
\prod_{f\in I^{[q]}}
R\Gamma\Bigl(U_{\operatorname{im}(f)},
(D_r\otimes_A \Ohatp_{\LTG})_{\cond}\Bigr)
\to
\prod_{f\in I^{[q]}}
R\Gamma\Bigl(U_{\operatorname{im}(f)},
Rf_{r,\proet,*}\Ohatpc_{\LevelG}\Bigr).
\]
Its limit is precisely $\gamma_r$. By \cref{lem:local-comparison-ball}, there exists an integer $N\ge 0$, depending only on
$D_r/A$, such that for every affinoid perfectoid $U\to \LTG$, the cofiber
\[
\cofib\!\Bigl(
R\Gamma\bigl(U,(D_r\otimes_A \Ohatp_{\LTG})_{\cond}\bigr)
\to
R\Gamma\bigl(U,Rf_{r,\proet,*}\Ohatpc_{\LevelG}\bigr)
\Bigr)
\]
has cohomology annihilated by $p^N$ in all degrees. In particular, for every $q\ge 0$ and
every $s\ge 0$, the $s$-cohomology of the cofiber of the $q$-th level of
$\alpha_{\mathcal U}$ is annihilated by $p^N$.

We obtain a spectral sequence
\[
E_1^{s,q}
=
\prod_{f\in I^{[q]}}
H^s\!\left(
\cofib\!\Bigl(
R\Gamma\bigl(U_{\operatorname{im}(f)},
(D_r\otimes_A \Ohatp_{\LTG})_{\cond}\bigr)
\to
R\Gamma\bigl(U_{\operatorname{im}(f)},
Rf_{r,\proet,*}\Ohatpc_{\LevelG}\bigr)
\Bigr)\right)
\to
H^{s+q}(\cofib(\gamma_r)).
\]
Each $E_1^{s,q}$ is killed by $p^N$, hence so is each $E_\infty^{s,q}$. For fixed total
degree $i\ge 0$, there are at most $i+1$ pairs $(s,q)$ with $s+q=i$, so the induced
filtration on $H^i(\cofib(\gamma_r))$ has at most $i+1$ graded pieces, each killed by
$p^N$. Therefore
\[
p^{N(i+1)}\cdot H^i(\cofib(\gamma_r))=0
\qquad\text{for all } i\ge 0.
\]

Now define
\[
\alpha_{\Gamma,r}:=\gamma_r\circ \alpha_\Gamma'
\colon
D_r[\epsilon]\to \RGammacond((\LevelG)_{\proet},\Ohatp_{\LevelG}).
\]
This is a $J_r$-equivariant morphism of algebra objects.

Finally, let $C$ be the cofiber of $\alpha_{\Gamma,r}$. Since $\alpha_{\Gamma,r}=\gamma_r\circ \alpha_\Gamma'$, there is a cofiber sequence
\[
\cofib(\alpha_\Gamma')\to C\to \cofib(\gamma_r)\to \Sigma\cofib(\alpha_\Gamma').
\]
The cofiber of $\alpha_\Gamma'$ is annihilated by a power of $p$, while for each $i\ge 0$
the group $H^i(\cofib(\gamma_r))$ is annihilated by $p^{N(i+1)}$. It follows that for every
$i\in \ZZ$ there exists an integer $M_i\ge 0$ such that
\[
p^{M_i}\cdot H^i(C)=0.
\]
This proves the claim.
\end{proof}

We now study how this morphism interacts with taking fixed points.

\begin{proposition}\label{prop:finite-level-rational-open-subgroups}
Let
\[
\alpha_{\Gamma,r}\colon
D_r[\epsilon]\to \RGammacond((\LevelG)_{\proet},\Ohatp_{\LevelG})
\]
be the morphism of \cref{thm:finite-level-rational}. Then for every open subgroup $H\subset \GG_n$, the induced morphism
\[
\alpha_{\Gamma,r}^{hH}\colon
D_r[\epsilon]^{hH}\to
\RGammacond((\LevelG)_{\proet},\Ohatp_{\LevelG})^{hH}
\]
becomes an equivalence after inverting $p$. 
\end{proposition}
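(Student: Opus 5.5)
Let $C$ denote the cofiber of $\alpha_{\Gamma,r}$. Since continuous fixed points $(-)^{hH}$ in $D(\Solid_{H})$ are an exact functor, there is a cofiber sequence
\[
D_r[\epsilon]^{hH}\to \RGammacond((\LevelG)_{\proet},\Ohatp_{\LevelG})^{hH}\to C^{hH}.
\]
It therefore suffices to show that $C^{hH}[1/p]\simeq 0$. By \cref{thm:finite-level-rational}, each $H^i(C)$ is killed by $p^{M_i}$. The task is thus to show that continuous fixed points by an open subgroup of $\GG_n$ preserve bounded $p$-power torsion, at least degreewise, and then to commute inverting $p$ past the limit.

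For the first step I would use that $H$ is a compact $p$-adic analytic group of finite virtual cohomological dimension $n^2$. After passing to an open uniform pro-$p$ subgroup $H'\subseteq H$, continuous cohomology has finite cohomological dimension $d=n^2$. The original $H$ is recovered by taking finite-group fixed points for $H/H'$, and this step is harmless after inverting $p$, since for a finite group the norm splits up to multiplication by the group order, which is a unit rationally only if we are careful. More precisely, $(-)^{h(H/H')}$ of a rational object is exact; since inverting $p$ makes the order of the $p$-part invertible and $C[1/p]$ is then rational, this applies. For $H'$ I would use the descent spectral sequence
\[
E_2^{a,b}=H^a_{\mathrm{cts}}(H',H^b(C))\Rightarrow H^{a+b}(C^{hH'}),
\]
which converges because $E_2^{a,b}$ vanishes for $a>d$. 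Each $E_2^{a,b}$ is a module over $H^b(C)$'s annihilator, so it is killed by $p^{M_b}$. Hence $H^i(C^{hH'})$ has a finite filtration of length at most $d+1$ with graded pieces killed by $p^{M_{i-a}}$ for $0\le a\le d$. It is therefore killed by $p^{\sum_{a} M_{i-a}}$, and in particular is bounded $p$-power torsion in every degree.

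The second step is to show that inverting $p$ kills $C^{hH'}$. Here $(-)[1/p]$ is a filtered colimit, so it commutes with taking cohomology groups in each degree. Since each $H^i(C^{hH'})$ is killed by a fixed power of $p$, we get $H^i(C^{hH'}[1/p])=H^i(C^{hH'})[1/p]=0$ for all $i$, and so $C^{hH'}[1/p]\simeq0$. Applying the finite-group descent from the previous paragraph then gives the statement for $H$.

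The main obstacle is the solid/condensed bookkeeping. First, one must know that the continuous fixed points used in \cite{Barthe-Schlank-Stapleton-Weinstein-rationalization} admit this descent spectral sequence with solid continuous cohomology of $H'$ of dimension $n^2$, computed for instance via the Lazard resolution of the trivial module by finite free $\ZZ_p[[H']]$-modules. Second, one must check that inverting $p$ in $D(\Solid)$ commutes with $(-)^{hH}$, or at least with cohomology; this is why I first bound the torsion before inverting. If the finite-dimensionality argument is awkward, I would instead imitate the proof of \cite[Thm.~3.9.4]{Barthe-Schlank-Stapleton-Weinstein-rationalization}. I expect this to work because $\GL_n(\ZZ/p^r)$ is only carried along and the input there is exactly a cofiber with degreewise bounded $p$-torsion.
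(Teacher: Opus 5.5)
Your proposal is correct, and its core is the paper's argument: pass to an open normal subgroup of finite cohomological dimension, use the descent spectral sequence to see that every cohomology group of the fixed points of the cofiber is $p$-power torsion (you even get a uniform bound in each degree), and then invert $p$.

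The only real difference is the finite quotient $H/H'$. The paper runs a second Lyndon--Hochschild--Serre spectral sequence $H^a_{\mathrm{cts}}(H/U,H^b(K^{hU}))\Rightarrow H^{a+b}(K^{hH})$. Connectivity (bounded-belowness) of $K^{hU}$ leaves only finitely many $a$ in each total degree, so $H^m(K^{hH})$ is $p$-power torsion before inverting $p$.

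You instead commute $[1/p]$ past $(-)^{h(H/H')}$, but your justification is garbled. Exactness is not the relevant property, since homotopy fixed points are exact on everything. Also, $C[1/p]$ is rational only because $C$ is a $\ZZ_p$-module, so the prime-to-$p$ part of $|H/H'|$ is already a unit.

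The clean statement goes as follows. For $G=H/H'$ and $X=C^{hH'}$, the cofiber of the norm $X_{hG}\to X^{hG}$ is the Tate construction $X^{tG}$. Multiplication by $|G|$ on $X^{tG}$ is null-homotopic, hence so is multiplication by a power of $p$, because $X$ is a $\ZZ_p$-module. Therefore $C^{hH}[1/p]\simeq X_{hG}[1/p]\simeq (X[1/p])_{hG}\simeq 0$. You should also state that $H'$ is chosen normal in $H$, so that $C^{hH}\simeq (C^{hH'})^{h(H/H')}$.

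Phrased this way, your route needs no boundedness hypothesis on $C$. The paper's route stays entirely within spectral sequence arguments and avoids invoking the Tate construction in the solid setting.
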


\begin{proof}
By \cref{thm:finite-level-rational}, for every $b\ge 0$ there exists an integer $N_b\ge 0$
such that $p^{N_b}\cdot H^b(K)=0$.
where $K:=\cofib(\alpha_{\Gamma,r})$. Since $\GG_n$ has finite virtual cohomological dimension, so does the open subgroup $H$.
Choose an open normal subgroup $U\triangleleft H$ of finite cohomological dimension. 

We first show that each cohomology group of $K^{hU}$ is $p$-power torsion. Apply \cite[Prop.~3.3.6]{Barthe-Schlank-Stapleton-Weinstein-rationalization} to the closed normal
subgroup $1\subset U$. This gives a spectral sequence
\[
E_2^{a,b}
=
H^a_{\cts}(U,H^b(K))
\to
H^{a+b}(K^{hU}).
\]
For each fixed $b$, the coefficient module $H^b(K)$ is killed by $p^{N_b}$, hence every
continuous cochain group computing $H^a_{\cts}(U,H^b(K))$ is also killed by $p^{N_b}$. It
follows that every $E_2^{a,b}$ is $p$-power torsion. Since $U$ has finite cohomological
dimension, only finitely many values of $a$ can contribute to a fixed total degree. Therefore each group $H^m(K^{hU})$ admits a finite filtration with $p$-power torsion graded pieces, and hence is itself $p$-power torsion.

Now apply \cite[Prop.~3.3.6]{Barthe-Schlank-Stapleton-Weinstein-rationalization} to the closed normal subgroup $U\subset H$. We obtain a Lyndon--Hochschild--Serre spectral sequence
\[
E_2^{a,b}
=
H^a_{\cts}\bigl(H/U,\,H^b(K^{hU})\bigr)
\to
H^{a+b}(K^{hH}).
\]
Since $K^{hU}$ is connective, and each $H^b(K^{hU})$ is $p$-power torsion, every $E_2^{a,b}$ is again $p$-power torsion. For each total degree only finitely many values of $a$ occur, so
each $H^m(K^{hH})$ admits a finite filtration with $p$-power torsion graded pieces. Hence $H^m(K^{hH})$ is
$p$-power torsion for every $m$.

Therefore
\[
H^m(K^{hH}[1/p])=0
\qquad\text{for all }m,
\]
so $K^{hH}[1/p]\simeq 0$. This proves that $\alpha_{\Gamma,r}^{hH}$ becomes an equivalence
after inverting $p$.
\end{proof}

\subsection{The Drinfeld Side} \label{sec:The Drinfeld Side}

We begin by introducing the main objects that will appear throughout this subsection.

\begin{definition}\label{def:H-cyclotomic-cover}
Let
\[
\GG_{n,r}
:=\ker\bigl(\GG_n\to (\ZZ/p^r)^\times\bigr).
\]
Let $X$ be the common infinite-level perfectoid space, equipped with commuting
actions of $\GG_n$ and $\GL_n(\ZZ_p)$, together with a $\GG_n$-torsor $X^\diamond \to \HDr^\diamond$.

We define the diamond
\[
(\HCyc)^\diamond:=X^\diamond/\GG_{n,r}.
\]
Then the natural map
\[
g_r^\diamond\colon (\HCyc)^\diamond\to \HDr^\diamond
\]
is a finite etale $(\ZZ/p^r)^\times$-torsor. Since $\HDr$ is a rigid-analytic space and finite etale morphisms of rigid
spaces are equivalent to finite etale morphisms of their associated diamonds,
there exists a unique finite etale rigid-analytic space
\[
g_r\colon \HCyc\to \HDr
\]
whose associated diamond is canonically identified with
$(\HCyc)^\diamond$ .
We call $\HCyc$ the cyclotomic level-$p^r$ cover of $\HDr$.
\end{definition}

Our goal in this subsection is to compute the equivariant condensed global sections of $\HCyc$. We begin by identifying the cover $(\HCyc)^\diamond$ with the cyclotomic pullback.

\begin{proposition}\label{prop:H-cyclotomic-pullback}
There is a canonical equivalence of diamonds
\[
(\HCyc)^\diamond
\simeq
\HDr^\diamond\times_{\Spd \QQ_p^\diamond}\Spd(\QQ_p(\omega_{p^r}))^\diamond.
\]
Under this identification, the action of $(\ZZ/p^r)^\times$ on
$(\HCyc)^\diamond$ is the natural Galois action on the second factor.
\end{proposition}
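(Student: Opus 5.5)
The plan is to identify $X/\GG_{n,r}$ by computing the quotient $X/\GG_n^1$, where $\GG_n^1:=\ker(\GG_n \to \ZZ_p^\times)$ is the kernel of the reduced norm (the determinant), and then descend. The key input is the two-towers picture: the infinite-level perfectoid space $X$ sits over both $\HDr$ and the Lubin--Tate tower. The map $X \to \HDr^\diamond$ is a $\GG_n$-torsor. By the theory of the determinant morphism (Strauch, Chen, Scholze--Weinstein), there is a map
\[
X \to \underline{\ZZ_p^\times}\times \Spd \breve{\QQ}_p(\omega_{p^\infty})
\quad\text{or rather}\quad
X\to \Spd\bigl(\breve{\QQ}_p(\omega_{p^\infty})\bigr)\times_{\Spd\breve\QQ_p}\HDr^\diamond ,
\]
compatible with both group actions. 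Here $b\in\GG_n$ acts on the cyclotomic factor through the Galois action of $\mathrm{Nrd}(b)\in\ZZ_p^\times$, and $g\in\GL_n(\ZZ_p)$ acts through $\det(g)^{-1}$.

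First I would construct this determinant map. On the Lubin--Tate side it comes from the top exterior power $\Lambda^n$ of the universal formal group, as in \cref{lem: height of etale part}. This exterior power is a height one formal group, namely $\widehat{\Gm}$ up to twist. A full level structure $(\ZZ_p^n)\to \GG$ induces a trivialization $\ZZ_p \to \Lambda^n\GG$, that is, a compatible system of primitive $p^r$-th roots of unity. This is the Drinfeld/Strauch result that $D_\infty$ contains $\ZZ_p[\omega_{p^\infty}]$, with $\GL_n$ acting via $\det$ and the stabilizer acting via the norm. Transporting this map along the two-towers isomorphism gives a $\GG_n\times\GL_n(\ZZ_p)$-equivariant map $X\to\Spd\QQ_p(\omega_{p^\infty})$ over $\Spd\QQ_p$. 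It therefore lies over $\HDr^\diamond$.

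Next, I would pass to the quotient by $\GG_{n,r}$. The subgroup $\GG_{n,r}$ acts trivially on $\Spd\QQ_p(\omega_{p^r})$, since it acts through $\mathrm{Nrd}$ composed with the reduction $\ZZ_p^\times\to(\ZZ/p^r)^\times$. Hence the map descends to a $(\ZZ/p^r)^\times$-equivariant map
\[
(\HCyc)^\diamond=X^\diamond/\GG_{n,r}\to \HDr^\diamond\times_{\Spd\QQ_p}\Spd\QQ_p(\omega_{p^r}).
\]
Both sides are finite étale $(\ZZ/p^r)^\times$-torsors over $\HDr^\diamond$. The target is a torsor because $\QQ_p(\omega_{p^r})/\QQ_p$ is Galois with group $(\ZZ/p^r)^\times$. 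An equivariant map between torsors under the same group is an isomorphism, provided it is a map of torsors, which holds here. The identification of the action with the Galois action on the second factor then holds by construction.

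The main obstacle will be making the equivariance precise. I need the reduced norm on $\GG_n$ to match, under the determinant map, the cyclotomic character's Galois action. I also need the $\GG_n$-action relevant here, the one for which $X\to\HDr^\diamond$ is a torsor, to act on the cyclotomic factor exactly through $\mathrm{Nrd}$ modulo $p^r$, including the correct sign or inverse convention. I would first try to check this using the explicit Lubin--Tate description of $\Lambda^n$ as the Lubin--Tate group of height one, as in Hopkins--Lurie and Scholze--Weinstein, Section 6.4. If conventions differ by an inverse, the resulting identification is still an isomorphism of torsors after twisting by inversion on $(\ZZ/p^r)^\times$.
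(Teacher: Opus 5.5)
Your proposal is correct and is essentially the paper's argument: both use the determinant morphism, compatible with the Lubin--Tate and Drinfeld presentations, to reduce to height one, where the level-$p^r$ cover is the cyclotomic extension $\Spd\QQ_p(\omega_{p^r})\to\Spd\QQ_p$. The paper pulls back along $\det\colon M_n\to M_1$ on quotient stacks and cites \cite[Thm.~3.7.1]{Barthel-Schlank-Stapleton-Weinstein-2024-Picard} for exactly the $\mathrm{Nrd}$/$\det$ equivariance you flag as the main obstacle, whereas you work upstairs on $X$ and finish with the correct observation that a $(\ZZ/p^r)^\times$-equivariant map of $(\ZZ/p^r)^\times$-torsors over $\HDr^\diamond$ is an isomorphism.
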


\begin{proof}
By \cite[Thm.~3.7.1]{Barthel-Schlank-Stapleton-Weinstein-2024-Picard},
there is a determinant morphism $\det\colon M_n \to M_1$ which is compatible with both the Lubin--Tate and Drinfeld presentations.
After pulling back along the map
\[
\HDr^\diamond \to [\HDr^\diamond/\GL_n(\ZZ_p)],
\]
the cover $(\HCyc)^\diamond \to \HDr^\diamond$ is therefore obtained by pulling back the corresponding height $1$ cover along the
composite
\[
\HDr^\diamond \to [\HDr^\diamond/\GL_n(\ZZ_p)]
\xrightarrow{\det} M_1.
\]

Thus the claim reduces to the height $1$ case. In height $1$, the Drinfeld space is $\mathcal H^0 \simeq \Spa(\QQ_p,\ZZ_p)$,
and the relevant Lubin--Tate formal group is the multiplicative formal group
$\widehat{\mathbb G}_m$. Its $p^r$-torsion is $\mu_{p^r}$, so the corresponding
finite etale cover is the cyclotomic extension
\[
\Spd(\QQ_p(\omega_{p^r}))^\diamond \to \Spd \QQ_p^\diamond \simeq \mathcal H^0 .
\]
Hence pulling back along the structural morphism $\HDr^\diamond \to \Spd \QQ_p^\diamond$ gives
\[
(\HCyc)^\diamond
\simeq
\HDr^\diamond\times_{\Spd \QQ_p^\diamond}\Spd(\QQ_p(\omega_{p^r}))^\diamond,
\]
as claimed. 
\end{proof}

We record how this cover behaves on affinoid perfectoid objects.

\begin{lemma}\label{lem:H-affinoid-evaluation}
Let
\[
g_r\colon \mathcal H_r^\times \to \mathcal H
\]
be the finite etale $(\ZZ/p^r)^\times$-torsor from
\cref{def:H-cyclotomic-cover}. Then there is a natural morphism of condensed sheaves on $\mathcal H_{\proet}$
\[
\gamma_r\colon
(\ZZ_p[\omega_{p^r}]\otimes_{\ZZ_p}\Ohatp_{\mathcal H})_{\mathrm{cond}}
\to
g_{r,\proet,*}(\Ohatpc_{\mathcal H_r^\times})_{\mathrm{cond}}.
\]
such that for $U=\Spa(R,R^+)\to \mathcal H_{\proet}$ an affinoid perfectoid object we have
\[
V:=\mathcal H_r^\times\times_{\mathcal H}U.
\]
Then $V$ is affinoid perfectoid, say $V=\Spa(S,S^+)$, with:
\begin{enumerate}
\item $S$ is finite etale over $R$ and there is a canonical identification
\[
S\simeq \QQ_p(\omega_{p^r})\otimes_{\QQ_p}R,
\]
\item $S^+$ is the integral closure of $R^+$ in $S$,
\item the morphism $\gamma_r$ evaluates on $U$ as the condensed enhancement of the natural ring map
\[
\ZZ_p[\omega_{p^r}]\otimes_{\ZZ_p}R^+\to S^+.
\]
\end{enumerate}
Moreover, the construction is functorial in $U$, the action of $(\ZZ/p^r)^\times$ is the Galois action on $\ZZ_p[\omega_{p^r}]$, and the whole construction is $(\ZZ/p^r)^\times\times \GL_n(\ZZ_p)$-equivariant.
\end{lemma}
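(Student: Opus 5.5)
The plan is to mirror the Lubin--Tate side (\cref{lem:affinoid-fet-basechange} and \cref{constr:comparison-morphism}), replacing $\Level_r\to\LT$ by the much simpler cover of \cref{prop:H-cyclotomic-pullback}. First, for $U=\Spa(R,R^+)$ affinoid perfectoid over $\mathcal H$, we form $V=\mathcal H_r^\times\times_{\mathcal H}U$. By \cref{prop:H-cyclotomic-pullback}, on diamonds this is
\[
V^\diamond\simeq U^\diamond\times_{\Spd\QQ_p}\Spd(\QQ_p(\omega_{p^r})).
\]
Since $\QQ_p(\omega_{p^r})$ is finite étale over $\QQ_p$, the fiber product is represented by the affinoid $\Spa(\QQ_p(\omega_{p^r})\otimes_{\QQ_p}R,\,S^+)$. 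Here $S^+$ is the integral closure of $R^+$, using \cref{lem:affinoid-fet-basechange} with $Y=\Spa(\QQ_p,\ZZ_p)$ and $X=\Spa(\QQ_p(\omega_{p^r}),\ZZ_p[\omega_{p^r}])$. The resulting space is perfectoid because a finite étale algebra over a perfectoid Tate ring is perfectoid (almost purity, as in \cite[Prop.~7.10]{ScholzePerfectoid}). This gives (1) and (2). The identification is canonical because it comes from the canonical diamond equivalence, and diamonds determine perfectoid spaces.

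Next we define $\gamma_r$ on the affinoid perfectoid basis. On $U$ the source evaluates to $\ZZ_p[\omega_{p^r}]\otimes_{\ZZ_p}R^+$. Since $\ZZ_p[\omega_{p^r}]$ is finite free over $\ZZ_p$, the tensor product commutes with the sheaf $\Ohatp$, so this formula is valid. The target evaluates to $H^0(V,\Ohatpc)=S^+$. The natural map $\ZZ_p[\omega_{p^r}]\otimes R^+\to \QQ_p(\omega_{p^r})\otimes_{\QQ_p}R=S$ lands in $S^+$, since its image is generated by elements integral over $R^+$; this is the same argument as in \cref{constr:comparison-morphism}. These maps are compatible with restriction along $U'\to U$ because the identification in (1) is natural in $U$. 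Hence they glue to a morphism of sheaves on the basis, which extends uniquely to $\mathcal H_{\proet}$; passing to condensed sheaves gives (3).

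Finally, we check equivariance. $(\ZZ/p^r)^\times$ acts on $\mathcal H_r^\times$ through the torsor structure, and by \cref{prop:H-cyclotomic-pullback} this action is the Galois action on the second factor. Under (1) it is therefore $\sigma\otimes\id$ on $S$, which visibly commutes with $\gamma_r$. $\GL_n(\ZZ_p)$ acts on $\mathcal H$ and on $X$ commuting with $\GG_n$, so it acts on $X^\diamond/\GG_{n,r}$ over its action on $\mathcal H$. Via the determinant compatibility in the proof of \cref{prop:H-cyclotomic-pullback}, it acts on the cyclotomic factor through $\det$ composed with the reciprocity character, i.e. through Galois automorphisms. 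Thus a given $g$ sends the fiber over $U$ to the fiber over $gU$ by an isomorphism of the form $\sigma_{\det g}\otimes g^*$.

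The main obstacle is making this last point precise. The issue is that the identification in (1) is equivariant only after twisting the $\ZZ_p[\omega_{p^r}]$-factor by $\det$. So equivariance must be read with $\GL_n(\ZZ_p)$ acting on the source $\ZZ_p[\omega_{p^r}]\otimes\Ohatp$ diagonally: through $\det$ on the first factor and geometrically on the second. I would verify this by reducing, as in \cref{prop:H-cyclotomic-pullback}, to height $1$ via the determinant map of \cite[Thm.~3.7.1]{Barthel-Schlank-Stapleton-Weinstein-2024-Picard}, where the statement is the classical Lubin--Tate description of $\QQ_p(\omega_{p^\infty})$.
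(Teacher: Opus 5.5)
Your proposal is correct and follows the paper's proof. As there, you identify $V$ via the cyclotomic pullback of \cref{prop:H-cyclotomic-pullback}, use finite \'etale base change for affinoid perfectoids to get $S\simeq \QQ_p(\omega_{p^r})\otimes_{\QQ_p}R$ with $S^+$ the integral closure of $R^+$, and take $\gamma_r$ to be the evident map $\ZZ_p[\omega_{p^r}]\otimes_{\ZZ_p}R^+\to S^+$; you additionally spell out why this map lands in $S^+$ and why it glues over the basis.

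Your point that $\GL_n(\ZZ_p)$-equivariance requires $\GL_n(\ZZ_p)$ to act on the $\ZZ_p[\omega_{p^r}]$-factor through $\det$ composed with the Galois action is also correct. The paper leaves this implicit. It does not affect the later arguments: in \cref{prop:hfp-comparison-finite-det-level} fixed points are taken only for $\GL_{n,r}$, on which $\det\equiv 1 \bmod p^r$.
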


\begin{proof}
By \cref{prop:H-cyclotomic-pullback}, 
\[
(\mathcal H_r^\times)^\diamond
\simeq
\mathcal H^\diamond\times_{\Spd \QQ_p^\diamond}\Spd(\QQ_p(\omega_{p^r}))^\diamond,
\]
and the action of $(\ZZ/p^r)^\times$ is the Galois action on the second factor. Since $g_r$ is finite etale and $U$ is affinoid perfectoid, finite etale base change for affinoid perfectoids implies that $V=\mathcal H_r^\times\times_{\mathcal H}U$ is affinoid perfectoid, say $V=\Spa(S,S^+)$, with $S\simeq \QQ_p(\omega_{p^r})\otimes_{\QQ_p}R$,
where $S$ is finite etale over $R$ and $S^+$ is the integral closure of $R^+$ in $S$. By definition of condensed enhancement,
\[
(\ZZ_p[\omega_{p^r}]\otimes_{\ZZ_p}\Ohatp_{\mathcal H})_{\mathrm{cond}}(U)
=
\underline{\ZZ_p[\omega_{p^r}]\otimes_{\ZZ_p}R^+},
\qquad
(g_{r,\proet,*}(\Ohatpc_{\mathcal H_r^\times})_{\mathrm{cond}})(U)
=
\underline{S^+},
\]
and the map is the condensed enhancement of the evident ring map
\[
\ZZ_p[\omega_{p^r}]\otimes_{\ZZ_p}R^+\to S^+.
\]
\end{proof}

The desired comparison will follow from the above by verifying it on an affinoid perfectoid cover.

\begin{proposition}\label{prop:H-derived-comparison}
Let
\[
\gamma_r\colon
(\ZZ_p[\omega_{p^r}]\otimes_{\ZZ_p}\Ohatp_{\mathcal H})_{\mathrm{cond}}
\to
g_{r,\proet,*}(\Ohatpc_{\mathcal H_r^\times})_{\mathrm{cond}}
\]
be the morphism of \cref{lem:H-affinoid-evaluation}. Then there exists an integer $N\ge 0$, depending only on $r$, such that for every affinoid perfectoid object $U=\Spa(R,R^+)\to \mathcal H$, the cofiber
\[
K_U:=
\cofib\!\Bigl(
R\Gamma\bigl(U,(\ZZ_p[\omega_{p^r}]\otimes_{\ZZ_p}\Ohatp_{\mathcal H})_{\cond}\bigr)
\to
R\Gamma\bigl(U,g_{r,\proet,*}(\Ohatpc_{\mathcal H_r^\times})_{\cond}\bigr)
\Bigr)
\]
has $p^N$-torsion cohomology. That is,
\[
p^N\cdot H^i(K_U)=0
\qquad\text{for all } i\ge 0.
\]
Moreover, $\gamma_r$ is $(\ZZ/p^r)^\times\times \GL_n(\ZZ_p)$-equivariant.
\end{proposition}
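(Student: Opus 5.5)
The plan is to mirror the proof of \cref{lem:local-comparison-ball}, with the finite free $A$-algebra $D_r$ replaced by the finite free $\ZZ_p$-algebra $\ZZ_p[\omega_{p^r}]$. First, the trace form. Since $\QQ_p(\omega_{p^r})/\QQ_p$ is finite étale, the trace pairing gives a $\ZZ_p$-linear map
\[
\tau\colon \ZZ_p[\omega_{p^r}]\to \Hom_{\ZZ_p}(\ZZ_p[\omega_{p^r}],\ZZ_p),\qquad x\mapsto \bigl(y\mapsto \Tr(xy)\bigr).
\]
This map becomes an isomorphism after inverting $p$. Its cokernel is the inverse different modulo the ring of integers, so it is killed by $p^{N_{\mathrm{tr}}}$ with $N_{\mathrm{tr}}$ bounded by the $p$-adic valuation of the discriminant of $\QQ_p(\omega_{p^r})$. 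In particular $N_{\mathrm{tr}}$ depends only on $r$, which is where the uniformity claim comes from.

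Next, fix an affinoid perfectoid $U=\Spa(R,R^+)$. By \cref{lem:H-affinoid-evaluation}, $V=\Spa(S,S^+)$ is affinoid perfectoid, with $S=\QQ_p(\omega_{p^r})\otimes_{\QQ_p}R$ and $S^+$ the integral closure of $R^+$ in $S$. Since $\ZZ_p[\omega_{p^r}]$ is finite free of rank $d=\varphi(p^r)$, the source satisfies
\[
R\Gamma\bigl(U,(\ZZ_p[\omega_{p^r}]\otimes_{\ZZ_p}\Ohatp_{\mathcal H})_{\cond}\bigr)\simeq R\Gamma(U,\Ohatp_{\mathcal H})^{\oplus d}.
\]
Because $g_r$ is finite étale, the target is $R\Gamma(V_{\proet},\Ohatp)$. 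By \cite[Lemma 4.10 (v)]{scholze2013adic}, both complexes have $H^0$ equal to $R^+$ (respectively $S^+$), and all their higher cohomology is almost zero, hence killed by $p$.

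The degree zero comparison is the key step. Set $M=\ZZ_p[\omega_{p^r}]\otimes_{\ZZ_p}R^+$, a finite free $R^+$-module. Base changing $\tau$ gives $p^{N_{\mathrm{tr}}}M^\vee\subset M$. Moreover $S^+\subset M^\vee$: for $s\in S^+$ and $m\in M$ the product $sm$ is integral over $R^+$, so $\Tr_{S/R}(sm)$ is integral over $R^+$, and therefore lies in $R^+$ because $R^+$ is integrally closed in $R$. This yields
\[
p^{N_{\mathrm{tr}}}S^+\subset M\subset S^+,
\]
so the cokernel of $M\to S^+$ is killed by $p^{N_{\mathrm{tr}}}$.

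Finally, apply the long exact sequence exactly as in \cref{lem:local-comparison-ball}. It shows that $H^0(K_U)$ is killed by $p^{N_{\mathrm{tr}}+1}$ and that $H^i(K_U)$ is killed by $p^2$ for $i>0$, so $N=\max\{N_{\mathrm{tr}}+1,2\}$ works. The equivariance statement follows from the functoriality and equivariance of $\gamma_r$ recorded in \cref{lem:H-affinoid-evaluation}. The trace map is Galois-equivariant, but that is not needed for the torsion bound. I expect the only real subtlety to be the identification of the target's $H^0$ with $S^+$ via finite étale base change, and this is already supplied by \cref{lem:H-affinoid-evaluation}.
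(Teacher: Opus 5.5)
Your proposal is correct and follows the paper's proof essentially step by step: the trace pairing on $\ZZ_p[\omega_{p^r}]$ with cokernel killed by $p^{N_{\mathrm{tr}}}$, the sandwich $p^{N_{\mathrm{tr}}}S^+\subset M\subset S^+$ via integrality of traces, almost vanishing of the higher $\Ohatp$-cohomology on $U$ and $V$, and the long exact sequence giving $N=\max\{N_{\mathrm{tr}}+1,2\}$. Your two additions are harmless refinements of points the paper treats tersely. First, you bound $N_{\mathrm{tr}}$ by the discriminant of $\QQ_p(\omega_{p^r})$. Second, you identify the target with $R\Gamma(V_{\proet},\Ohatp)$ via finite \'etaleness of $g_r$, which makes $g_{r,\proet,*}$ exact; the paper instead appeals to ``derived pushforward''.
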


\begin{proof}
Since $\ZZ_p[\omega_{p^r}]$ is finite free over $\ZZ_p$, multiplication defines a trace pairing
\[
\tau\colon \ZZ_p[\omega_{p^r}]
\to
\Hom_{\ZZ_p}(\ZZ_p[\omega_{p^r}],\ZZ_p),
\qquad
x\mapsto \bigl(y\mapsto \Tr_{\QQ_p(\omega_{p^r})/\QQ_p}(xy)\bigr).
\]
After inverting $p$, this becomes an isomorphism because
$\QQ_p(\omega_{p^r})/\QQ_p$ is finite etale. Hence there exists an integer
$N_{\mathrm{tr}}\ge 0$ such that
\[
p^{N_{\mathrm{tr}}}\cdot \coker(\tau)=0.
\]

Let $U=\Spa(R,R^+)\to \mathcal H$ be an affinoid perfectoid object. By \cref{lem:H-affinoid-evaluation},
\[
V:=\mathcal H_r^\times\times_{\mathcal H}U \simeq \Spa(S,S^+),
\qquad
S\simeq \QQ_p(\omega_{p^r})\otimes_{\QQ_p}R,
\]
where $S^+$ is the integral closure of $R^+$ in $S$. By definition of derived pushforward,
\[
R\Gamma\bigl(U,g_{r,\proet,*}(\Ohatpc_{\mathcal H_r^\times})_{\cond}\bigr)
\simeq
R\Gamma\bigl(V_{\proet},(\Ohatpc_{\mathcal H_r^\times})_{\cond}\bigr).
\]

Because $\ZZ_p[\omega_{p^r}]$ is finite free over $\ZZ_p$, say of rank $d$, we have
\[
R\Gamma\bigl(U,(\ZZ_p[\omega_{p^r}]\otimes_{\ZZ_p}\Ohatp_{\mathcal H})_{\cond}\bigr)
\simeq
R\Gamma(U,\Ohatp_{\mathcal H})^{\oplus d}.
\]

Since $U=\Spa(R,R^+)$ is affinoid perfectoid, $H^0(U,\Ohatp_{\mathcal H})=R^+$, and for every $j>0$, the cohomology $H^j(U,\Ohatp_{\mathcal H})$ is annihilated by the ideal
of topologically nilpotent elements of $R^+$, in particular, it is killed by $p$, see \cite[Lemma 4.10.(v)]{scholze2013adic}. 

Likewise, since $V=\Spa(S,S^+)$ is affinoid perfectoid, $H^0\bigl(V_{\proet},\Ohatpc_{\mathcal H_r^\times}\bigr)=S^+$,
and for every $j>0$, the cohomology $H^j\bigl(V_{\proet},\Ohatpc_{\mathcal H_r^\times}\bigr)$ is annihilated by $p$.

Set $M:=\ZZ_p[\omega_{p^r}]\otimes_{\ZZ_p}R^+$. Base-changing $\tau$ along $\ZZ_p\to R^+$ gives
\[
\tau_U\colon M\to M\dual:=\Hom_{R^+}(M,R^+)
\]
with
\[
p^{N_{\mathrm{tr}}}\cdot \coker(\tau_U)=0.
\]
Equivalently, $p^{N_{\mathrm{tr}}}M\dual\subset M$. We claim that $S^+\subset M\dual$. Indeed, if $s\in S^+$ and $m\in M$, then both $s$ and $m$ are integral over $R^+$, hence so is $sm$. Therefore $\Tr_{S/R}(sm)$ is integral over $R^+$, and since $R^+$ is integrally closed in
$R$, we get $\Tr_{S/R}(sm)\in R^+$. Thus $s$ defines an $R^+$-linear functional on $M$, proving the claim.

Combining the two inclusions, we obtain
\[
p^{N_{\mathrm{tr}}}S^+\subset p^{N_{\mathrm{tr}}}M\dual\subset M\subset S^+.
\]
Hence the cokernel of
\[
M=\ZZ_p[\omega_{p^r}]\otimes_{\ZZ_p}R^+\to S^+
\]
is killed by $p^{N_{\mathrm{tr}}}$.

Now consider the long exact cohomology sequence attached to the defining triangle
\[
R\Gamma\bigl(U,(\ZZ_p[\omega_{p^r}]\otimes_{\ZZ_p}\Ohatp_{\mathcal H})_{\cond}\bigr)
\to
R\Gamma\bigl(U,g_{r,\proet,*}(\Ohatpc_{\mathcal H_r^\times})_{\cond}\bigr)
\to
K_U.
\]

In degree $0$, the exact segment
\begin{align*}
H^0\Bigl(R\Gamma\bigl(U,(\ZZ_p[\omega_{p^r}]\otimes_{\ZZ_p}\Ohatp_{\mathcal H})_{\cond}\bigr)\Bigr)
\to
H^0\Bigl(R\Gamma\bigl(U,g_{r,\proet,*}(\Ohatpc_{\mathcal H_r^\times})_{\cond}\bigr)\Bigr)
\to \\
\to H^0(K_U)
\to
H^1\Bigl(R\Gamma\bigl(U,(\ZZ_p[\omega_{p^r}]\otimes_{\ZZ_p}\Ohatp_{\mathcal H})_{\cond}\bigr)\Bigr)
\end{align*}

shows that $H^0(K_U)$ is an extension of a group killed by
$p^{N_{\mathrm{tr}}}$, by a subgroup of a $p$-torsion group. Therefore $p^{N_{\mathrm{tr}}+1}\cdot H^0(K_U)=0$.

For $i>0$, the exact segment
\begin{align*}
H^i\Bigl(R\Gamma\bigl(U,(\ZZ_p[\omega_{p^r}]\otimes_{\ZZ_p}\Ohatp_{\mathcal H})_{\cond}\bigr)\Bigr)
\to
H^i\Bigl(R\Gamma\bigl(U,g_{r,\proet,*}(\Ohatpc_{\mathcal H_r^\times})_{\cond}\bigr)\Bigr)
\to \\
\to
H^i(K_U)
\to
H^{i+1}\Bigl(R\Gamma\bigl(U,(\ZZ_p[\omega_{p^r}]\otimes_{\ZZ_p}\Ohatp_{\mathcal H})_{\cond}\bigr)\Bigr)
\end{align*}
shows that $H^i(K_U)$ is an extension of $p$-torsion groups, hence
\[
p^2\cdot H^i(K_U)=0
\qquad i>0.
\]

Thus, if we set $N:=\max\{N_{\mathrm{tr}}+1,2\}$, then
\[
p^N\cdot H^i(K_U)=0
\qquad\text{for all } i\ge 0,
\]
as claimed. The equivariance of $\gamma_r$ follows from \cref{lem:H-affinoid-evaluation}.
\end{proof}

We now use this and \cite[Theorem 3.9.1]{Barthe-Schlank-Stapleton-Weinstein-rationalization} to compute the global sections.

\begin{lemma}\label{lem:H-cyclotomic-cohomology}
There exists a $(\ZZ/p^r)^\times\times \GL_n(\ZZ_p)$-equivariant morphism of algebra objects
\[
\alpha_{\HDr,r}^{\times}\colon
\ZZ_p[\omega_{p^r}][\epsilon]
\to
\RGammacond\bigl((\mathcal H_r^\times)_{\proet},\Ohatp_{\mathcal H_r^\times}\bigr)
\]
in $D\bigl(\Solid_{(\ZZ/p^r)^\times\times \GL_n(\ZZ_p)}\bigr)$, where
$(\ZZ/p^r)^\times$ acts through its Galois action on $\ZZ_p[\omega_{p^r}]$ and trivially on
$\epsilon$. Moreover, if $C$ denotes the cofiber of $\alpha_{\HDr,r}^{\times}$, then for
every $i\in \ZZ$ there exists an integer $M_i\ge 0$ such that
\[
p^{M_i}\cdot H^i(C)=0.
\]
\end{lemma}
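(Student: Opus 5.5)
The proof will mirror \cref{thm:finite-level-rational}, with the Lubin--Tate input replaced by its Drinfeld counterpart. We start from \cite[Thm.~3.9.1]{Barthe-Schlank-Stapleton-Weinstein-rationalization}. On the Drinfeld side it supplies a $\GL_n(\ZZ_p)$-equivariant morphism of algebra objects
\[
\alpha_{\HDr}\colon \ZZ_p[\epsilon]\to \RGammacond\bigl(\HDr_{\proet},\Ohatp_{\HDr}\bigr)
\]
in $D(\Solid_{\GL_n(\ZZ_p)})$. Here $\epsilon$ is square-zero in cohomological degree $1$, and the cofiber of $\alpha_{\HDr}$ is killed by a bounded power of $p$. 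We let $(\ZZ/p^r)^\times$ act trivially on source and target. We then tensor with the finite free $\ZZ_p$-algebra $\ZZ_p[\omega_{p^r}]$, letting $(\ZZ/p^r)^\times$ act through the Galois action on this factor. This gives an equivariant morphism
\[
\alpha'\colon \ZZ_p[\omega_{p^r}][\epsilon]\to \ZZ_p[\omega_{p^r}]\otimes_{\ZZ_p}\RGammacond(\HDr_{\proet},\Ohatp_{\HDr}),
\]
and its cofiber is still killed by a power of $p$. Since $\ZZ_p[\omega_{p^r}]$ is finite free, the target identifies equivariantly with $\RGammacond\bigl(\HDr_{\proet},(\ZZ_p[\omega_{p^r}]\otimes_{\ZZ_p}\Ohatp_{\HDr})_{\cond}\bigr)$.

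Next we apply derived condensed global sections to the morphism $\gamma_r$ of \cref{lem:H-affinoid-evaluation}, composed with $g_{r,\proet,*}\to Rg_{r,\proet,*}$. Using $\RGammacond(\HDr,Rg_{r,\proet,*}(-))\simeq\RGammacond(\HCyc,-)$, this gives a morphism of algebra objects
\[
\gamma_r'\colon \RGammacond\bigl(\HDr_{\proet},(\ZZ_p[\omega_{p^r}]\otimes\Ohatp)_{\cond}\bigr)\to \RGammacond\bigl((\HCyc)_{\proet},\Ohatp_{\HCyc}\bigr).
\]
By \cref{prop:H-derived-comparison} it is $(\ZZ/p^r)^\times\times\GL_n(\ZZ_p)$-equivariant. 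We set $\alpha^\times_{\HDr,r}:=\gamma_r'\circ\alpha'$.

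To bound the cofiber of $\gamma_r'$ we argue as in \cref{thm:finite-level-rational}. Choose a cover of $\HDr$ by affinoid perfectoid objects $U_j$ all of whose finite fiber products are again affinoid perfectoid; such a cover exists because $\HDr$ is locally perfectoid in the pro-\'etale topology. The cofiber of $\gamma_r'$ is then the limit of the cosimplicial diagram of local cofibers $K_U$. By \cref{prop:H-derived-comparison}, every cohomology group of every $K_U$ is killed by a uniform $p^N$. The \v{C}ech-to-total spectral sequence then shows that $H^i(\cofib\gamma_r')$ is killed by $p^{N(i+1)}$. Finally, the cofiber sequence $\cofib(\alpha')\to C\to\cofib(\gamma_r')$ yields the integers $M_i$.

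The main obstacle is the input from \cite[Thm.~3.9.1]{Barthe-Schlank-Stapleton-Weinstein-rationalization}. We need its Drinfeld half, with the source $\ZZ_p[\epsilon]$ compatible with the $\GL_n(\ZZ_p)$-action, and we need it as a morphism of algebras in solid equivariant modules rather than merely as a computation of cohomology. If it is only stated for the Lubin--Tate side, we would first transport it through the two-towers identification $\RGammacond(X)^{h\GG_n}$ versus $\RGammacond(X)^{h\GL_n(\ZZ_p)}$. A further point to verify is the uniformity of $N$ in \cref{prop:H-derived-comparison}. This holds because the bound comes only from the trace discriminant of $\ZZ_p[\omega_{p^r}]$, so the \v{C}ech argument goes through without any quasi-compactness of $\HDr$.
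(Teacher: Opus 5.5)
Your proposal is correct and follows the paper's proof essentially step by step. You tensor the Drinfeld-side comparison $\ZZ_p[\epsilon]\to\RGammacond(\mathcal H_{\proet},\Ohatp_{\mathcal H})$ with $\ZZ_p[\omega_{p^r}]$, compose with the global sections of $\gamma_r$, and bound the cofiber of the latter by the \v{C}ech spectral sequence using the uniform local bound $p^N$ of \cref{prop:H-derived-comparison}. The only difference is that the paper takes the Drinfeld input directly from \cite[Thm.~3.5.3(2)]{Barthe-Schlank-Stapleton-Weinstein-rationalization}, already a $\GL_n(\ZZ_p)$-equivariant map of algebra objects, so your contemplated detour through the two-towers isomorphism is not needed.
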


\begin{proof}
By \cite[Thm.~3.5.3(2)]{Barthe-Schlank-Stapleton-Weinstein-rationalization}, there is a
$\GL_n(\ZZ_p)$-equivariant morphism of algebra objects
\[
\alpha_{\mathcal H}\colon
\ZZ_p[\epsilon]\to
\RGammacond(\mathcal H_{\proet},\Ohatp_{\mathcal H})
\]
whose cofiber is annihilated by a power of $p$. We regard $\alpha_{\mathcal H}$ as a
$(\ZZ/p^r)^\times\times \GL_n(\ZZ_p)$-equivariant morphism by letting
$(\ZZ/p^r)^\times$ act trivially on both source and target.

Tensoring $\alpha_{\mathcal H}$ with $\ZZ_p[\omega_{p^r}]$ yields a
$(\ZZ/p^r)^\times\times \GL_n(\ZZ_p)$-equivariant morphism
\[
\alpha_{\mathcal H}'\colon
\ZZ_p[\omega_{p^r}][\epsilon]
=
\ZZ_p[\omega_{p^r}]\otimes_{\ZZ_p}\ZZ_p[\epsilon]
\to
\ZZ_p[\omega_{p^r}]\otimes_{\ZZ_p}\RGammacond(\mathcal H_{\proet},\Ohatp_{\mathcal H}),
\]
whose cofiber is also annihilated by a power of $p$.

Since $\ZZ_p[\omega_{p^r}]$ is finite free over $\ZZ_p$, there is a natural
$(\ZZ/p^r)^\times\times \GL_n(\ZZ_p)$-equivariant identification
\[
\ZZ_p[\omega_{p^r}]\otimes_{\ZZ_p}\RGammacond(\mathcal H_{\proet},\Ohatp_{\mathcal H})
\simeq
\RGammacond\bigl(\mathcal H_{\proet},
\ZZ_p[\omega_{p^r}]\otimes_{\ZZ_p}\Ohatp_{\mathcal H}\bigr).
\]

Let
\[
\gamma_r\colon
(\ZZ_p[\omega_{p^r}]\otimes_{\ZZ_p}\Ohatp_{\mathcal H})_{\mathrm{cond}}
\to
g_{r,\proet,*}(\Ohatpc_{\mathcal H_r^\times})_{\mathrm{cond}}
\]
be the morphism of \cref{prop:H-derived-comparison}. Applying derived condensed global sections
and using the adjunction isomorphism
\[
\RGammacond\bigl(\mathcal H_{\proet},
g_{r,\proet,*}(\Ohatpc_{\mathcal H_r^\times})_{\mathrm{cond}}\bigr)
\simeq
\RGammacond\bigl((\mathcal H_r^\times)_{\proet},\Ohatp_{\mathcal H_r^\times}\bigr),
\]
we obtain a $(\ZZ/p^r)^\times\times \GL_n(\ZZ_p)$-equivariant morphism of algebra objects
\[
\gamma_r'\colon
\RGammacond\bigl(\mathcal H_{\proet},
\ZZ_p[\omega_{p^r}]\otimes_{\ZZ_p}\Ohatp_{\mathcal H}\bigr)
\to
\RGammacond\bigl((\mathcal H_r^\times)_{\proet},\Ohatp_{\mathcal H_r^\times}\bigr).
\]

We claim that for every $i\ge 0$ there exists $N_i\ge 0$ such that
\[
p^{N_i}\cdot H^i(\cofib(\gamma_r'))=0.
\]
To see this, choose a cover $\{U_j\}_{j\in I}$ of $\mathcal H$ by affinoid perfectoid objects
such that every finite intersection
\[
U_{j_0\cdots j_q}:=
U_{j_0}\times_{\mathcal H}\cdots\times_{\mathcal H}U_{j_q}
\]
is again affinoid perfectoid. Consider the induced map of cosimplicial objects
\[
\alpha_{\mathcal U}\colon
\prod_{f\in I^{[q]}}
R\Gamma\Bigl(U_{\operatorname{im}(f)},
(\ZZ_p[\omega_{p^r}]\otimes_{\ZZ_p}\Ohatp_{\mathcal H})_{\mathrm{cond}}\Bigr)
\to
\prod_{f\in I^{[q]}}
R\Gamma\Bigl(U_{\operatorname{im}(f)},
g_{r,\proet,*}(\Ohatpc_{\mathcal H_r^\times})_{\mathrm{cond}}\Bigr).
\]
Its totalization computes $\gamma_r'$.

By \cref{prop:H-derived-comparison}, there exists an integer $N\ge 0$, depending only on
$r$, such that for every affinoid perfectoid $U\to \mathcal H$, the cofiber
\[
\cofib\!\Bigl(
R\Gamma\bigl(U,(\ZZ_p[\omega_{p^r}]\otimes_{\ZZ_p}\Ohatp_{\mathcal H})_{\mathrm{cond}}\bigr)
\to
R\Gamma\bigl(U,g_{r,\proet,*}(\Ohatpc_{\mathcal H_r^\times})_{\mathrm{cond}}\bigr)
\Bigr)
\]
has cohomology annihilated by $p^N$ in all degrees. In particular, for every $q\ge 0$ and
every $s\ge 0$, the cohomology of the cofiber of the $q$-th level of $\alpha_{\mathcal U}$
is annihilated by $p^N$.

We therefore obtain a spectral sequence
\[
E_1^{s,q}
=
\prod_{f\in I^{[q]}}
H^s\!\left(
\cofib\!\Bigl(
R\Gamma\bigl(U_{\operatorname{im}(f)},
(\ZZ_p[\omega_{p^r}]\otimes_{\ZZ_p}\Ohatp_{\mathcal H})_{\mathrm{cond}}\bigr)
\to
R\Gamma\bigl(U_{\operatorname{im}(f)},
g_{r,\proet,*}(\Ohatpc_{\mathcal H_r^\times})_{\mathrm{cond}}\bigr)
\Bigr)\right)
\to
H^{s+q}(\cofib(\gamma_r')).
\]
Each $E_1^{s,q}$ is killed by $p^N$, hence so is each $E_\infty^{s,q}$. For fixed total
degree $i\ge 0$, there are at most $i+1$ pairs $(s,q)$ with $s+q=i$, so the induced
filtration on $H^i(\cofib(\gamma_r'))$ has at most $i+1$ graded pieces, each killed by
$p^N$. Therefore
\[
p^{N(i+1)}\cdot H^i(\cofib(\gamma_r'))=0
\qquad\text{for all } i\ge 0.
\]

Now define
\[
\alpha_{\HDr,r}^{\times}:=\gamma_r'\circ \alpha_{\mathcal H}'\colon
\ZZ_p[\omega_{p^r}][\epsilon]
\to
\RGammacond\bigl((\mathcal H_r^\times)_{\proet},\Ohatp_{\mathcal H_r^\times}\bigr).
\]
This is a $(\ZZ/p^r)^\times\times \GL_n(\ZZ_p)$-equivariant morphism of algebra objects.

Finally, let $C$ be the cofiber of $\alpha_{\HDr,r}^{\times}$. Since $\alpha_{\HDr,r}^{\times}=\gamma_r'\circ \alpha_{\mathcal H}'$, there is a cofiber sequence
\[
\cofib(\alpha_{\mathcal H}')\to C\to \cofib(\gamma_r')\to \Sigma\cofib(\alpha_{\mathcal H}').
\]
The cofiber of $\alpha_{\mathcal H}'$ is annihilated by a power of $p$, while for each
$i\ge 0$ the group $H^i(\cofib(\gamma_r'))$ is annihilated by $p^{N(i+1)}$. It follows
that for every $i\in \ZZ$ there exists an integer $M_i\ge 0$ such that
\[
p^{M_i}\cdot H^i(C)=0.
\]
This proves the claim.
\end{proof}

We now study how this morphism interacts with taking fixed points.

\begin{proposition}\label{prop:H-cyclotomic-open-subgroups}
Let
\[
\alpha_{\HDr,r}^{\times}\colon
\ZZ_p[\omega_{p^r}][\epsilon]\to
\RGammacond\bigl((\mathcal H_r^\times)_{\proet},\Ohatp_{\mathcal H_r^\times}\bigr)
\]
be the morphism of \cref{lem:H-cyclotomic-cohomology}. Then for every open subgroup $H\subseteq \GL_n(\ZZ_p)$, the induced morphism
\[
(\alpha_{\HDr,r}^{\times})^{hH}\colon
\bigl(\ZZ_p[\omega_{p^r}][\epsilon]\bigr)^{hH}\to
\RGammacond\bigl((\mathcal H_r^\times)_{\proet},\Ohatp_{\mathcal H_r^\times}\bigr)^{hH}
\]
becomes an equivalence after inverting $p$.
\end{proposition}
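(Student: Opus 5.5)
The argument will be the Drinfeld-side analogue of \cref{prop:finite-level-rational-open-subgroups}, with $\GL_n(\ZZ_p)$ in place of $\GG_n$. Let $K:=\cofib(\alpha_{\HDr,r}^{\times})$. Since homotopy fixed points for $H$ are exact, they commute with cofibers, so $\cofib\bigl((\alpha_{\HDr,r}^{\times})^{hH}\bigr)\simeq K^{hH}$. Inverting $p$ is also exact, so it suffices to show that every cohomology group $H^m(K^{hH})$ is $p$-power torsion; then $K^{hH}[1/p]\simeq 0$. The input is \cref{lem:H-cyclotomic-cohomology}: for each $b$ there is an $M_b$ with $p^{M_b}\cdot H^b(K)=0$.

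\textbf{Step 1: choose a subgroup of finite cohomological dimension.} The group $\GL_n(\ZZ_p)$ is a compact $p$-adic analytic group, so it has finite virtual cohomological dimension, and hence so does the open subgroup $H$. Choose an open normal subgroup $U\triangleleft H$ that is torsion-free (for instance, a uniform pro-$p$ subgroup $\ker(\GL_n(\ZZ_p)\to \GL_n(\ZZ/p^k))\cap H$ for $k$ large). Then $U$ is a Poincaré duality group of dimension $\dim \GL_n = n^2$, so it has finite cohomological dimension.

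\textbf{Step 2: torsion for $K^{hU}$.} Apply \cite[Prop.~3.3.6]{Barthe-Schlank-Stapleton-Weinstein-rationalization} to $1\subset U$. This gives a spectral sequence
\[
E_2^{a,b}=H^a_{\cts}(U,H^b(K))\Rightarrow H^{a+b}(K^{hU}).
\]
Each $E_2^{a,b}$ is killed by $p^{M_b}$, since already the continuous cochains with coefficients in $H^b(K)$ are. Because $a$ ranges over $[0,n^2]$, each total degree has a finite filtration with $p$-power-torsion graded pieces. Hence $H^m(K^{hU})$ is $p$-power torsion; in fact it is bounded torsion, killed by $p^{\sum_{b} M_b}$ over the finitely many relevant $b$.

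\textbf{Step 3: pass from $U$ to $H$.} Use the Lyndon--Hochschild--Serre spectral sequence for $U\subset H$:
\[
E_2^{a,b}=H^a(H/U,H^b(K^{hU}))\Rightarrow H^{a+b}(K^{hH}).
\]
Here $H/U$ is finite, and its group cohomology with $p$-power-torsion coefficients is $p$-power torsion. The main obstacle is convergence, since finite groups can have infinite cohomological dimension. I would handle this in one of two ways. First, one can note that $H^b(K)$ vanishes for $b<0$, because both source and target are coconnective in the cohomological sense. Then each total degree receives contributions only from $a\le m$, so the filtration is finite. Alternatively, one can sidestep the issue: $K^{hU}[1/p]\simeq 0$ already by Step 2, and $(-)^{h(H/U)}$ of a finite group commutes with inverting $p$ (it is a finite limit up to a norm-type argument). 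Then $K^{hH}[1/p]\simeq (K^{hU}[1/p])^{h(H/U)}\simeq 0$.

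Finally, check that the equivariance needed to form the fixed points is available. It is provided by \cref{lem:H-cyclotomic-cohomology}, restricted along $H\subseteq \GL_n(\ZZ_p)$ (the $(\ZZ/p^r)^\times$-factor plays no role). Combining Steps 1–3 gives $\cofib\bigl((\alpha_{\HDr,r}^{\times})^{hH}\bigr)[1/p]\simeq 0$, which is the claim.
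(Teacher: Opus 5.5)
Your proposal is correct and follows the paper's proof step for step: take the cofiber $K$ of $\alpha_{\HDr,r}^{\times}$, use the descent spectral sequence over an open normal $U\triangleleft H$ of finite cohomological dimension to see that each $H^m(K^{hU})$ is $p$-power torsion, then run the Lyndon--Hochschild--Serre spectral sequence for $U\subset H$, using cohomological bounded-belowness to get finite filtrations in each total degree. Two small caveats: a map between objects concentrated in cohomological degrees $\ge 0$ only gives $H^b(K)=0$ for $b<-1$ (which is all the convergence argument needs), and your alternative route of commuting $(-)^{h(H/U)}$ with inverting $p$ is valid only because of this same boundedness, since homotopy fixed points for a finite group do not commute with inverting $p$ in general.
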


\begin{proof}
By \cref{lem:H-cyclotomic-cohomology}, for every $b\ge 0$ there exists an integer $N_b\ge 0$
such that
\[
p^{N_b}\cdot H^b(K)=0,
\]
where $K:=\cofib(\alpha_{\HDr,r}^{\times})$.

Since $\GL_n(\ZZ_p)$ has finite virtual cohomological dimension, so does the open subgroup
$H$. Choose an open normal subgroup $U\triangleleft H$
of finite cohomological dimension.

We first show that each cohomology group of $K^{hU}$ is $p$-power torsion. Apply
\cite[Prop.~3.3.6]{Barthe-Schlank-Stapleton-Weinstein-rationalization} to the closed normal
subgroup $1\subset U$. This gives a spectral sequence
\[
E_2^{a,b}
=
H^a_{\cts}(U,H^b(K))
\to
H^{a+b}(K^{hU}).
\]
For each fixed $b$, the coefficient module $H^b(K)$ is killed by $p^{N_b}$, hence every
continuous cochain group computing $H^a_{\cts}(U,H^b(K))$ is also killed by $p^{N_b}$. It
follows that every $E_2^{a,b}$ is $p$-power torsion. Since $U$ has finite cohomological
dimension, only finitely many values of $a$ can contribute to a fixed total degree. Therefore
each group $H^m(K^{hU})$ admits a finite filtration with $p$-power torsion graded pieces, and hence is itself
$p$-power torsion.

Now apply \cite[Prop.~3.3.6]{Barthe-Schlank-Stapleton-Weinstein-rationalization} to the closed
normal subgroup $U\subset H$. We obtain a Lyndon--Hochschild--Serre spectral sequence
\[
E_2^{a,b}
=
H^a_{\cts}\bigl(H/U,\,H^b(K^{hU})\bigr)
\to
H^{a+b}(K^{hH}).
\]
Since $K^{hU}$ is connective and each $H^b(K^{hU})$ is $p$-power torsion, every $E_2^{a,b}$ is
again $p$-power torsion. For each total degree only finitely many values of $a$ occur, so
each
\[
H^m(K^{hH})
\]
admits a finite filtration with $p$-power torsion graded pieces. Hence $H^m(K^{hH})$ is
$p$-power torsion for every $m$.

Therefore
\[
H^m(K^{hH}[1/p])=0
\qquad\text{for all }m,
\]
so $K^{hH}[1/p]\simeq 0$. This proves that $(\alpha_{\HDr,r}^{\times})^{hH}$ becomes an
equivalence after inverting $p$.
\end{proof}

\subsection{Computing With the Two-Towers Isomorphism}\label{sec: two towers}

In this subsection we use the two-towers isomorphism
\[
\begin{tikzcd}
& X \arrow[dl, swap, "\GL_n(\ZZ_p)" ]
     \arrow[dr, "\GG_n"] & \\
\LTG & & \HDr
\end{tikzcd}
\]

We now introduce the subgroups with respect to which we will take  fixed points.

\begin{notation}
For $r\ge 1$, let
\[
\GG_{n,r}:=\ker\!\bigl(\GG_n\to (\ZZ/p^r)^\times\bigr),
\qquad
\GL_{n,r}:=\ker\!\bigl(\GL_n(\ZZ_p)\to \GL_n(\ZZ/p^r)\bigr),
\]
where the first map is the reduction modulo $p^r$ of the determinant character
$\GG_n\to \ZZ_p^\times$.
\end{notation}

Finally, we will use the two-towers isomorphism to compute $D_r[1/p]=\pi_0(C_{0,r})$.

\begin{proposition}\label{prop:hfp-comparison-finite-det-level}
There is an equivalence of algebra objects
\[
(\pi_0(C_{0,r})[\epsilon])^{h_c\GG_{n,r}}
\;\xrightarrow{\ \sim\ }\;
(\QQ_p(\omega_{p^r})[\epsilon])^{h_c\GL_{n,r}}.
\]
In particular,
\[
\pi_0(C_{0,r})^{h_c\GG_{n,r}}
\;\xrightarrow{\ \sim\ }\;
\QQ_p(\omega_{p^r})^{h_c\GL_{n,r}}.
\]
\end{proposition}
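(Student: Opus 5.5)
Both sides will be computed as continuous fixed points of the rational condensed cohomology of a single perfectoid space: the quotient $X_r:=X/(\GL_{n,r}\times \GG_{n,r})$ of the common infinite-level space $X$. I will reach it once from the Lubin--Tate side and once from the Drinfeld side, in the spirit of \cite{Barthe-Schlank-Stapleton-Weinstein-rationalization}.

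\textbf{Identifying $\pi_0(C_{0,r})$.} First I identify $\pi_0(C_{0,r})$ with $D_r[1/p]$. By \cref{cor: classifying the spliiting algebra on pi0} at $t=0$, $\pi_0 C_{0,r}$ classifies full level-$p^r$ structures on the rationalized formal group. These are the Drinfeld level structures after inverting $p$, so $\pi_0(C_{0,r})\simeq D_r[1/p]$, compatibly with the $J_r$-action.

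\textbf{The Lubin--Tate side.} I take $\GL_{n,r}\subset\GL_n(\ZZ/p^r)$-invariants, meaning the kernel acts trivially at level $r$. Then I consider $\GG_{n,r}$-fixed points of \cref{thm:finite-level-rational}. By \cref{prop:finite-level-rational-open-subgroups}, applied to the open subgroup $H=\GG_{n,r}\subset\GG_n$, this gives
\[
(D_r[1/p][\epsilon])^{h_c\GG_{n,r}}\simeq \RGammacond((\LevelG)_{\proet},\Ohatp)[1/p]^{h_c\GG_{n,r}}.
\]
Since $\LevelG\simeq X/\GL_{n,r}$ as diamonds, the right-hand side becomes the cohomology of $X$ with continuous $\GL_{n,r}\times\GG_{n,r}$ fixed points. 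This uses the standard pro-étale descent $R\Gamma(X/G,\widehat{\mathcal O})\simeq R\Gamma(X,\widehat{\mathcal O})^{h_cG}$ for profinite $G$ acting freely.

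\textbf{The Drinfeld side.} By \cref{def:H-cyclotomic-cover}, $\HCyc=X/\GG_{n,r}$. Then \cref{prop:H-cyclotomic-open-subgroups}, applied with $H=\GL_{n,r}$, identifies the $\GL_{n,r}$-fixed points of its rational cohomology with $(\QQ_p(\omega_{p^r})[\epsilon])^{h_c\GL_{n,r}}$. Applying the same descent on this side yields the same object. Composing the two identifications gives the claimed equivalence of algebra objects; it is multiplicative because every map in the chain is a map of algebras.

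\textbf{The degree-zero statement.} For the "in particular", I pass to $H^0$. The class $\epsilon$ sits in degree $1$ with trivial action, so $(B[\epsilon])^{hG}\simeq B^{hG}\oplus B^{hG}\epsilon[-1]$, and the $\epsilon$-free summands match. I need to check that the equivalence respects this splitting. I expect it does, because on both sides $\epsilon$ is pulled back from the same class on $X$: the class of \cite{Barthe-Schlank-Stapleton-Weinstein-rationalization}, which is compatible under the two-towers isomorphism.

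\textbf{Main obstacle.} The hardest step is making the descent identifications precise and equivariant in the condensed setting, namely commuting $[1/p]$ with continuous fixed points. I would handle this by the finite virtual cohomological dimension argument already used in \cref{prop:finite-level-rational-open-subgroups}, and by checking that the determinant-level quotient of $X$ matches $\HCyc$ via \cref{prop:H-cyclotomic-pullback}.
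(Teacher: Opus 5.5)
Your proposal is correct and follows essentially the paper's route: the paper also applies \cref{prop:finite-level-rational-open-subgroups} with $H=\GG_{n,r}$ and \cref{prop:H-cyclotomic-open-subgroups} with $H=\GL_{n,r}$, and compares $\RGammacond((\LevelG)_{\proet},\Ohatp_{\LevelG})^{h\GG_{n,r}}\simeq\RGammacond((\HCyc)_{\proet},\Ohatp_{\HCyc})^{h\GL_{n,r}}$, which it obtains from the equivalence of quotient diamonds $[\LevelG^\diamond/\GG_{n,r}]\simeq[(\HCyc)^\diamond/\GL_{n,r}]$ together with \cite[Proposition 3.6.3]{Barthe-Schlank-Stapleton-Weinstein-rationalization}; you instead descend explicitly from $X$, and this is the same identification, since both sides are $X$ modulo $\GL_{n,r}\times\GG_{n,r}$. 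For the degree-zero statement the paper, like you, uses that the equivalence is $\QQ_p[\epsilon]$-linear and base changes along $\QQ_p[\epsilon]\to\QQ_p$; that linearity is all you need, rather than preservation of the individual summands of the splitting, and note that $\GL_{n,r}$ is the congruence kernel in $\GL_n(\ZZ_p)$, not a subgroup of $\GL_n(\ZZ/p^r)$.
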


\begin{proof}
By the compatibility of the two-towers isomorphism with determinants
(\cite[Thm.~3.6.1 and Thm.~3.7.1]{Barthel-Schlank-Stapleton-Weinstein-2024-Picard}), the equivalence
\[
[\LTG^\diamond/\GG_n]\simeq [\HDr^\diamond/\GL_n(\ZZ_p)]
\]
pulls back along the level-$p^r$ cyclotomic cover on the height $1$ side to an equivalence
of quotient diamonds
\[
[\LevelG^\diamond/\GG_{n,r}]
\;\xrightarrow{\ \sim\ }\;
[(\mathcal H_r^\times)^\diamond/\GL_{n,r}].
\]
Applying condensed pro-etale cohomology with coefficients in $\Ohatp$ and using \cite[Proposition 3.6.3]{Barthe-Schlank-Stapleton-Weinstein-rationalization} we get an equivalence of algebra objects
\[
\RGammacond\bigl((\LevelG)_{\proet},\Ohatp_{\LevelG}\bigr)^{h\GG_{n,r}}
\;\xrightarrow{\ \sim\ }\;
\RGammacond\bigl((\HCyc)_{\proet},\Ohatp_{\HCyc}\bigr)^{h\GL_{n,r}}.
\]

By \cref{thm:finite-level-rational},
\[
\RGammacond\bigl((\LevelG)_{\proet},\Ohatp_{\LevelG}\bigr)[1/p]
\simeq
\pi_0(C_{0,r})[\epsilon],
\]
and by \cref{lem:H-cyclotomic-cohomology},
\[
\RGammacond\bigl((\HCyc)_{\proet},\Ohatp_{\HCyc}\bigr)[1/p]
\simeq
\QQ_p(\omega_{p^r})[\epsilon].
\]
Taking fixed points under $\GG_{n,r}$ and $\GL_{n,r}$, respectively and applying \cref{prop:H-cyclotomic-open-subgroups} and \cref{prop:finite-level-rational-open-subgroups}, yields
\[
(\pi_0(C_{0,r})[\epsilon])^{h\GG_{n,r}} \simeq D_r[\epsilon]^{h\GG_{n,r}}[1/p]
\;\xrightarrow{\ \sim\ }\;
\ZZ_p[\omega_p^r][\epsilon]^{h\GL_{n,r}}[1/p]\simeq (\QQ_p(\omega_{p^r})[\epsilon])^{h\GL_{n,r}}.
\]

Finally, the preceding equivalences are compatible with the canonical $\QQ_p[\epsilon]$-algebra structures.
Thus it is $\QQ_p[\epsilon]$-linear, hence carries $\epsilon$ to $\epsilon$. Passing to the quotient by
$(\epsilon)$ yields
\[
\pi_0(C_{0,r})^{h_c\GG_{n,r}}
\;\xrightarrow{\ \sim\ }\;
\QQ_p(\omega_{p^r})^{h_c\GL_{n,r}}.
\]
\end{proof}

\subsection{The $n$-Fold Universal Character of $\SS_{K(n)}$}\label{sec: SK(n) universal character}

In this subsection we tie everything together to compute the $n$-fold universal character of $\SS_{K(n)}$ and get a formula for $L_\QQ(\SS_{K(n)}^A)$ for any $\pi$-finite space $A$. 

\begin{lemma}\label{lem:C0r-hGGnr-spectrum}
    We have a $(\ZZ/p^r)^\times$-equivariant equivalence 
    \[
    \pi_0(C_{0,r})^{h_c\GG_{n,r}}\simeq C_{0,r}^{h_c\GG_{n,r}}\simeq L_\QQ(\SS_{K(n)}[\omega^{(n)}_{p^r}]^{\B(\ZZ/p^r)^n})[e_r^{-1}],
    \]
    where $e_r$ is the idempotent splitting the primitive root of unity.
\end{lemma}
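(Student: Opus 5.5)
We read $C_{0,r}$ as $C_{0,r}(\En)=L_\QQ(\En^{\B(\ZZ/p^r)^n})[e_r^{-1}]$, the finite-level transchromatic splitting algebra at $t=0$. By \cref{cor: classifying the splitting algebra} and Stapleton's description, $\pi_0(C_{0,r})$ is the rationalized Drinfeld level-$p^r$ ring $D_r[1/p]$ in the notation of \cref{sec: Lubin-Tate Side}. The proof has two separate steps, one for each of the two equivalences.

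\textbf{First equivalence.} Here we compare $\pi_0$ with the spectrum. The spectrum $C_{0,r}$ is rational and $2$-periodic, and $\pi_1(C_{0,r})=0$. Moreover, $\pi_{2k}(C_{0,r})\simeq \pi_0(C_{0,r})\otimes_{\pi_0}\pi_2^{\otimes k}$, where $\pi_2$ is an invertible module carrying the twisted $\GG_n$-action. We will use \cref{prop:finite-level-rational-open-subgroups} together with its height-$1$/Drinfeld counterpart and the two-towers comparison \cref{prop:hfp-comparison-finite-det-level}. These reduce the continuous cohomology of $\GG_{n,r}$ with coefficients in the twisted modules $\pi_{2k}(C_{0,r})$ to a rational computation. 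We expect that this computation gives vanishing for $k\neq 0$; this is the same mechanism as in \cite{Barthe-Schlank-Stapleton-Weinstein-rationalization}, where $L_\QQ\SS_{K(n)}$ only sees weight zero. The descent spectral sequence for $C_{0,r}^{h_c\GG_{n,r}}$ then collapses to the weight-zero row. That row is $\pi_0(C_{0,r})^{h_c\GG_{n,r}}$, viewed as a rational algebra with its cohomological ($\epsilon$-type) classes, and this yields the first equivalence. The $(\ZZ/p^r)^\times$-equivariance holds automatically, because $\GG_{n,r}$ is normal with quotient $(\ZZ/p^r)^\times$.

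\textbf{Second equivalence.} Here we use Galois descent. Devinatz--Hopkins give $\SS_{K(n)}\simeq \En^{h_c\GG_n}$ with $\SS_{K(n)}[\omega^{(n)}_{p^r}]\simeq \En^{h_c\GG_{n,r}}$, since the height $n$ cyclotomic extension corresponds to the determinant quotient. By \cref{lem: dedent for fix point by close subgroup}, the map $\En^{h_c\GG_{n,r}}\to\En$ is descendable. The functor
\[
M\mapsto L_\QQ(M^{\B(\ZZ/p^r)^n})[e_r^{-1}]
\]
is exact, so it commutes with the totalization computing continuous fixed points. We note that $\B(\ZZ/p^r)^n$ is $\pi$-finite and $\GG_{n,r}$ acts only on coefficients. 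We also need the idempotent $e_r$ to be $\GG_{n,r}$-invariant. This holds because $e_r$ is defined through the height $n$ primitive roots of unity and the Fourier transform, and $\GG_{n,r}$ fixes the $p^r$-th roots. These facts give $C_{0,r}^{h_c\GG_{n,r}}\simeq L_\QQ(\SS_{K(n)}[\omega^{(n)}_{p^r}]^{\B(\ZZ/p^r)^n})[e_r^{-1}]$. This is the same interchange already used in the proof of \cref{lem:L!-as-finite-quotient-hfps}.

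\textbf{Main obstacle.} The delicate point is interchanging rationalization with continuous fixed points. $L_\QQ$ does not commute with infinite limits in general, so the descendability argument must be run at the $K(n)$-local integral level first. One then controls the error using the bounded $p$-torsion estimates of \cref{prop:finite-level-rational-open-subgroups}, which rely on finite virtual cohomological dimension, exactly as in \cite{Barthe-Schlank-Stapleton-Weinstein-rationalization}. The vanishing of the nonzero weights in the first step is the other point to check carefully.
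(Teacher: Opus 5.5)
Your outline follows the same skeleton as the paper's proof. The Devinatz--Hopkins descent spectral sequence for $\En^{\B(\ZZ/p^r)^n}$ with its $\GG_{n,r}$-action is rationalized and cut down by the $\GG_{n,r}$-invariant idempotent $e_r$. This gives a strongly convergent spectral sequence $E_2^{s,t}=H^s_{\cts}(\GG_{n,r},\pi_t(C_{0,r}))\Rightarrow \pi_{t-s}\bigl(L_\QQ(\SS_{K(n)}[\omega^{(n)}_{p^r}]^{\B(\ZZ/p^r)^n})[e_r^{-1}]\bigr)$. Your descendability phrasing of this step is fine: descendability makes the Tot tower pro-constant, which is exactly the horizontal vanishing line the paper uses to commute $L_\QQ$ past the limit.

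The gap is in the step that actually carries the lemma, namely the vanishing of $H^*_{\cts}(\GG_{n,r},\pi_{2k}(C_{0,r}))$ for $k\neq 0$. You only say you ``expect'' this. Moreover, the route you propose for it does not work with what is available. \cref{prop:finite-level-rational-open-subgroups}, its Drinfeld counterpart and \cref{prop:hfp-comparison-finite-det-level} concern only the untwisted completed structure sheaf, i.e.\ $\pi_0(C_{0,r})=D_r[1/p]$. They say nothing about the twisted coefficients $\pi_{2k}(C_{0,r})\cong\pi_0(C_{0,r})\otimes\pi_2^{\otimes k}$, so you would need a twisted two-towers comparison that is not established here.

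The missing idea is elementary and uses no $p$-adic geometry.
\begin{itemize}
\item Choose $N\ge r$ large enough (and $N\ge 2$ if $p=2$) that the central scalar subgroup $Z=1+p^N\ZZ_p$ lies in $\GG_{n,r}$, and let $\gamma$ be a topological generator.
\item A scalar $a\in Z$ acts on level structures through $[a]$, which is the identity on $p^r$-torsion. It acts on $\pi_2$ by multiplication by $a$. Hence $Z$ acts trivially on $\pi_0(C_{0,r})$ and by the scalar $\gamma^k$ on $\pi_{2k}(C_{0,r})$.
\item For $k\neq 0$, $\gamma^k-1$ is a nonzero element of $\QQ_p$ and so acts invertibly. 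The complex $\pi_{2k}(C_{0,r})\xrightarrow{\gamma-1}\pi_{2k}(C_{0,r})$ computing $H^*_{\cts}(Z,-)$ is therefore acyclic.
\item The Lyndon--Hochschild--Serre spectral sequence for $Z\subseteq\GG_{n,r}$ (normal, since central) then kills all rows $t\neq0$.
\end{itemize}
The descent spectral sequence collapses onto the row $t=0$ without extension problems, which gives both equivalences at once.

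Finally, your ``main obstacle'' paragraph misplaces the difficulty. The bounded $p$-torsion estimates of \cref{prop:finite-level-rational-open-subgroups} compare $D_r[\epsilon]$ with the pro-\'etale cohomology of the level-$p^r$ Lubin--Tate space. They play no role in interchanging $L_\QQ$ with continuous fixed points; that interchange is already handled by the descendability argument in your second step.
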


\begin{proof}
    Consider the Devinatz--Hopkins homotopy fixed point spectral sequence for
    $\En^{\B(\ZZ/p^r)^n}$:
    \[
    E_2^{s,t}=H^s_{\cts}\!\bigl(\GG_{n,r},\pi_t(\En^{\B(\ZZ/p^r)^n})\bigr)
    \to
    \pi_{t-s}\!\bigl((\En^{\B(\ZZ/p^r)^n})^{h_c\GG_{n,r}}\bigr).
    \]
    This spectral sequence is strongly convergent and has a horizontal vanishing line. Moreover, since $\GG_{n,r}$ acts trivially on $\B(\ZZ/p^r)^n$, the abutment identifies
    with
    \[
    \pi_{t-s}\!\bigl((\SS_{K(n)}[\omega^{(n)}_{p^r}])^{\B(\ZZ/p^r)^n}\bigr).
    \]
    Rationalizing therefore gives another strongly convergent spectral sequence
    \[
    \QQ\otimes E_2^{s,t}
    \simeq
    H^s_{\cts}\!\bigl(\GG_{n,r},\QQ\otimes\pi_t(\En^{\B(\ZZ/p^r)^n})\bigr)
    \to
    \pi_{t-s}\!\bigl(L_\QQ((\SS_{K(n)}[\omega^{(n)}_{p^r}])^{\B(\ZZ/p^r)^n})\bigr).
    \]

    By construction,
    \[
    e_r\in \pi_0\!\bigl(L_\QQ(\En^{\B(\ZZ/p^r)^n})\bigr)
    \]
    is $\GG_{n,r}$-invariant. Therefore multiplication by $e_r$ defines an idempotent endomorphism of the
    rational $\GG_{n,r}$-spectrum $L_\QQ(\En^{\B(\ZZ/p^r)^n})$. By naturality of the homotopy fixed point spectral sequence, this idempotent acts on every page of the rationalized spectral sequence. Passing to the image of this idempotent on each page gives a direct summand spectral sequence, still strongly
    convergent, whose abutment is the $e_r$-summand
    \[
    \pi_{t-s}\!\bigl(L_\QQ((\SS_{K(n)}[\omega^{(n)}_{p^r}])^{\B(\ZZ/p^r)^n})[e_r^{-1}]\bigr),
    \]
    
    Thus we obtain a strongly convergent spectral sequence
    \[
    E_{2,e_r}^{s,t}
    =
    H^s_{\cts}\!\bigl(\GG_{n,r},\pi_t(C_{0,r})\bigr)
    \to
    \pi_{t-s}\!\bigl(L_\QQ((\SS_{K(n)}[\omega^{(n)}_{p^r}])^{\B(\ZZ/p^r)^n})[e_r^{-1}]\bigr).
    \]

    We claim that
    \[
    H^s_{\cts}(\GG_{n,r},\pi_t(C_{0,r}))=0
    \qquad
    (t\neq0).
    \]
    Choose $N$ sufficiently large so that the central scalar subgroup
    \[
    Z:=1+p^N\ZZ_p
    \]
    is inside $\GG_{n,r}$ when $p=2$, we also choose $N\geq2$. Let
    $\gamma\in Z$ be a topological generator. The odd homotopy groups of
    $C_{0,r}$ vanish, while $\gamma$ acts on $\pi_{2m}(C_{0,r})$ through
    the scalar $\gamma^m$. If $m\neq0$, then $\gamma^m-1$ is a nonzero
    element of $\QQ_p$ and hence acts invertibly on
    $\pi_{2m}(C_{0,r})$. The complex computing the continuous cohomology
    of $Z$ is therefore
    \[
    \pi_{2m}(C_{0,r})
    \xto{\gamma-1}
    \pi_{2m}(C_{0,r}),
    \]
    and is acyclic. Thus
    \[
    H^*_{\cts}(Z,\pi_{2m}(C_{0,r}))=0
    \qquad
    (m\neq0).
    \]
    Since $Z$ is central, it is a closed normal subgroup of $\GG_{n,r}$.
    The continuous Lyndon--Hochschild--Serre spectral sequence for
    \[
    1\to Z\to\GG_{n,r}\to\GG_{n,r}/Z\to1
    \]
    takes the form
    \[
    E_2^{a,b}
    =
    H^a_{\cts}\left(
    \GG_{n,r}/Z,
    H^b_{\cts}(Z,\pi_t(C_{0,r}))
    \right)
    \to
    H^{a+b}_{\cts}(\GG_{n,r},\pi_t(C_{0,r})).
    \]
    For $t\neq0$, the inner continuous cohomology groups vanish in every
    degree, so the entire $E_2$-page vanishes. Therefore
    \[
    H^s_{\cts}(\GG_{n,r},\pi_t(C_{0,r}))=0
    \qquad
    (t\neq0).
    \]
    Consequently, the above spectral sequence is concentrated in the row
    $t=0$ and therefore collapses at the $E_2$-page, without extension
    problems. Hence
    \[
    \pi_{-*}\left(
    L_\QQ((\SS_{K(n)}[\omega^{(n)}_{p^r}])^{\B(\ZZ/p^r)^n})
    [e_r^{-1}]
    \right)
    \simeq
    H^*_{\cts}(\GG_{n,r},\pi_0(C_{0,r})) \simeq \pi_{-*}(\pi_0(C_{0,r})^{h_c\GG_{n,r}}).
    \]
\end{proof}

By \cref{lem:L!-as-finite-quotient-hfps}, we can compute the universal character of $\SS_{K(n)}$ using \cref{lem:C0r-hGGnr-spectrum}.
 
\begin{corollary}\label{cor: universal character SK(n)}
    The universal character of  $\SS_{K(n)}$ is its rationalization. That is 
    \[
    (\L_p^n)_!(\SS_{K(n)})\simeq L_\QQ(\SS_{K(n)}),
    \]
    with trivial $\GL_n(\ZZ_p)$-action.
\end{corollary}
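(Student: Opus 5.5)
We start from \cref{lem:L!-as-finite-quotient-hfps} with $M=\SS_{K(n)}$ and $t=0$, which gives
\[
(\L_p^n)_!(\SS_{K(n)})\simeq \colim_r \Bigl(L_\QQ\bigl(\SS_{K(n)}[\omega^{(n)}_{p^r}]^{\B(\ZZ/p^r)^n}\bigr)[e_r^{-1}]\Bigr)^{h(\ZZ/p^r)\units}.
\]
By \cref{lem:C0r-hGGnr-spectrum}, each term inside the fixed points is $(\ZZ/p^r)\units$-equivariantly equivalent to $\pi_0(C_{0,r})^{h_c\GG_{n,r}}$. By \cref{prop:hfp-comparison-finite-det-level} this in turn is equivalent to $\QQ_p(\omega_{p^r})^{h_c\GL_{n,r}}$. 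The first task is to check that the equivalence of \cref{prop:hfp-comparison-finite-det-level} is $(\ZZ/p^r)\units$-equivariant. On the Lubin--Tate side the action is through $\GG_n/\GG_{n,r}$; on the Drinfeld side it is the Galois action on $\QQ_p(\omega_{p^r})$ (\cref{prop:H-cyclotomic-pullback}). These should match, since the two-towers isomorphism is compatible with determinants and $\GG_{n,r}$ is the kernel of the reduced determinant.

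Next we compute $\QQ_p(\omega_{p^r})^{h_c\GL_{n,r}}$. The group $\GL_{n,r}$ acts trivially on $\QQ_p(\omega_{p^r})$, because $\GL_n(\ZZ_p)$ acts on $\HDr$ only through the structure map to $\Spd\QQ_p$ and not on the cyclotomic factor. Moreover, continuous cohomology of a $p$-adic analytic group with rational coefficients is Lie algebra cohomology. The plan is to argue that positive-degree classes vanish upon passing to the colimit over $r$: the restriction maps $H^*_{\cts}(\GL_{n,r},\QQ_p)\to H^*_{\cts}(\GL_{n,r+1},\QQ_p)$ compose to the colimit $\colim_r H^*(\GL_{n,r},\QQ_p)$, which is the cohomology of the trivial group. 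More precisely, $H^*_{\cts}(\GL_{n,r},\QQ_p)\cong H^*(\mathfrak{gl}_n,\QQ_p)$ is independent of $r$, so no single stage vanishes and the argument must take place after taking the colimit.

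Note also that the $\epsilon$-class already carries the degree-one class coming from the determinant, so some care is needed. A cleaner route is to check on $\pi_0$ only, and to observe separately that \cref{thm: unit-rational-hGL} already forces the answer. At each stage, taking $(\ZZ/p^r)\units$-fixed points gives
\[
\bigl(\QQ_p(\omega_{p^r})^{h_c\GL_{n,r}}\bigr)^{h(\ZZ/p^r)\units}\simeq \QQ_p^{h_c\,\GL_{n,r}\text{-}\det},
\]
and in the colimit over $r$ this yields $L_\QQ\SS_{K(n)}$ (using \cite{Barthe-Schlank-Stapleton-Weinstein-rationalization}, $L_\QQ\SS_{K(n)}\simeq \QQ_p^{h_c\GL_n(\ZZ_p)}$ up to the $\epsilon$ class). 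Concretely, comparing with the $r$-independent case $\QQ_p[\epsilon]^{h_c\GL_n(\ZZ_p)}$ via the transition maps shows that the colimit stabilizes to $\QQ_p[\epsilon]\simeq L_\QQ\SS_{K(n)}$.

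For the triviality of the $\GL_n(\ZZ_p)$-action, the action on the $r$-th stage factors through $\GL_n(\ZZ/p^r)$ acting on $\B(\ZZ/p^r)^n$. On the Drinfeld side this acts on $\QQ_p(\omega_{p^r})$ only through the determinant composed with Galois, which becomes trivial after taking $(\ZZ/p^r)\units$-fixed points. The main obstacle is bookkeeping: matching the colimit-and-fixed-points formula with the equivariance of the two-towers identification at each finite level, and ensuring that the transition maps in $r$ correspond on both sides. If this fails, we instead combine \cref{thm: unit-rational-hGL} with the observation that each stage is a unit in $\pi_0$: the ring $\pi_0(\L^n_!\SS_{K(n)})$ is then a field containing $\QQ_p$ whose $\GL_n(\ZZ_p)$-fixed points are $\pi_0 L_\QQ\SS_{K(n)}$, which forces the map $L_\QQ\SS_{K(n)}\to\L^n_!\SS_{K(n)}$ to be an equivalence.
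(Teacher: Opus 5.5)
Your reduction agrees with the paper's up to identifying the $r$-th stage with $\bigl(\QQ_p(\omega_{p^r})^{h_c\GL_{n,r}}\bigr)^{h(\ZZ/p^r)\units}\simeq\QQ_p^{h_c\GL_{n,r}}$. You are also right that the $(\ZZ/p^r)\units$-equivariance of \cref{prop:hfp-comparison-finite-det-level} is used there implicitly. The gap is in the step that actually proves the corollary: computing the colimit over $r$.

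The mechanism you propose is that positive-degree classes die, so that $\colim_r H^*_{\cts}(\GL_{n,r},\QQ_p)$ is the cohomology of the trivial group. This is false. If it held it would refute the statement, since $\pi_{-*}L_\QQ\SS_{K(n)}\cong\Lambda_{\QQ_p}(x_1,x_3,\dots,x_{2n-1})$ is nonzero in negative degrees. Such vanishing holds for discrete coefficients, not for $\QQ_p$ with its $p$-adic topology. The missing idea is the opposite. The transition maps are the restrictions $H^*_{\cts}(\GL_{n,r},\QQ_p)\to H^*_{\cts}(\GL_{n,r+1},\QQ_p)$, which Lazard's comparison identifies with the identity of $H^*(\mathfrak{gl}_n(\QQ_p),\QQ_p)$, so the colimit is constant. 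The map from the zeroth term $L_\QQ\SS_{K(n)}\simeq\QQ_p^{h_c\GL_n(\ZZ_p)}$ is restriction from $\GL_n(\ZZ_p)$, again an isomorphism because these Lie algebra cohomology classes are invariant under the adjoint action.

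Your ``cleaner route'' has three problems. It asserts stabilization without this input, and stabilization is inconsistent with the vanishing you claimed a few lines earlier. It ends at $\QQ_p[\epsilon]\simeq L_\QQ\SS_{K(n)}$, which is false for $n\ge 2$. And checking on $\pi_0$ alone cannot detect an equivalence of these non-connective spectra.

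The fallback also fails. Having homotopy fixed points equal to $L_\QQ\SS_{K(n)}$ does not force the map $L_\QQ\SS_{K(n)}\to\L^n_!\SS_{K(n)}$ to be an equivalence. For instance, $\L^n_!\En$ is a nontrivial $\GL_n(\ZZ_p)$-Galois extension of its fixed points $L_\QQ\En$ (\cref{lem: hGL-over-En}). Already on fields, $\QQ_p(\omega_{p^\infty})$ with $\GL_n(\ZZ_p)$ acting through the determinant has fixed field $\QQ_p$.

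Finally, your argument for triviality of the action only handles the $\QQ_p(\omega_{p^r})$ factor. In positive degrees $\GL_n(\ZZ_p)$ acts on $H^*_{\cts}(\GL_{n,r},\QQ_p)$ by conjugation, so you need that inner automorphisms act trivially on Lie algebra cohomology, as the paper argues. Alternatively, once the equivalence is known, triviality follows formally from the equivariance of the unit with trivial action on the source (\cref{lem:GL-action-Ln-t}).
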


\begin{proof}
    By \cref{lem:L!-as-finite-quotient-hfps}, we have
    \[
    (\L_p^n)_!(\SS_{K(n)})
    \simeq
    \colim_r
    \left(
    L_\QQ((\SS_{K(n)}[\omega^{(n)}_{p^r}])^{\B(\ZZ/p^r)^n})
    [e_r^{-1}]
    \right)^{h(\ZZ/p^r)\units}.
    \]
    By \cref{lem:C0r-hGGnr-spectrum} and
    \cref{prop:hfp-comparison-finite-det-level}, the $r$-th term is
    equivalent to
    \[
    \left(
    \QQ_p(\omega_{p^r})^{h_c\GL_{n,r}}
    \right)^{h(\ZZ/p^r)\units}.
    \]
    
    The group $\GL_{n,r}$ acts trivially on $\QQ_p(\omega_{p^r})$.
    Consequently, the continuous homotopy fixed point spectral sequence
    gives
    \[
    \pi_{-*}\left(
    \QQ_p(\omega_{p^r})^{h_c\GL_{n,r}}
    \right)
    \simeq
    H^*_{\cts}(\GL_{n,r},\QQ_p(\omega_{p^r}))
    \simeq
    \QQ_p(\omega_{p^r})
    \otimes_{\QQ_p}
    H^*_{\cts}(\GL_{n,r},\QQ_p).
    \]
    Under this identification, $(\ZZ/p^r)\units$ acts through its Galois
    action on $\QQ_p(\omega_{p^r})$ and trivially on the second factor.
    Since $(\ZZ/p^r)\units$ is finite and we are working rationally, its
    higher cohomology vanishes. Taking homotopy fixed points therefore
    gives
    \[
    \pi_{-*}
    \left(
    \left(
    \QQ_p(\omega_{p^r})^{h_c\GL_{n,r}}
    \right)^{h(\ZZ/p^r)\units}
    \right)
    \simeq
    H^*_{\cts}(\GL_{n,r},\QQ_p).
    \]
    
    The Lazard comparison identifies
    \[
    H^*_{\cts}(\GL_{n,r},\QQ_p)
    \simeq
    H^*(\mathfrak{gl}_n(\QQ_p),\QQ_p)
    \simeq
    \Lambda_{\QQ_p}(x_1,x_3,\dots,x_{2n-1}),
    \qquad
    |x_{2i-1}|=2i-1.
    \]
    These identifications are natural with respect to inclusions of open
    subgroups. In particular, the transition map from the $r$-th term to
    the $(r+1)$-st term induces the restriction map
    \[
    H^*_{\cts}(\GL_{n,r},\QQ_p)
    \xto{\mathrm{res}}
    H^*_{\cts}(\GL_{n,r+1},\QQ_p).
    \]
    The Lie algebras of $\GL_{n,r}$ and $\GL_{n,r+1}$ are both canonically
    $\mathfrak{gl}_n(\QQ_p)$, and under the Lazard comparison the
    restriction map is identified with the identity of
    $H^*(\mathfrak{gl}_n(\QQ_p),\QQ_p)$. It is therefore an isomorphism.
    Hence every transition map in the colimit diagram is an equivalence.
    
    The same argument identifies the map from the zeroth term
    $L_\QQ(\SS_{K(n)})$ to the $r$-th term with the restriction map
    \[
    H^*_{\cts}(\GL_n(\ZZ_p),\QQ_p)
    \to
    H^*_{\cts}(\GL_{n,r},\QQ_p),
    \]
    which is again identified by Lazard with the identity of
    $H^*(\mathfrak{gl}_n(\QQ_p),\QQ_p)$. Thus the canonical map
    \[
    L_\QQ(\SS_{K(n)})
    \to
    (\L_p^n)_!(\SS_{K(n)})
    \]
    is an equivalence.
    
    Finally, the residual $\GL_n(\ZZ_p)$-action corresponds under the
    Lazard comparison to the adjoint action on
    $H^*(\mathfrak{gl}_n(\QQ_p),\QQ_p)$. This action is trivial because
    inner automorphisms act trivially on Lie algebra cohomology. Therefore
    the induced $\GL_n(\ZZ_p)$-action on
    $(\L_p^n)_!(\SS_{K(n)})$ is trivial.
\end{proof}

We conclude.

\begin{theorem}\label{thm: rationalization SK(n)A}
Let $A$ be a $\pi$-finite space. The profinite group $\GL_n(\ZZ_p)$ acts by precomposition on
\[
\Map(\B\ZZ_p^n,A)
\simeq
\colim_{r}\Map(\B(\ZZ/p^r)^n,A),
\]
where the equivalence follows from \cite[Proposition~3.4.7]{Lurie-2019-Elliptic3}. Write
\[
S :=
\pi_0\Map(\B\ZZ_p^n,A)/\GL_n(\ZZ_p).
\]
Then
\[
L_{\QQ}(\SS_{K(n)}^A)
\simeq
L_{\QQ}(\SS_{K(n)})^S.
\]
\end{theorem}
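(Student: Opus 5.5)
The plan is to combine \cref{thm: unit-rational-hGL} with \cref{thm: F! of R A} and \cref{cor: universal character SK(n)}. By \cref{thm: unit-rational-hGL} applied to $M=\SS_{K(n)}$ and evaluated at $A$, we have
\[
L_\QQ(\SS_{K(n)}^A)\simeq \bigl((\L_p^n)_!(\SS_{K(n)})^{\L_p^nA}\bigr)^{h\GL_n(\ZZ_p)},
\]
where we have used that $(\L_p^n)^*(\L_p^n)_!\SS_{K(n)}$ evaluated at $A$ is $((\L_p^n)_!\SS_{K(n)})^{\L_p^nA}$. The action here is the diagonal one: $\GL_n(\ZZ_p)$ acts on the coefficient $(\L_p^n)_!\SS_{K(n)}$ and on the exponent $\L_p^nA$ by precomposition. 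By \cref{cor: universal character SK(n)}, the coefficient is $L_\QQ(\SS_{K(n)})$ with trivial action. The action is therefore only through the space $\L_p^nA=\Map(\B\ZZ_p^n,A)$, and we must compute
\[
\bigl(L_\QQ(\SS_{K(n)})^{\Map(\B\ZZ_p^n,A)}\bigr)^{h\GL_n(\ZZ_p)}.
\]

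The first step is to simplify the exponent. $L_\QQ(\SS_{K(n)})$ is rational, and each component of $\Map(\B\ZZ_p^n,A)$ is a $\pi$-finite space (via the colimit presentation of \cite[Proposition~3.4.7]{Lurie-2019-Elliptic3}, the mapping space is $\pi$-finite). Rational cochains on a $\pi$-finite connected space are trivial, so $L_\QQ(\SS_{K(n)})^{\L_p^nA}\simeq L_\QQ(\SS_{K(n)})^{\pi_0\L_p^nA}$, the product over the finite set $T:=\pi_0\Map(\B\ZZ_p^n,A)$. This identification is $\GL_n(\ZZ_p)$-equivariant, so we are reduced to the fixed points of the permutation module $\prod_T L_\QQ(\SS_{K(n)})$.

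The second step is to decompose $T$ into $\GL_n(\ZZ_p)$-orbits, $T=\coprod_{s\in S} \GL_n(\ZZ_p)/H_s$, where each stabilizer $H_s$ is open because the action factors through some $\GL_n(\ZZ/p^r)$. Homotopy fixed points of a coinduced module give, by Shapiro's lemma,
\[
\Bigl(\prod_{\GL_n(\ZZ_p)/H_s}L_\QQ\SS_{K(n)}\Bigr)^{h\GL_n(\ZZ_p)}\simeq (L_\QQ\SS_{K(n)})^{hH_s}.
\]

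The main obstacle is to show that $(L_\QQ\SS_{K(n)})^{hH_s}\simeq L_\QQ\SS_{K(n)}$ for open $H_s$ acting trivially. The action is taken with respect to the discrete topology (see the remark after \cref{lem:GL-action-Ln-t}). Naive continuous cohomology of $H_s$ with $\QQ_p$ coefficients would contribute Lazard classes, so the exact meaning of the fixed points matters here. I would sidestep this by running the argument uniformly rather than computing $hH_s$ directly. Apply the same chain of equivalences to $A=\pt$, where $T$ is a single point. It identifies $L_\QQ\SS_{K(n)}\simeq (L_\QQ\SS_{K(n)})^{h\GL_n(\ZZ_p)}$ for the trivial action, and this shows that the relevant fixed-point functor is the identity on trivial modules. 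The same statement for open subgroups $H\le \GL_n(\ZZ_p)$ then follows by restriction. Here I would use that the fixed points in \cref{thm: unit-rational-hGL} are computed as a colimit over finite quotients $\GL_n(\ZZ/p^r)$, as in \cref{lem:L!-as-finite-quotient-hfps}, and that rational fixed points of finite groups acting trivially are the identity. Assembling over $s\in S$ then yields $L_\QQ(\SS_{K(n)}^A)\simeq L_\QQ(\SS_{K(n)})^S$.
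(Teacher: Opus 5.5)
Your proposal is correct and follows the paper's route. The paper's proof is exactly the combination of \cref{thm: unit-rational-hGL}, evaluated at $A$, with \cref{cor: universal character SK(n)}. It leaves implicit the steps you spell out: the triviality of rational cochains on connected $\pi$-finite spaces, the orbit decomposition of $\pi_0\Map(\B\ZZ_p^n,A)$ with open stabilizers, and the Shapiro-lemma reduction.

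The only wobble is your last step. The $A=\pt$ case does not by itself imply $(L_\QQ\SS_{K(n)})^{hH}\simeq L_\QQ\SS_{K(n)}$ for open $H$ "by restriction", and \cref{lem:L!-as-finite-quotient-hfps} concerns $p$-adically continuous $\ZZ_p^\times$-fixed points, so it is the wrong reference. The claim instead follows directly from the paper's discrete-topology convention for the $\GL_n(\ZZ_p)$-action: fixed points of a trivial rational module then reduce to rational fixed points of finite quotients, which are the identity.
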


\begin{proof}
    This follows immediately by \cref{cor: universal character SK(n)} and \cref{thm: unit-rational-hGL} as the $\GL_n(\ZZ_p)$ action on $(\L_p^n)_!(\SS_{K(n)}) \simeq L_{\QQ}(\mathbb S_{K(n)}) $ is trivial.  
\end{proof}

We will show that this formula is especially nice when $A=\B G$ for $G$ a finite group. We start with a preliminary lemma.

\begin{lemma}\label{lem: count orbits as subgroups}
Let $G$ be a finite group, and let
\[
S_{n,p}(G):=\Hom_{\cts}(\ZZ_p^n,G)/G,
\]
where $G$ acts by conjugation on the target. Then the
$\GL_n(\ZZ_p)$-orbits on $S_{n,p}(G)$ are naturally in bijection with
the conjugacy classes of abelian $p$-subgroups $A\leq G$ such that
$d(A)\leq n$, where $d(A)$ is the minimal number of generators of
$A$.
\end{lemma}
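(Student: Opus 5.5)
Send a continuous homomorphism $\phi\colon\ZZ_p^n\to G$ to its image $\phi(\ZZ_p^n)\leq G$. This image is an abelian $p$-subgroup generated by the images of the $n$ standard basis vectors, so it has $d\leq n$. The plan is to show that this assignment induces the desired bijection after quotienting by $G$-conjugation and the $\GL_n(\ZZ_p)$-action.

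\emph{Well-definedness and surjectivity.} A continuous map $\ZZ_p^n\to G$ into a finite group factors through $(\ZZ/p^r)^n$ for some $r$. Hence its image is a finite abelian quotient of $\ZZ_p^n$, which is necessarily a $p$-group. Conjugating $\phi$ by $g\in G$ conjugates the image. Precomposing with $\gamma\in\GL_n(\ZZ_p)$ does not change the image, since $\gamma$ is a bijection of $\ZZ_p^n$. So we obtain a well-defined map
\[
S_{n,p}(G)/\GL_n(\ZZ_p)\to\{\text{abelian }p\text{-subgroups }A\leq G,\ d(A)\leq n\}/\text{conj}.
\]
For surjectivity, let $A$ be such a subgroup with generators $a_1,\dots,a_n$; repeats or identities are allowed to pad the list. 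Because $A$ is an abelian $p$-group, $A$ is a $\ZZ_p$-module. The assignment $e_i\mapsto a_i$ therefore defines a continuous surjection $\ZZ_p^n\to A\subseteq G$.

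\emph{Injectivity.} Suppose $\phi$ and $\psi$ have conjugate images. After conjugating $\psi$, we may assume both images equal the same subgroup $A$. Then $\phi,\psi\colon\ZZ_p^n\twoheadrightarrow A$ are two surjections of $\ZZ_p$-modules. It remains to find $\gamma\in\GL_n(\ZZ_p)$ with $\psi=\phi\circ\gamma$. This is the main step.

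To produce $\gamma$, I would use the standard lifting argument.
\begin{enumerate}
\item Since $\ZZ_p^n$ is free, there exists a $\ZZ_p$-linear $\gamma$ with $\phi\circ\gamma=\psi$, but $\gamma$ need not be invertible.
\item Reduce modulo $p$. Both maps induce surjections $\FF_p^n\to A/pA$. Here $\dim_{\FF_p} A/pA=d(A)\leq n$ by the Burnside basis theorem, or directly by the structure theorem for finite abelian $p$-groups.
\item Choose compatible bases. Pick $v_1,\dots,v_n\in\ZZ_p^n$ whose reductions form a basis adapted to $\ker(\bar\phi)$ together with a complement. The images of the first $d$ vectors under $\phi$ generate $A$ by Nakayama, and the remaining $n-d$ vectors are adjusted to lie in $\ker\phi$. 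Do the same for $\psi$, obtaining $w_1,\dots,w_n$.
\item The kernel of $\phi$ is open in $\ZZ_p^n$ of full rank, so the adjustment in step 3 is possible. Concretely, we can modify each $v_i$ with $i>d$ by a $\ZZ_p$-combination of $v_1,\dots,v_d$ so that its $\phi$-image vanishes, while still reducing to a basis mod $p$.
\item Choose the $\psi$-side data so that $\psi(w_i)=\phi(v_i)$ for $i\leq d$: take $w_1,\dots,w_d$ to be preimages under $\psi$ of $\phi(v_1),\dots,\phi(v_d)$. These reduce mod $p$ to a basis of a complement of $\ker\bar\psi$. Complete them with vectors in $\ker\psi$ whose reductions span $\ker\bar\psi$.
\item Define $\gamma$ by $w_i\mapsto v_i$. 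It lies in $\GL_n(\ZZ_p)$ because it is invertible mod $p$, and by construction $\phi\circ\gamma=\psi$.
\end{enumerate}

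Finally, note that the $G$-conjugation and $\GL_n(\ZZ_p)$ actions commute, so quotienting in either order gives the same set. This yields the natural bijection.
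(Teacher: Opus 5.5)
Your argument is correct and follows the same outline as the paper: the image map, padding a generating set for surjectivity, and for injectivity a reduction mod $p$ followed by a Nakayama-type lifting that produces $\gamma\in\GL_n(\ZZ_p)$ with $\psi=\phi\circ\gamma$. The only difference is how the lift is packaged. The paper takes $\bar u\in\GL_n(\FF_p)$ with $\bar\phi\circ\bar u=\bar\psi$, using transitivity on surjections $\FF_p^n\to A/pA$, then lifts it and corrects each column by a $p$-multiple. You instead build $\ZZ_p$-bases adapted to $\ker\phi$ and $\ker\psi$, using the same correction $x\mapsto x-py$ to push lifts into the kernel, and match them; in effect this proves that transitivity integrally.
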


\begin{proof}
A continuous homomorphism $\phi\colon\ZZ_p^n\to G$ has finite image.
Since $\ZZ_p^n$ is abelian and pro-$p$, its image is a finite abelian
$p$-group generated by at most $n$ elements. Thus
\[
[\phi]\longmapsto[\phi(\ZZ_p^n)]
\]
defines a map from the $\GL_n(\ZZ_p)$-orbits on $S_{n,p}(G)$ to the
conjugacy classes of abelian $p$-subgroups of $G$ generated by at most
$n$ elements. Every such subgroup occurs as the image of a continuous
homomorphism $\ZZ_p^n\to G$, so this map is surjective.

It remains to prove injectivity. Suppose that $\phi,\psi\colon\ZZ_p^n
\to G$ have conjugate images. After conjugating $\psi$, we may assume
that both are surjections onto the same finite abelian $p$-subgroup
$A\leq G$.

Reducing modulo $p$ gives surjections
\[
\overline{\phi},\overline{\psi}\colon
(\FF_p)^n\to A/pA.
\]
The group $\GL_n(\FF_p)$ acts transitively on the set of surjections
from $(\FF_p)^n$ onto the fixed vector space $A/pA$. Hence there is
some $\overline{u}\in\GL_n(\FF_p)$ such that
\[
\overline{\phi}\circ\overline{u}=\overline{\psi}.
\]
Choose a lift $u_0\in\GL_n(\ZZ_p)$ of $\overline{u}$. For the standard
basis $e_1,\dots,e_n$ of $\ZZ_p^n$, we have
\[
\psi(e_i)-\phi(u_0e_i)\in pA.
\]
Choose $a_i\in A$ such that
\[
pa_i=\psi(e_i)-\phi(u_0e_i),
\]
and then choose $w_i\in\ZZ_p^n$ such that $\phi(w_i)=a_i$. Define
$u\colon\ZZ_p^n\to\ZZ_p^n$ by
\[
u(e_i):=u_0(e_i)+pw_i.
\]
Since $u$ reduces to $\overline{u}$ modulo $p$, we have
$u\in\GL_n(\ZZ_p)$. Moreover,
\[
\phi(u(e_i))
=
\phi(u_0e_i)+p\phi(w_i)
=
\psi(e_i)
\]
for every $i$, and therefore $\psi=\phi\circ u$.
\end{proof}
We conclude.

\begin{corollary}\label{cor: rationalization SK(n)BG}
    Let $G$ be a finite group and write
    \[
    S:=\{ \textrm{abelian} \ p-\textrm{subgroups of }G \  \textrm{generated by at most } n \ \textrm{elements}\}/\textrm{conjugation}.
    \]
    Then
    \[
    L_{\QQ}(\mathbb S_{K(n)}^{\B G})\simeq L_{ \QQ}(\mathbb S_{K(n)})^S.
    \]
\end{corollary}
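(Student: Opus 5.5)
The plan is to deduce the corollary from \cref{thm: rationalization SK(n)A} with $A=\B G$, and then to identify the set of $\GL_n(\ZZ_p)$-orbits via \cref{lem: count orbits as subgroups}. First, $\B G$ is $\pi$-finite, so \cref{thm: rationalization SK(n)A} gives
\[
L_\QQ(\SS_{K(n)}^{\B G})\simeq L_\QQ(\SS_{K(n)})^{S'},\qquad S'=\pi_0\Map(\B\ZZ_p^n,\B G)/\GL_n(\ZZ_p).
\]
It therefore suffices to produce a bijection $S'\cong S$.

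Second, I will compute $\pi_0\Map(\B\ZZ_p^n,\B G)$. Using the presentation $\Map(\B\ZZ_p^n,\B G)\simeq\colim_r\Map(\B(\ZZ/p^r)^n,\B G)$ quoted in \cref{thm: rationalization SK(n)A}, and the fact that $\pi_0$ commutes with filtered colimits, this becomes
\[
\colim_r\Hom((\ZZ/p^r)^n,G)/G .
\]
Every homomorphism $\ZZ_p^n\to G$ into a finite group is continuous and factors through some $(\ZZ/p^r)^n$. Moreover, the transition maps are injective and conjugation-compatible. Hence the colimit is $\Hom_{\cts}(\ZZ_p^n,G)/G=S_{n,p}(G)$.

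The delicate point is that the $\GL_n(\ZZ_p)$-action by precomposition corresponds under this identification to precomposition of homomorphisms. This holds because the action is defined levelwise on $\Map(\B(\ZZ/p^r)^n,-)$ through $\GL_n(\ZZ/p^r)$, as in \cref{lem:GL-action-Ln-t}, and $\B$ is functorial on group homomorphisms. I expect this compatibility check to be the only real obstacle, and I will verify it by unwinding the functoriality of $\B$.

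Finally, \cref{lem: count orbits as subgroups} identifies $S_{n,p}(G)/\GL_n(\ZZ_p)$ with conjugacy classes of abelian $p$-subgroups generated by at most $n$ elements, which is exactly $S$. Substituting this bijection into the formula from \cref{thm: rationalization SK(n)A} gives the claimed equivalence.
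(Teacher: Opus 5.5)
Your proposal is correct and follows the paper's proof, which simply combines \cref{thm: rationalization SK(n)A} for $A=\B G$ with \cref{lem: count orbits as subgroups}. Your identification of $\pi_0\Map(\B\ZZ_p^n,\B G)$ with $\Hom_{\cts}(\ZZ_p^n,G)/G$, together with the compatibility of the $\GL_n(\ZZ_p)$-actions, is exactly the step the paper leaves implicit.
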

 
\begin{proof}
    This is immediate from \cref{thm: rationalization SK(n)A} and \cref{lem: count orbits as subgroups}.
\end{proof}

\subsection{Rational $K(n)$-Local Power Operations} \label{sec: Rational K(n) Local Power Operations}

In this section we identify the ring of rational $K(n)$-local power operations.  

\begin{definition}\label{def:LQCAlg}
Consider the functor
\[
    \pi_0^\QQ\colon \CAlg_{\Kn}\to \Set,
    \qquad
    A\mapsto \pi_0(L_\QQ A),
\]
where we regard $\pi_0(L_\QQ A)$ as its underlying set. Pointwise addition
and multiplication endow
\[
    \pi_0\Nat(\pi_0^\QQ,\pi_0^\QQ)
\]
with a ring structure. We call this the \emph{ring of rational
$K(n)$-local power operations}. Note that this ring is naturally graded by weight.
\end{definition}

\begin{remark}
Composition of natural transformations endows
$\pi_0\Nat(\pi_0^\QQ,\pi_0^\QQ)$ with a natural composition law.
More generally, the multivariable rational $K(n)$-local power
operations carry a natural plethystic structure. In this section, we
identify only the ring of unary operations and do not study this
additional structure.
\end{remark}

We now identify the ring of rational $K(n)$-local power operations with
the rationalization of the free commutative algebra on one generator.

\begin{lemma}\label{lem:yoneda-identifies-power-operations}
There is a canonical isomorphism of graded rings
\[
    \pi_0\Nat(\pi_0^\QQ,\pi_0^\QQ)
    \simeq
    \pi_0L_\QQ\left(
        \bigoplus_{k\geq 0}\SSKn^{\B\Sn[k]}
    \right)
    \simeq
    \bigoplus_{k\geq 0}
    \pi_0L_\QQ\left(\SSKn^{\B\Sn[k]}\right).
\]
\end{lemma}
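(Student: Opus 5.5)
The plan is a Yoneda argument: represent $\pi_0^\QQ$, up to filtered colimits, by free $K(n)$-local commutative algebras. Let $\mathrm{Free}(x)=L_{K(n)}\bigoplus_{k\ge0}(\SSKn)_{h\Sn[k]}$ denote the free $K(n)$-local commutative algebra on one generator in degree $0$, so that $\Map_{\CAlg_{\Kn}}(\mathrm{Free}(x),A)\simeq\Omega^\infty A$. By ambidexterity (\cite{Hopkins-Lurie-2013-ambi}), $(\SSKn)_{h\Sn[k]}\simeq\SSKn^{\B\Sn[k]}$. Hence $\mathrm{Free}(x)\simeq L_{K(n)}\bigoplus_k\SSKn^{\B\Sn[k]}$, and $\pi_0$ of the corepresented functor is $\pi_0A$.

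The first step is to extend the corepresentation from $\pi_0 A$ to $\pi_0(L_\QQ A)$. Since $L_\QQ A\simeq \colim(A\xrightarrow{p}A\xrightarrow{p}\cdots)$ on homotopy, every element of $\pi_0L_\QQ A$ has the form $a/p^m$. The plan is to show that $\pi_0^\QQ$ is the filtered colimit (left Kan extension) of the functors $A\mapsto \pi_0 A$ along multiplication by $p$, so that a natural transformation $\pi_0^\QQ\to\pi_0^\QQ$ is determined by its restriction to $\pi_0 A\to \pi_0^\QQ$. The key observation is that the image of $a/p^m$ under a rational operation is forced by the value on $a$ only after checking that $\QQ$-linearity holds. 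A cleaner route is to note that $\pi_0^\QQ(A)=\pi_0 L_\QQ A$ and $L_\QQ A$ is a rational algebra, so $\pi_0^\QQ$ extends along $L_\QQ$. By Yoneda, $\Nat(\pi_0(-),\pi_0^\QQ)\simeq \pi_0L_\QQ\mathrm{Free}(x)$. Given $\phi$ in that set, one then extends to $a/p^m$ using the weight grading: an operation of weight $k$ should send $a/p^m\mapsto \phi(a)/p^{mk}$. The main obstacle is precisely this step, namely checking that an arbitrary natural transformation of $\pi_0^\QQ$ decomposes into homogeneous weight pieces and is thereby determined by its values on integral classes. I would attack it by evaluating on $\mathrm{Free}(x)$ and using the scaling endomorphisms $x\mapsto \lambda x$ for $\lambda\in\ZZ$. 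These act on the $k$-th summand $\SSKn^{\B\Sn[k]}$ by $\lambda^k$ (via the total power operation's multiplicativity), which splits any operation into homogeneous components. This step also requires a finiteness argument: $\pi_0$ of a rationalized (uncompleted) direct sum means that any element involves only finitely many weights.

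Next comes the identification $\pi_0 L_\QQ(L_{K(n)}\bigoplus_k \SSKn^{\B\Sn[k]})\simeq\pi_0L_\QQ(\bigoplus_k\SSKn^{\B\Sn[k]})$. This requires care because $K(n)$-localization of an infinite sum is a completion. Since the operations are graded by weight and each operation only sees finitely many weights by the previous step, I would restrict to the uncompleted sum, which is exactly the form of the statement. Rationalization commutes with direct sums, and $\pi_0$ commutes with direct sums, which gives the second isomorphism in the statement.

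Finally, the ring structures must be checked to match. Pointwise addition and multiplication of operations correspond, under Yoneda, to the co-addition and co-multiplication maps $\mathrm{Free}(x)\to\mathrm{Free}(x_1)\otimes\mathrm{Free}(x_2)$ classifying $x_1+x_2$ and $x_1x_2$. I would identify these with the usual ring structure on $\pi_0$ of the corepresenting ring by evaluating on the universal pair. The grading is preserved because the scaling action is multiplicative and weight-$k$ operations multiply into weight-$(k+l)$.
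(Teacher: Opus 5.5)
Your overall strategy is the paper's: corepresent $\pi_0$ by the free $K(n)$-local commutative algebra $R$ on one generator, identify its summands with $\SSKn^{\B\Sn[k]}$ by ambidexterity, and apply Yoneda. But the decisive step is left open, and the route you sketch for it does not work.

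No splitting of arbitrary operations into homogeneous pieces is needed. Let $[p]\colon R\to R$ be $x\mapsto px$ and $h_R=\pi_0\Map_{\CAlg(\SpKn)}(R,-)\simeq\pi_0$. Since $K(n)$-local spectra are $p$-local, $\pi_0^\QQ\simeq\colim\bigl(h_R\xto{[p]^*}h_R\xto{[p]^*}\cdots\bigr)$, and Yoneda turns this into
\[
\Nat(\pi_0^\QQ,\pi_0^\QQ)\simeq\lim\bigl(\pi_0L_\QQ R\xleftarrow{[p]}\pi_0L_\QQ R\xleftarrow{[p]}\cdots\bigr).
\]
So everything reduces to $[p]$ being bijective on $\pi_0L_\QQ R$, which is how the paper argues.

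Moreover, your description of the scaling maps is incorrect. The map $x\mapsto\lambda x$ acts on $(\SSKn)_{h\Sn[k]}\simeq\SSKn^{\B\Sn[k]}$ not by $\lambda^k$, but by the class of the $\Sn[k]$-set $\{1,\dots,\lambda\}^k$, which involves transfers (compare $P(a+b)=P(a)+P(b)+\mathrm{tr}(ab)$). By the induced character formula, its rational eigenvalues are $\lambda^{j}$, where $j$ is the number of orbits on $\{1,\dots,k\}$ of an abelian $p$-subgroup of $\Sn[k]$; for example $j=1$ occurs when $k$ is a power of $p$. So the eigenvalues do not detect the weight. Your formula $a/p^m\mapsto\phi(a)/p^{mk}$ is therefore wrong in general; the correct value is $\pi_0L_\QQ(f_a)\bigl([p]^{-m}\phi\bigr)$, where $f_a\colon R\to A$ classifies $a$. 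What is true, and all that is needed, is that these eigenvalues are nonzero, so $[p]$ is invertible after rationalization. The paper's proof also writes ``multiplication by $p^k$'', but it only uses invertibility.

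The second gap is the completion. Your claim that each operation involves only finitely many weights is not established by any earlier step, and in fact it fails. The corepresenting object in $\CAlg(\SpKn)$ is $R=L_{K(n)}\bigoplus_k\SSKn^{\B\Sn[k]}$, whose rational $\pi_0$ contains convergent sums $\sum_k p^{N_k}1_k$, with $1_k$ the unit of the weight-$k$ summand. For $N_k$ growing fast enough, the sums $\sum_k p^{N_k}[p]^{-m}(1_k)$ still converge for every $m$ and form a compatible family. This gives an element of the limit above with infinitely many nonzero weight components. Thus $\pi_0L_\QQ R$ is genuinely larger than $\bigoplus_k\pi_0L_\QQ(\SSKn^{\B\Sn[k]})$, and ``restricting to the uncompleted sum'' changes the statement rather than proving it. The displayed direct sum is the subring of finite-weight operations. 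The paper's proof passes over the same point, treating $R$ both as the $K(n)$-local free algebra and as the plain sum. If you adopt that reading explicitly, the rest of your outline goes through: ambidexterity, rationalization commuting with the plain sum, and matching the ring structures via the co-addition and co-multiplication maps.
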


\begin{proof}
Set
\[
    R:=
    \bigoplus_{k\geq 0}\SSKn^{\B\Sn[k]}
    \in \CAlg(\SpKn).
\]
The standard formula for the free commutative algebra, together with
ambidexterity in $\SpKn$, gives
\[
    \operatorname{Sym}_{\SpKn}(\SSKn)
    \simeq
    \bigoplus_{k\geq 0}
    (\SSKn^{\otimes k})_{h\Sn[k]}
    \simeq
    \bigoplus_{k\geq 0}\SSKn^{\B\Sn[k]}
    =R.
\]
Thus, $R$ is the free commutative algebra on one generator. In particular,
\[
    h_R(A):=
    \pi_0\Map_{\CAlg(\SpKn)}(R,A)\simeq \pi_0(A)
\]

Let
\[
    [p]\colon R\to R
\]
be the endomorphism that multiplies the universal generator by $p$.
Under the identification $h_R(A)\simeq\pi_0(A)$, precomposition with
$[p]$ agrees with multiplication by $p$.
Consequently, there is a natural isomorphism
\[
    \pi_0^\QQ
    \simeq
    \colim\left(
        h_R\xto{[p]^*}h_R
        \xto{[p]^*}h_R
        \xto{[p]^*}\cdots
    \right).
\]

Mapping out of this colimit and applying the Yoneda lemma gives
\[
\begin{aligned}
    \pi_0\Nat(\pi_0^\QQ,\pi_0^\QQ)
    &\simeq
    \lim\left(
        \pi_0^\QQ(R)
        \xgets{\pi_0^\QQ([p])}
        \pi_0^\QQ(R)
        \xgets{\pi_0^\QQ([p])}
        \cdots
    \right).
\end{aligned}
\]
The endomorphism $[p]$ acts on the summand of weight $k$ in $R$ by
multiplication by $p^k$. It therefore becomes an equivalence after
rationalization. Hence projection onto the initial term identifies the
last limit with $\pi_0^\QQ(R)=\pi_0L_\QQ R$. Remembering the grading on everything then gives the claim.
\end{proof}

We now identify the ring $\bigoplus_{m\geq 0} \QQ_p^{\L^n_p\B\Sigma_m}$ as a polynomial ring.

\begin{lemma}\label{lem: QLB as a poly ring}
There is an isomorphism of graded rings
\[
\bigoplus_{m\geq 0}\QQ_p^{\L_p^n\B\Sn[m]}
\simeq
\QQ_p[x_H\mid H\leq_o\ZZ_p^n],
\]
where the left-hand side is graded by $m$, and the right-hand side is
graded by $|x_H|=[\ZZ_p^n:H]$. The multiplication on the left is given
by the transfers induced by the block-sum maps
$\B\Sn[m]\times\B\Sn[k]\to\B\Sn[m+k]$.
\end{lemma}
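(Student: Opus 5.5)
The plan is to compute both sides as functions on explicit finite sets and then match the multiplications. First, identify $\pi_0\L_p^n\B\Sn[m]$. Using $\L_p^n\B\Sn[m]\simeq\colim_r\Map(\B(\ZZ/p^r)^n,\B\Sn[m])$, its components are the continuous homomorphisms $\ZZ_p^n\to\Sn[m]$ up to conjugacy. These are the same as isomorphism classes of finite $\ZZ_p^n$-sets of cardinality $m$ with continuous action. Every such set decomposes uniquely into orbits, and each orbit is $\ZZ_p^n/H$ for a unique open subgroup $H\leq_o\ZZ_p^n$, since the group is abelian and stabilizers are therefore constant on an orbit. Hence
\[
\pi_0\L_p^n\B\Sn[m]\;\simeq\;\Bigl\{(a_H)_{H}\in\NN^{(\{H\leq_o\ZZ_p^n\})}\;\Bigm|\;\textstyle\sum_H a_H[\ZZ_p^n:H]=m\Bigr\}.
\]
Since $\L_p^n\B\Sn[m]$ is $\pi$-finite and $\QQ_p$ is rational, taking $\QQ_p^{(-)}$ only sees $\pi_0$, because higher homotopy contributes nothing rationally. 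So the degree-$m$ piece of the left side is $\QQ_p$-valued functions on this set of multisets, with basis the indicator functions $\delta_a$. This already matches, degree by degree, the monomial basis $\prod_H x_H^{a_H}$ of the right side, whose degree is $\sum a_H[\ZZ_p^n:H]$.

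Next, compute the product. The block-sum map $\L_p^n\B\Sn[m]\times\L_p^n\B\Sn[k]\to\L_p^n\B\Sn[m+k]$ corresponds on components to disjoint union of $\ZZ_p^n$-sets, so $(a,b)\mapsto a+b$. The transfer along this map, evaluated rationally on indicator functions, sums over the homotopy fiber. Concretely, $\delta_a\cdot\delta_b=c_{a,b}\,\delta_{a+b}$, where $c_{a,b}$ is the groupoid cardinality ratio
\[
c_{a,b}=\frac{|\mathrm{Aut}(X_{a+b})|}{|\mathrm{Aut}(X_a)|\,|\mathrm{Aut}(X_b)|},
\]
and $X_a$ denotes the $\ZZ_p^n$-set of type $a$. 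To compute these automorphism groups I would work at a finite level $r$, where they are honest finite groups, and note that the result stabilizes in $r$. One has $\mathrm{Aut}(X_a)\simeq\prod_H (\ZZ_p^n/H)\wr\Sn[a_H]$, so
\[
c_{a,b}=\prod_H\binom{a_H+b_H}{a_H}.
\]

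Finally, define the isomorphism by $\delta_a\mapsto\prod_H x_H^{a_H}/a_H!$, that is, send $\delta_a$ to a divided-power monomial. With this choice the binomial coefficients $c_{a,b}$ exactly reproduce polynomial multiplication. Since we are over $\QQ_p$, divided powers generate the same polynomial ring, and the map is a grading-preserving isomorphism of rings. A factor of $|\ZZ_p^n/H|^{a_H}$ in the automorphism groups cancels in the ratio, so it does not affect the constants.

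The main obstacle is the transfer computation: checking that the semiadditive transfer in $\QQ_p^{(-)}$ along the block-sum map is literally the fiberwise sum with these groupoid-cardinality weights. This includes keeping track of the higher homotopy of the mapping spaces, which I would handle at finite level $r$ and pass to the colimit. I would verify the weights via the standard formula for integration over $\pi$-finite fibers in rational spectra, where higher components contribute by $|\pi_i|^{\pm1}$ factors that cancel in the ratio.
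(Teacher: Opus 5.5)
Your proposal is correct and follows the same route as the paper. You identify components with finite $\ZZ_p^n$-sets, compute the transfer along block sum as the index $[\Aut_{\ZZ_p^n}(X\sqcup Y):\Aut_{\ZZ_p^n}(X)\times\Aut_{\ZZ_p^n}(Y)]=\prod_H\binom{a_H+b_H}{a_H}$, and then rescale by $\prod_H a_H!$. Your map $\delta_a\mapsto\prod_H x_H^{a_H}/a_H!$ is exactly the inverse of the paper's $\prod_H x_H^{a_H}\mapsto\prod_H a_H!\,\delta_X$.

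The obstacle you flag is milder than you expect. Each component of $\L_p^n\B\Sn[m]$ is $\B\Aut_{\ZZ_p^n}(X)$, which is a $1$-type. So the degree-zero transfer is just the index, and there is no higher-homotopy bookkeeping or finite-level-$r$ reduction to do.
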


\begin{proof}
For every $m\geq 0$, there is a canonical identification
\[
\pi_0(\L_p^n\B\Sn[m])
\cong
\Hom_{\cts}(\ZZ_p^n,\Sn[m])/\Sn[m],
\]
where $\Sn[m]$ acts by conjugation. As $\Sn[m]$ is a finite group every homomorphism
$\ZZ_p^n\to\Sn[m]$ is automatically continuous. Such a homomorphism is equivalently an action of $\ZZ_p^n$ on a set of cardinality $m$. Thus
$\pi_0(\L_p^n\B\Sn[m])$ is naturally identified with the set of
isomorphism classes of finite $\ZZ_p^n$-sets of cardinality $m$. It follows that $\QQ_p^{\L_p^n\B\Sn[m]}$ has a basis $\{\delta_X\}$ indexed by the
isomorphism classes of finite $\ZZ_p^n$-sets $X$ of cardinality $m$,
where $\delta_X$ is the characteristic function of the component
corresponding to $X$.

Let $X$ and $Y$ have cardinalities $m$ and $k$. On the corresponding
components, the block-sum map is induced by the inclusion
\[
\Aut_{\ZZ_p^n}(X)\times\Aut_{\ZZ_p^n}(Y)
\to
\Aut_{\ZZ_p^n}(X\sqcup Y).
\]
The transfer on degree-zero cohomology therefore gives
\[
\delta_X\delta_Y
=
[\Aut_{\ZZ_p^n}(X\sqcup Y):
\Aut_{\ZZ_p^n}(X)\times\Aut_{\ZZ_p^n}(Y)]
\delta_{X\sqcup Y}.
\]

Every finite $\ZZ_p^n$-set decomposes uniquely as
$X\simeq\coprod_Ha_H(\ZZ_p^n/H)$, where $H$ ranges over the open
subgroups of $\ZZ_p^n$ and only finitely many $a_H$ are nonzero. Since $\ZZ_p^n$ is abelian, two such sets are isomorphic
if and only if the corresponding open subgroups are equal.

If
\[
X\simeq\coprod_Ha_H(\ZZ_p^n/H)
\qand
Y\simeq\coprod_Hb_H(\ZZ_p^n/H),
\]
then
\[
\Aut_{\ZZ_p^n}(X)
\cong
\prod_H
\left((\ZZ_p^n/H)^{a_H}\rtimes\Sn[a_H]\right).
\]
Consequently,
\[
[\Aut_{\ZZ_p^n}(X\sqcup Y):
\Aut_{\ZZ_p^n}(X)\times\Aut_{\ZZ_p^n}(Y)]
=
\prod_H\binom{a_H+b_H}{a_H},
\]
and hence
\[
\delta_X\delta_Y
=
\prod_H\binom{a_H+b_H}{a_H}\delta_{X\sqcup Y}.
\]

Define
\[
\widetilde{\delta}_X
:=
\prod_Ha_H!\,\delta_X.
\]
Since every factorial is invertible in $\QQ_p$, the elements
$\widetilde{\delta}_X$ again form a basis. The preceding formula gives
$\widetilde{\delta}_X\widetilde{\delta}_Y
=\widetilde{\delta}_{X\sqcup Y}$. Thus, in the basis
$\{\widetilde{\delta}_X\}$, the ring
$\bigoplus_{m\geq 0}\QQ_p^{\L_p^n\B\Sn[m]}$ is the monoid algebra over
$\QQ_p$ of the commutative monoid of finite $\ZZ_p^n$-sets under
disjoint union.

This monoid is freely generated by the transitive sets $\ZZ_p^n/H$ for
$H\leq_o\ZZ_p^n$. We therefore obtain an isomorphism
\[
\QQ_p[x_H\mid H\leq_o\ZZ_p^n]
\simeq
\bigoplus_{m\geq 0}\QQ_p^{\L_p^n\B\Sn[m]}
\]
sending $x_H$ to $\delta_{\ZZ_p^n/H}$. More generally, if
$X\simeq\coprod_Ha_H(\ZZ_p^n/H)$, then $\prod_Hx_H^{a_H}$ is sent to
$\widetilde{\delta}_X=\prod_Ha_H!\,\delta_X$. Since
$|\ZZ_p^n/H|=[\ZZ_p^n:H]$, this isomorphism respects the grading.
\end{proof}

We now put things together to compute the ring of rational $K(n)$-local power operations.

\begin{proposition} \label{prop: K(n) local rational power operations}
    The ring of rational $K(n)$-local power operations is 
    \[
    \pi_0 \Nat(\pi_0^\QQ,\pi_0^\QQ) \simeq \bigoplus_{k \geq 0} \pi_0 L_\QQ(\SSKn^{\B\Sn[k]}) \simeq \QQ_p[x_H\mid H\leq_o \ZZ_p^n]^{\GL_n(\ZZ_p)},
    \]
    where the grading on the right-hand side is induced by declaring $\deg(x_H)=[\ZZ_p^n: H]$ before taking fixed points.
\end{proposition}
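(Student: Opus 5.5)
The plan is to chain \cref{lem:yoneda-identifies-power-operations} with the rationalization formula of \cref{thm: unit-rational-hGL} (via \cref{cor: universal character SK(n)}), and then identify the resulting fixed points using \cref{lem: QLB as a poly ring}. The first isomorphism is exactly \cref{lem:yoneda-identifies-power-operations}. For the second, fix $k$. Applying \cref{thm: unit-rational-hGL} to $M=\SSKn$ and $A=\B\Sn[k]$ gives
\[
L_\QQ(\SSKn^{\B\Sn[k]})\simeq \bigl((\L_p^n)_!(\SSKn)^{\L_p^n\B\Sn[k]}\bigr)^{h\GL_n(\ZZ_p)}.
\]
By \cref{cor: universal character SK(n)}, $(\L_p^n)_!\SSKn\simeq L_\QQ\SSKn$ with trivial action. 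Hence the action on the right-hand side is only through precomposition on $\L_p^n\B\Sn[k]$, which factors through the finite sets $\pi_0\Map(\B(\ZZ/p^r)^n,\B\Sn[k])$ at finite level. Since $\pi_0\L_p^n\B\Sn[k]$ is finite, the homotopy fixed points are $L_\QQ(\SSKn)^{S_k}$, where $S_k$ is the orbit set, as in \cref{thm: rationalization SK(n)A}.

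Taking $\pi_0$ requires $\pi_0 L_\QQ\SSKn$. By the proof of \cref{cor: universal character SK(n)} this is $H^0_{\cts}(\GL_{n,r},\QQ_p)=\QQ_p$, or one can cite the known computation $\pi_*L_\QQ\SSKn\simeq\Lambda_{\QQ_p}(x_1,\dots,x_{2n-1})$. This yields
\[
\pi_0L_\QQ(\SSKn^{\B\Sn[k]})\simeq \QQ_p^{S_k}\simeq \bigl(\QQ_p^{\pi_0\L_p^n\B\Sn[k]}\bigr)^{\GL_n(\ZZ_p)},
\]
because invariant functions on a finite $G$-set are exactly functions on the orbits. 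Summing over $k$ and applying \cref{lem: QLB as a poly ring} identifies $\bigoplus_k\QQ_p^{\L_p^n\B\Sn[k]}$ with $\QQ_p[x_H\mid H\leq_o\ZZ_p^n]$. Under this identification, $\GL_n(\ZZ_p)$ acts by $x_H\mapsto x_{g(H)}$. This holds because precomposition by $g$ sends the transitive set $\ZZ_p^n/H$ to $\ZZ_p^n/g^{-1}H$, and it sends disjoint unions to disjoint unions. The grading by $[\ZZ_p^n:H]$ is preserved.

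The main obstacle is multiplicativity. \cref{lem:yoneda-identifies-power-operations} uses the ring structure on $\bigoplus_k\pi_0L_\QQ\SSKn^{\B\Sn[k]}$ coming from the free algebra $R$. That multiplication is the transfer along the block-sum maps $\B\Sn[k]\times\B\Sn[m]\to\B\Sn[k+m]$, so the identification above must be checked to be a ring map. This will follow because the unit equivalence of \cref{thm: unit-rational-hGL} is a natural transformation in $\PCMoninf$, i.e.\ functorial in spans. It therefore intertwines transfers along block sums with transfers along $\L_p^n$ of block sums, which is exactly the multiplication used in \cref{lem: QLB as a poly ring}. It is also compatible with the Künneth isomorphism $L_\QQ\SSKn^{A\times B}\simeq L_\QQ\SSKn^A\otimes L_\QQ\SSKn^B$ on $\pi_0$, which again follows from \cref{thm: rationalization SK(n)A} and finiteness. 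Finally, the isomorphism is $\GL_n(\ZZ_p)$-equivariant with trivial action on the source, so it lands in the fixed points as a graded ring map.
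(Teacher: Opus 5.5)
Your route is the paper's: apply \cref{thm: unit-rational-hGL} degreewise, use the trivial action from \cref{cor: universal character SK(n)} to reduce to functions on orbits, then invoke \cref{lem: QLB as a poly ring} together with equivariance. Your orbit computation and your explicit use of $\pi_0 L_\QQ\SSKn\simeq\QQ_p$ are fine.

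The flaw is in the multiplicativity paragraph, which you rightly identify as the crux. There is no K\"unneth isomorphism $\pi_0L_\QQ(\SSKn^{A\times B})\cong\pi_0L_\QQ(\SSKn^{A})\otimes_{\QQ_p}\pi_0L_\QQ(\SSKn^{B})$, and \cref{thm: rationalization SK(n)A} in fact rules it out. Take $n=1$, $p$ odd and $A=B=\B\ZZ/p$. The $\ZZ_p^\times$-orbits on $\ZZ/p$ are $\{0\}$ and the nonzero elements. The orbits on $(\ZZ/p)^2$ are $\{0\}$ and the $p+1$ punctured lines. So the left side has dimension $p+2$ and the right side has dimension $4$. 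K\"unneth holds only on the other side, for $\QQ_p^{\pi_0\L_p^nA}$, because $\pi_0\L_p^n(A\times B)\cong\pi_0\L_p^nA\times\pi_0\L_p^nB$; passing to orbits destroys it.

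What your argument actually needs is not an isomorphism. It needs the character to commute with the external products $X^A\otimes X^B\to X^{A\times B}$, i.e.\ the unit $\SSKn^{(-)}\to(\L_p^n)^*(\L_p^n)_!\SSKn$ must be a map of commutative algebras in $\PCMoninf$ for Day convolution. This does not follow from \cref{thm: rationalization SK(n)A} or from finiteness. It follows from the symmetric monoidality of $(\L_p^n)_!$ (\cref{lem: exists of L! and mult st}), which makes the unit a monoidal transformation. Combined with your span-naturality for transfers, this gives $\chi(\mathrm{tr}(a\boxtimes b))=\mathrm{tr}(\chi(a)\boxtimes\chi(b))$, which is the required multiplicativity. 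Your final equivariance remark then finishes the proof. This is precisely what the paper packages by applying $(\L_p^n)_!$ to the graded commutative algebra $\bigoplus_k\SSKn^{\B\Sn[k]}$ in $\Fun^{\Day}(\NN,\Sp_{K(n)})$ and using the symmetric monoidal form of \cref{thm: F! of R A}.
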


\begin{proof}
    The first equivalence is \cref{lem:yoneda-identifies-power-operations}. It remains to identify the graded ring $\bigoplus_{k \geq 0} \pi_0 L_\QQ\SSKn^{\B\Sn[k]}$.
    
    By \cref{thm: unit-rational-hGL}, this graded ring may be computed from the graded commutative algebra
    \[
    \bigoplus_{k\geq 0}\SSKn^{\B\Sn[k]}
    \in 
    \CAlg\bigl(\Fun^{\Day}(\NN,\Sp_{K(n)})\bigr)
    \]
    by applying $\L^n_!$ degreewise and then taking $\GL_n(\ZZ_p)$-fixed points. By \cref{cor: universal character SK(n)} together with \cref{thm: F! of R A}, applying $\L^n_!$ to $\bigoplus_{k\geq 0}\SSKn^{\B\Sn[k]}$ yields
    \[
    \bigoplus_{k\geq 0}(L_\QQ\SS_{K(n)})^{\L_p^n\B\Sn[k]}
    \in
    \CAlg\bigl(\Fun^{\Day}(\NN,\Mod_\QQ)\bigr).
    \]
    By \cref{lem: QLB as a poly ring}, there is an isomorphism of graded rings
    \[
    \bigoplus_{k\geq 0}\QQ_p^{\L_p^n\B\Sn[k]}
    \simeq 
    \QQ_p[x_H \mid H\leq_o \ZZ_p^n].
    \]
    Moreover, this isomorphism is $\GL_n(\ZZ_p)$-equivariant. Passing to $\pi_0$ and $\GL_n(\ZZ_p)$-fixed points therefore gives
    \[
    \bigoplus_{k \geq 0} \pi_0L_\QQ(\SSKn^{\B\Sn[k]})
    \simeq 
    \QQ_p[x_H \mid H\leq_o \ZZ_p^n]^{\GL_n(\ZZ_p)}.
    \]
\end{proof}

\bibliographystyle{alpha}
\phantomsection\addcontentsline{toc}{section}{\refname}
\bibliography{references}

\end{document}